\DeclareFontFamily{U}{rsfs}{%
\skewchar\font127}
\DeclareFontShape{U}{rsfs}{m}{n}{%
<-6>rsfs5<6-8.5>rsfs7<8.5->rsfs10}{}
\DeclareSymbolFont{rsfs}{U}{rsfs}{m}{n}
\DeclareRobustCommand*\rsfs{%
\@fontswitch\relax\mathrsfs}
\theoremstyle{plain}
\newtheorem{thm}{Theorem}[section]
\newtheorem{prop}[thm]{Proposition}
\newtheorem{lem}[thm]{Lemma}
\newtheorem{defi}[thm]{Definition}
\newtheorem{rmk}[thm]{Remark}
\newtheorem{cor}[thm]{Corollary}
\newtheorem{prop-defi}[thm]{Proposition-Definition}
\newtheorem{thm-defi}[thm]{Theorem-Definition}
\newtheorem{lem-defi}[thm]{Lemma-Definition}
\newtheorem{conj}[thm]{Conjecture}
\newtheorem{exam}[thm]{Example}
\newdimen\argwidth
\def\db[#1\db]{
 \setbox0=\hbox{$#1$}\argwidth=\wd0
 \setbox0=\hbox{$\left[\box0\right]$}
  \advance\argwidth by -\wd0
 \left[\kern.3\argwidth\box0 \kern.3\argwidth\right]}
\newcommand{\aA}{\mathcal{A}}
\newcommand{\bB}{\mathcal{B}}
\newcommand{\eE}{\mathcal{E}}
\newcommand{\fF}{\mathcal{F}}
\newcommand{\gG}{\mathcal{G}}
\newcommand{\hH}{\mathcal{H}}
\newcommand{\lL}{\mathcal{L}}
\newcommand{\mM}{\mathcal{M}}
\newcommand{\oO}{\mathcal{O}}
\newcommand{\pP}{\mathcal{P}}
\newcommand{\tT}{\mathcal{T}}
\newcommand{\vV}{\mathcal{V}}
\newcommand{\Hom}{\mathop{\rm Hom}\nolimits}
\newcommand{\dR}{\mathbf{R}}
\newcommand{\Pic}{\mathop{\rm Pic}\nolimits}
\newcommand{\id}{\textrm{id}}
\newcommand{\ch}{\mathop{\rm ch}\nolimits}
\newcommand{\Ext}{\mathop{\rm Ext}\nolimits}
\newcommand{\Spec}{\mathop{\rm Spec}\nolimits}
\newcommand{\rank}{\mathop{\rm rank}\nolimits}
\newcommand{\Coh}{\mathop{\rm Coh}\nolimits}
\newcommand{\cneq}{\mathrel{\raise.095ex\hbox{:}\mkern-4.2mu=}}
\newcommand{\eqcn}{\mathrel{=\mkern-4.5mu\raise.095ex\hbox{:}}}
\newcommand{\ext}{\mathop{\rm ext}\nolimits}
\newcommand{\Aut}{\mathop{\rm Aut}\nolimits}
\newcommand{\DT}{\mathop{\rm DT}\nolimits}
\newcommand{\PT}{\mathop{\rm PT}\nolimits}
\newcommand{\RHom}{\mathop{\dR\mathrm{Hom}}\nolimits}
\title[{Curve counting via stable objects of CY 4-folds}]
{Curve counting via stable objects in derived \\ categories of Calabi-Yau 4-folds}
\date{}
\author{Yalong Cao}
\address{RIKEN Interdisciplinary Theoretical and Mathematical Sciences Program (iTHEMS), 2-1, Hirosawa, Wako-shi, Saitama, 351-0198, Japan}
\email{yalong.cao@riken.jp}
\author{Yukinobu Toda}
\address{Kavli Institute for the Physics and Mathematics of the Universe (WPI),The University of Tokyo Institutes for Advanced Study, The University of Tokyo, Kashiwa, Chiba 277-8583, Japan}
\email{yukinobu.toda@ipmu.jp}
\begin{document}
\maketitle
\begin{abstract}
In our previous paper with Maulik, we proposed a 
conjectural Gopakumar-Vafa (GV) type formula for the generating series of 
stable pair invariants on Calabi-Yau (CY) 4-folds. 
The purpose of this paper is to give an interpretation of 
the above GV type formula in terms of 
wall-crossing phenomena in the derived category. 
We introduce invariants counting 
LePotier's stable pairs on CY 4-folds, and show 
that they count certain stable objects in 
D0-D2-D8 bound states in the derived category. 
We propose a conjectural wall-crossing formula for 
the generating series of our invariants, which 
recovers the conjectural GV type formula. 
Examples are computed for both compact and toric cases to 
support our conjecture.

\end{abstract}



\tableofcontents

\section{Introduction}
\subsection{Background on GV/PT formula on CY 3-folds}
The notion of stable pairs was introduced by Pandharipande-Thomas 
(PT)~\cite{PT}
in order to give a better formulation of Maulik-Nekrasov-Okounkov-Pandharipande (MNOP) conjecture~\cite{MNOP}
relating Gromov-Witten (GW) invariants and Donaldson-Thomas (DT) curve counting invariants on Calabi-Yau (CY) 3-folds.  
By definition, a stable pair on a variety $X$ consists of a pair
\begin{align}\label{intro:PTpair}
(F, s), \ s \colon \oO_X \to F
\end{align}
satisfying PT stability condition:
$F$ is a pure one dimensional sheaf and $s$ is surjective in dimension one. 
 When $X$ is a CY 3-fold, 
we have integer valued invariants $P_{n, \beta} \in \mathbb{Z}$
(called PT invariants)
which virtually count stable pairs (\ref{intro:PTpair})
satisfying $([F], \chi(F))=(\beta, n)$. 
The generating series
\begin{align}\label{intro:PTX}
\PT(X)=\sum_{n, \beta}P_{n, \beta}y^n q^{\beta}
\end{align}
is conjectured to be equal to the 
generating series of GW invariants under some variable change, 
which was proved by Pandharipande-Pixton~\cite{PP} in many cases 
including quintic 3-folds. 

On the other hand, the generating series (\ref{intro:PTX})
on a CY 3-fold 
is expected to 
be written as an infinite product
(see e.g.~\cite[Conj.~6.2]{Toda5}) 
with powers given by Gopakumar-Vafa (GV) invariants
$n_{g, \beta} \in \mathbb{Z}$~\cite{GV, MT}:
\begin{align}\label{intro:PT:GV}
\PT(X)=\prod_{\beta}
\left(\prod_{j=1}^{\infty}(1-(-y)^j q^{\beta})^{jn_{0, \beta}}
\cdot \prod_{g=1}^{\infty} \prod_{k=0}^{2g-2}
(1-(-y)^{g-1-k}q^{\beta})^{(-1)^{k+g}n_{g, \beta}
\left(\begin{subarray}{c}
2g-2 \\
k
\end{subarray} \right)}   \right). 
\end{align}
In fact, such an infinite product can be 
explained from wall-crossing phenomena.
In the second author's previous works~\cite{Toda1, Toda2, Toda3, Toda5},
he investigated wall-crossing phenomena of 
stable D0-D2-D6 bound states in the derived category of 
coherent sheaves, 
by introducing one parameter family of weak stability conditions on 
them. 
The wall-crossing formula of associated 
DT counting invariants can be 
studied using the works of Joyce-Song~\cite{JS}
and Kontsevich-Soibelman~\cite{KS}. 
As a result, it turned out that 
the factor $\prod_{j=1}^{\infty}(1-(-y)^j q^{\beta})^{jn_{0, \beta}}$
is the wall-crossing term
(up to showing multiple cover conjecture of 
Joyce-Song's generalized DT invariants~\cite{JS} for 
one dimensional semistable sheaves), so giving 
an intrinsic understanding of the GV formula (\ref{intro:PT:GV}) via wall-crossing. 

\subsection{Motivation on GV/PT formula on CY 4-folds}
The purpose of this paper is to 
 give a similar interpretation of GV formula for 
stable pair invariants on CY 4-folds, using 
$\mathrm{DT}_4$ virtual classes defined in general cases by Borisov-Joyce~\cite{BJ} and in special cases by Cao-Leung~\cite{CL1}. 
In our previous paper with Maulik~\cite{CMT2}, 
we studied stable pair invariants on CY 4-folds $X$
and conjectured that the generating series of 
these invariants 
with exponential insertions $\exp(\gamma)$ 
for $\gamma \in H^4(X, \mathbb{Z})$
is written as a similar infinite product
\begin{align}\label{intro:PTexp}
\mathrm{PT}(X)(\exp(\gamma)) =\prod_{\beta} \left(
\exp(yq^{\beta})^{n_{0, \beta}(\gamma)}
\cdot M(q^{\beta})^{n_{1, \beta}}\right).
\end{align}
Here 
$M(q)=\prod_{k\geqslant 1}(1-q^k)^{-k}$ is the MacMahon function, and 
the invariants
\begin{align*}
n_{0, \beta}(\gamma) \in \mathbb{Q}, \ n_{1, \beta} \in \mathbb{Q}
\end{align*}
are GV type invariants on CY 4-folds
defined by Klemm-Pandharipande~\cite{KP}
from GW invariants on CY 4-folds, which are 
conjectured to be integers.  

We will consider a family of 
LePotier stability conditions on 
pairs (\ref{intro:PTpair}), parametrized by 
a stability parameter $t \in \mathbb{R}$, 
and construct $\mathrm{DT}_4$ type invariants counting such 
pairs. Indeed our invariants 
count certain stable objects 
in the category of D0-D2-D8 bound states, which 
is an abelian subcategory in the derived category of 
coherent sheaves. 
Here we observe a new phenomenon for CY 4-folds: 
stable objects on D0-D2-D8 bound states on CY 4-folds 
are always written as a pair (\ref{intro:PTpair}) while 
this is not the case for stable D0-D2-D6 bound states on 
CY 3-folds. 
We then propose a conjectural wall-crossing formula of 
generating series of our invariants, and 
explain that it recovers the GV formula (\ref{intro:PTexp}). 

\subsection{$\mathrm{DT}_4$ type invariants counting LePotier stable pairs}
Let $(X, \omega)$ be a polarized 
smooth projective CY 4-fold over $\mathbb{C}$. 
For $t \in \mathbb{R}$, a pair (\ref{intro:PTpair})
for a pure one dimensional sheaf $F$
is called
\textit{$Z_t$-stable} if the following conditions holds
(here we denote $\mu(F)=\chi(F)/(\omega \cdot [F]$)):
\begin{enumerate}
\renewcommand{\labelenumi}{(\roman{enumi})}
\item for any subsheaf $0\neq F' \subseteq F$, we have 
$\mu(F')<t$,
\item for any
subsheaf $ F' \subsetneq F$ 
such that $s$ factors through $F'$, 
we have 
$\mu(F/F')>t$. 
\end{enumerate}
Indeed we will see that the above 
stability condition is a special case of LePotier's 
stability conditions~\cite{LePotier} (ref.~Proposition \ref{identify stab}). 
For a given $\beta \in H_2(X, \mathbb{Z})$ and $n\in \mathbb{Z}$, 
we denote by
\begin{align*}
P_n^t(X, \beta)
\end{align*}
the moduli space of 
$Z_t$-stable pairs 
$(F, s)$ with $([F], \chi)=(\beta, n)$. It has a wall-chamber structure and for a generic $t \in \mathbb{R}$ (generic means outside a finite subset of $\mathbb{R}$), the above 
moduli space is a projective scheme, and 
coincides with the moduli space of PT stable 
pairs for the $t\to \infty$ limit. 

An important question is whether we can define $\DT_4$ type counting invariants of $P_n^t(X, \beta)$. When $t\to \infty$, this was done 
in \cite{CMT2} by using the well-known fact that moduli spaces of PT stable pairs can be regarded as moduli spaces of objects in derived categories of coherent sheaves \cite{PT} (note that the natural pair deformation-obstruction theory of PT moduli spaces $P_n(X, \beta)$ does not seem to give rise to a virtual class
even when $X$ is 3-dimensional). Therefore we are allowed to apply Pantev-T\"{o}en-Vaqui\'{e}-Vezzosi's construction of $(-2)$-shifted symplectic structures \cite{PTVV} and Borisov-Joyce's 
virtual classes \cite{BJ}.

Our first result is that 
the moduli space $P_n^t(X, \beta)$ is indeed a moduli space of two 
term complexes in the derived category for any choice of $t\in \mathbb{R}$: 
\begin{thm}\emph{(Proposition \ref{openess}, Theorem~\ref{prop:stack:AB})}\label{intro:thm1}
Let $\mM_0$ be the moduli stack of 
$E \in \mathrm{D^{b}(Coh(\textit{X\,}))}$
satisfying $\Ext^{<0}(E, E)=0$ and $\det E \cong \oO_X$. 
Then the natural 
morphism
\begin{align*}
P_n^t(X, \beta) \to \mM_0, \quad 
(F, s) \mapsto (\oO_X \stackrel{s}{\to} F)
\end{align*}
is an open immersion.
\end{thm}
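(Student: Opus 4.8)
The plan is to regard a $Z_t$-stable pair $(F,s)$ as the two-term complex $E=(\oO_X\xrightarrow{s}F)$, with $\oO_X$ placed in degree $0$, and to establish the three ingredients of an open immersion of stacks: that $E$ genuinely defines a point of $\mM_0$, that $(F,s)$ is recovered functorially from $E$ so that the morphism is a monomorphism, and that the image is an open substack on which the morphism is formally \'etale.

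First I would check that the morphism is well defined, i.e. $E\in\mM_0$. Since $F$ is supported in codimension $3$ we have $c_1(F)=0$, whence $\det F\cong\oO_X$ and therefore $\det E\cong\oO_X$. For the vanishing $\Ext^{<0}(E,E)=0$ I would use the distinguished triangle $F[-1]\to E\to\oO_X\xrightarrow{s}F$. As the cohomology sheaves of $E$ sit in the two adjacent degrees $0$ and $1$, the hypercohomology spectral sequence already gives $\Ext^i(E,E)=0$ for $i<-1$, so only $\Ext^{-1}(E,E)$ must be treated. Applying $\Hom(-,E[-1])$ to the triangle places $\Ext^{-1}(E,E)$ in an exact sequence between $\Ext^{-1}(\oO_X,E)$ and $\Hom(F,E)$, and both of these vanish: the first because $H^{<0}(\oO_X)=0$ and $H^{<0}(F)=0$, the second because $\Ext^{<0}(F,F)=0$ and $\Hom(F,\oO_X)=0$ ($F$ being torsion and $\oO_X$ torsion-free). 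Hence $\Ext^{-1}(E,E)=0$.

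Next, for the monomorphism property I would reconstruct the pair intrinsically from $E$. Rotating the same triangle produces a canonical morphism $E\to\oO_X$ whose cone is $F$ and which induces $s$, so it is enough to characterize this morphism. The decisive computation is $\Hom(E,\oO_X)\cong\mathbb{C}$: applying $\Hom(-,\oO_X)$ to the triangle and using $\Hom(F,\oO_X)=0$ together with $\Ext^1(F,\oO_X)=0$ --- the latter equal to $H^3(F)^{\vee}$ by Serre duality on the Calabi--Yau $4$-fold and hence zero since $F$ is one-dimensional --- identifies this group with the line spanned by the projection $E\to\oO_X$. Thus $E$ determines the morphism $E\to\oO_X$ up to scalar, and with it the pair $(F,s)=(\Cone(E\to\oO_X),s)$ up to exactly the scaling that constitutes an isomorphism of pairs. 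Carrying this out relatively over a base $S$ shows that the relative sheaf $\HOM(\mathcal{E},\oO_{X\times S})$ is a line bundle and yields the reconstruction in families, giving the monomorphism.

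The final, and I expect hardest, step is openness. Here I would realize the image as an open subfunctor of $\mM_0$: for a flat family $\mathcal{E}\in\mM_0(S)$ the conditions cutting it out --- that $\mathcal{H}^i(\mathcal{E}_s)=0$ for $i\neq 0,1$, that $\mathcal{H}^0(\mathcal{E}_s)$ be a torsion-free rank-one (hence ideal) sheaf, that $\mathcal{H}^1(\mathcal{E}_s)$ be at most one-dimensional with the reconstructed $F$ pure, and that the resulting pair be $Z_t$-stable --- are all open in $S$ by semicontinuity and openness of stability, and over this locus the reconstruction above produces the universal pair mapping to $\mathcal{E}$. To promote ``monomorphism onto an open substack'' to ``open immersion'' I would verify that the morphism is formally \'etale: the triangle, together with $\Hom(E,\oO_X)\cong\mathbb{C}$ and $\Ext^{<0}(E,E)=0$, identifies the tangent--obstruction theory of $\mM_0$ at $E$, governed by the traceless $\Ext^{\bullet}(E,E)_0$, with the deformation theory of the pair $(F,s)$. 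The main obstacle is precisely this openness statement in its functorial, base-change-compatible form --- producing the family of pairs from a family of complexes and checking that $Z_t$-stability is an open condition --- the derived-category computations in the earlier steps being essentially formal once the triangle is in hand.
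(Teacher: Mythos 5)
Your first two steps are essentially correct and line up with the paper's categorical comparison (Proposition \ref{A_X=B_X} and the deformation-theoretic half of Theorem \ref{prop:stack:AB}): the Serre-duality vanishings $\Hom(F,\oO_X)=\Ext^1(F,\oO_X)=0$ for $F\in\Coh_{\leqslant 1}(X)$ do give $\Hom(E,\oO_X)\cong\mathbb{C}$ and hence the reconstruction of the pair from the complex, and the identification of deformation theories is exactly what the paper proves — though note that for the obstruction comparison you will also need $\Ext^2(F,\oO_X)=H^2(X,F)^{\vee}=0$, which yields the isomorphism $\Hom(E,F)\cong\Ext^1(E,E)_0$ and the injection $\Ext^1(E,F)\hookrightarrow\Ext^2(E,E)_0$; this is the CY$4$-specific vanishing the whole theorem hinges on, and it never appears explicitly in your sketch.

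The genuine gap is your third step, which you yourself flag as the main obstacle and then dispose of ``by semicontinuity and openness of stability.'' The conditions you want to impose — that $\hH^0(\eE_s)$ be torsion-free of rank one, that $\hH^1(\eE_s)$ have support of dimension $\leqslant 1$, that the reconstructed sheaf be pure — are conditions on the cohomology sheaves of the \emph{fibers} $\eE_s$ of a family of complexes, and these cohomology sheaves do not form flat families over the base; standard semicontinuity and openness results for flat families of sheaves therefore do not apply, and proving that such loci are open is precisely the hard content of Proposition \ref{openess}. The paper does not argue fiberwise at all: instead, Lemma \ref{lem:A=Ahat} shows that for a complex with $\ch=(1,0,0,-\beta,-n)$ and trivial determinant, lying in $\aA_X$ is \emph{equivalent} to lying in the tilted heart $\widehat{\aA}_X=\langle \fF_{\omega},\tT_{\omega}[-1]\rangle_{\rm{ext}}$ — the nontrivial input being the Bogomolov--Gieseker inequality, which forces $\ch_2(\hH^i(E))\cdot\omega^2=0$ and hence the required shape of $\hH^0$ and $\hH^1$ — and then openness of the locus of objects in a heart obtained by tilting a torsion pair on $\Coh(X)$ is quoted from \cite[Theorem~A.3]{AB} (alternatively the argument of \cite[Lemma~4.7]{Tst3}). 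Without this tilting trick, or some substitute proof that your listed fiberwise conditions are open for non-flat cohomology of a perfect complex, your proof of openness does not go through; the remaining ingredients (formal \'etaleness, and openness of $Z_t$-stability inside $\underline{\aA}_X(v)$) are fine and agree with the paper.
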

As we mentioned in the previous subsection, this 
is a new phenomenon for CY 4-folds, 
as the similar statement is not true 
for CY 3-folds except the $t \to \infty$ limit, i.e. 
PT stable pairs. Indeed the moduli space 
$P_n^t(X, \beta)$ is regarded as a moduli space of 
certain stable objects in the extension 
closure 
\begin{align*}\mathcal{A}_X:=\langle \oO_X, \Coh_{\leqslant1}(X)[-1] \rangle_{\ext}
\subset  \mathrm{D^{b}(Coh(\textit{X\,}))},
\end{align*}
called the category of \textit{D0-D2-D8 bound states}. 
We will show that the above category is equivalent
to the category of pairs
$(\vV \to F)$, where $\vV$ is an iterated extensions of $\oO_X$ (ref.~Proposition \ref{A_X=B_X}). 
The above mentioned equivalence is not true for CY 3-folds, 
as we need the vanishing $\Ext^{2}(F, \oO_X)=0$ for 
any one dimensional sheaf $F$. 

Thanks to Theorem~\ref{intro:thm1}, we are able to define a virtual class (ref.~Theorem \ref{vir class of pair moduli})
\begin{align*}
[P^t_n(X, \beta)]^{\rm{vir}} \in H_{2n}(P_n(X, \beta), \mathbb{Z}),
\end{align*}
depending on the choice of orientation on
certain real line bundle on it \cite{CGJ}.
In order to define invariants from the above virtual class, 
we need to involve insertions: define the map 
\begin{align*}
\tau \colon H^{4}(X,\mathbb{Z})\to H^{2}(P^t_n(X,\beta),\mathbb{Z}), \quad 
\tau(\gamma) \cneq (\pi_{P})_{\ast}(\pi_X^{\ast}\gamma \cup\ch_3(\mathbb{F}) ),
\end{align*}
where $\pi_X$, $\pi_P$ are projections from $X \times P^t_n(X,\beta)$
onto corresponding factors, $\mathbb{I}=(\pi_X^*\oO_X\to \mathbb{F})$ is the universal pair and $\ch_3(\mathbb{F})$ is the
Poincar\'e dual to the fundamental cycle of $\mathbb{F}$.

For a generic $t \in \mathbb{R}$, 
we define the $Z_t$-stable pair invariant by 
\begin{align*}P^t_{n,\beta}(\gamma):=\int_{[P^t_n(X,\beta)]^{\rm{vir}}} \tau(\gamma)^n\in \mathbb{Z}. \end{align*}
Here we also write $P^t_{0,\beta}:=P^t_{n,\beta}(\gamma)$ when $n=0$.

When $t\to \infty$, $Z_t$-stable pairs are PT stable pairs. So we denote
\begin{align}
\label{intro:PT:def}
P_{n,\beta}(\gamma) \cneq P^t_{n,\beta}(\gamma)\big|_{t\to \infty}\,, \end{align}
which
is nothing but the stable pair invariant on CY 4-fold $X$ studied in \cite{CMT2}.

\subsection{Conjectures} 
The main conjecture of this paper is the following: 
\begin{conj}\label{main conj intro}\emph{(Conjecture \ref{main conj})}
Let $(X,\omega)$ be a smooth projective Calabi-Yau 4-fold, $\beta\in H_2(X,\mathbb{Z})$ and $n\in\mathbb{Z}_{\geqslant0}$.
Choose a generic $t\in \mathbb{R}_{>0}$. 
Then for certain choice of orientation, we have 
\begin{align}\label{intro:WCF}
P^t_{n,\beta}(\gamma)=\sum_{\begin{subarray}{c}\beta_0+\beta_1+\cdots+\beta_n=\beta  \\  \omega\cdot\beta_i>\frac{1}{t},\,  i=1,\ldots,n \end{subarray}}P_{0,\beta_0}\cdot\prod_{i=1}^nn_{0,\beta_i}(\gamma). 
\end{align}
In particular, $P^t_{0,\beta}=P_{0,\beta}$ is independent of the choice of $t>0$.
\end{conj}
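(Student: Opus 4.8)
The plan is to prove the wall-crossing formula (\ref{intro:WCF}) by tracking how the moduli spaces $P^t_n(X,\beta)$---realised through Theorem~\ref{intro:thm1} as open substacks of $\mM_0$---change as $t$ moves along $\mathbb{R}_{>0}$, and then to read off the $n=0$ consequence. First I would establish the wall-and-chamber structure: for a fixed class $(\beta,n)$ the values of $t$ admitting a strictly $Z_t$-semistable pair form a locally finite subset of $\mathbb{R}_{>0}$, and on each complementary chamber $P^t_n(X,\beta)$ is constant and carries its $\mathrm{DT}_4$ virtual class. A short computation with the slope $\mu=\chi/(\omega\cdot[F])$ shows that the walls meeting $\mathbb{R}_{>0}$ lie at $t=1/(\omega\cdot\beta')$ for effective $\beta'\leqslant\beta$, which is exactly the source of the constraint $\omega\cdot\beta_i>1/t$ in (\ref{intro:WCF}); in particular, for $n\geqslant 1$ condition~(i) applied to $F$ itself forces the smallest nonempty chamber to begin at $t=n/(\omega\cdot\beta)$.

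Next I would identify the objects that become strictly semistable at a wall $t_0=1/(\omega\cdot\beta')$. There a $Z_{t_0}$-semistable pair of class $(\beta,n)$ acquires a sub- or quotient object in $\aA_X$ of the form $F'[-1]$, where $F'$ is a one-dimensional slope-$t_0$ semistable sheaf of class $\beta'$; the numerics single out $\chi(F')=1$, i.e.\ a primitive genus-$0$ building block of slope $1/(\omega\cdot\beta')=t_0$. Crossing the wall corresponds to peeling off such a factor, dropping $\chi$ by one and splitting $\beta=(\beta-\beta')+\beta'$. Iterating across all walls between the empty chamber near $t=0$ and the chosen chamber produces the decompositions $\beta=\beta_0+\beta_1+\cdots+\beta_n$ of (\ref{intro:WCF}), the surviving core having class $(\beta_0,0)$.

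The heart of the argument is a wall-crossing formula for the $\mathrm{DT}_4$ invariants. I would express the jump of $P^t_{n,\beta}(\gamma)$ across $t_0$ as a product of the invariant of the surviving core pair with the invariant counting the one-dimensional factors, and identify the latter with the Klemm-Pandharipande genus-$0$ Gopakumar-Vafa invariant $n_{0,\beta'}(\gamma)$, the insertion $\tau(\gamma)$ built from $\ch_3$ being tailored to this purpose. Solving the resulting telescoping recursion upward from the empty chamber yields the closed form (\ref{intro:WCF}), and the limit $t\to\infty$ recovers the product (\ref{intro:PTexp}). The final assertion is then immediate: when $n=0$ the index set of (\ref{intro:WCF}) reduces to the single term $\beta_0=\beta$ (the product $\prod_{i=1}^{0}$ is empty), so its right-hand side equals $P_{0,\beta}$ with no dependence on $t$; since $\tau(\gamma)^0=1$ it is also independent of $\gamma$, giving $P^t_{0,\beta}(\gamma)=P_{0,\beta}(\gamma)$ for every $t>0$.

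The main obstacle is precisely the wall-crossing step. On a Calabi-Yau $4$-fold there is no established Hall-algebra or motivic integration formalism, analogous to that of Joyce-Song and Kontsevich-Soibelman in the threefold case, governing how the Borisov-Joyce and Cao-Leung virtual classes transform across a wall; moreover these classes depend on a choice of orientation that must be propagated coherently through every wall. Matching the wall contribution with $n_{0,\beta'}(\gamma)$ additionally requires a $\mathrm{DT}_4$ analogue of the multiple-cover formula for one-dimensional semistable sheaves. The absence of these ingredients is exactly why the statement is posed as a conjecture, and why only its formal consequences---such as the $n=0$ independence above, once the formula is granted---are unconditional.
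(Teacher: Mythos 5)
You were asked to prove a statement that the paper itself does not prove: this is Conjecture~\ref{main conj}, the main conjecture of the paper, and the authors support it only by heuristic arguments and by checks in examples. Your proposal is consistent with that situation: you sketch the intended wall-crossing mechanism and then correctly identify why it cannot at present be made rigorous (no Joyce--Song/Kontsevich--Soibelman-type wall-crossing formalism for Borisov--Joyce virtual classes, coherent propagation of orientations across walls, and a Katz/multiple-cover-type input identifying the wall contribution with $n_{0,\beta'}(\gamma)$). This is exactly the paper's own stated reason for leaving the statement conjectural, and your formal deduction of the $n=0$ case from the formula is the same as the paper's.

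As for the route: your sketch coincides with the paper's master-space heuristic. For a simple wall $t_0$ the paper derives, assuming Conjecture~\ref{g=0 one dim sheaf conj}, the jump formula (\ref{WCF:mspace}), and Proposition~\ref{lem:wcf:equiv} shows it is equivalent to the conjectured closed formula at such walls; telescoping up from the empty chamber $t<n/(\omega\cdot\beta)$ is then your argument. One inaccuracy you should fix: the walls are not located only at $t=1/(\omega\cdot\beta')$. Strictly $Z_t$-semistable objects occur at every rational value $t_0=n''/(\omega\cdot\beta'')$, with destabilizing factor $F''[-1]$ satisfying $\mu(F'')=t_0$; the reason only $\chi(F'')=1$ matters is not the wall structure itself but the virtual-dimension count in the master-space localization (real virtual dimension $2n$ for $M^{\pm}$ versus $2n'+2$ for the fixed locus), which forces $(n',n'')=(n-1,1)$ for a nonzero residue. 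So the invariants are expected to jump only at $t_0=1/(\omega\cdot\beta'')$, although walls exist at other rational points. Finally, note that the paper also offers an independent justification via ideal geometry (Section~\ref{heuristic argument}), and unconditional verifications in examples (Section~\ref{sec:JS/GV}), neither of which your plan addresses.
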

As in the previous paper~\cite{CMT2}, the conjecture is based on a
heuristic argument given in Section~\ref{heuristic argument},
where we verify it assuming the CY 4-fold $X$ to be `ideal', i.e. curves in $X$ deform in some family of expected dimensions and have 
expected generic properties.

The conjectural formula (\ref{intro:WCF})
can be expressed in terms of generating series as follows. 
Set 
\begin{align*}
\PT^t(X)(\exp(\gamma))
\cneq \sum_{n, \beta}\frac{P_{n, \beta}^{t}(\gamma)}{n!}y^n q^{\beta}. 
\end{align*}
Then for each $t_0 \in \mathbb{R}_{>0}$, the formula (\ref{intro:WCF}) implies
the wall-crossing formula
\begin{align}\label{intro:WCF2}
\lim_{t \to t_0+}\PT^t(X)(\exp(\gamma))=\prod_{\omega \cdot \beta=\frac{1}{t_0}} \exp(yq^{\beta})^{n_{0, \beta}(\gamma)} \cdot \lim_{t \to t_0-}\PT^t(X)(\exp(\gamma)). 
\end{align}
In the $t \to \infty$ limit,
\begin{align*}
\mathrm{PT}(X)(\exp(\gamma)) =
\lim_{t \to \infty} \mathrm{PT}^{t}(X)(\exp(\gamma))
\end{align*}
is the generating series of PT stable pair invariants on $X$
by (\ref{intro:PT:def}). 

In the $t \to +0$ limit, by (\ref{intro:WCF}), we have  
\begin{align*}
\lim_{t\to +0} \PT^t(X)(\exp(\gamma))=\sum_{\beta}P_{0, \beta}q^{\beta}
=\prod_{\beta} M(q^{\beta})^{n_{1, \beta}},
\end{align*}
where the second identity is conjectured in~\cite[Conj.~1.2]{CMT2}. 
Therefore the wall-crossing formula from 
$t\to +0$ to $t\to \infty$ recovers
the conjectural GV formula (\ref{intro:PTexp}) of stable pair invariants
on CY 4-folds, giving an interpretation of (\ref{intro:PTexp}) in terms of
wall-crossing in the derived category. 

A particularly interesting choice of $t \in \mathbb{R}$ is 
$t=n/(\omega \cdot \beta)+0$, which sits in the first non-trivial chamber for a fixed $\beta$ and $n$.
In this case, 
the moduli space
\begin{align*}
P_n^t(X, \beta),  \quad
t=\frac{n}{\omega \cdot \beta}+0
\end{align*}
is the moduli space of \textit{``Joyce-Song type" 
stable pairs}, i.e. one dimensional semistable sheaves with sections satisfying certain property (ref.~Definition \ref{defi:PTJSpair}, compared with \cite[Def.~12.2]{JS} which deals with pairs from a very negative line bundle instead of $\oO_X$).
We define JS stable pair invariant by 
\begin{align*}
P_{n, \beta}^{\rm{JS}}(\gamma) \cneq \lim_{t \to n/(\omega \cdot \beta)+0}
P_{n, \beta}^{t}(\gamma). 
\end{align*}
Then Conjecture~\ref{main conj intro} in particular implies the following:

\begin{conj}\label{JS conj intro}\emph{(Conjecture \ref{main conj})}
In the same situation of Conjecture~\ref{main conj intro}, we have the 
identity
\begin{align}\label{intro:JS}
(1)\,\, P^{\mathrm{JS}}_{n,\beta}(\gamma)=\sum_{\begin{subarray}{c}\beta_1+\cdots+\beta_n=\beta  \\  \omega\cdot\beta_i= \frac{\omega\cdot \beta}{n},\,  i=1,\ldots,n \end{subarray}}\prod_{i=1}^nn_{0,\beta_i}(\gamma), \,\, \mathrm{if}\,\, n\geqslant 1, \quad  (2) \,\, P^{\mathrm{JS}}_{0,\beta}=P_{0,\beta}. 
\end{align}
In particular, $P^{\mathrm{JS}}_{1,\beta}(\gamma)=n_{0,\beta}(\gamma)$.
\end{conj}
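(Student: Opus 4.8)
The plan is to deduce Conjecture~\ref{JS conj intro} directly from the wall-crossing formula (\ref{intro:WCF}) of Conjecture~\ref{main conj intro}, by specializing the stability parameter to the wall $t_0=n/(\omega\cdot\beta)$ and reading off which terms survive in the limit $t\to t_0+$. First I would observe that, by definition, $P^{\mathrm{JS}}_{n,\beta}(\gamma)$ is the value of $P^t_{n,\beta}(\gamma)$ for generic $t$ infinitesimally larger than $t_0$. Since the right-hand side of (\ref{intro:WCF}) is a finite sum whose index set changes only when $1/t$ crosses one of the finitely many values $\omega\cdot\beta_i$ occurring in an effective decomposition of the fixed class $\beta$, the assignment $t\mapsto P^t_{n,\beta}(\gamma)$ is locally constant near $t_0$. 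Hence the limit defining $P^{\mathrm{JS}}_{n,\beta}(\gamma)$ is just the evaluation of (\ref{intro:WCF}) at any generic $t\in(t_0,t_0+\delta)$ with $\delta$ small.

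Next I would analyze the constraints for $n\geqslant 1$. For $t$ slightly above $t_0$ the quantity $1/t$ lies slightly below $(\omega\cdot\beta)/n$, so in the limit the condition $\omega\cdot\beta_i>1/t$ in (\ref{intro:WCF}) becomes $\omega\cdot\beta_i\geqslant(\omega\cdot\beta)/n$ for each $i=1,\ldots,n$ (again using that only finitely many values $\omega\cdot\beta_i$ occur, so there is a definite gap below the threshold). Summing these inequalities and using $\beta_0+\beta_1+\cdots+\beta_n=\beta$ gives
\begin{align*}
\omega\cdot\beta-\omega\cdot\beta_0=\sum_{i=1}^n\omega\cdot\beta_i\geqslant n\cdot\frac{\omega\cdot\beta}{n}=\omega\cdot\beta,
\end{align*}
so $\omega\cdot\beta_0\leqslant 0$. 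Since $\omega$ is ample and $\beta_0$ is effective (otherwise $P_{0,\beta_0}$ vanishes), we get $\omega\cdot\beta_0=0$, hence $\beta_0=0$; equality throughout the displayed chain then forces each $\omega\cdot\beta_i=(\omega\cdot\beta)/n$ exactly. Thus the only surviving decompositions are $0+\beta_1+\cdots+\beta_n=\beta$ with $\omega\cdot\beta_i=(\omega\cdot\beta)/n$.

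Finally I would normalize and conclude. Using the vacuum convention $P_{0,0}=1$ (consistent with the constant term of $\lim_{t\to+0}\PT^t(X)(\exp(\gamma))$), the surviving terms collapse (\ref{intro:WCF}) to
\begin{align*}
P^{\mathrm{JS}}_{n,\beta}(\gamma)=\sum_{\begin{subarray}{c}\beta_1+\cdots+\beta_n=\beta\\ \omega\cdot\beta_i=(\omega\cdot\beta)/n\end{subarray}}\prod_{i=1}^nn_{0,\beta_i}(\gamma),
\end{align*}
which is identity $(1)$; the case $n=1$ has the unique term $\beta_1=\beta$, giving $P^{\mathrm{JS}}_{1,\beta}(\gamma)=n_{0,\beta}(\gamma)$. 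For identity $(2)$, setting $n=0$ makes the product in (\ref{intro:WCF}) empty and leaves the single summand $\beta_0=\beta$, so $P^t_{0,\beta}=P_{0,\beta}$; since this is independent of $t>0$ by the last assertion of Conjecture~\ref{main conj intro}, passing to the limit yields $P^{\mathrm{JS}}_{0,\beta}=P_{0,\beta}$.

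The only genuine subtlety, and hence the main point to pin down, is the first step: that the limit defining the JS invariant equals the value of the wall-crossing formula at a nearby generic parameter. This rests on the local constancy of $P^t_{n,\beta}(\gamma)$ away from walls together with the finiteness of effective decompositions of a fixed $\beta$ into a fixed number of parts, so that only finitely many walls accumulate at $t_0$; both are already built into the setup of (\ref{intro:WCF}), so no hard estimate is required, and the rest is the elementary averaging argument above.
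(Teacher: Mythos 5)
Your derivation is correct and is essentially the argument the paper intends: the paper simply declares this conjecture to be a special case of Conjecture~\ref{main conj intro} (evaluating the wall-crossing formula (\ref{intro:WCF}) at generic $t$ in the Joyce-Song chamber $t=n/(\omega\cdot\beta)+0$), and the details you supply --- local constancy near the wall via finiteness of effective decompositions, summing the limiting inequalities $\omega\cdot\beta_i\geqslant(\omega\cdot\beta)/n$ to force $\omega\cdot\beta_0\leqslant 0$ and hence $\beta_0=0$ by ampleness, and the normalization $P_{0,0}=1$ --- are exactly the ones the paper leaves implicit (and uses elsewhere, e.g.\ in its wall-crossing derivation of $P^{\mathrm{JS}}_{1,\beta}(\gamma)=n_{0,\beta}(\gamma)$). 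Your handling of $n=0$ via the empty product likewise matches the paper's definition $P^{\mathrm{JS}}_{0,\beta}=P^t_{0,\beta}\big|_{t=+0}$.
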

Remarkably, JS stable pair invariants (based on above conjecture) also encode information of all genus GV type invariants.

\subsection{Verifications of conjectures}
In Section \ref{master spa argu}, we give a `heuristic' master space argument for Conjecture \ref{main conj intro} in the case of simple wall-crossing.
There we discuss the construction of master spaces and virtual classes locally and `heuristic' means we assume they extend to the global moduli spaces. 
We show that the wall-crossing formula given by the master space heuristics coincides with the formula in our main conjecture (Proposition \ref{lem:wcf:equiv}). 

Besides that, in Section~\ref{sec:JS/GV}, we 
check Conjecture~\ref{main conj intro} or Conjecture~\ref{JS conj intro}
in several examples: for some 
compact CY 4-folds (sextic 4-fold, elliptic fibered CY 4-fold), 
local Fano 3-folds, 
local surfaces ($\mathrm{Tot}_{\mathbb{P}^2}(\oO(-1) \oplus \oO(-2)), 
\mathrm{Tot}_{\mathbb{P}^1 \times \mathbb{P}^1}(\oO(-1, -1)^{\oplus 2}))$.
The most important check among them is the 
comparison between JS stable pair invariants
$P_{1, \beta}^{\rm{JS}}(\gamma)$ and 
$\mathrm{DT}_4$ invariants counting one dimensional 
stable sheaves. 
\begin{thm}\emph{(Theorem~\ref{prod of cy3},~\ref{thm:locFano}, \ref{n=1 local surface})}
\label{intro:thm:JS=N}
Suppose that $X$ is either 
$Y \times E$ where $Y$ is a projective CY 3-fold and $E$ is an elliptic 
curve, or $\mathrm{Tot}_Y(K_Y)$ for a Fano 3-fold $Y$.
Then for any curve class $\beta\in H_2(Y)\subseteq H_2(X)$, we have the identity
\begin{align}\label{intro:id:JS=N}
P_{1, \beta}^{\rm{JS}}(\gamma)=
\int_{[M_1(X, \beta)]^{\rm{vir}}}\tau(\gamma),
\end{align}
for certain choice of orientation.
Here $M_1(X, \beta)$ is the moduli space of one 
dimensional stable sheaves $F$ on $X$ with $([F], \chi(F))=(\beta,1)$. 
\end{thm}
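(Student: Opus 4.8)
The plan is to identify the Joyce--Song moduli space $P_1^{\mathrm{JS}}(X,\beta)$ at $n=1$ with the sheaf moduli space $M_1(X,\beta)$, and then to match the $\mathrm{DT}_4$ virtual classes, orientations and insertions under this identification. First I would analyze $Z_t$-stability at the wall $t=1/(\omega\cdot\beta)+0$ in the class $([F],\chi)=(\beta,1)$. Since $\gcd(1,\omega\cdot\beta)=1$ there are no strictly slope-semistable sheaves in this class, so condition (i) forces $F$ to be stable, i.e. $F\in M_1(X,\beta)$, and for $t=1/(\omega\cdot\beta)+\epsilon$ with $\epsilon>0$ small, condition (ii) then holds automatically for every nonzero $s$ by discreteness of slopes, while $s=0$ yields the decomposable object $\mathcal{O}_X\oplus F[-1]$ and is excluded. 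The remaining data is thus a section $s\in H^0(X,F)=\mathrm{Hom}(\mathcal{O}_X,F)$ up to scaling; by Theorem~\ref{intro:thm1} the pair is recorded by the two-term complex $[\mathcal{O}_X\xrightarrow{s}F]$, which depends only on $[s]$, so I obtain a forgetful morphism $\Phi\colon P_1^{\mathrm{JS}}(X,\beta)\to M_1(X,\beta)$, $(F,s)\mapsto F$, whose fibres are governed by $\mathbb{P}(H^0(X,F))$.

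Next I would make $\Phi$ explicit using the special geometry. In both cases a stable $F$ with $\beta\in H_2(Y)$ is a pushforward $i_\ast G$ from the embedded Calabi--Yau threefold---a fibre $Y\times\{e\}\subset Y\times E$, respectively the zero section $Y\subset\mathrm{Tot}_Y(K_Y)$---with $\chi(G)=1$, so that $H^0(X,F)=H^0(Y,G)$ and the section analysis reduces to $Y$. The aim of this step is to show that, at the level of the $\mathrm{DT}_4$ virtual class, the section directions contribute trivially---ideally that $\Phi$ is an isomorphism---so that $\Phi_\ast[P_1^{\mathrm{JS}}(X,\beta)]^{\mathrm{vir}}=[M_1(X,\beta)]^{\mathrm{vir}}$.

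The heart of the matter is the comparison of obstruction theories. Both moduli spaces carry self-dual half-dimensional $\mathrm{DT}_4$ obstruction theories, built from $\mathrm{Ext}^\bullet_X([\mathcal{O}_X\to F],[\mathcal{O}_X\to F])_0$ and $\mathrm{Ext}^\bullet_X(F,F)_0$ respectively (Theorem~\ref{vir class of pair moduli}). The distinguished triangle relating $[\mathcal{O}_X\to F]$ to $\mathcal{O}_X$ and $F$ shows that the difference of these complexes is controlled by $\mathbf{R}\mathrm{Hom}(\mathcal{O}_X,F)$ and, by Serre duality on the fourfold, by its dual; these pair up so that the extra section deformations ($H^0$) are matched by corresponding obstructions and cancel in the half-dimensional class. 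I would make this precise while simultaneously pinning down the orientations, checking that the natural orientations correspond under $\Phi$. Finally, since the universal sheaf $\mathbb{F}$ on $P_1^{\mathrm{JS}}(X,\beta)$ is pulled back under $\Phi$ from the universal sheaf on $M_1(X,\beta)$, the insertion $\tau(\gamma)=(\pi_P)_\ast(\pi_X^\ast\gamma\cup\ch_3(\mathbb{F}))$ equals $\Phi^\ast$ of the corresponding class, and the projection formula gives $\int_{[P_1^{\mathrm{JS}}]^{\mathrm{vir}}}\tau(\gamma)=\int_{[M_1]^{\mathrm{vir}}}\tau(\gamma)$.

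The main obstacle is precisely this virtual-class-plus-orientation comparison: one must show that the section moduli $H^0(X,F)$, which may jump over $M_1(X,\beta)$, contribute trivially to the half-dimensional $\mathrm{DT}_4$ class uniformly in the family, and not merely at points where $h^0(F)=1$. Controlling this uniformly is where the product or total-space structure must be exploited---either through the Serre-duality pairing of $H^0(X,F)$ with the obstruction space on the Calabi--Yau fourfold, or by exhibiting $\Phi$ as a genuine isomorphism via a vanishing such as $h^1(Y,G)=0$---and fixing the compatible orientation so that no spurious sign appears is the final delicate point.
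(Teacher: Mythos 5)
Your setup coincides with the paper's: at $n=1$ semistability forces $F$ to be stable, JS pairs are exactly $(F,s)$ with $[F]\in M_1(X,\beta)$ and $s\neq 0$, the problem reduces to the embedded 3-fold $Y$, and the insertion $\tau(\gamma)$ is pulled back under the forgetful map, so everything hinges on a virtual pushforward $f_{\ast}[P^{\mathrm{JS}}_1]^{\mathrm{vir}}=[M_1]^{\mathrm{vir}}$. But that pushforward is exactly where your proposal stops. You offer two routes: (a) an unspecified Serre-duality cancellation of the section directions inside the half-dimensional Borisov--Joyce class, or (b) the hope that the forgetful map is an isomorphism via a vanishing such as $h^1(Y,G)=0$. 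Route (b) fails in general: $\chi(G)=1$ only gives $h^0=h^1+1$, and $h^0$ genuinely jumps, so the fibers $\mathbb{P}(H^0(Y,G))$ are positive dimensional over strata of $M_1(Y,\beta)$. Route (a) is the correct heuristic but is not an argument one can run directly on Borisov--Joyce classes, which do not admit this kind of cut-and-paste manipulation; you yourself flag it as the ``main obstacle'' without resolving it.

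The paper closes this gap with two steps missing from your proposal. First, the product (resp.\ local) geometry is used to replace the $\mathrm{DT}_4$ class by an algebraic virtual class: $[P^{\mathrm{JS}}_1(X,\beta)]^{\mathrm{vir}}=[P^{\mathrm{JS}}_1(Y,\beta)]^{\mathrm{vir}}_{\mathrm{pair}}\times[E]$ (and its analogue for $K_Y$), where the pair deformation-obstruction theory on the CY 3-fold $Y$ is perfect by Lemma~\ref{lem on pair moduli on CY3}: since $F$ is stable with $\chi(F)=1$, one has $\Ext^2_Y(I_Y,F)\cong\Ext^3_Y(F,F)\cong\mathbb{C}$ and $\Ext^{\geqslant 3}_Y(I_Y,F)=0$, and the constant $\Ext^2$ is truncated away. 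Second --- and this is the key missing idea --- one applies Manolache's virtual pushforward theorem (Theorem~\ref{Manolache formula}) to the map $f\colon P^{\mathrm{JS}}_1(Y,\beta)=\mathbb{P}(\pi_{M\ast}\mathbb{F})\to M_1(Y,\beta)$ between spaces carrying algebraic perfect obstruction theories. The degree $c$ is computed on a single fiber: the relative obstruction bundle there is $H^1(Y,F)\otimes\oO(1)$, of rank $h^0(Y,F)-1$ precisely because $\chi(F)=1$, so
\begin{align*}
c=\int_{\mathbb{P}(H^0(Y,F))}e\bigl(H^1(Y,F)\otimes\oO(1)\bigr)
=\int_{\mathbb{P}(H^0(Y,F))}c_1(\oO(1))^{h^0(Y,F)-1}=1,
\end{align*}
independently of the value of $h^0$. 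This is how the jumping of $H^0$ is handled uniformly: no isomorphism and no vanishing is required, only the numerical identity $h^1=h^0-1$. Without this virtual pushforward mechanism (or an equivalent substitute), your argument does not go through.
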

The right hand side 
of (\ref{intro:id:JS=N}) is conjectured to 
be equal to $n_{0, \beta}(\gamma)$ in~\cite{CMT1} (see also Conjecture \ref{g=0 one dim sheaf conj}).
If this is the case, Theorem~\ref{intro:thm:JS=N}
implies Conjecture~\ref{JS conj intro}
in the case of $X=Y \times E$, $X=\mathrm{Tot}_Y(K_Y)$ with $\beta\in H_2(Y)\subseteq H_2(X)$ (see Corollary \ref{prim check}, \ref{loc fano check}). \\

Apart from them, we also study Conjecture~\ref{JS conj intro} for 
local $\mathbb{P}^1$, i.e.
\begin{align*}
X=\mathrm{Tot}_{\mathbb{P}^1}(\oO_{\mathbb{P}^1}(-1) \oplus 
\oO_{\mathbb{P}^1}(-1) \oplus \oO_{\mathbb{P}^1}).
\end{align*}
In this case, 
the four dimensional complex torus $(\mathbb{C}^{\ast})^4$
acts on $X$, and we denote by $T \subset (\mathbb{C}^{\ast})^4$
the subtorus preserving the CY 4-form. 
We will define the $T$-equivariant JS stable 
pair invariant by
(see Definition~\ref{def of JS inv for -1-10}):
\begin{align*}
P^{\mathrm{JS}}_{n,d}:=\sum_{I\in P^{\mathrm{JS}}_n(X,d\,[\mathbb{P}^1])^T}(-1)^{d+1}e_T(\chi_X(I,I)^{\frac{1}{2}}_0)\in \frac{\mathbb{Q}(\lambda_0, \lambda_1,\lambda_2,\lambda_3)}{(\lambda_0+\lambda_1+\lambda_2+\lambda_3)}.
\end{align*}
Here we make a particular choice 
of square root 
$\chi_X(I,I)^{\frac{1}{2}}_0$ as in Lemma \ref{choice of squ root} and the sign $(-1)^{d+1}$ denotes a choice of orientation to normalize
the expression.
We will give an explicit computation of the above invariant. 
\begin{thm}\emph{(Theorem~\ref{main thm local curve})}
\label{intro:thm:loccurve}
We have:
\begin{align*}
P^{\mathrm{JS}}_{n,d} =&\frac{(-1)^{k(d+1)}}{1!\,2!\,\cdots k!}\cdot\frac{1}{\lambda_0^{k(k+1)/2}\lambda_3^{d}}\cdot 
\sum_{\begin{subarray}{c}d_0+\cdots+d_k=d  \\  d_0,\ldots, d_k\geqslant 0 \end{subarray}}\frac{1}{d_0!\cdots d_k!}\cdot \prod_{\begin{subarray}{c}i<j  \\  0\leqslant i,j \leqslant k \end{subarray}}\Big((j-i)\lambda_0+(d_i-d_j)\lambda_3\Big) \\
&\times \prod_{i=0}^k\Bigg(\prod_{\begin{subarray}{c}1\leqslant a\leqslant d_i  \\  1\leqslant b\leqslant k-i  \end{subarray}}\frac{1}{a\lambda_3+b\lambda_0}\cdot
\prod_{\begin{subarray}{c}1\leqslant a\leqslant d_i  \\  1\leqslant b\leqslant i  \end{subarray}}\frac{1}{a\lambda_3-b\lambda_0}\Bigg), 
\quad \mathrm{if}\,\, n=d(k+1), \,\, k\geqslant 0,
\end{align*}
and $P^{\mathrm{JS}}_{n,d}=0$ otherwise.
\end{thm}
The formula in Theorem~\ref{intro:thm:loccurve}
is complicated, but we expect  
significant cancellations of 
rational functions. 
Indeed as an analogy of Conjecture~\ref{JS conj intro}, 
we should have the identities:
\begin{align}\label{intro:locP1}
P_{n, d}^{\rm{JS}}=
\left\{
\begin{array}{cc}
\frac{1}{d!(\lambda_3)^d}, & n=d, \\
& \\
0, & n\neq d. 
\end{array}
\right. 
\end{align}
By an residue argument and a `Mathematica' program, we show the following:
\begin{thm}\emph{(Theorem~\ref{thm:id:rational})}
The identity (\ref{intro:locP1}) holds in the following cases 
\begin{itemize}
\item $d\nmid n$, 
\item $d=1,2$ with any $n$,
\item  $n=d,2d$ with any $d$.
\end{itemize}
The identity \eqref{intro:locP1} is also checked 
in many other cases by Mathematica
 (see~Proposition~\ref{prop:mathematica}). 
 \end{thm}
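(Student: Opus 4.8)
The plan is to prove (\ref{intro:locP1}) by a direct analysis of the closed formula for $P^{\mathrm{JS}}_{n,d}$ in Theorem~\ref{intro:thm:loccurve}. Write $k=n/d-1$, so that the formula applies precisely when $d\mid n$, and set $x_i\cneq i\lambda_0-d_i\lambda_3$ for $0\le i\le k$. The crucial observation is that the antisymmetric factor is a Vandermonde determinant,
\[
\prod_{0\le i<j\le k}\big((j-i)\lambda_0+(d_i-d_j)\lambda_3\big)=\prod_{i<j}(x_j-x_i)=\det\big(x_i^{l}\big)_{0\le i,l\le k}.
\]
Since each row of this determinant, together with the remaining weight $\tfrac{1}{d_i!}R_i(d_i)$ (where $R_i(m)=\prod_{b=1}^{k-i}\prod_{a=1}^{m}(a\lambda_3+b\lambda_0)^{-1}\prod_{b=1}^{i}\prod_{a=1}^{m}(a\lambda_3-b\lambda_0)^{-1}$), depends only on the single index $d_i$, multilinearity of the determinant lets me fold these factors into the rows and encode the constraint $\sum_i d_i=d$ by a bookkeeping variable $z$. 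The constrained sum then collapses to
\[
\sum_{d_0+\cdots+d_k=d}(\cdots)=[z^d]\,\det\big(A_{i,l}(z)\big)_{0\le i,l\le k},\qquad A_{i,l}(z)=\sum_{m\ge0}\frac{z^m}{m!}\,(i\lambda_0-m\lambda_3)^{l}R_i(m),
\]
turning $P^{\mathrm{JS}}_{n,d}$ into the coefficient of a single $(k+1)\times(k+1)$ determinant whose entries are one-variable hypergeometric sums in $u\cneq\lambda_0/\lambda_3$.

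When $n=d$, i.e. $k=0$, the sum has a single term and the formula reduces at once to $\tfrac{1}{d!\lambda_3^d}$, the global sign being fixed by the orientation convention of Lemma~\ref{choice of squ root}. The real content of (\ref{intro:locP1}) is therefore the assertion that for $k\ge1$ the sum $S\cneq\sum_{d_0+\cdots+d_k=d}\tfrac{1}{\prod_i d_i!}\prod_{i<j}(x_j-x_i)\prod_i R_i(d_i)$ vanishes identically as a rational function. I would prove this by a residue argument: viewing $S$ as a rational function of $\lambda_0$, I would show that its only candidate poles, at $\lambda_0=\pm a\lambda_3/b$ arising from the factors $a\lambda_3\pm b\lambda_0$, have residue zero once all partitions are summed, and that $S\to0$ as $\lambda_0\to\infty$; a pole-free rational function vanishing at infinity is identically zero. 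The organizing symmetry is the reflection $\lambda_0\mapsto-\lambda_0$ combined with $i\mapsto k-i$, under which $R_i\mapsto R_{k-i}$ while the Vandermonde is antisymmetric, which I expect to drive the cancellation of residues.

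For the cases listed this becomes a finite computation. The smallest instance $d=1$, $k=1$ already exhibits the mechanism: the two partitions contribute $+1$ and $-1$, so $S=0$. For $n=2d$ (so $k=1$) the determinant is $2\times2$ and the sum over $d_0+d_1=d$ resolves by partial fractions into a telescoping cancellation, and for $d=1,2$ the partitions of $d$ into $k+1$ parts are so few that the residue cancellation can be verified directly for every $k$. Finally, when $d\nmid n$ there is no integer $k$ with $n=d(k+1)$, so Theorem~\ref{intro:thm:loccurve} does not apply; here $P^{\mathrm{JS}}_{n,d}=0$ follows from emptiness of the $T$-fixed locus $P^{\mathrm{JS}}_n(X,d[\mathbb{P}^1])^T$, consistent with (\ref{intro:locP1}) since then $n\ne d$.

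The main obstacle is the vanishing of $S$ uniformly in both $k$ and $d$: as $k$ grows the expression carries a pole of order $\binom{k+1}{2}$ in $\lambda_0$ and an alternating sum over exponentially many partitions, and one must show that every residue in $\lambda_0$ cancels and no constant term survives. I expect the reflection symmetry above, together with a Chu--Vandermonde type evaluation of the entries $A_{i,l}(z)$, to settle the families with $k$ or $d$ small; but a single uniform identity covering all pairs $(k,d)$ looks delicate, and it is precisely here that I would reduce each remaining case, through the residue reformulation, to a finite symbolic identity discharged by the Mathematica verification of Proposition~\ref{prop:mathematica}.
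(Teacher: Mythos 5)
Your overall frame coincides with the paper's: dispose of $d\nmid n$ by emptiness of the ($T$-fixed) moduli space, of $n=d$ by the one-term evaluation, and attack the remaining families by viewing the sum in Theorem~\ref{main thm local curve} as a homogeneous rational function of $(\lambda_0,\lambda_3)$ with simple poles and proving that all residues vanish --- this is exactly the paper's ``residue argument''. Your Vandermonde observation $\prod_{i<j}\big((j-i)\lambda_0+(d_i-d_j)\lambda_3\big)=\prod_{i<j}(x_j-x_i)$ with $x_i=i\lambda_0-d_i\lambda_3$, and the collapse of the constrained sum to $[z^d]\det\big(A_{i,l}(z)\big)$, are correct and genuinely absent from the paper, but you extract no vanishing statement from them, so they carry no load in your argument.

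The gaps are real. First, the cancellation itself --- the entire content of the theorem for the infinite families $n=2d$ and $d=1,2$ --- is only ``expected'', and the symmetry you nominate as its mechanism is the wrong one: the reflection $\lambda_0\mapsto-\lambda_0$, $i\mapsto k-i$ sends the term indexed by $(d_0,\dots,d_k)$ to the term indexed by $(d_k,\dots,d_0)$, hence proves $S(-\lambda_0,\lambda_3)=(-1)^{k(k+1)/2}S(\lambda_0,\lambda_3)$, which merely exchanges the residues at $\lambda_0=\pm m\lambda_3$ and cannot kill either. The paper's actual mechanism for $n=2d$ (i.e.\ $k=1$) is an involution \emph{within} a fixed pole: setting $\lambda_3=1$, the compositions contributing at $\lambda_0=m\geqslant 0$ are $(d_0,d_1)=(d-i,i)$ with $m\leqslant i\leqslant d$, and after the reindexing $j=m+d-2i$ the residue becomes $(-1)^m\sum_j j\big/\big[(\tfrac{m+d-j}{2})!\,(\tfrac{d-m+j}{2})!\,(\tfrac{d-m-j}{2})!\,(\tfrac{m+d+j}{2})!\big]$, which vanishes because the denominator is even and the numerator odd under $j\to -j$; this pairs $(d_0,d_1)$ with $(d_1-m,d_0+m)$ and agrees with your reflection only when $m=0$. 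Second, your closing step ``pole-free and vanishing at infinity $\Rightarrow$ zero'' is structurally insufficient for precisely the families $d=1,2$: the summed function is homogeneous of degree $k(k+1-2d)/2$, which is non-negative once $k\geqslant 2d-1$, so even after all residues are shown to vanish one is left with a possibly nonzero homogeneous polynomial. Indeed for $d=1$ the denominators cancel identically --- there are no poles at all --- and the vanishing is the alternating binomial identity $\sum_{i_0}(-1)^{i_0}/\big(i_0!\,(k-i_0)!\big)=0$, which is how the paper's direct computation in Section~\ref{local P1 d=1} (the factor $(1-1)^{n-1}=0$) handles that case; a residue argument says nothing there. Third, the listed cases are not ``finite computations'': ``$d=1,2$ with any $n$'' and ``$n=d,2d$ with any $d$'' are infinite families of identities (one per $k$, resp.\ per $d$), so they cannot be delegated to the finitely many checks of Proposition~\ref{prop:mathematica}; each requires a uniform argument of the kind just described.
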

 \begin{rmk}
 Recently \eqref{intro:locP1} has been proved in full generality in \cite{CT2}.
 \end{rmk}
Finally we remark that one issue of the current proposal (this also happened in previous related works, e.g.~\cite{CMT2}) is that we do not have a general way 
to fix the choice of orientation in the virtual classes and invariants. 
Our choice of orientation in verifications is based on case by case studies. 
Nevertheless, we expect our wall-crossing interpretation in this paper will shed new light on this issue, i.e.~we expect the choice of orientation 
on different moduli spaces should be compatible with wall-crossing. 
In fact, motivated by this, explicit choice of orientation for moduli spaces of PT stable pairs on $K_Y$ (where $Y$ is Fano 3-fold) is given in 
\cite[(1.5)]{CKM2} (at least when stable pairs are scheme theoretically supported on $Y$) and used to verify \eqref{intro:PTexp} in examples. 
We hope to explore this more in the future.


\subsection{Notation and convention}
In this paper, all varieties and schemes are defined over $\mathbb{C}$. 
For a morphism $\pi \colon X \to Y$ of schemes, 
and for $\fF, \gG \in \mathrm{D^{b}(Coh(\textit{X\,}))}$, we denote by 
$\dR \hH om_{\pi}(\fF, \gG)$ 
the functor $\dR \pi_{\ast} \dR \hH om_X(\fF, \gG)$. 
We also denote by $\mathrm{ext}^i(\fF, \gG)$ the dimension of 
$\Ext^i_X(\fF, \gG)$. 

A class $\beta\in H_2(X,\mathbb{Z})$ is called \textit{irreducible} (resp. \textit{primitive}) if it is not the sum of two non-zero effective classes
(resp. if it is not a positive integer multiple of an effective class).

\subsection{Acknowledgement}
Both authors are supported by the World Premier International Research Center Initiative (WPI), MEXT, Japan.
Y. C. is partially supported by RIKEN Interdisciplinary Theoretical and Mathematical Sciences
Program (iTHEMS), JSPS KAKENHI Grant Number JP19K23397 and Newton International Fellowships Alumni 2019 and 2020.
Y. T. is supported by Grant-in Aid for Scientific Research grant (No. 26287002) from MEXT, Japan.

\section{Definitions}
Throughout this paper, unless stated otherwise,
$(X,\omega)$ is always denoted to be a smooth projective Calabi-Yau 4-fold (i.e. $K_X\cong \oO_X$) with 
an ample divisor $\omega$ on it.

\subsection{Category of D0-D2-D8 bound states}
We define the category of 
D0-D2-D8 bound states
on $X$ to be the extension closure in
the derived category
\begin{align*}
\mathcal{A}_X:=\langle \oO_X, \Coh_{\leqslant1}(X)[-1] \rangle_{\ext}
\subset \mathrm{D^{b}(Coh(\textit{X\,}))}. 
\end{align*}
Here $\Coh_{\leqslant1}(X)$ is the category of
coherent sheaves $F$ on $X$ whose support have 
dimension less than or equal to one. 
The argument in~\cite[Lem.~3.5]{Toda2}
shows that
 $\mathcal{A}_X$ is the heart of a bounded t-structure on the triangulated 
subcategory of $\mathrm{D^{b}(Coh(\textit{X\,}))}$
generated by $\oO_X$ and $\Coh_{\leqslant1}(X)$. 
In particular, $\mathcal{A}_X$ is an abelian category. 

We also define the category $\bB_X$, whose 
objects consist of triples
\begin{align*}
(\vV, F, s), \quad
\vV \in \langle \oO_X \rangle_{\rm{ext}}, \ 
F \in \Coh_{\leqslant1}(X), \ s \colon \vV \to F. 
\end{align*}
Note that if $H^1(\oO_X)=0$, 
the vector bundle $\vV$ is of the form $V \otimes \oO_X$
for a finite dimensional vector space $V$.
The set of morphisms in $\bB_X$ is given
by commutative diagrams of coherent sheaves
\begin{align}\label{morphism:B}
\xymatrix{
\vV \ar[r]^-{s} \ar[d]_-{\alpha} & F \ar[d]^-{\beta} \\
\vV' \ar[r]^-{s'} & F'. 
}
\end{align} 
We compare the categories $\aA_X$ and $\bB_X$ in the 
following proposition:

\begin{prop}\label{A_X=B_X}
There exists a natural equivalence of categories 
\begin{align}\label{equiv:A=B}
\Phi \colon 
\bB_X \stackrel{\sim}{\to} \aA_X.  
\end{align}
\end{prop}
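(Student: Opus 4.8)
The plan is to construct the equivalence $\Phi\colon \bB_X \to \aA_X$ explicitly and then verify it is fully faithful and essentially surjective. Given an object $(\vV, F, s)$ of $\bB_X$, I would send it to the two-term complex $\Phi(\vV, F, s) \cneq (\vV \stackrel{s}{\to} F)$, where $\vV$ is placed in degree $0$ and $F$ in degree $1$; equivalently this is the cone of the morphism $F[-1] \to \vV$ adjoint to $s$, shifted appropriately, so that there is a distinguished triangle $F[-1] \to \Phi(\vV,F,s) \to \vV$ in $\mathrm{D^{b}(Coh(\textit{X\,}))}$. Since $\vV \in \langle \oO_X \rangle_{\ext}$ and $F[-1] \in \Coh_{\leqslant 1}(X)[-1]$, this triangle exhibits $\Phi(\vV,F,s)$ as an iterated extension of objects generating $\aA_X$, so it indeed lands in $\aA_X$. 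On morphisms, a commutative square (\ref{morphism:B}) manifestly induces a morphism of the associated complexes, so $\Phi$ is a well-defined functor.

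The key steps are then the following. First I would establish \textbf{full faithfulness}. The essential input is the computation of $\Hom$ and $\Ext$ groups between the generating pieces. Using the triangle above together with the long exact sequences obtained by applying $\Hom(-, \Phi(\vV', F', s'))$, full faithfulness reduces to controlling the groups $\Hom_{\mathrm{D^{b}}}(\vV, F'[-1]+i)$, $\Hom(\vV,\vV')$, $\Hom(F[-1], F'[-1])$, and the cross term $\Hom(F[-1], \vV')$. The last of these is $\Ext^{-1}(F, \vV')$, which vanishes since $F$ is a sheaf and $\vV'$ is a sheaf (no negative $\Ext$'s between sheaves). The crucial vanishing is $\Hom_{\mathrm{D^{b}}}(\vV, F'[-1]) = \Ext^{-1}(\vV, F') = 0$, which again holds because both are sheaves. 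Assembling the long exact sequences then shows that a morphism $\Phi(\vV,F,s) \to \Phi(\vV',F',s')$ in the derived category is uniquely represented by a commutative square of the form (\ref{morphism:B}), giving a bijection on Hom-sets.

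The main obstacle, and the step I expect to require the CY $4$-fold hypothesis, is \textbf{essential surjectivity}. Given an arbitrary $E \in \aA_X$, I need to produce a triple $(\vV, F, s)$ with $\Phi(\vV, F, s) \cong E$. Since $\aA_X = \langle \oO_X, \Coh_{\leqslant 1}(X)[-1]\rangle_{\ext}$, the object $E$ sits in a filtration whose subquotients are copies of $\oO_X$ and shifted one-dimensional sheaves; the cohomology sheaves of $E$ are concentrated in degrees $0$ and $1$, with $\hH^0(E) = \vV$ locally free in $\langle \oO_X\rangle_{\ext}$ and $\hH^1(E) = F \in \Coh_{\leqslant 1}(X)$. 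The truncation triangle $\vV \to E \to F[-1]$ then yields a class in $\Hom_{\mathrm{D^{b}}}(F[-1], \vV[1]) = \Ext^0(F, \vV)$, which is exactly the datum of a map $s\colon \vV \to F$ reconstructing $E$ as $\Phi(\vV,F,s)$. The subtle point is that there is no \emph{extra} gluing data: the potential ambiguity lives in $\Ext^2(F, \oO_X)$ (equivalently the higher extension controlling whether the filtration splits into the expected two-step form), and as the introduction already emphasizes, the vanishing $\Ext^2(F, \oO_X) = 0$ for one-dimensional $F$ is precisely what holds on a CY $4$-fold but fails on a CY $3$-fold. I would verify this vanishing by Serre duality, $\Ext^2(F,\oO_X) \cong \Ext^2(\oO_X, F)^\vee = H^2(X,F)^\vee$, which vanishes because $F$ is supported in dimension $\leqslant 1$ so has no cohomology above degree $1$. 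With this vanishing in hand, the reconstruction of $E$ from $(\vV,F,s)$ is canonical, completing essential surjectivity and hence the proof that $\Phi$ is an equivalence.
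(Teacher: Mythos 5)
Your reduction of full faithfulness already contains a degree error that hides the real content of that step. The cross term is $\Hom(F[-1], \vV') = \Hom(F, \vV'[1]) = \Ext^1(F, \vV')$, not $\Ext^{-1}(F, \vV')$: placing $F$ in degree $+1$ \emph{raises} the degree of maps out of it. This group does vanish, but not for the formal reason you give; its vanishing is exactly where the paper invokes Serre duality on the CY 4-fold, namely $\Ext^1(F, \vV') \cong \Ext^3(\vV', F)^{\vee} = H^3(X, \vV'^{\vee}\otimes F)^{\vee} = 0$ because $\dim \Supp F \leqslant 1$. (One also needs $\Hom(F, \vV')=0$, which does hold trivially since $F$ is torsion and $\vV'$ is locally free.) So the conclusion of this half survives, but only after replacing your justification by a genuinely non-trivial one.

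The essential surjectivity argument, however, rests on a false structural claim and would fail. For $E = \Phi(\vV \stackrel{s}{\to} F)$ the cohomology sheaves are $\hH^0(E) = \Ker(s)$ and $\hH^1(E) = \Cok(s)$, not $\vV$ and $F$: already for a PT pair $(\oO_X \stackrel{s}{\to} \oO_C)$ with $s$ surjective one gets $\hH^0(E) = I_C$, an ideal sheaf, which is not an object of $\langle \oO_X \rangle_{\rm{ext}}$ (objects there are locally free). Hence the triangle $F[-1] \to E \to \vV$ is \emph{not} the truncation triangle, and the pair $(\vV, F, s)$ cannot be read off from the cohomology sheaves of $E$. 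There is a second shift error here as well: the class classifying a triangle $\vV \to E \to F[-1]$ lies in $\Hom(F[-1], \vV[1]) = \Ext^2(F, \vV)$, not in $\Hom(F,\vV)$, so it is not the datum of a section; in the correct triangle $F[-1] \to E \to \vV$ the section is recovered as the connecting morphism $\vV \to F$. What actually requires proof is that every $E \in \aA_X$ fits into \emph{some} triangle $F[-1] \to E \to \vV$ with $\vV \in \langle \oO_X \rangle_{\rm{ext}}$ and $F \in \Coh_{\leqslant 1}(X)$. The paper establishes this by induction on the defining extension filtration of $E$: writing the already-constructed part as $M_{j-1} \cong \Phi(\vV_{j-1} \to F_{j-1})$, when the next graded piece is $F[-1]$ the obstruction to absorbing it into the sheaf part is the composition $F[-2] \to M_{j-1} \to \vV_{j-1}$, which vanishes because $\Hom(F[-2], \vV_{j-1}) = \Ext^2(F, \vV_{j-1}) \cong H^2(X, \vV_{j-1}^{\vee}\otimes F)^{\vee} = 0$. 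This is precisely the CY 4-fold vanishing $\Ext^2(F, \oO_X)=0$ that you correctly identified and proved by Serre duality — but your argument never reaches a point where it can be applied; the filtration induction (or an equivalent torsion-pair argument in $\aA_X$) is the missing idea.
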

\begin{proof}
For an object $E=(\vV, F, s)$ in 
$\bB_X$, we have the associated two term complex 
$\Phi(E)=(\vV \stackrel{s}{\to} F) \in \mathrm{D^{b}(Coh(\textit{X\,}))}$,
where $\vV$ is located in degree zero. By the distinguished triangle
\begin{align*}
F[-1] \to \Phi(E) \to \vV,
\end{align*}
the object $\Phi(E)$ lies in $\aA_X$, 
hence we obtain the functor (\ref{equiv:A=B}). 
Indeed the above sequence is a short exact seqeunce 
in the abelian category $\aA_X$. 
Below we show that $\Phi$ is an equivalence 
along with the argument of~\cite[Prop.~2.2]{Toda4}. 

We first show that $\Phi$ is fully-faithful. 
Let us take another triple $E'=(\vV', F', s')$, and 
take a morphism 
$\gamma \colon 
\Phi(E) \to \Phi(E')$
in $\aA_X$. 
By the Serre duality, we have the vanishing 
$\Hom(F[-1], \vV')=\Ext^3(\vV', F)^{\vee}=0$, hence 
we have the unique 
morphisms $(\alpha, \beta)$ which make the following 
diagram commutative
\begin{align}\label{diagram:FV}
\xymatrix{
F[-1] \ar[r] \ar@{.>}[d]_-{\beta}& \Phi(E) \ar[r] \ar[d]_-{\gamma} &\vV \ar@{.>}[d]_-{\alpha} \\
F'[-1] \ar[r] & \Phi(E') \ar[r] & \vV'.
}
\end{align}
By taking cones, we obtain the diagram (\ref{morphism:B}). 
Conversely given a diagram (\ref{morphism:B}), 
there is a morphism $\gamma$ which makes 
the diagram (\ref{diagram:FV}) commutative. 
Because of $\Hom(\vV, F'[-1])=0$, 
such $\gamma$ is uniquely determined. 
Therefore the functor $\Phi$ is fully-faithful. 

It remains to show that 
the functor $\Phi$ is essentially surjective. 
For an object $M \in \aA_X$, by the definition of $\aA_X$,
there is a filtration 
\begin{align*}
M_0 \subset M_1 \subset \cdots \subset M_k=M,
\end{align*} 
such that each $N_i=M_i/M_{i-1}$ is isomorphic to $\oO_X$
or an object in $\Coh_{\leqslant1}(X)[-1]$. 
We show that, by the induction on $j$, 
 each $M_j$ is isomorphic to 
an object of the form $\Phi(E_j)$
for an object $E_j=(\vV_j \to F_j)$ in $\bB_X$. 
The case of $j=0$ is obvious. 
Suppose that $M_{j-1}$ is isomorphic to 
$\Phi(E_{j-1})$. 
If $N_j=\oO_X$, then by taking the cones of the 
commutative diagram
\begin{align*}
\xymatrix{
& \oO_X[-1] \ar[d] \ar[rd] & \\
F_{j-1}[-1] \ar[r] & M_{j-1} \ar[r] & \vV_{j-1},
}
\end{align*}
we obtain the exact sequences in $\aA_X$
\begin{align*}
0 \to F_{j-1}[-1] \to M_j \to \vV_j \to 0, \quad 
0 \to \vV_{j-1} \to \vV_j \to \oO_X \to 0. 
\end{align*}
Therefore $M_j$ is isomorphic to $\Phi(\vV_j \to F_{j-1})$. 
If $N_j=F[-1]$ for $F \in \Coh_{\leqslant1}(X)[-1]$, 
we have a commutative diagram
\begin{align*}
\xymatrix{
& F[-2] \ar[ld] \ar[d] & \\
F_{j-1}[-1] \ar[r] & M_{j-1} \ar[r] & \vV_{j-1},
}
\end{align*}
since $\Hom(F[-2], \vV_{j-1})=H^2(\vV_{j-1}^{\vee} \otimes F)^{\vee}=0$. 
By taking cones, we obtain exact sequences in $\aA_X$:
\begin{align*}
0 \to F_{j}[-1] \to M_j \to \vV_{j-1} \to 0, \quad 
0 \to F_{j-1}[-1] \to F_j[-1] \to F[-1] \to 0. 
\end{align*}
Therefore $M_j$ is isomorphic to 
$\Phi(\vV_{j-1} \to F_{j})$. 
\end{proof}

Below we will be interested in 
objects in $\aA_X$ with  
Chern character of the following form
\begin{align}\label{Chern:v}
v=(1, 0, 0, -\beta, -n) \in H^0(X) \oplus H^2(X) \oplus H^4(X)
\oplus H^6(X) \oplus H^8(X). 
\end{align}
We also identity $\beta$ with an element in $H_2(X)$
by Poincar\'e duality. 
Note that for an object $(\vV \to F)$ in $\bB_X$, 
we have
\begin{align*}
\ch \Phi(\vV \to F)=v \ \Leftrightarrow
\ \vV=\oO_X, \ ([F], \chi(F))=(\beta, n).
\end{align*}
Here $[F] \in H_2(X, \mathbb{Z})$ is the 
fundamental one cycle of $F$. 

We relate objects in $\aA_X$ with 
Chern character of the form (\ref{Chern:v}) to objects 
in a tilting 
of $\Coh(X)$ with respect to the slope
stability. 
For $E \in \Coh(X)$, with respect to the ample divisor $\omega$ on $X$, we set
\begin{align*}
\widehat{\mu}_{\omega}(E) =\frac{c_1(E) \cdot \omega^{3}}{\rank(E)} \in \mathbb{Q}
\cup \{\infty\}. 
\end{align*}
As usual, an object $E \in \Coh(X)$ is 
called $\widehat{\mu}_{\omega}$-semistable if 
for any non-zero subsheaf $E' \subset E$, 
we have $\widehat{\mu}_{\omega}(E')\leqslant \widehat{\mu}_{\omega}(E)$. 
We define subcategories of $\Coh(X)$:
\begin{align*}
&\tT_{\omega}=\langle \widehat{\mu}_{\omega}\mbox{-semistable } E
\mbox{ with } \widehat{\mu}_{\omega}(E)>0\rangle_{\rm{ext}}, \\
&\fF_{\omega}=\langle \widehat{\mu}_{\omega}\mbox{-semistable } E
\mbox{ with } \widehat{\mu}_{\omega}(E) \leqslant 0\rangle_{\rm{ext}}. 
\end{align*}
By the existence of Harder-Narasimhan filtrations, the pair 
of subcategories $(\tT_{\omega}, \fF_{\omega})$ forms a 
torsion pair of $\Coh(X)$. 
By taking the tilting \cite{HRS}, we obtain the heart of a 
bounded t-structure
\begin{align*}
\widehat{\aA}_X =\langle \fF_{\omega}, \tT_{\omega}[-1]\rangle_{\rm{ext}}
\subset \mathrm{D^{b}(Coh(\textit{X\,}))}. 
\end{align*}
Note that we have $\aA_X \subset \widehat{\aA}_X$ by their 
definitions. 

\begin{lem}\label{lem:A=Ahat}
Let $v \in H^{\ast}(X)$ be of the form (\ref{Chern:v}). 
For an object $E \in \mathrm{D^{b}(Coh(\textit{X\,}))}$
with $\ch(E)=v$ and $\det(E)\cong\oO_X$, it is an object 
in $\aA_X$ if and only if it is an object in $\widehat{\aA}_X$. 
\end{lem}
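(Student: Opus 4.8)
The plan is to prove the two inclusions separately. One direction is immediate: since $\aA_X \subset \widehat{\aA}_X$ by construction (as noted just before the statement), any $E \in \aA_X$ with $\ch(E)=v$ automatically lies in $\widehat{\aA}_X$. So the content is the converse, and throughout I fix $E \in \widehat{\aA}_X$ with $\ch(E)=v=(1,0,0,-\beta,-n)$ and $\det(E)\cong\oO_X$. First I would record the cohomological description of the tilt: an object of $\widehat{\aA}_X=\langle \fF_\omega, \tT_\omega[-1]\rangle_{\ext}$ has ordinary cohomology sheaves concentrated in degrees $0$ and $1$, with $\hH^0(E)\in\fF_\omega$ and $\hH^1(E)\in\tT_\omega$, and it sits in a short exact sequence $0 \to \hH^0(E) \to E \to \hH^1(E)[-1] \to 0$ in $\widehat{\aA}_X$. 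The goal then becomes to show that $\hH^0(E)$ is an ideal sheaf $I_W$ with $\dim W\leqslant 1$ and that $\hH^1(E)\in\Coh_{\leqslant1}(X)$, after which the extension-closedness of $\aA_X$ finishes the argument.

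Second, I would pin down $\hH^0(E)$ and $\hH^1(E)$ from the numerical data. Taking ranks in the above sequence gives $\rank\hH^0(E)-\rank\hH^1(E)=1$, and $\det(E)\cong\oO_X$ forces $c_1(\hH^0(E))=c_1(\hH^1(E))$. If $\hH^1(E)$ had positive rank, then $\hH^1(E)\in\tT_\omega$ would give $c_1(\hH^1(E))\cdot\omega^3>0$, while $\hH^0(E)\in\fF_\omega$ gives $c_1(\hH^0(E))\cdot\omega^3\leqslant 0$; since the two first Chern classes agree, this is a contradiction, so $\hH^1(E)$ is torsion and $\rank\hH^0(E)=1$. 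Then $c_1(\hH^1(E))$ is effective, so $c_1(\hH^1(E))\cdot\omega^3\geqslant 0$; combined with $c_1(\hH^0(E))\cdot\omega^3\leqslant 0$ and their equality, both vanish, and ampleness of $\omega$ forces $c_1(\hH^1(E))=c_1(\hH^0(E))=0$. Hence $\hH^1(E)$ is supported in codimension $\geqslant 2$ (so $\det\hH^1(E)\cong\oO_X$ and therefore $\det\hH^0(E)\cong\oO_X$), and $\hH^0(E)$, being torsion-free of rank $1$ with trivial determinant (torsion-freeness because $\fF_\omega$ contains no nonzero torsion), is an ideal sheaf $I_W$ with $\codim W\geqslant 2$.

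The decisive step is to upgrade ``codimension $\geqslant 2$'' to ``dimension $\leqslant 1$'', and this is exactly where the special shape of $v$ enters through $\ch_2(E)=0$. I would write $\ch_2(E)=\ch_2(I_W)-\ch_2(\hH^1(E))$, where $\ch_2(I_W)=-[W]_2$ is anti-effective (with $[W]_2$ the $2$-dimensional part of $W$) and $\ch_2(\hH^1(E))=[\hH^1(E)]_2$ is effective; their difference vanishes only if both $2$-dimensional cycles vanish, giving $\dim W\leqslant 1$ and $\hH^1(E)\in\Coh_{\leqslant1}(X)$. Finally I would assemble the pieces: from the sheaf sequence $0\to I_W \to \oO_X \to \oO_W \to 0$ with $\oO_W\in\Coh_{\leqslant1}(X)$, the two-term complex $(\oO_X\to\oO_W)$ is quasi-isomorphic to $I_W$, so $I_W=\Phi(\oO_X\to\oO_W)\in\aA_X$, while $\hH^1(E)[-1]\in\Coh_{\leqslant1}(X)[-1]\subset\aA_X$. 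Since $0 \to I_W \to E \to \hH^1(E)[-1] \to 0$ exhibits $E$ as an extension of two objects of $\aA_X$ and $\aA_X$ is extension-closed in $\mathrm{D^{b}(Coh(\textit{X\,}))}$, we conclude $E\in\aA_X$. I expect the main obstacle to be precisely this dimension reduction via $\ch_2$, since it requires careful effectivity bookkeeping for the codimension-$2$ cycles and is the only place where the vanishing $\ch_2(v)=0$ is genuinely used.
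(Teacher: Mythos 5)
Your proof is correct, and it follows the same skeleton as the paper's: both arguments decompose $E$ via the tilting torsion pair as $0 \to \hH^0(E) \to E \to \hH^1(E)[-1] \to 0$ in $\widehat{\aA}_X$, use $\ch_1(E)\cdot\omega^3=0$ to force both cohomology sheaves to have numerically trivial $c_1$, then use $\ch_2(E)=0$ to cut the relevant supports down to dimension $\leqslant 1$, and finally present $\hH^0(E)$ as a two-term complex $(\oO_X \to T)$ with $T \in \Coh_{\leqslant 1}(X)$ to conclude by extension-closedness of $\aA_X$. The one genuine difference is how the $\ch_2$ step is handled. At that stage the paper only knows that $\hH^0(E)$ is a rank-one $\widehat{\mu}_{\omega}$-semistable sheaf, so it invokes the Bogomolov--Gieseker inequality to obtain $\ch_2(\hH^0(E))\cdot\omega^2 \leqslant 0$, and only afterwards passes to the double dual to realize $\hH^0(E)$ inside $\oO_X$. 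You reorder these steps: you first identify $\hH^0(E)=I_W$ with $\codim W \geqslant 2$ (rank one, torsion-free because $\fF_\omega$ admits no torsion subsheaf, trivial determinant, reflexive hull a line bundle on the smooth $X$), and then $\ch_2(I_W)=-[W]_2$ is anti-effective by direct computation from $0 \to I_W \to \oO_X \to \oO_W \to 0$, so $\ch_2(E) = -[W]_2 - [\hH^1(E)]_2 = 0$ forces both effective cycles to vanish upon pairing with $\omega^2$. This makes the Bogomolov--Gieseker inequality unnecessary, which is a small but genuine simplification; conversely, the paper's ordering keeps the argument purely numerical (intersection numbers $\ch_i \cdot \omega^{4-2i}$) until the very end, whereas yours requires the structural identification of $\hH^0(E)$ as an ideal sheaf earlier. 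Both are complete proofs.
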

\begin{proof}
Since $\aA_X \subset \widehat{\aA}_X$, it is enough to show 
that an object $E \in \widehat{\aA}_X$ with $\ch(E)=v$ and $\det(E)\cong\oO_X$ 
is an object in $\aA_X$. 
We have an exact sequence in $\widehat{\aA}_X$
\begin{align}\label{exact:A}
0 \to \hH^0(E) \to E \to \hH^1(E)[-1] \to 0,
\end{align}
such that $\hH^0(E) \in \fF_{\omega}$ and $\hH^1(E) \in \tT_{\omega}$. 
Since 
\begin{align*}
\ch_1(\hH^0(E)) \cdot \omega^3 \leqslant 0, \quad
\ch_1(\hH^1(E)[-1]) \cdot \omega^3 \leqslant 0,
\end{align*}
and their sum is zero, we have 
$\ch_1(\hH^i(E)) \cdot \omega^3=0$
for $i=0, 1$. 
It follows that $\hH^0(E)$ is a $\mu_{\omega}$-semistable sheaf
and $\hH^1(E) \in \Coh_{\leqslant2}(X)$.
Moreover we have 
\begin{align*}
\ch_2(\hH^0(E)) \cdot \omega^2 \leqslant 0, \quad 
\ch_2(\hH^1(E)[-1]) \cdot \omega^2 \leqslant 0,
\end{align*}
where the first inequality follows from 
the Bogomolov-Gieseker inequality. 
As their sum is also zero, 
we have 
$\ch_2(\hH^i(E)) \cdot \omega^2=0$
for $i=0, 1$. 
Therefore $\hH^1(E)[-1] \in \Coh_{\leqslant1}(X)[-1]$. 
As for $\hH^0(E)$, since 
it is of rank one and has trivial determinant, 
we have the exact seqeunce of coherent sheaves
\begin{align*}
0 \to \hH^0(E) \to \hH^0(E)^{\vee \vee} \cong \oO_X \to T \to 0,
\end{align*}
for some $T \in \Coh_{\leqslant2}(X)$.
The vanishing of $\ch_2(\hH^0(E)) \cdot \omega^2$
implies that $T \in \Coh_{\leqslant1}(X)$, 
so we have 
$\hH^0(E) \in \aA_X$. 
From the exact sequence (\ref{exact:A}), 
we conclude $E \in \aA_X$. 
\end{proof}

\subsection{Moduli stacks of objects on $\aA_X$}
Let $\mM$ be the 2-functor
\begin{align*}
\mM \colon \mathrm{Sch}/\mathbb{C} \to 
\mathrm{Groupoid},
\end{align*}
which sends a $\mathbb{C}$-scheme $T$ 
to the groupoid of perfect complexes 
$\eE$
on $X \times T$
such that the derived restriction 
$\eE_t=\eE|_{X \times \{t\}}$ for each 
closed point $t \in T$
is an object in 
$\mathrm{D^{b}(Coh(\textit{X\,}))}$
satisfying $\Ext^{<0}(\eE_t, \eE_t)=0$. 
By a result of Lieblich~\cite{Lieblich}, 
the 2-functor $\mM$ is an Artin stack 
locally of finite type. 

By taking the determinant
of $\eE$, we have a morphism of stacks
\begin{align*}
\det \colon 
\mM \to [\Pic(X)/\mathbb{C}^{\ast}]. 
\end{align*}
We define the Artin stack $\mM_0$ by the following 
Cartesian square
\begin{align*}
\xymatrix{
\mM_0 \ar[r] \ar[d] & \mM \ar[d]^-{\det} \\
\Spec \mathbb{C} \ar[r] & [\Pic(X)/\mathbb{C}^{\ast}].
}
\end{align*}
Here the bottom arrow corresponds to the 
trivial line bundle $\oO_X$. 
We have the decomposition into 
open and closed substacks
\begin{align*}
\mM_0=\coprod_{v \in H^{\ast}(X)} \mM_0(v),
\end{align*}
where $\mM_0(v)$ parametrizes objects in $\mathrm{D^{b}(Coh(\textit{X\,}))}$
 with 
Chern character $v$ and trivial determinant.  
We have substacks
\begin{align*}
\underline{\aA}_X(v) \subset \underline{\widehat{\aA}}_X(v) \subset \mM_0(v),
\end{align*}
where $\underline{\aA}_X(v)$ (resp. $\underline{\widehat{\aA}}_X(v)$) parametrizes 
objects in $\aA_X$ (resp. $\widehat{\aA}_X$),  
with Chern characters $v$ and trivial determinant. 
We have the following proposition. 

\begin{prop}\label{openess}
Suppose that $v$ is of the form (\ref{Chern:v}). 
Then we have 
$\underline{\aA}_X(v)=\underline{\widehat{\aA}}_{X}(v)$, 
and they are open substacks of $\mM_0(v)$. 
\end{prop}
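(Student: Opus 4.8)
The plan is to prove the two assertions separately, using Lemma \ref{lem:A=Ahat} to get the set-theoretic equality $\underline{\aA}_X(v)=\underline{\widehat{\aA}}_X(v)$ for free, and then concentrating the real work on the openness statement. First I would observe that Lemma \ref{lem:A=Ahat} already tells us that for any $\mathbb{C}$-point $E$ with $\ch(E)=v$ and $\det(E)\cong\oO_X$, membership in $\aA_X$ is equivalent to membership in $\widehat{\aA}_X$; this identifies the two substacks on closed points, and since both are cut out by the same underlying groupoid-valued conditions over arbitrary test schemes (the pointwise conditions $\eE_t\in\aA_X$ versus $\eE_t\in\widehat{\aA}_X$ agreeing fiberwise), it gives $\underline{\aA}_X(v)=\underline{\widehat{\aA}}_X(v)$ as substacks of $\mM_0(v)$.

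The heart of the matter is therefore showing that $\underline{\widehat{\aA}}_X(v)$ is open in $\mM_0(v)$, i.e. that the condition ``$\eE_t\in\widehat{\aA}_X$'' is an open condition in families. The standard approach here is to use that $\widehat{\aA}_X$ arises as a tilt of $\Coh(X)$ at the torsion pair $(\tT_\omega,\fF_\omega)$ coming from slope stability, and that tilted hearts obtained from a Noetherian torsion pair with openness of the relevant (semi)stability conditions give rise to open substacks of the stack of all objects satisfying $\Ext^{<0}=0$. Concretely, I would use the cohomology-sheaf characterization from the exact sequence (\ref{exact:A}): an object $E\in\mathrm{D^b(Coh}(X))$ with $\Ext^{<0}(E,E)=0$ lies in $\widehat{\aA}_X$ exactly when its cohomology sheaves are concentrated in degrees $0$ and $1$, with $\hH^0(E)\in\fF_\omega$ and $\hH^1(E)\in\tT_\omega$. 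Openness of this condition in a flat family then follows from two ingredients: the constructibility/openness of the locus where cohomology is concentrated in the right degrees (a standard semicontinuity argument on the perfect complex $\eE$ over $X\times T$), together with the openness of the conditions ``$\hH^0(\eE_t)\in\fF_\omega$'' and ``$\hH^1(\eE_t)\in\tT_\omega$'', which is the openness of $\widehat{\mu}_\omega$-slope bounds in families.

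For the openness of the slope conditions I would invoke the standard boundedness and openness results for $\mu$-semistability: the relative Harder-Narasimhan filtration exists and varies semicontinuously, so that the conditions on the maximal and minimal slopes ($\widehat\mu_{\max}\le 0$ on $\hH^0$ and $\widehat\mu_{\min}>0$ on $\hH^1$) cut out open subsets of the base. This reduces to citing the well-known relative HN theory (as in the theory of families of semistable sheaves) combined with Lieblich's representability of $\mM$. Since $v$ has the very special form (\ref{Chern:v}), the possible cohomology sheaves are heavily constrained — rank one with trivial determinant in degree $0$ and $\Coh_{\leqslant 1}$ in degree $1$ — and these constraints make the HN loci particularly simple to control.

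The step I expect to be the main obstacle is verifying cleanly that the pointwise derived-category condition $\eE_t\in\widehat{\aA}_X$ translates into the sheaf-theoretic openness statements above uniformly in families, rather than just at a single point; in particular one must ensure that no unexpected jumping in the degrees of the cohomology sheaves of $\eE_t$ occurs, which is exactly where the semicontinuity of $\dim\Ext^i$ and the form of $v$ must be combined. I would handle this by fixing the Chern character $v$ throughout (so all numerical invariants are locally constant), reducing the openness to the finitely many slope inequalities that define the tilt, and then invoking the known openness of tilted hearts — essentially the same argument as in \cite[Lem.~3.5]{Toda2} and \cite[Proposition~2.2]{Toda4} referenced earlier — adapted to the CY 4-fold setting.
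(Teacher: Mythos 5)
Your proposal is correct and follows essentially the same route as the paper: the equality of substacks is deduced from Lemma~\ref{lem:A=Ahat} as a pointwise (fiberwise) statement, and openness is reduced to the standard openness of the tilted heart attached to the slope torsion pair $(\tT_{\omega}, \fF_{\omega})$, which the paper handles by citing \cite[Lemma~4.7]{Tst3} (the K3 case, whose proof is exactly the semicontinuity-plus-relative-HN argument you sketch) or alternatively \cite[Theorem~A.3]{AB} on open stacks of torsion theories. The only slip is your closing citation: the relevant precedents are those two references, not \cite[Lem.~3.5]{Toda2} and \cite[Proposition~2.2]{Toda4}, which the paper uses instead for the t-structure on $\aA_X$ and for the equivalence $\bB_X \simeq \aA_X$, respectively.
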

\begin{proof}
The identity $\underline{\aA}_X(v)=\underline{\widehat{\aA}}_{X}(v)$
follows from Lemma~\ref{lem:A=Ahat}. 
It remains to show that $\underline{\widehat{\aA}}_{X}(v)$
is an open substack of $\mM_0(v)$. 
This can be proved literally following the proof of~\cite[Lem.~4.7]{Tst3}, 
where the similar statement is proved for K3 surfaces. 
Altenatively, 
the stacks of objects in 
$(\tT_{\omega}, \fF_{\omega})$ 
detemine the open stack of torsion theories 
in the sense of~\cite[App.~A, Definition]{AB}
on the moduli stack of objects in $\Coh(X)$.
Therefore the openness of $\underline{\widehat{\aA}}_X(v)$
follows from~\cite[Thm.~A.3]{AB}. 
\end{proof}

Let us take $v$ of the form (\ref{Chern:v}). 
We define the moduli stack of objects in $\bB_{X}$
with Chern character $v$ 
to be the 2-functor
\begin{align*}
\underline{\bB}_X(v) \colon 
\mathrm{Sch}/\mathbb{C} \to \mathrm{Groupoid},
\end{align*}
which sends a $\mathbb{C}$-scheme $T$ to the 
groupoid of pairs $(\oO_{X \times T} \stackrel{s}{\to} \fF)$, 
where $\fF$ is a flat family of objects in 
$\Coh_{\leqslant1}(X)$
such that $(\oO_X \to \fF_t)$ has Chern character $v$
for any closed point $t \in T$. 
The isomorphisms in $\underline{\bB}_X(v)$ 
are given by commutative diagrams
\begin{align*}
\xymatrix{
\oO_{X\times T} \ar[r]^-{s} \ar@{=}[d] & \fF \ar[d]^-{\cong} \\
\oO_{X \times T} \ar[r]^-{s'} & \fF'. 
}
\end{align*}
We have a morphism of stacks
\begin{align}\label{stack:AB}
\underline{\Phi} \colon 
\underline{\bB}_X(v) \to \underline{\aA}_X(v),
\end{align}
by sending pairs 
$(\oO_{X \times T} \stackrel{s}{\to} \fF)$
to the associated two term complexes. 
\begin{thm}\label{prop:stack:AB}
The morphism of stacks (\ref{stack:AB})
is an isomorphism of stacks. 
\end{thm}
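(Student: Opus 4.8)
The plan is to construct an explicit inverse $\underline{\Psi}\colon \underline{\aA}_X(v)\to\underline{\bB}_X(v)$ and to check that $\underline{\Psi}$ and $\underline{\Phi}$ are mutually inverse. Fibrewise the functor $\Phi$ is already known to be an equivalence by Proposition~\ref{A_X=B_X}, so the only real content is to carry out the construction of the pair in families, compatibly with base change, and to match the rigidifications on the two sides. First I would record the pointwise Hom-computation that drives everything.

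For $E\in\aA_X$ with $\ch(E)=v$ of the form (\ref{Chern:v}) and $\det E\cong\oO_X$, the summand $\vV$ in its $\bB_X$-presentation has rank one and $c_1=0$, so $\vV\cong\oO_X$, and one has the canonical triangle $F[-1]\to E\to\oO_X$ with $F\in\Coh_{\leqslant1}(X)$. Applying $\dR\hH om(-,\oO_X)$ and using that $F$ is supported in dimension $\leqslant1$ on the $4$-fold $X$ (whence $\Ext^{\leqslant 2}(F,\oO_X)=0$), I get $\dim\Hom(E,\oO_X)=1$ and $\Ext^{<0}(E,\oO_X)=0$, the canonical surjection $E\twoheadrightarrow\oO_X$ in $\aA_X$ spanning $\Hom(E,\oO_X)$ with kernel $F[-1]$. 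In particular $F\cong\Cone(E\to\oO_X)$ canonically, which is exactly the recipe for the inverse at the level of points.

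Next, the family version. Given a $T$-flat family $\eE$ on $X\times T$ with fibres in $\aA_X(v)$ and a trivialisation $\det\eE\cong\oO_{X\times T}$, set $\pP:=\dR\pi_{T\ast}\dR\hH om_{X\times T}(\eE,\oO_{X\times T})$. The uniform vanishing $\Ext^{<0}(E_t,\oO_X)=0$ together with the constancy $\dim\Hom(E_t,\oO_X)=1$ lets me apply cohomology and base change: $\mathcal{L}:=\hH^0(\pP)$ is a line bundle on $T$ whose formation commutes with base change, and the tautological evaluation gives a morphism $\eE\to\pi_T^\ast\mathcal{L}\otimes\oO_{X\times T}$ restricting fibrewise to the canonical surjections. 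Putting $\fF:=\Cone\!\big(\eE\to\pi_T^\ast\mathcal{L}\otimes\oO_{X\times T}\big)$, the pointwise picture shows $\fF$ is a $T$-flat family of sheaves in $\Coh_{\leqslant1}(X)$, so that $\eE\cong[\pi_T^\ast\mathcal{L}\otimes\oO_{X\times T}\to\fF]$ with the locally free term in degree $0$. It remains to remove the twist: since $\det\fF\cong\oO_{X\times T}$ (torsion in codimension $\geqslant3$), one computes $\pi_T^\ast\mathcal{L}\cong\det\eE$, and the chosen trivialisation $\det\eE\cong\oO_{X\times T}$ then yields $\mathcal{L}\cong\oO_T$ canonically. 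This produces an honest pair $(\oO_{X\times T}\xrightarrow{s}\fF)$, hence $\underline{\Psi}(\eE)$, and base-change compatibility of $\dR\hH om$, of the cone, and of the trivialisation makes $\underline{\Psi}$ a morphism of stacks.

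Finally I would verify $\underline{\Psi}\circ\underline{\Phi}\cong\id$ and $\underline{\Phi}\circ\underline{\Psi}\cong\id$; both reduce to Proposition~\ref{A_X=B_X} applied fibrewise together with the uniqueness (up to the scalar recorded by $\mathcal{L}$) of the canonical map $E\to\oO_X$. A point that must be checked carefully is that the rigidification of $\oO_{X\times T}$ built into the isomorphisms of $\underline{\bB}_X(v)$ agrees with the trivial-determinant rigidification defining $\mM_0$: this is where $\rk E=1$ is essential, since a scalar $\alpha$ acting on the $\oO_X$-term scales $\det E$ by $\alpha$, so fixing $\det$ kills exactly the $\mathbb{C}^\ast$ ambiguity and matches the $\Aut$-groups on the two sides. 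I expect the main obstacle to be precisely this family-level construction of $\fF$—its flatness, the base-change behaviour of $\mathcal{L}$, and the identification $\pi_T^\ast\mathcal{L}\cong\det\eE$—rather than the pointwise statements, which follow directly from Proposition~\ref{A_X=B_X}; the delicate part is ensuring the whole package descends to an honest morphism of stacks and that the two rigidifications coincide.
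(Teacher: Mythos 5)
Your proposal is correct in substance, but it takes a genuinely different route from the paper's proof. You build an explicit inverse $\underline{\Psi}$ at the family level: from $\pP=\dR\hH om_{\pi_T}(\eE,\oO_{X\times T})$ you extract the line bundle $\mathcal{L}=\hH^0(\pP)$ by cohomology-and-base-change (legitimate, since the fibrewise vanishing $\Ext^{<0}(E_t,\oO_X)=0$ and the constancy $\dim\Hom(E_t,\oO_X)=1$ follow from $\Ext^{\leqslant 2}(F,\oO_X)\cong H^{\geqslant 2}(X,F)^{\vee}=0$ for $F\in\Coh_{\leqslant 1}(X)$ on a CY 4-fold), then take the cone and kill the twist by the determinant trivialization. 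The paper never constructs an inverse: it takes the equivalence on $\mathbb{C}$-valued points from Proposition~\ref{A_X=B_X} as given and instead identifies the \emph{infinitesimal deformation theories} of the two stacks. Concretely, for a square-zero extension $0\to I\to R\to R_0\to 0$, the triangle $\RHom(E,F)\to\RHom(E,E)_0[1]\to\RHom(F,\oO_X)[2]$ together with the Serre-duality vanishings $\Hom(F,\oO_X[1])=H^3(X,F)^{\vee}=0$ and $\Hom(F,\oO_X[2])=H^2(X,F)^{\vee}=0$ gives an isomorphism $\Hom(E,F)\cong\Ext^1(E,E)_0$ and an injection $\Ext^1(E,F)\hookrightarrow\Ext^2(E,E)_0$; the injection forces the obstruction to extending the pair to vanish whenever the complex extends, and the isomorphism matches the torsors of extensions, proving unique existence of the lift of the pair. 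So both arguments hinge on exactly the same CY$_4$-specific vanishing $\Ext^{\leqslant 2}(F,\oO_X)=0$, deployed at different spots: you use it to make $\Hom(E,\oO_X)$ one-dimensional so the section can be reconstructed canonically in families; the paper uses it to compare deformation-obstruction spaces. What your approach buys is a concrete global inverse, including the correct matching of rigidifications (your observation that rank one forces $\det\phi=\alpha$, so fixing the determinant kills precisely the scalar on the $\oO_X$-side, is exactly the right point); the cost is the family-level technicalities you flag (base change for $\mathcal{L}$, flatness of the cone, naturality of all identifications), which are standard but must be carried out. What the paper's approach buys is brevity: everything reduces to linear algebra of Ext-groups at closed points, at the cost of invoking the principle that a morphism of Artin stacks locally of finite type which is an equivalence on $\mathbb{C}$-points and on infinitesimal deformations is an isomorphism. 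One small slip in your write-up: the tautological evaluation naturally produces a map $\eE\to\pi_T^{\ast}\mathcal{L}^{\vee}\otimes\oO_{X\times T}$ (and correspondingly $\det\eE\cong\pi_T^{\ast}\mathcal{L}^{\vee}$), not $\pi_T^{\ast}\mathcal{L}$; since your endpoint is only the trivialization $\mathcal{L}\cong\oO_T$, nothing breaks.
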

\begin{proof}
By Proposition~\ref{A_X=B_X}, 
the morphism (\ref{stack:AB}) induces an 
equivalence of groupoid of $\mathbb{C}$-valued points of 
$\underline{\bB}_X(v)$ and $\underline{\aA}_X(v)$. 
It is enough to show that 
the infinitesimal deformation theories of 
$\mathbb{C}$-valued points in $\underline{\bB}_X(v)$ and 
$\underline{\aA}_X(v)$ are equivalent. 
Namely, let $R_0$ be an Artinian local 
$\mathbb{C}$-algebra
and $0 \to I \to R \to R_0 \to 0$ 
a square zero extension. 
Take a $R_0$-valued point 
\begin{align}\label{R0:point}
(\oO_{X \times \Spec R_0} \to \fF_0)
\in \underline{\bB}_{X}(v)(\Spec R_0). 
\end{align} 
Suppose that 
the associated two term complex
\begin{align*}
\eE_0=\Phi(\oO_{X \times \Spec R_0} \to \fF_0)
\in \underline{\aA}_{X}(v)(\Spec R_0)
\end{align*}
extends to a $R$-valued point 
$\eE \in \underline{\aA}_{X}(v)(\Spec R)$. 
Then we show that 
there is an extension of (\ref{R0:point})
to a $R$-valued point of $\underline{\bB}_X(v)$, 
unique up to isomorphisms, 
and corresponds to $\eE$ under $\Phi$. 

Let $\mathbf{m} \subset R_0$ be the maximal ideal, 
$F=\fF_0 \otimes_{R_0}R_0/\mathfrak{m}$ and 
take $E=\eE_0 \otimes_{R_0}R_0/\mathfrak{m}=\Phi(\oO_X \to F)$. 
From the distinguished triangle 
$F[-1] \to E \to \oO_X$, we have the following commutative diagram
\begin{align*}
\xymatrix{
& \RHom(\oO_X, \oO_X)[1] \ar@{=}[r] \ar[d] & \RHom(\oO_X, \oO_X)[1] \ar[d] \\
\RHom(E, F) \ar[r] &\RHom(E, E)[1] \ar[r] \ar[d] & \RHom(E, \oO_X)[1] \ar[d] \\
&  \RHom(E, E)_0[1] \ar[r] & \RHom(F, \oO_X)[2]. 
}
\end{align*}
Here $(-)_0$ means taking the traceless part.
From the above diagram, 
we obtain a 
distinguished triangle
\begin{align*}
\RHom(E, F) \to \RHom(E, E)_0[1] \to \RHom(F, \oO_X)[2]. 
\end{align*}
Since $X$ is a Calabi-Yau 4-fold, we have the following vanishing by Serre duality:
\begin{align*}
\Hom(F, \oO_X[1])=H^3(X, F)^{\vee}=0, \quad
\Hom(F, \oO_X[2])=H^2(X, F)^{\vee}=0. 
\end{align*}
Therefore we have an isomorphism and an injection:
\begin{align}\label{def:obs}
\Hom(E, F) \stackrel{\cong}{\to} \Ext^1(E, E)_0, \quad
\Ext^1(E, F) \hookrightarrow \Ext^2(E, E)_0. 
\end{align}
From the deformation-obstruction theory of pairs, 
the obstruction class of 
extending (\ref{R0:point}) to a $R$-valued point of 
$\underline{\bB}_X(v)$ lies in $\Ext^1(E, F) \otimes I$. 
Its image under the map 
$\Ext^1(E, F)  \otimes I\to \Ext^2(E, E)_0 \otimes I$ is 
the obstruction class extending $\eE_0$ to a $R$-valued point 
of $\underline{\aA}_X(v)$, which vanishes as we assumed $\eE_0$ extends to $\eE$. 
Thus by the injectivity of the right map in (\ref{def:obs}), 
the obstruction class of extending (\ref{R0:point}) vanishes, 
hence it extends to a $R$-valued point of $\underline{\bB}_X(v)$. 
Moreover all possible extensions of (\ref{R0:point})
form $\Hom(E, F) \otimes I$-torsor, and those of $\eE_0$ form $\Ext^1(E, E)_0 \otimes I$-torsor. 
Therefore the uniqueness also holds by 
the left isomorphism in (\ref{def:obs}). 
\end{proof}

\subsection{Moduli spaces of $Z_t$-stable pairs}
We recall Le Potier's stability (such pairs were studied in low dimensions by S. Bradlow,  M. Thaddeus and A. Bertram,~etc.)~for pairs on $X$ (ref. \cite{LePotier},  \cite[Sect.~1.1]{PT}, \cite[pp.~164]{JS}). 
Let us fix an ample line bundle $\oO_X(1)$ on $X$ with 
corresponding divisor $\omega$. 
For a coherent sheaf $F$ on $X$, 
its Hilbert polynomial is defined by 
\begin{align*}
\chi(F(m))=\sum_{i \in \mathbb{Z}} (-1)^i \dim H^i(F(m))
\in \mathbb{Q}[m]. 
\end{align*}
We denote by $r(F)$ the leading coefficient of the above polynomial. 
\begin{defi}
A pair $(s \colon \oO_X \to F)$ in $\bB_X$ is
called $q$-(semi)stable 
for a polynomial $q \in \mathbb{Q}[m]$
if the following conditions hold: 
\begin{enumerate}
\renewcommand{\labelenumi}{(\roman{enumi})}
\item For any subsheaf $F' \subset F$, 
we have the inequality
\begin{align}\label{LP1}
\frac{\chi(F'(m))}{r(F')}<(\leqslant) \frac{\chi(F(m))+q(m)}{r(F)}, \quad m \gg 0.
\end{align}
\item For any subsheaf $F' \subsetneq F$
such that $s$ factors through $F'$, 
we have the inequality
\begin{align}\label{LP2}
\frac{\chi(F'(m))+q(m)}{r(F')}<(\leqslant) \frac{\chi(F(m))+q(m)}{r(F)}, \quad m \gg 0.
\end{align}
\end{enumerate}
\end{defi}

On the other hand for each $t \in \mathbb{R}$, 
we define
the map
\begin{align*}
Z_t \colon K(\aA_X) \to \mathbb{C},
\end{align*}
by 
sending $E \in K(\aA_X)$ 
with $\ch(E)=(r, 0, 0, -\beta, -n)$
to 
\begin{align*}
Z_t(E):=\left\{  \begin{array}{cc}
r(-t+\sqrt{-1}), & r \neq 0, \\
& \\
-n+(\beta \cdot \omega) \sqrt{-1}, & r=0. 
\end{array} \right. 
\end{align*}
By the definition of $\aA_X$, 
we have $Z_t(E) \in \hH \cup \mathbb{R}_{<0}$
for any non-zero $E \in \aA_X$, 
where $\hH \subset \mathbb{C}$ is the upper half plane. 
The pair $(\aA_X, Z_t)$ is a weak stability condition 
introduced in~\cite{Toda3} (see also~\cite{Toda5}), which generalizes Bridgeland's notion of stability conditions \cite{Bri}. 
\begin{defi}\label{def of Z_t stability}
An object $E \in \aA_X$ is called $Z_t$-(semi)stable 
if for any exact sequence $0 \to E' \to E \to E'' \to 0$
in $\aA_X$
with $E'\neq 0$, $E''\neq 0$, we have 
\begin{align*}
\arg Z_t(E')<(\leqslant) \arg Z_t(E'') \in (0, \pi]. 
\end{align*}
\end{defi}
For an object $F \in \Coh_{\leqslant1}(X)$
with $([F], \chi(F))=(\beta, n)$, we set
\begin{align*}
\mu(F) =\frac{n}{\beta \cdot \omega} \in \mathbb{Q} \cup \{\infty\},
\end{align*}
where $\mu(F)=\infty$ if $\beta=0$. 
We have the following characterization of $Z_t$-(semi)stable objects. 
\begin{lem}\label{lem:charactrize}
For an object $(\oO_X \stackrel{s}{\to} F) \in \bB_X$, 
the object $\Phi(\oO_X \stackrel{s}{\to} F) \in \aA_X$
is $Z_t$-(semi)stable
if and only if the following conditions hold:
\begin{enumerate}
\renewcommand{\labelenumi}{(\roman{enumi})}
\item for any subsheaf $0\neq F' \subseteq F$, we have 
$\mu(F')<(\leqslant)t$. 
\item for any
subsheaf $ F' \subsetneq F$ 
such that $s$ factors through $F'$, 
we have 
$\mu(F/F')>(\geqslant)t$. 
\end{enumerate}
\end{lem}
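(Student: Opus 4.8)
Write $E\cneq \Phi(\oO_X \stackrel{s}{\to} F)$, so that by Proposition~\ref{A_X=B_X} there is a short exact sequence $0\to F[-1]\to E\to \oO_X\to 0$ in $\aA_X$ and $\ch(E)=(1,0,0,-\beta,-n)$ with $([F],\chi(F))=(\beta,n)$. The plan is to run through every short exact sequence $0\to E'\to E\to E''\to 0$ in $\aA_X$ with $E',E''\neq 0$ and translate the defining inequality $\arg Z_t(E')<(\leqslant)\arg Z_t(E'')$ into a numerical condition on $\mu$. Since $\rk(-)=\ch_0(-)$ is additive on short exact sequences and non-negative on $\aA_X$, and $\rk(E)=1$, only two cases occur: either $\rk(E')=0$ (hence $\rk(E'')=1$) or $\rk(E')=1$ (hence $\rk(E'')=0$). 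For the phase comparison I will use the elementary fact that for $z_1,z_2$ in the closed upper half plane (arguments taken in $(0,\pi]$) one has $\arg z_1<(\leqslant)\arg z_2$ iff $\Ree(z_1)\,\Imm(z_2)-\Ree(z_2)\,\Imm(z_1)>(\geqslant)0$, together with the explicit values $Z_t(G[-1])=-\chi(G)+(\omega\cdot[G])\sqrt{-1}$ for $G\in\Coh_{\leqslant1}(X)$ and $Z_t(R)=\rk(R)\,(-t+\sqrt{-1})$ for $\rk(R)>0$. A preliminary observation is that the rank-zero objects of $\aA_X$ are precisely $\Coh_{\leqslant1}(X)[-1]$ (an extension with total rank zero involves no copy of $\oO_X$), which I will invoke repeatedly.

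\textbf{Rank-zero subobjects give condition (i).} First I would show that the subobjects $E'\subset E$ with $\rk(E')=0$ are exactly the shifts $F'[-1]$ of subsheaves $0\neq F'\subset F$. Indeed such an $E'$ has the form $G[-1]$ with $G\in\Coh_{\leqslant1}(X)$, and the composite $G[-1]\to E\to\oO_X$ lies in $\Hom(G[-1],\oO_X)=\Ext^1(G,\oO_X)=H^3(X,G)^{\vee}=0$ by Serre duality and $\dim\Supp G\leqslant1$; hence $G[-1]\to E$ factors through $F[-1]$ and identifies $G$ with a subsheaf $F'\subset F$. The quotient is then $E''=\Phi(\oO_X\to F/F')$, of rank one, so $Z_t(E'')=-t+\sqrt{-1}$. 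With $z_1=Z_t(F'[-1])$ the phase criterion reads $-\chi(F')+t\,(\omega\cdot[F'])=(\omega\cdot[F'])\big(t-\mu(F')\big)>(\geqslant)0$, i.e.\ $\mu(F')<(\leqslant)t$; the degenerate case $\omega\cdot[F']=0$ (a zero-dimensional $F'$) gives $\arg z_1=\pi$ and matches $\mu(F')=\infty$, for which the inequality correctly fails. This is exactly condition (i).

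\textbf{Rank-one subobjects give condition (ii).} Next I would analyse $E'\subset E$ with $\rk(E')=1$, $\rk(E'')=0$. Writing $E'=\Phi(\oO_X\stackrel{s'}{\to}F')$ via Proposition~\ref{A_X=B_X}, the inclusion corresponds to a morphism of pairs $(\alpha,\beta)$; since a rank-one object cannot inject into the rank-zero object $F[-1]$, the map $\alpha\colon\oO_X\to\oO_X$ is nonzero, hence an isomorphism, so I may take $\alpha=\id$ and $s=\beta\circ s'$. A mapping-cone computation identifies $E''=\Cone(E'\to E)$ with $\Cone(\beta\colon F'\to F)[-1]$, whose cohomology sheaves are $\ker\beta$ in degree $0$ and $\Cok\beta$ in degree $1$; as $E''$ must lie in $\Coh_{\leqslant1}(X)[-1]$ this forces $\ker\beta=0$, so $F'\hookrightarrow F$ is a subsheaf through which $s$ factors and $E''=(F/F')[-1]$. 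Taking $z_1=Z_t(E')=-t+\sqrt{-1}$ and $z_2=Z_t((F/F')[-1])$, the phase criterion becomes $-t\,(\omega\cdot[F/F'])+\chi(F/F')=(\omega\cdot[F/F'])\big(\mu(F/F')-t\big)>(\geqslant)0$, i.e.\ $\mu(F/F')>(\geqslant)t$, which is condition (ii) (the zero-dimensional quotient again matching $\mu=\infty$). The constraint $E''\neq 0$ forces $F'\subsetneq F$, and when $s\neq 0$ the factorisation $s=\beta\circ s'$ forces $F'\neq 0$, matching the quantifier in (ii); the boundary value $F'=0$ occurs only when $s=0$, where it records the extra subobject $\oO_X\subset E$ and is governed by the same computation with $\mu(F/F')=\mu(F)$.

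\textbf{Conclusion and the main difficulty.} Combining the two cases, ranging over all proper nonzero subobjects is the same as imposing (i) for every $0\neq F'\subset F$ and (ii) for every relevant $F'\subsetneq F$, so $E$ is $Z_t$-(semi)stable iff (i) and (ii) hold. The genuinely delicate point, which I expect to be the crux, is the exact translation of sub- and quotient objects of $E$ inside the extension-closed abelian category $\aA_X$ into honest pair data: the vanishing $\Ext^1(G,\oO_X)=0$ for $G\in\Coh_{\leqslant1}(X)$ (handling rank-zero subobjects) and the injectivity of $\beta$ forced by $\Cone(\beta)[-1]\in\aA_X$ (handling rank-one subobjects) are precisely what make the dictionary between $\aA_X$ and $\bB_X$ exact, and both are special to the Calabi--Yau $4$-fold setting. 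The remaining degenerate configurations---zero-dimensional sub/quotients producing $\arg=\pi$, and the locus $s=0$---are routine to check and I would relegate them to a short verification.
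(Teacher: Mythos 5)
Your proposal is correct and takes essentially the same route as the paper: the paper's (much terser) proof uses the equivalence $\Phi$ of Proposition~\ref{A_X=B_X} to assert that every short exact sequence in $\aA_X$ with middle term $\Phi(\oO_X\stackrel{s}{\to}F)$ arises from one of two displayed sequences in $\bB_X$ --- precisely your rank-zero and rank-one cases --- and then concludes that the lemma follows from the definition of $Z_t$-(semi)stability. Your write-up simply makes explicit what the paper leaves implicit (the rank bookkeeping, the Serre-duality vanishing $\Hom(G[-1],\oO_X)=0$, the injectivity of $\beta$ forced by $E''\in\Coh_{\leqslant 1}(X)[-1]$, and the phase computations), including the degenerate $s=0$, $F'=0$ configuration that the paper's proof silently subsumes.
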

\begin{proof}
Since the object $E=\Phi(\oO_X \to F)$ is of rank one
and $\Phi$ is an equivalence, 
any exact sequence $0 \to E'\to E \to E'' \to 0$ in 
$\aA_X$ is given by the image of
either one of the following exact sequences in $\bB_X$
\begin{align}\notag
&0 \to (0 \to F') \to (\oO_X \stackrel{s}{\to} F) \to (\oO_X \to F'') \to 0, \\
\notag
&0 \to (\oO_X \to F') \to (\oO_X \stackrel{s}{\to} F) \to (0 \to F'') \to 0. 
\end{align}
Therefore the lemma follows from the definition of $Z_t$-(semi)stability. 
\end{proof}

\begin{prop}\label{identify stab}
Let us fix $v \in H^{\ast}(X)$ of the form (\ref{Chern:v}) 
and set $q_t(m) \in \mathbb{Q}[m]$ to be
the constant polynomial
\begin{align}\label{def:q(m)}
q_t(m) \equiv (\beta \cdot \omega)\,t-n. 
\end{align}
Then 
a pair $(\oO_X \to F) \in \bB_X$
with $([F], \chi(F))=(\beta, n)$ 
 is 
$q_t$-(semi)stable if and only if 
$\Phi(\oO_X \to F) \in \aA_X$
is $Z_t$-(semi)stable. 
\end{prop}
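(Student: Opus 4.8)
The plan is to leverage Lemma~\ref{lem:charactrize}, which already translates $Z_t$-(semi)stability of $\Phi(\oO_X \to F)$ into slope conditions on $F$, and then to show by a direct Hilbert-polynomial computation that the LePotier inequalities (\ref{LP1}) and (\ref{LP2}) for the specific polynomial $q_t$ of (\ref{def:q(m)}) are equivalent to exactly those slope conditions. Concretely, by Lemma~\ref{lem:charactrize} it suffices to prove that $(\oO_X \to F)$ is $q_t$-(semi)stable if and only if (i) every nonzero subsheaf $F' \subset F$ satisfies $\mu(F') < (\leqslant) t$ and (ii) every proper subsheaf $F' \subsetneq F$ through which $s$ factors satisfies $\mu(F/F') > (\geqslant) t$.

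First I would record that for a one-dimensional sheaf $G$ one has $\chi(G(m)) = (\omega \cdot [G])\, m + \chi(G)$, so $r(G) = \omega \cdot [G]$ and the reduced Hilbert polynomial is $\chi(G(m))/r(G) = m + \mu(G)$. The essential feature of the choice (\ref{def:q(m)}) is that it is calibrated so that the common right-hand side of (\ref{LP1}) and (\ref{LP2}) collapses: since $\chi(F(m)) + q_t(m) = (\omega\cdot\beta)(m+t)$, dividing by $r(F) = \omega\cdot\beta$ gives simply $m+t$.

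With these two formulas the matching is immediate. For a one-dimensional subsheaf $F' \subset F$, inequality (\ref{LP1}) reads $m + \mu(F') < (\leqslant) m + t$ for $m \gg 0$, i.e. $\mu(F') < (\leqslant) t$, which is (i). For (\ref{LP2}) I would set $F'' = F/F'$ and use $\beta = [F'] + [F'']$ together with $n = \chi(F') + \chi(F'')$; a short rearrangement shows the left-hand side of (\ref{LP2}) equals $(m+t) + \big((\omega\cdot[F''])\,t - \chi(F'')\big)/(\omega\cdot[F'])$, so, as $\omega\cdot[F'] > 0$, the inequality is equivalent to $(\omega\cdot[F''])\,t - \chi(F'') < (\leqslant) 0$, that is $\mu(F/F') > (\geqslant) t$, which is (ii). Reading each equivalence in both directions and invoking Lemma~\ref{lem:charactrize} then yields the proposition.

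The step I expect to require the most care is the treatment of lower-dimensional subsheaves, which is exactly where purity of $F$ enters. On the $Z_t$ side a nonzero zero-dimensional subsheaf $F'$ has $\mu(F') = \infty$, so condition (i)---already in its semistable form with $\leqslant$---forces $F$ to be pure of dimension one; correspondingly I would make sure the normalization by leading coefficients in (\ref{LP1}) is read in LePotier's sense, so that $q_t$-(semi)stability is likewise imposed only on pairs with $F$ pure. Once this purity is in force, every nonzero subsheaf occurring in (\ref{LP1}) and (\ref{LP2}) is genuinely one-dimensional (note that a zero-dimensional quotient $F/F'$ causes no trouble, both sides then holding automatically), and the slope computations above apply verbatim, completing the argument.
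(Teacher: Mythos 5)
Your proof is correct and takes essentially the same route as the paper's: both reduce to Lemma~\ref{lem:charactrize} and observe that with $q_t$ as in (\ref{def:q(m)}) the reduced polynomials become $m+\mu(F')$ on the left and $m+t$ on the right, so that (\ref{LP1}) and (\ref{LP2}) match conditions (i) and (ii) respectively. Your additional discussion of zero-dimensional subsheaves and purity (reading the LePotier conditions so that purity is in force) makes explicit a point the paper's two-line proof leaves implicit, but it does not alter the argument.
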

\begin{proof}
For $F \in \Coh_{\leqslant1}(X)$
with $([F], \chi(F))=(\beta, n)$, 
its Hilbert polynomial is 
written as
\begin{align*}
\chi(F(m))=(\beta \cdot \omega)m+n. 
\end{align*}
Therefore we have
\begin{align*}
\frac{\chi(F(m))}{r(F)}=m+\mu(F), \quad
\frac{\chi(F(m))+q_t(m)}{r(F)}=m+t. 
\end{align*}
Thus (\ref{LP1}), (\ref{LP2}) 
are equivalent to the conditions (i), (ii) 
in Lemma~\ref{lem:charactrize} respectively. 
\end{proof}

Let us take an element $v \in H^{\ast}(X)$ of the form (\ref{Chern:v}). 
For each $t \in \mathbb{R}$, 
we denote by 
\begin{align}\label{stack:AX}
P_n^t(X, \beta) \subset \pP_n^t(X, \beta) \subset \underline{\aA}_X(v)
\end{align}
the open substacks of $Z_t$-stable (semistable) 
objects in $\aA_X$
with Chern character $v$. 
Because of Theorem \ref{prop:stack:AB} and Proposition \ref{identify stab}, 
they are also identified with 
open substacks of $\underline{\bB}_X(v)$ 
parametrizing pairs satisfying (i), (ii) in Lemma~\ref{lem:charactrize}. 
Below we write a $\mathbb{C}$-valued point 
of the stacks in (\ref{stack:AX}) 
as a pair $(\oO_X \to F)$
by the above identification.

\begin{thm}\label{existence of proj moduli space}
For $\beta \in H_2(X, \mathbb{Z})$ and $n\in \mathbb{Z}$, 
the moduli space $P^t_n(X, \beta)$ 
is a quasi-projective scheme, and 
$\pP^t_n(X, \beta)$ admits a good moduli space
\begin{align*}
\pP^t_n(X, \beta) \to \overline{P}_n^t(X, \beta),
\end{align*}
where $\overline{P}_n^t(X, \beta)$ is a projective 
scheme which parametrizes $Z_t$-polystable objects. 
\end{thm}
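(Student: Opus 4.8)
The plan is to reduce the statement to Le Potier's GIT construction of moduli of pairs, using the identification of stability conditions already established. By Proposition~\ref{identify stab}, a $\mathbb{C}$-valued point of $\pP^t_n(X,\beta)$ corresponds to a $q_t$-semistable pair $(\oO_X \to F)$ in the sense of Le Potier, where $q_t$ is the constant polynomial (\ref{def:q(m)}). Thus it suffices to construct a projective good moduli space for $q_t$-semistable pairs with $([F],\chi(F))=(\beta,n)$, whose closed points are the $Z_t$-polystable objects, and to show that the $Z_t$-stable locus is a quasi-projective scheme.

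First I would establish boundedness of the family. Every $q_t$-semistable pair $(\oO_X\to F)$ has $F$ a pure one-dimensional sheaf with fixed invariants $([F],\chi(F))=(\beta,n)$, and condition (i) of Lemma~\ref{lem:charactrize} bounds the reduced Hilbert polynomial of each subsheaf $F'\subset F$ from above by $m+t$. Since the supports move in the bounded family of curves of class $\beta$ and the slopes of all subsheaves are bounded above by $t$, the boundedness theorem for pure one-dimensional sheaves (as in Le Potier's and Simpson's work) shows that the set of such $F$ is bounded. Consequently there is an integer $m_0$ so that for all $m\geqslant m_0$ every such $F$ is $m$-regular: $F(m)$ is globally generated, $H^{>0}(F(m))=0$, and $\dim H^0(F(m))=\chi(F(m))=(\beta\cdot\omega)m+n =: N$.

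Next I would set up the GIT problem. Fixing an $N$-dimensional vector space $H$, every semistable $F$ arises as a quotient $[H\otimes\oO_X(-m)\twoheadrightarrow F]$, giving a point of the Quot scheme $\Quot(H\otimes\oO_X(-m),P)$ for $P(m')=(\beta\cdot\omega)m'+n$; the section $s\colon\oO_X\to F$ is recorded by an additional linear datum, so the pairs are parametrized by a $\GL(H)$-invariant locally closed subscheme $R$ of a projective scheme. Following~\cite{LePotier}, for a linearization of the $\GL(H)$-action determined by the polynomial $q_t$ and for $m\gg 0$, the GIT (semi)stable points of $R$ are exactly those corresponding to $q_t$-(semi)stable pairs, and two semistable points lie in the same closed orbit if and only if the associated pairs are S-equivalent. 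The essential input here is the matching of the numerical GIT inequalities with the inequalities (\ref{LP1}) and (\ref{LP2}), which is Le Potier's theorem adapted to the section datum.

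Finally I would take the quotient. The GIT quotient $\overline{P}^t_n(X,\beta):=R^{ss}/\!\!/\GL(H)$ is a projective scheme, being a GIT quotient of a projective scheme with an ample linearization; its closed points are the closed $\GL(H)$-orbits in $R^{ss}$, which under the correspondence are precisely the $Z_t$-polystable objects. The stable locus $R^s$ admits a geometric quotient, which is the quasi-projective scheme $P^t_n(X,\beta)$. Identifying $\pP^t_n(X,\beta)$ with the quotient stack $[R^{ss}/\GL(H)]$ (after rigidifying the scalar automorphisms acting trivially on objects), the induced morphism $\pP^t_n(X,\beta)\to\overline{P}^t_n(X,\beta)$ is the stack-theoretic GIT quotient map, hence a good moduli space. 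The main obstacle will be verifying the precise equivalence between GIT stability for this linearization and $Z_t$-stability, together with the identification of S-equivalence classes with polystable objects; once the boundedness and $m$-regularity estimates are in place, this is exactly the point at which Le Potier's construction must be carried out with the extra section datum and for one-dimensional sheaves on the Calabi-Yau $4$-fold.
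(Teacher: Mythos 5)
Your proposal follows essentially the same route as the paper: the paper's proof consists precisely of invoking Proposition~\ref{identify stab} (together with Theorem~\ref{prop:stack:AB}, which identifies the stack of objects in $\aA_X$ with the stack of pairs) to reduce to $q_t$-(semi)stability, and then citing Le Potier's GIT construction of moduli of semistable coherent systems as a black box. Your additional sketch of the internals of that construction (boundedness, $m$-regularity, the Quot-scheme setup with the section datum, and the matching of GIT stability with the inequalities (\ref{LP1})--(\ref{LP2})) is exactly the content of the cited result, so the argument is correct.
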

\begin{proof}
By Theorem \ref{prop:stack:AB}, Proposition \ref{identify stab}, $P^t_n(X, \beta)$, $\pP_n(X, \beta)$
are isomorphic to the moduli spaces of 
$q_t$-stable (semistable) pairs in $\bB_X$ (with $q_t\equiv(\beta \cdot \omega)\,t-n$). The latter has a GIT construction due to Le Potier's work on
semistable coherent systems (\cite[Thm.~4.11]{LePotier}, \cite[pp.~164]{JS}).
\end{proof}
Here by Proposition~\ref{identify stab}, 
a rank one $Z_t$-polystable object in $\aA_X$ 
is of the following form
\begin{align}\label{polystable}
(\oO_X \to F_0) \oplus \bigoplus_{i=1}^k V_i \otimes F_i[-1],
\end{align}
where $(\oO_X \to F_0)$ is $Z_t$-stable, 
each $F_i$ for $1\leqslant i\leqslant k$ are mutually non-isomorphic $\mu$-stable 
one dimensional sheaves with $\mu(F_i)=t$, 
and $V_i$ are finite dimensional vector spaces. 

As usual, there is a wall-chamber structure for $Z_t$-stability, where moduli spaces of stable objects stay unchanged inside chambers. 
Namely 
there is a finite set of points $W \subset \mathbb{R}$ such that 
we have 
\begin{align*}
P_n(X, \beta)=\pP_n^t(X, \beta)=\overline{P}_n^t(X, \beta), \quad  t \notin W.
\end{align*}
In particular, $P_n(X, \beta)$ is a projective scheme for $t \notin W$. 
Let $t_0 \in W$ and set 
$t_{\pm}=t_0 \pm \varepsilon$ for $0<\varepsilon \ll 1$. 
We have open immersions
\begin{align*}
P_n^{t_+}(X, \beta) \subset \pP_n^{t_0}(X, \beta)
\supset P_n^{t_-}(X, \beta),
\end{align*}
which induce the following flip type diagram 
of good moduli spaces
\begin{align}\label{diagram:wall}
\xymatrix{
P_n^{t_+}(X, \beta) \ar[rd]_{\pi^+} & & \ar[ld]^{\pi^-} P_n^{t_-}(X, \beta) \\
& \overline{P}_n^{t_0}(X, \beta). & 
}
\end{align}

\subsection{PT stable pairs and JS stable pairs}\label{sect on PT/JS pairs}
We discuss two interesting chambers for $Z_t$-stability. 
Recall the following two notions of stable pairs. 

\begin{defi}\emph{(\cite{PT, JS})}\label{defi:PTJSpair}

(i) A pair $(\oO_X \stackrel{s}{\to} F) \in \aA_X$ is 
called a PT stable pair if
$F$ is a pure one dimensional sheaf and $s$ is surjective in dimension one. 

(ii) A pair $(\oO_X \stackrel{s}{\to} F) \in \aA_X$
is called a JS stable pair if $s$ is a non-zero morphism, $F$ is $\mu$-semistable and 
for any subsheaf $0\neq F' \subsetneq F$ such that $s$ factors through 
$F'$ we have $\mu(F')<\mu(F)$. 
\end{defi}
\begin{rmk}
Strictly speaking, JS stable pairs in \cite[Def.~12.2]{JS} are of type $(\oO_X(-N)\to F)$ for a sufficiently negative line bundle $\oO_X(-N)$,
different from the definition given here. 
\end{rmk}
The above stable pairs appear as 
$Z_t$-stable objects in some chambers. 
\begin{prop}\label{prop:chambers}
For a fixed $v \in H^{\ast}(X, \mathbb{Q})$ 
of the form (\ref{Chern:v}), we have the following: 

(i) There exists $t(v)>0$ such that 
an object $(\oO_X \stackrel{s}{\to} F) \in \aA_X$
with Chern character $v$
is $Z_t$-stable for $t>t(v)$ if and only if 
it is a PT stable pair. 

(ii) There exists $\varepsilon(v)>0$ such that 
an object $(\oO_X \stackrel{s}{\to} F) \in \aA_X$
with Chern character $v$
is $Z_t$-stable for $\frac{n}{\omega \cdot \beta}
<t<\frac{n}{\omega \cdot \beta}+\varepsilon(v)$
if and only if it is JS stable pair. 

(iii) For $t<\frac{n}{\omega \cdot \beta}$, 
there is no $Z_t$-semistable object 
$(\oO_X \stackrel{s}{\to} F) \in \aA_X$
with Chern character $v$. 
\end{prop}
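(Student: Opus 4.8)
The plan is to analyze the three claims by directly comparing, for each relevant subobject in $\aA_X$, the argument of $Z_t$ applied to the sub and quotient. By Lemma~\ref{lem:charactrize}, $Z_t$-stability of $(\oO_X \stackrel{s}{\to} F)$ translates into the two numerical conditions: (i) every nonzero $F' \subset F$ satisfies $\mu(F')<t$, and (ii) every proper $F' \subsetneq F$ through which $s$ factors satisfies $\mu(F/F')>t$. So the whole proposition reduces to understanding how these two conditions behave as $t$ ranges over $\mathbb{R}$, and to showing they single out PT pairs for large $t$ and JS pairs just above $t=n/(\omega\cdot\beta)$.

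For part (iii), I would argue that any $Z_t$-semistable object forces $t \geqslant \mu(F)=n/(\omega\cdot\beta)$. Indeed, take $F'=F$ in condition (ii), or rather observe that if $s$ factors through a proper $F' \subsetneq F$ then applying (ii) with the full subsheaf gives a constraint; more directly, a $Z_t$-semistable pair must have $F$ itself satisfying $\mu(F) \leqslant t$ by combining (i) with a global destabilizing test, since the subobject $(0 \to F) \hookrightarrow (\oO_X \to F)$ has slope $\mu(F)$ and cannot destabilize. Thus no semistable object exists once $t<\mu(F)$. For part (i), the point is that $\mu(F')$ and $\mu(F/F')$ take values in a finite set determined by the bounded family of subsheaves with Chern character bounded by $v$; choosing $t(v)$ larger than all finitely many relevant slopes $\mu(F')$ makes condition (i) automatic, so $Z_t$-stability collapses exactly to: $F$ is pure (no zero-dimensional subsheaf, forced by (i) with $F'$ the torsion) and $s$ is surjective in dimension one (the cokernel condition extracted from (ii) in the $t\to\infty$ limit). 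For part (ii), setting $t$ just above $n/(\omega\cdot\beta)=\mu(F)$ forces, via (i), that $\mu(F')\leqslant \mu(F)$ for all $F'\subset F$, i.e. $F$ is $\mu$-semistable, while (ii) becomes precisely the JS condition $\mu(F')<\mu(F)$ on subsheaves through which $s$ factors.

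The key technical input I expect to lean on is boundedness: the set of subsheaves $F' \subset F$ arising for $F$ with fixed $[F]=\beta$ and $\chi(F)=n$ forms a bounded family, so the slopes $\mu(F')$ and $\mu(F/F')$ take only finitely many values in any given situation, and moreover these values are bounded away from $\mu(F)$ except in the borderline cases. This is what lets me choose a uniform $t(v)$ and $\varepsilon(v)$ depending only on $v$ rather than on the individual object. I would invoke the standard boundedness of the moduli of one-dimensional sheaves with fixed numerical class, together with the fact that subsheaves of such $F$ again have bounded Chern characters.

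The main obstacle will be part (i): unlike (ii) and (iii), where the critical slope $\mu(F)$ is an explicit rational number, for large $t$ I must show that the two LePotier inequalities degenerate \emph{uniformly over all $F$ with Chern character $v$} into the two defining clauses of a PT pair. Concretely, condition (i) for $t \gg 0$ must be shown equivalent to purity of $F$ (ruling out a zero-dimensional subsheaf, which would have $\mu=+\infty$ and hence violate $\mu(F')<t$ for \emph{every} finite $t$ — this actually forces purity for \emph{all} $t$, not just large $t$), and condition (ii) must degenerate into surjectivity of $s$ in dimension one. Handling the $\mu=\infty$ case carefully, and verifying that the cokernel of $s$ being zero-dimensional is exactly what (ii) enforces in the large-$t$ regime, is where I would spend the most care, since it involves the interplay between the leading coefficient $r(F')$ and the slope when $F'$ drops dimension.
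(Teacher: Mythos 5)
Your reduction of the whole proposition to Lemma~\ref{lem:charactrize} is exactly the paper's strategy, and your treatments of (ii) and (iii) coincide with the paper's proof. For (iii), the paper simply takes $F'=F$ in the lemma (the subobject $F[-1]\subset (\oO_X\to F)$, as you eventually say after first misplacing it in condition (ii), where $F'=F$ is not allowed); for (ii), the paper chooses $\varepsilon(v)$ so that no slope $n'/(\omega\cdot\beta')$ of an effective class lies strictly between $\frac{n}{\omega\cdot\beta}$ and $\frac{n}{\omega\cdot\beta}+\varepsilon(v)$, which is precisely your discreteness-of-slopes argument. Your observation that condition (i) forces purity for \emph{every} $t$ (a torsion subsheaf has $\mu=+\infty$) is also correct.

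The gap is in part (i), which the paper does not argue directly but outsources to \cite[Proposition~5.4~(i)]{Toda5}. Your substitute argument rests on the claims that the relevant slopes $\mu(F')$, $\mu(F/F')$ ``take values in a finite set'' and on ``the standard boundedness of the moduli of one-dimensional sheaves with fixed numerical class.'' The latter is false: pure one-dimensional sheaves with fixed $([F],\chi(F))=(\beta,n)$ do \emph{not} form a bounded family when $\beta$ is reducible (take $F_k=L_k\oplus M_k$ supported on two disjoint curves, with $\deg L_k=k$ and $\deg M_k$ compensating so that $\chi$ is fixed); over all pairs of class $v$ the subsheaf slopes are then unbounded above, not finite in number. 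The two directions of (i) in fact need two \emph{different} boundedness inputs, neither of which appears in your sketch: (a) for ``PT $\Rightarrow$ $Z_t$-stable for $t\gg 0$'' one needs $\sup\mu_{\max}(F)$ to be finite \emph{over the PT moduli space}, which holds because $P_n(X,\beta)$ is projective, hence a bounded family --- not because arbitrary sheaves of class $v$ are bounded; (b) for ``$Z_t$-stable with $t\gg 0$ $\Rightarrow$ PT'' one must bound $\mu(F/\mathrm{Im}(s))$ from above whenever $\mathrm{coker}(s)$ is one-dimensional. Since $F$ is pure, $\mathrm{Im}(s)\cong \oO_{C'}$ for a Cohen--Macaulay curve $C'$ in some class $\beta'<\beta$, so what is needed is exactly $\chi(\oO_{C'})\geqslant n(\beta')>-\infty$, the quantity the paper introduces for Proposition~\ref{prop:nowall}; this gives the uniform bound $\mu(F/\mathrm{Im}(s))\leqslant (n-n(\beta'))/(\omega\cdot(\beta-\beta'))$, and $t(v)$ is taken above the maximum over the finitely many $\beta'<\beta$. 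This is precisely the step you flag as ``where I would spend the most care'' but never supply; without it, nothing in your argument rules out, for a fixed large $t$, a $Z_t$-stable pair whose cokernel is one-dimensional of very large slope.
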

\begin{proof}
(i) follows from the same argument for CY 3-folds, 
see e.g.~\cite[Prop.~5.4 (i)]{Toda5}. 
(ii) easily follows from Lemma~\ref{lem:charactrize}, by setting 
$\epsilon(v)$ satisfying the following condition
\begin{align*}
\frac{n'}{\omega \cdot \beta'}+\epsilon(v)<
\frac{n}{\omega \cdot \beta},
\end{align*}
for any effective class $\beta' \in H_2(X, \mathbb{Z})$
and $n' \in \mathbb{Z}$
satisfying $n'/(\omega \cdot \beta')<n/(\omega \cdot \beta)$. 
(iii) also follows by setting $F'=F$ in 
Lemma~\ref{lem:charactrize}. 
\end{proof}
Following Proposition~\ref{prop:chambers}, 
we discuss three distinguished chambers as follows. 
\begin{enumerate}
\renewcommand{\labelenumi}{(\roman{enumi})}
\item 
  \textbf{Pandharipande-Thomas chamber}.
For $t \to \infty$, 
we have the moduli space of PT stable pairs
(see Proposition~\ref{prop:chambers} (i))
\begin{align*}
P_n(X,\beta) \cneq P^t_n(X,\beta)\big|_{t\to \infty}\,. 
\end{align*}

\item \textbf{Joyce-Song chamber}. For 
$t=\frac{n}{\omega \cdot \beta}+0$,
we have 
the moduli space of JS stable pairs 
(see Proposition~\ref{prop:chambers} (ii))
\begin{align*}
P^{\mathrm{JS}}_n(X,\beta)
\cneq P^t_n(X,\beta)\big|_{t=\frac{n}{\omega\cdot\beta}+0}\,. 
\end{align*}

\item \textbf{Empty chamber}.
For $t<\frac{n}{\omega \cdot \beta}$, 
we have (see Proposition~\ref{prop:chambers} (iii))
\begin{align*}
P_n^t(X, \beta)=\emptyset. 
\end{align*} 
\end{enumerate}

For $(\beta, n)$, we denote by 
\begin{align*}
\iota_M \colon 
\mM_n(X, \beta) \to M_n(X, \beta)
\end{align*}
the moduli stack of $\mu$-semistable one dimensional sheaves $F$
on $X$ with $([F], \chi(F))=(\beta, n)$, 
and its good moduli space parametrizing $\mu$-polystable objects. 
Since the target of JS stable pair $(\oO_X \to F)$ is $\mu$-semistable, 
we have a natural morphism
\begin{align}\label{map:JStoM}
P_n^{\rm{JS}}(X, \beta) \to M_n(X, \beta), \quad 
(\oO_X \to F) \mapsto \iota_M(F). 
\end{align}
In this way, 
the wall-crossing diagrams (\ref{diagram:wall}) 
relate $P_n(X, \beta)$, $P_n^{\rm{JS}}(X, \beta)$
and $M_n(X, \beta)$ in terms of 
flip type diagrams as in (\ref{diagram:wall}) (see also the diagram in Example \ref{exam:locP2}). 

Nevertheless, in some cases there is no wall in $t>\frac{n}{\omega \cdot \beta}$
so that PT stable pairs and JS stable pairs coincide. 
For an effective curve class $\beta \in H_2(X, \mathbb{Z})$, 
we define
\begin{align*}
n(\beta) \cneq \mathrm{inf}\big\{ \chi(\oO_C) : 
C \subset X \mbox{ is a one dimensional closed subscheme with }
[C]=\beta \big\} >-\infty. 
\end{align*}
Then we state the following proposition:
\begin{prop}\label{prop:nowall}
Let $\beta$ be an effective curve class and $(\beta, n) \in H_2(X, \mathbb{Z}) \oplus \mathbb{Z}$. Suppose the following inequality 
\begin{align}\label{ineq:betan}
\frac{n}{\omega \cdot \beta}
\leqslant \frac{n(\beta')}{\omega \cdot \beta'}
\end{align}
holds for any effective class $0<\beta' < \beta$. 
Then $P_n^t(X, \beta)$ is independent of $t$ if $t>\frac{n}{\omega \cdot \beta}$. 
In particular in this case, 
we have $P_n(X, \beta)=P_n^{\rm{JS}}(X, \beta)$. 
\end{prop}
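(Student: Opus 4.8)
The plan is to show that under hypothesis (\ref{ineq:betan}), no wall can occur in the region $t > \frac{n}{\omega \cdot \beta}$, so that $P_n^t(X, \beta)$ remains constant there and hence agrees with the $t \to \infty$ limit $P_n(X,\beta)$ by Proposition~\ref{prop:chambers}~(i). Recall that a wall is a value $t_0$ at which some $Z_t$-stable object $(\oO_X \to F)$ becomes strictly semistable, i.e.\ admits a destabilizing subobject or quotient of the same $Z_t$-phase. First I would use Lemma~\ref{lem:charactrize} to enumerate the possible destabilizers: by the exact sequence analysis in its proof, any short exact sequence in $\aA_X$ comes from one of the two types $(0 \to F') \hookrightarrow (\oO_X \to F)$ with quotient $(\oO_X \to F/F')$, or $(\oO_X \to F') \hookrightarrow (\oO_X \to F)$ with quotient $(0 \to F/F')$. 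A wall at $t_0$ therefore forces the existence of a nonzero one-dimensional subsheaf or quotient sheaf of $F$ whose slope equals $t_0$.

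The key step is to bound these slopes below. Suppose $t_0 > \frac{n}{\omega\cdot\beta}$ is a wall and let $0 \neq F' \subset F$ (or a quotient) be a destabilizing sheaf with class $\beta'$ and $\mu(F') = t_0$ forced by the semistability at $t_0$. Since $t_0 > \frac{n}{\omega\cdot\beta} = \mu(F)$, and $F$ itself has slope $\mu(F)$, the destabilizing phenomenon must involve a proper subobject $\beta' $ with $0 < \beta' < \beta$. Here the crucial input is that when $(\oO_X \to F)$ is a genuine pair with a nonzero section, the subscheme structure forces the relevant quotient to be supported on an honest curve, so its Euler characteristic is bounded below by $n(\beta')$; concretely, for the destabilizing quotient $(\oO_X \to F/F')$ one obtains $\mu(F/F') \geqslant \frac{n(\beta')}{\omega\cdot\beta'}$ after identifying $F/F'$ with (a quotient of) the structure sheaf of a one-dimensional subscheme in class $\beta'$. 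I would then combine this with hypothesis (\ref{ineq:betan}), which reads $\frac{n}{\omega\cdot\beta} \leqslant \frac{n(\beta')}{\omega\cdot\beta'}$, to deduce that the slope of any such destabilizer is at least $\frac{n}{\omega\cdot\beta}$, contradicting the requirement that at a wall in the open region $t > \frac{n}{\omega\cdot\beta}$ the destabilizing slope equals $t_0 > \frac{n}{\omega\cdot\beta}$ while simultaneously being pinned by the arithmetic of $n(\beta')$.

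More precisely, I expect the argument to run by contradiction: assume a wall $t_0 > \frac{n}{\omega\cdot\beta}$ exists, extract the destabilizing sheaf $F'$ (or quotient) via Lemma~\ref{lem:charactrize}, observe $\mu$-values are pinned at $t_0$ on both a subobject of class $\beta'$ and its complement of class $\beta - \beta'$, and use the additivity $n = \chi(F') + \chi(F/F')$ together with $\mu(F') = \mu(F/F') = t_0$ to back out that $\mu(F) = t_0$, forcing $t_0 = \frac{n}{\omega\cdot\beta}$, contradicting $t_0 > \frac{n}{\omega\cdot\beta}$. The role of (\ref{ineq:betan}) is to rule out the remaining degenerate configurations where the section-carrying piece has strictly smaller slope; there the bound $n(\beta')$ enters to show such a configuration would violate $Z_t$-stability already, so it never contributes a wall. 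The main obstacle will be the careful bookkeeping of which sheaf carries the section and correctly translating the pair-stability conditions (i),(ii) of Lemma~\ref{lem:charactrize} into slope inequalities on $\beta'$ and $\beta-\beta'$, together with justifying that the relevant Euler characteristic is genuinely bounded below by $n(\beta')$ rather than by the a priori larger quantity coming from an arbitrary sheaf—this requires that the quotient through which $s$ factors is the structure sheaf of an actual subscheme, which is where the definition of $n(\beta)$ via closed subschemes is used.
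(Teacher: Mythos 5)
Your overall framing (rule out walls in the region $t>\frac{n}{\omega\cdot\beta}$, then identify the constant moduli space with $P_n(X,\beta)$ via Proposition~\ref{prop:chambers}~(i)) could in principle be made to work, but both concrete mechanisms you propose for excluding walls fail. First, the additivity step is wrong: at a wall $t_0$ only \emph{one} slope is pinned. If the destabilizing subobject is $(0\to F')$, the equal-phase condition is the single equation $\mu(F')=t_0$; nothing forces $\mu(F/F')=t_0$. Since $\mu(F)=\frac{n}{\omega\cdot\beta}$ is a fixed weighted average of $\mu(F')$ and $\mu(F/F')$, the configuration $\mu(F')=t_0>\mu(F)>\mu(F/F')$ is perfectly consistent, so you cannot ``back out $\mu(F)=t_0$'' and no contradiction arises from additivity. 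Second, the inequality $\chi(F/F')\geqslant n([F/F'])$ is not available for an arbitrary destabilizing quotient: only the \emph{image} of the induced map $\oO_X\to F/F'$ is a structure sheaf $\oO_{C''}$ (quotients of structure sheaves are structure sheaves), and $F/F'$ may contain $\oO_{C''}$ properly with a one-dimensional cokernel of negative Euler characteristic, in which case $\chi(F/F')$ can be smaller than $n([F/F'])$. That bound is legitimate exactly when the section is surjective in dimension one onto the quotient in question, i.e.\ essentially when the pair is already known to be PT stable --- which is what is being proved, so at this point your argument is circular.

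The paper avoids both problems by proving the set-theoretic identity $P_n^t(X,\beta)=P_n(X,\beta)$ for every $t>\frac{n}{\omega\cdot\beta}$ in two separate directions, each pivoting on $\oO_{C'}\cneq \Imm(s)$, which honestly is a structure sheaf and hence satisfies $\chi(\oO_{C'})\geqslant n([C'])$. For ``$Z_t$-stable $\Rightarrow$ PT stable'': if $\beta'=[C']<\beta$, applying stability to the two exact sequences with sub $(0\to\oO_{C'})$ and with sub $(\oO_X\to\oO_{C'})$ gives $\mu(\oO_{C'})<t<\mu(F/\oO_{C'})$, hence
\begin{align*}
\frac{n(\beta')}{\omega\cdot\beta'}\leqslant\frac{\chi(\oO_{C'})}{\omega\cdot\beta'}
<\frac{n-\chi(\oO_{C'})}{\omega\cdot(\beta-\beta')}\leqslant\frac{n-n(\beta')}{\omega\cdot(\beta-\beta')},
\end{align*}
and the mediant inequality then forces $\frac{n(\beta')}{\omega\cdot\beta'}<\frac{n}{\omega\cdot\beta}$, contradicting (\ref{ineq:betan}); so $s$ is surjective in dimension one. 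For the converse ``PT stable $\Rightarrow$ $Z_t$-stable for all $t>\frac{n}{\omega\cdot\beta}$'': now $\Cok(s)$ is zero-dimensional, so for any subsheaf $F'\subset F$ the composite $\oO_X\to F/F'$ \emph{is} surjective in dimension one, the bound $\chi(F/F')\geqslant n(\beta'')$ with $\beta''=\beta-[F']$ becomes valid, and together with (\ref{ineq:betan}) applied to $\beta''$ it yields $\mu(F')\leqslant\frac{n}{\omega\cdot\beta}<t$. Your proposal contains the germ of this second direction, but transplants its key inequality into the first direction, where it does not hold.
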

\begin{proof}
We first show that any $Z_t$-stable pair 
$(\oO_X \stackrel{s}{\to} F) \in P_n^t(X, \beta)$ is a PT stable pair
if $t>\frac{n}{\omega \cdot \beta}$. 
Note that the $Z_t$-stability always implies that 
$F$ is a pure one dimensional sheaf. 
The image of $s$ is written as $\oO_{C'}$
for a one dimensional subscheme $C'$ 
such that $\beta'=[C'] \leqslant \beta$. 
Suppose by contradiction that $\beta'<\beta$
so that $\beta-\beta'>0$, and 
set $Q=F/\oO_{C'}$. 
We have two exact sequences in $\aA_X$
\begin{align*}
&0 \to (0 \to \oO_{C'}) \to (\oO_X \to F) \to (\oO_X \to Q) \to 0, \\
&0 \to (\oO_X \to \oO_{C'}) \to (\oO_X \to F) \to (0 \to Q) \to 0. 
\end{align*}
Then the $Z_t$-stability yields
\begin{align*}
\frac{n(\beta')}{\omega \cdot \beta'} \leqslant
\frac{\chi(\oO_{C'})}{\omega \cdot \beta'}
< t< \frac{n-\chi(\oO_{C'})}{\omega \cdot (\beta-\beta')}
\leqslant \frac{n-n(\beta')}{\omega \cdot (\beta-\beta')}. 
\end{align*}
The above inequalities contradict with the inequality (\ref{ineq:betan}).

Conversely, we show that any PT stable pair $(\oO_X \to F) \in P_n(X, \beta)$
is $Z_t$-stable for $t>\frac{n}{\omega \cdot \beta}$.  
It is enough to show that for any subsheaf $F' \subset F$
with $([F'], \chi(F'))=(\beta', n')$, we have 
\begin{align*}
\mu(F')=\frac{n'}{\omega \cdot \beta'} \leqslant \frac{n}{\omega \cdot \beta} <t.
\end{align*}
The above inequality is obvious if $\beta'=\beta$, so 
we may assume that $\beta'<\beta$ and set $\beta''=\beta-\beta'$. 
The composition $\oO_X \to F \to F/F'$ is surjective in dimension 
one, so we have
$
\chi(F/F')=n-n' \geqslant n(\beta'')$. 
Therefore we have 
\begin{align*}
\frac{n'}{\omega \cdot \beta'} \leqslant 
\frac{n-n(\beta'')}{\omega \cdot (\beta-\beta'')} \leqslant
\frac{n}{\omega \cdot \beta},
\end{align*}
where the last inequality follows from (\ref{ineq:betan})
for $\beta''$. 
Therefore we obtain the proposition. 
\end{proof}

\begin{exam}\label{exam:locP2}
Let $X$ be the non-compact CY 4-fold
given by
\begin{align*}
X=\mathrm{Tot}_{\mathbb{P}^2}(\oO_{\mathbb{P}^2}(-1) \oplus 
\oO_{\mathbb{P}^2}(-2)).
\end{align*}
Let $[l] \in H_2(X, \mathbb{Z})=H_2(\mathbb{P}^2, \mathbb{Z})$
be the class of a line. 
In this case, 
the numerical class
$(4[l], 1)$ does not satisfy 
the condition (\ref{ineq:betan}) 
as $n(3[l])=0$. 
Indeed a PT stable pair 
in $P_1(X, 4[l]))$
is destabilized at $t=1$
if and only if 
it is of the form $(I_C \stackrel{s}{\to} \oO_l)$
for a cubic curve $C \subset \mathbb{P}^2$, 
a line $l \subset \mathbb{P}^2$, 
and a non-zero morphism $s$. 
The destabilizing sequence is given by 
\begin{align*}
0 \to \oO_l[-1] \to (I_C \stackrel{s}{\to} \oO_l) \to I_C \to 0.
\end{align*}
We can show that $t=1$ is the only wall and we have 
the following wall-crossing phenomena of 
moduli spaces of $Z_t$-stable objects
\begin{align*}
P_1^t(X, 4[l])=
\left\{ \begin{array}{cc}
P_1(X, 4[l]),  & t>1, \\
& \\
P_1^{\rm{JS}}(X, 4[l]), & \frac{1}{4}<t<1, \\
& \\
\emptyset, & t<\frac{1}{4}.
\end{array}  \right. 
\end{align*}
The corresponding flip type diagram is
\begin{align*}
\xymatrix{
P_1(X, 4[l]) \ar[rd] & & P_1^{\rm{JS}}(X, 4[l]) \ar[ld]
\ar[rd] & \\
& \overline{P}_1^{t=1}(X, 4[l])  & & M_1(X, 4[l]). 
}
\end{align*}

\end{exam}

\section{$\mathrm{DT}_4$ type invariants for $Z_t$-stable pairs}
 
\subsection{Review of $\mathrm{DT}_4$ invariants}\label{review DT4}
Before defining $\mathrm{DT}_4$ type counting invariants associated with moduli spaces $P^t_n(X,\beta)$ of $Z_t$-stable objects,
we first introduce the set-up of $\mathrm{DT}_4$ invariants.
We fix an ample divisor $\omega$ on $X$
and take a cohomology class
$v \in H^{\ast}(X, \mathbb{Q})$.

The coarse moduli space $M_{\omega}(v)$
of $\omega$-Gieseker semistable sheaves
$E$ on $X$ with $\ch(E)=v$ exists as a projective scheme.
We always assume that
$M_{\omega}(v)$ is a fine moduli space, i.e.
any point $[E] \in M_{\omega}(v)$ is stable and
there is a universal family
\begin{align}\label{universal}
\eE \in \Coh(X \times M_{\omega}(v)).
\end{align}
For instance, moduli spaces of one dimensional stable sheaves $E$ with $\chi(E)=1$ and Hilbert schemes of 
closed subschemes satisfy this assumption \cite{CaoFano, CK1, CK2, CMT1}.

In~\cite{BJ, CL1}, under certain hypotheses,
the authors construct 
a $\mathrm{DT}_{4}$ virtual
class
\begin{align}\label{virtual}
[M_{\omega}(v)]^{\rm{vir}} \in H_{2-\chi(v, v)}(M_{\omega}(v), \mathbb{Z}), \end{align}
where $\chi(-,-)$ is the Euler pairing.
Notice that this class will not necessarily be algebraic.

Roughly speaking, in order to construct such a class, one chooses at
every point $[E]\in M_{\omega}(v)$, a half-dimensional real subspace
\begin{align*}\Ext_{+}^2(E, E)\subset \Ext^2(E, E)\end{align*}
of the usual obstruction space $\Ext^2(E, E)$, on which the quadratic form $Q$ defined by Serre duality is real and positive definite. 
Then one glues local Kuranishi-type models of form 
\begin{equation}\kappa_{+}=\pi_+\circ\kappa: \Ext^{1}(E,E)\to \Ext_{+}^{2}(E,E),  \nonumber \end{equation}
where $\kappa$ is a Kuranishi map of $M_{\omega}(v)$ at $E$ and $\pi_+$ is the projection 
according to the decomposition $\Ext^{2}(E,E)=\Ext_{+}^{2}(E,E)\oplus\sqrt{-1}\cdot\Ext_{+}^{2}(E,E)$.  

In \cite{CL1}, local models are glued in three special cases: 
\begin{enumerate}
\item when $M_{\omega}(v)$ consists of locally free sheaves only; 
\item  when $M_{\omega}(v)$ is smooth;
\item when $M_{\omega}(v)$ is a shifted cotangent bundle of a derived smooth scheme. 
\end{enumerate}
And the corresponding virtual classes are constructed using either gauge theory or algebro-geometric perfect obstruction theory.

The general gluing construction is due to Borisov-Joyce \cite{BJ} (one needs to assume that $M_{\omega}(v)$ can be given a $(-2)$-shifted symplectic structure as in Claim 3.29 \cite{BJ} to apply their constructions. For the $Z_t$-stable pairs case, we show this can be done in Lemma \ref{exist of -2 str}), based on Pantev-T\"{o}en-Vaqui\'{e}-Vezzosi's theory of shifted symplectic geometry \cite{PTVV} and Joyce's theory of derived $C^{\infty}$-geometry.
The corresponding virtual class is constructed using Joyce's
D-manifold theory (a machinery similar to Fukaya-Oh-Ohta-Ono's theory of Kuranishi space structures used in defining Lagrangian Floer theory).

In this paper, all computations and examples will only involve the virtual class constructions in situations (2), (3), mentioned above. We briefly 
review them as follows:  
\begin{itemize}
\item When $M_{\omega}(v)$ is smooth, the obstruction sheaf $Ob\to M_{\omega}(v)$ is a vector bundle endowed with a quadratic form $Q$ via Serre duality. Then the $\DT_4$ virtual class is given by
\begin{equation}[M_{\omega}(v)]^{\rm{vir}}=\mathrm{PD}(e(Ob,Q)).   \nonumber \end{equation}
Here $e(Ob, Q)$ is the half-Euler class of 
$(Ob,Q)$ (i.e. the Euler class of its real form $Ob_+$), 
and $\mathrm{PD}(-)$ is its 
Poincar\'e dual. 
Note that the half-Euler class satisfies 
\begin{align*}
e(Ob,Q)^{2}&=(-1)^{\frac{\mathrm{rk}(Ob)}{2}}e(Ob),  \textrm{ }\mathrm{if}\textrm{ } \mathrm{rk}(Ob)\textrm{ } \mathrm{is}\textrm{ } \mathrm{even}, \\
 e(Ob,Q)&=0, \textrm{ }\mathrm{if}\textrm{ } \mathrm{rk}(Ob)\textrm{ } \mathrm{is}\textrm{ } \mathrm{odd}. 
\end{align*}
\item When $M_{\omega}(v)$ is a $(-2)$-shifted cotangent bundle of a derived smooth scheme, roughly speaking, this means that at any closed point $[E]\in M_{\omega}(v)$, we have Kuranishi map of type
\begin{equation}\kappa:
 \Ext^{1}(E,E)\to \Ext^{2}(E,E)=V_E\oplus V_E^{*},  \nonumber \end{equation}
where $\kappa$ factors through a maximal isotropic subspace $V_E$ of $(\Ext^{2}(E,E),Q)$. Then the $\DT_4$ virtual class of $M_{\omega}(v)$ is, 
roughly speaking, the 
virtual class of the perfect obstruction theory formed by $\{V_E\}_{E\in M_{\omega}(v)}$. 
When $M_{\omega}(v)$ is furthermore smooth as a scheme, 
then it is
simply the Euler class of the vector bundle 
$\{V_E\}_{E\in M_{\omega}(v)}$ over $M_{\omega}(v)$. 
\end{itemize}
To construct the above virtual class (\ref{virtual}) with coefficients in $\mathbb{Z}$ (instead of $\mathbb{Z}_2$), we need an orientability result 
for $M_{\omega}(v)$, which is stated as follows.
Let  
\begin{equation}\label{det line bdl}
 \lL:=\mathrm{det}(\dR \hH om_{\pi_M}(\eE, \eE))
 \in \Pic(M_{\omega}(v)), \quad  
\pi_M \colon X \times M_{\omega}(v)\to M_{\omega}(v),
\end{equation}
be the determinant line bundle of $M_{\omega}(v)$, equipped with a symmetric pairing $Q$ induced by Serre duality.  An \textit{orientation} of 
$(\mathcal{L},Q)$ is a reduction of its structure group (from $O(1,\mathbb{C})$) to $SO(1, \mathbb{C})=\{1\}$; in other words, we require a choice of square root of the isomorphism
\begin{equation}\label{Serre duali}Q: \lL\otimes \lL \to \oO_{M_{\omega}(v)}  \end{equation}
to construct the virtual class (\ref{virtual}).
An orientability result was first obtained for $M_{\omega}(v)$ when the CY 4-fold $X$ satisfies
$\mathrm{Hol}(X)=SU(4)$ and $H^{\rm{odd}}(X,\mathbb{Z})=0$ \cite[Thm.~2.2]{CL2} and it has recently 
been generalized to arbitrary CY 4-folds \cite[Cor.~1.17]{CGJ}. 
Note that the set of orientations forms a torsor for $H^{0}(M_{\omega}(v),\mathbb{Z}_2)$.

\subsection{$Z_t$-stable pair invariants}\label{subsection stable pair invs}
 
For $\beta \in H_2(X, \mathbb{Z})$ and $n\in \mathbb{Z}$,
let
\begin{align}\label{PT:CY4}
P^t_n(X, \beta)
\end{align}
be the moduli space of $Z_t$-stable objects $(F, s)$ on $X$
such that $[F]=\beta$, $\chi(F)=n$.
By Theorem \ref{existence of proj moduli space}, it is a quasi-projective scheme whose closed points correspond to two-term complexes
\begin{align*}
I=(\oO_X \stackrel{s}{\to} F) \in \mathrm{D^b(\Coh(\textit{X\,}))},
\end{align*}
in the derived category of coherent sheaves on $X$, 
satisfying the $Z_t$-stability condition. 

Similar to moduli spaces of stable sheaves,
the moduli space $P^t_n(X, \beta)$
admits a deformation-obstruction theory, whose tangent, obstruction and `higher' obstruction spaces are
given by
\begin{align*}
\Ext^1(I, I)_0, \ \Ext^2(I, I)_{0},  \ \Ext^3(I, I)_{0},
\end{align*}
where $(-)_0$ denotes the trace-free part. Note that Serre duality gives an isomorphism $\Ext_0^1\cong (\Ext_0^3)^{\vee}$
and a non-degenerate quadratic form on $\Ext_0^2$. Moreover, we have
the following lemma: 
\begin{lem}\label{exist of -2 str}
The moduli space $P^t_n(X, \beta)$ can be given the structure of a $(-2)$-shifted symplectic derived scheme in the sense of Pantev-T\"{o}en-Vaqui\'{e}-Vezzosi \cite{PTVV}.
\end{lem}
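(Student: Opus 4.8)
The plan is to exhibit $P^t_n(X,\beta)$ as an open derived subscheme of the fixed-determinant derived moduli stack of objects in $\mathrm{D^{b}(Coh(\textit{X\,}))}$, and to transport onto it the $(-2)$-shifted symplectic structure that this stack carries by virtue of $X$ being a Calabi--Yau $4$-fold.

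First I would set up the derived enhancement. The Artin stack $\mM$ of Lieblich admits a natural derived enhancement inside the derived moduli stack $\mathbf{Perf}(X)$ of perfect complexes on $X$, whose tangent complex at $E$ is $\RHom(E,E)[1]$. The Calabi--Yau condition $K_X \cong \oO_X$ supplies, via Serre duality, a trace map $\dR\Gamma(X,\oO_X)\to \mathbb{C}[-4]$, i.e. an $\oO$-orientation of degree $4$ on $X$ in the sense of \cite{PTVV}.

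Next I would invoke the mapping-stack construction of \cite{PTVV}. Writing $\mathbf{Perf}(X)=\mathbf{Map}(X,\mathbf{Perf})$, the canonical $2$-shifted symplectic structure on $\mathbf{Perf}$ together with the degree $4$ orientation on $X$ yields a $(2-4)=(-2)$-shifted symplectic structure on $\mathbf{Perf}(X)$; concretely the underlying non-degenerate $2$-form pairs $\RHom(E,E)[1]$ with itself through composition, trace, and the Calabi--Yau pairing, the non-degeneracy being exactly Serre duality on $X$. I would then pass to fixed determinant: along $\det\colon \mathbf{Perf}(X)\to \mathbf{Pic}(X)$ the tangent complex splits off the summand $\RHom(\oO_X,\oO_X)[1]$, and the $(-2)$-shifted form restricts to a non-degenerate form on the traceless summand $\RHom(E,E)_0[1]$. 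This equips the fixed-determinant stack $\mM_0$, hence each component $\mM_0(v)$, with a $(-2)$-shifted symplectic structure.

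Finally I would restrict to the open stable locus. By Proposition~\ref{openess} and Theorem~\ref{prop:stack:AB}, for $v$ of the form (\ref{Chern:v}) the moduli space $P^t_n(X,\beta)$ is an open substack of $\mM_0(v)$, and the restriction of a shifted symplectic structure to an open substack is again shifted symplectic. Since $Z_t$-stability of $I=(\oO_X\stackrel{s}{\to}F)$ forces $\Ext^{\leqslant 0}(I,I)_0=0$, the shifted tangent complex $\RHom(I,I)_0[1]$ is concentrated in degrees $[0,2]$; thus there are no stacky directions, and together with the scheme structure on the classical truncation from Theorem~\ref{existence of proj moduli space} this produces a $(-2)$-shifted symplectic derived scheme. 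The one genuinely delicate point is the fixed-determinant reduction, namely verifying that the orientation and the $2$-form are compatible with the splitting induced by $\det$ so that non-degeneracy survives on the traceless part; the remaining steps are either direct citations to \cite{PTVV} or formal consequences of openness and stability.
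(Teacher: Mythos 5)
Your proposal is correct and takes essentially the same route as the paper: the paper's proof simply cites Proposition~\ref{openess} to realize $P^t_n(X,\beta)$ as an open substack of the trivial-determinant moduli stack of perfect complexes and then invokes the PTVV mapping-stack theorem (together with their treatment of the determinant-fixed substack) for the $(-2)$-shifted symplectic structure. Your additional details --- the degree-$4$ orientation from $K_X\cong\oO_X$, the traceless splitting along $\det$, and the vanishing $\Ext^{\leqslant 0}(I,I)_0=0$ ensuring one gets a derived scheme rather than a stack --- are exactly what the paper's citations compress.
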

\begin{proof}
By Proposition \ref{openess}, $P^t_n(X, \beta)$ is an open substack of the moduli stack of perfect complexes of coherent sheaves with trivial determinant on $X$, whose $(-2)$-shifted symplectic structure is constructed by~\cite[Thm.~0.1]{PTVV} (see also \cite[Sect.~3.2,~pp.~48]{PTVV} for pull-back to the determinant fixed substack).
\end{proof}
Let $\mathbb{I}$
be the universal pair
\begin{align}\label{Upair}
\mathbb{I}=(\mathcal{O}_{X\times P^t_n(X, \beta)}\rightarrow \mathbb{F}).
\end{align}
Then the determinant line bundle
\begin{align*}
\mathcal{L}:=\mathrm{det}(\dR \hH om_{\pi_P}(\mathbb{I}, \mathbb{I})_0)
\in \Pic(P^t_n(X, \beta))
\end{align*}
is endowed with a non-degenerate quadratic form $Q$ defined by Serre duality,
where $\pi_P\colon X\times P^t_n(X, \beta)\rightarrow P^t_n(X, \beta)$ is the projection.
Similarly as before, the orientability issue for the moduli space $P^t_n(X, \beta)$ is whether the structure group of the quadratic line bundle $(\mathcal{L},Q)$ can be reduced from $O(1,\mathbb{C})$ to $SO(1,\mathbb{C})=\{1\}$. 

As $P^t_n(X, \beta)$ is an open substack of the moduli stack of perfect complexes of coherent sheaves with trivial determinant on $X$,
it is always orientable in the above sense by \cite[Cor.~1.17]{CGJ}.
Combining this with Theorem \ref{existence of proj moduli space} and Lemma \ref{exist of -2 str}, 
we can construct their virtual classes. 
\begin{thm}\label{vir class of pair moduli}
Let $(X,\omega)$ be a smooth projective Calabi-Yau 4-fold, $\beta\in H_{2}(X,\mathbb{Z})$ and $n\in\mathbb{Z}$.
For a generic choice of $t\in \mathbb{R}$ such that $P^t_n(X, \beta)$ is projective, there exists a virtual class 
\begin{align}\label{pair moduli vir class}
[P^t_n(X, \beta)]^{\rm{vir}}\in H_{2n}\big(P^t_n(X, \beta),\mathbb{Z}\big),  \end{align}
in the sense of Borisov-Joyce \cite{BJ}, depending on the choice of orientation.
\end{thm}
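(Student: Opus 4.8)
The plan is to assemble the three structural inputs demanded by the Borisov--Joyce machinery and then read off the degree of the resulting class by a Riemann--Roch computation. The construction of \cite{BJ} produces a virtual fundamental class in integral homology out of the data of (a) a proper moduli space, (b) a $(-2)$-shifted symplectic derived structure, and (c) an orientation of the associated quadratic determinant line bundle. All three are available here: properness for generic $t$ is part of the hypothesis and is supplied by Theorem~\ref{existence of proj moduli space}; the $(-2)$-shifted symplectic structure is Lemma~\ref{exist of -2 str}; and orientability follows from the fact, established in Proposition~\ref{openess}, that $P_n^t(X,\beta)$ is an open substack of the moduli stack of perfect complexes with trivial determinant on $X$, to which \cite[Corollary~1.17]{CGJ} applies.

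First I would fix a generic $t$ so that $P_n^t(X,\beta)$ is a projective scheme whose points are the two-term complexes $I=(\oO_X\stackrel{s}{\to}F)$. The tangent--obstruction theory is governed by $\RHom(\mathbb{I},\mathbb{I})_0$ for the universal pair $\mathbb{I}$, with $\Ext^1(I,I)_0$ the tangent space and $\Ext^2(I,I)_0$ the obstruction space, the latter carrying the non-degenerate symmetric form $Q$ induced by Serre duality that is compatible with the $(-2)$-shifted symplectic form of Lemma~\ref{exist of -2 str}. Having verified orientability of $(\mathcal{L},Q)$, the Borisov--Joyce construction glues the local Kuranishi models $\kappa_+\colon\Ext^1(I,I)_0\to\Ext^2_+(I,I)_0$ into a derived manifold whose associated virtual class lies in $H_{\mathrm{vd}}(P_n^t(X,\beta),\mathbb{Z})$, where the real virtual dimension is $\mathrm{vd}=2\,\ext^1(I,I)_0-\ext^2(I,I)_0$ (the tangent space contributes real dimension $2\,\ext^1(I,I)_0$, while the half-dimensional real obstruction $\Ext^2_+(I,I)_0$ contributes $-\ext^2(I,I)_0$).

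It then remains to identify $\mathrm{vd}$ with $2n$, which I would do by Riemann--Roch. Since $I$ is $Z_t$-stable we have $\Ext^0(I,I)_0=0$, so by Serre duality on the Calabi--Yau $4$-fold $\Ext^4(I,I)_0=0$ and $\Ext^3(I,I)_0\cong\Ext^1(I,I)_0^{\vee}$; hence
\begin{align*}
\chi(I,I)_0=-2\,\ext^1(I,I)_0+\ext^2(I,I)_0=-\mathrm{vd}.
\end{align*}
On the other hand, writing $\ch(I)=v=(1,0,0,-\beta,-n)$ and using $c_1(X)=0$, the only surviving contributions to $\int_X\ch(I)^{\vee}\ch(I)\,\td_X$ are the degree-$0$ term paired with $\td_4$ and the degree-$8$ term, giving $\chi(I,I)=\chi(\oO_X)-2n$; subtracting the trace part $\chi(\oO_X,\oO_X)=\chi(\oO_X)$ yields $\chi(I,I)_0=-2n$. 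Combining the two computations gives $\mathrm{vd}=2n$, so $[P_n^t(X,\beta)]^{\mathrm{vir}}\in H_{2n}(P_n^t(X,\beta),\mathbb{Z})$, depending on the chosen orientation.

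Since the genuinely hard analytic and algebro-geometric content sits inside the cited results, the remaining work is bookkeeping plus the dimension count, and the main obstacle is conceptual rather than computational: one must ensure that the $(-2)$-shifted symplectic form, the Serre-duality form $Q$ on $\Ext^2(I,I)_0$, and the orientation of $(\mathcal{L},Q)$ are genuinely the restrictions to $P_n^t(X,\beta)$ of the corresponding structures on the ambient stack $\mM_0$ of determinant-fixed perfect complexes. This compatibility is exactly what makes the Borisov--Joyce class well defined with $\mathbb{Z}$-coefficients---rather than merely $\mathbb{Z}_2$-coefficients---and canonically attached to the $Z_t$-stable pair problem; I would treat this point with care, invoking Lemma~\ref{exist of -2 str} and \cite[Corollary~1.17]{CGJ} precisely because both are phrased on the level of the ambient stack.
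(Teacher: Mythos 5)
Your proposal is correct and follows essentially the same route as the paper: the paper likewise assembles projectivity (Theorem~\ref{existence of proj moduli space}), the $(-2)$-shifted symplectic structure (Lemma~\ref{exist of -2 str} via Proposition~\ref{openess}), and orientability of $(\mathcal{L},Q)$ from \cite[Corollary~1.17]{CGJ} applied to the determinant-fixed moduli stack, and then invokes Borisov--Joyce. The only difference is that you spell out the Riemann--Roch computation $\chi(I,I)_0=-2n$ identifying the virtual dimension, which the paper leaves implicit in the statement that the class lies in $H_{2n}$.
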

As in \cite{CMT2}, we consider primary insertions: for integral classes $\gamma \in H^{4}(X,\mathbb{Z})$, let 
\begin{align}\label{pri insertion}
\tau \colon H^{4}(X,\mathbb{Z})\to H^{2}(P^t_n(X,\beta),\mathbb{Z}), \quad 
\tau(\gamma):=(\pi_{P})_{\ast}(\pi_X^{\ast}\gamma \cup\ch_3(\mathbb{F}) ),
\end{align}
where $\pi_X$, $\pi_P$ are projections from $X \times P^t_n(X,\beta)$
to corresponding factors, 
$\mathbb{F}$ is the target of the universal pair (\ref{Upair}),
and $\ch_3(\mathbb{F})$ is the
Poincar\'e dual to the fundamental cycle of $\mathbb{F}$.
\begin{defi}
For a generic $t \in \mathbb{R}$, the 
$Z_t$-stable pair invariant
is defined to be 
\begin{align}\label{Z_t pair inv}P^t_{n,\beta}(\gamma):=\int_{[P^t_n(X,\beta)]^{\rm{vir}}} \tau(\gamma)^n\in \mathbb{Z}.
 \end{align}
Here we write $P^t_{0,\beta}=P^t_{n,\beta}(\gamma)$ when $n=0$. 
\end{defi}

As in Section \ref{sect on PT/JS pairs}, we are particularly interested in our invariants in the following two distinguished chambers.
\begin{enumerate}
\renewcommand{\labelenumi}{(\roman{enumi})}
\item 
When $t\to \infty$, $Z_t$-stable pairs are PT stable pairs.
We denote 
\begin{align}
\label{PT inv}P_{n,\beta}(\gamma):=P_{n,\beta}^t(\gamma)\big|_{t\to \infty}\,, \quad P_{0,\beta}:=P^t_{0,\beta}\big|_{t\to \infty}\,,
\end{align}
which has been studied before in \cite{CMT2}. \\

\item 
When $t=\frac{n}{\omega\cdot\beta}+0$, $Z_t$-stable pairs are Joyce-Song stable pairs. We denote
\begin{align}\label{JS inv}P^\mathrm{JS}_{n,\beta}(\gamma):=P^{t}_{n,\beta}(\gamma)\big|_{t=\frac{n}{\omega\cdot\beta}+0}\,, \quad P^\mathrm{JS}_{0,\beta}:=P^t_{0,\beta}\big|_{t=+0}\,. \end{align}
\end{enumerate}

\section{Conjectures}

\subsection{GW/GV conjecture}\label{app on GV inv}

Let $X$ be a smooth projective CY 4-fold. 
The genus 0 Gromov-Witten invariants on $X$ are defined using
insertions: for $\gamma \in H^{4}(X, \mathbb{Z})$,
one defines
\begin{equation}
\mathrm{GW}_{0, \beta}(\gamma)
=\int_{[\overline{M}_{0, 1}(X, \beta)]^{\rm{vir}}}\mathrm{ev}^{\ast}(\gamma)\in\mathbb{Q},
\nonumber \end{equation}
where $\mathrm{ev} \colon \overline{M}_{0, 1}(X, \beta)\to X$
is the evaluation map.

The \textit{genus 0 Gopakumar-Vafa type invariants} 
\begin{align}\label{g=0 GV}
n_{0, \beta}(\gamma) \in \mathbb{Q}
\end{align}
are defined by Klemm-Pandharipande \cite{KP} from the identity
\begin{align*}
\sum_{\beta>0}\mathrm{GW}_{0, \beta}(\gamma)q^{\beta}=
\sum_{\beta>0}n_{0, \beta}(\gamma) \sum_{d=1}^{\infty}
d^{-2}q^{d\beta}.
\end{align*}

For genus 1 case, virtual dimensions of moduli spaces of stable maps are zero, so Gromov-Witten invariants
\begin{align*}
\mathrm{GW}_{1, \beta}=\int_{[\overline{M}_{1, 0}(X, \beta)]^{\rm{vir}}}
1 \in \mathbb{Q}
\end{align*}
can be defined
without insertions.
The \textit{genus 1 Gopakumar-Vafa type invariants}
\begin{align}\label{g=1 GV}
n_{1, \beta} \in \mathbb{Q}
\end{align}
 are defined in~\cite{KP} by the identity
\begin{align*}
\sum_{\beta>0}
\mathrm{GW}_{1, \beta}q^{\beta}=
&\sum_{\beta>0} n_{1, \beta} \sum_{d=1}^{\infty}
\frac{\sigma(d)}{d}q^{d\beta}
+\frac{1}{24}\sum_{\beta>0} n_{0, \beta}(c_2(X))\log(1-q^{\beta}) \\
&-\frac{1}{24}\sum_{\beta_1, \beta_2}m_{\beta_1, \beta_2}
\log(1-q^{\beta_1+\beta_2}),
\end{align*}
where $\sigma(d)=\sum_{i|d}i$ and $m_{\beta_1, \beta_2}\in\mathbb{Z}$ are called meeting invariants which can be inductively determined by genus 0 Gromov-Witten invariants.
In~\cite{KP}, both of the invariants (\ref{g=0 GV}), (\ref{g=1 GV})
are conjectured to be integers, and Gromov-Witten invariants on $X$ are computed  to support the conjectures in many examples
by localization technique or mirror symmetry.

\subsection{Katz/GV conjecture}
In \cite{CMT1}, Maulik and the authors define $\DT_4$ counting invariants for one dimensional stable sheaves and use them to
give a sheaf theoretical interpretation to the genus 0 GV type invariants (\ref{g=0 GV}).

To be precise, we consider the moduli scheme $M_1(X, \beta)$ of 
one dimensional stable sheaves $F$ on $X$ with $[F]=\beta\in H_2(X,\mathbb{Z})$ and $\chi(F)=1$. 
The spherical twist (here we need to assume $h^{0,1}(X)=h^{0,2}(X)=0$,~see~\cite[Def.~0.1]{ST}):
$$\Phi_{\oO_X}(\bullet)=\mathrm{cone}\big(\RHom(\oO_X,\bullet)\otimes \oO_X\to \bullet\, \big)$$
identifies $M_1(X, \beta)$ with some moduli stack of rank one objects in $\mathrm{D^b(\Coh(\textit{X\,}))}$.
As in Theorem \ref{vir class of pair moduli}, there exists a virtual class 
$$[M_1(X,\beta)]^{\mathrm{vir}}\in H_2(M_1(X,\beta),\mathbb{Z}). $$
Consider primary insertions: for integral classes $\gamma \in H^{4}(X, \mathbb{Z})$, let 
\begin{align}\label{insertion for one dim sheaves} 
\tau \colon H^{4}(X, \mathbb{Z})\to H^{2}(M_1(X,\beta), \mathbb{Z}), \quad 
\tau(\gamma):=(\pi_{M})_{\ast}(\pi_X^{\ast}\gamma \cup\ch_3(\mathbb{F}) ),
\end{align}
where $\pi_X$, $\pi_M$ are projections from $X \times M_1(X,\beta)$
to corresponding factors, $\mathbb{F}$ is the universal sheaf and $\ch_3(\mathbb{F})$ is the
Poincar\'e dual to the fundamental cycle of $\mathbb{F}$.

The following may be thought as an analogue of Katz/GV conjecture \cite{Katz} on CY 4-folds.
\begin{conj}\emph{(\cite[Conjecture 0.2]{CMT1})}\label{g=0 one dim sheaf conj}
For certain choice of orientation, we have 
$$\int_{[M_{1}(X,\beta)]^{\rm{vir}}}\tau(\gamma)=n_{0,\beta}(\gamma), $$
where $n_{0,\beta}(\gamma)$ is the g=0 Gopakumar-Vafa type invariant (\ref{g=0 GV}).
\end{conj}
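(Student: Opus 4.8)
The plan is to prove the identity first in the \emph{ideal} geometry---where the rational curves in class $\beta$ form clean families of the expected dimension with generic transversality---and then to view the general statement as the target of a full Gromov--Witten/$\mathrm{DT}_4$ correspondence that is not yet available. The starting point is to give both sides a common geometric reading. On the Gromov--Witten side, since $\gamma \in H^4(X,\mathbb{Z})$ and the virtual dimension of $\overline{M}_{0,1}(X,\beta)$ is $2$, the invariant $\mathrm{GW}_{0,\beta}(\gamma)$ virtually counts genus $0$ maps whose marked point is incident to a generic real $4$-cycle $\Gamma$ with $[\Gamma]=\mathrm{PD}(\gamma)$, and passing to $n_{0,\beta}(\gamma)$ via the Klemm--Pandharipande law $\sum_{d\geqslant 1}d^{-2}q^{d\beta}$ strips off the degree $d\geqslant 2$ cover contributions. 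On the sheaf side, a stable one dimensional $F$ with $\chi(F)=1$ is supported on a curve $C$ with $[C]=\beta$, generically $F\cong\oO_C$ for a smooth rational $C$, and the insertion $\tau(\gamma)=(\pi_M)_\ast(\pi_X^\ast\gamma\cup\ch_3(\mathbb{F}))$ records the incidence condition that the support meet $\Gamma$. Thus both sides morally enumerate the same pairs $(C,p)$ with $p\in C\cap\Gamma$, and the problem is to match the two virtual counts.

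First I would analyze the local structure of $M_1(X,\beta)$ via its support map to the Chow variety of curves. In the ideal case a generic $F\cong\oO_C$ is supported on a smooth rational curve $C\cong\mathbb{P}^1$ with normal bundle $N_{C/X}=\oO(a_1)\oplus\oO(a_2)\oplus\oO(a_3)$, $a_1+a_2+a_3=-2$ (as $K_X\cong\oO_X$), moving in a $1$-dimensional family so that $M_1(X,\beta)$ is smooth of the expected dimension $1$. Here the obstruction bundle $Ob$ is governed by $\Ext^2(F,F)_0$ and, by the smooth case reviewed in Section~\ref{review DT4}, the $\mathrm{DT}_4$ class is the half-Euler class $e(Ob,Q)$. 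I would then evaluate $\int_{[M_1(X,\beta)]^{\mathrm{vir}}}\tau(\gamma)$ by expressing $e(Ob,Q)$ and $\tau(\gamma)$ through the normal bundle data and the incidence cycle, and compare the resulting local contribution of each $C$ with the BPS contribution of $C$ to $n_{0,\beta}(\gamma)$. For primitive $\beta$, where $n_{0,\beta}(\gamma)=\mathrm{GW}_{0,\beta}(\gamma)$, this reduces to a finite signed point count on both sides, and the role of the chosen orientation is exactly to make the half-Euler signs coincide with the Gromov--Witten signs.

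The main obstacle is twofold. Analytically, the hardest part is reproducing the \emph{multiple cover} and imprimitive contributions: the genus $0$ GV invariant is defined through the correction series in $d$, and one must show that the $\mathrm{DT}_4$ classes of $M_1(X,d\beta_0)$ assemble so that the sheaf count returns the integer $n_{0,\beta}(\gamma)$ directly rather than the raw Gromov--Witten number---a non-linear phenomenon invisible to the ideal-case local model. Conceptually, a proof in arbitrary geometry requires a genuine comparison of genus $0$ Gromov--Witten theory with the $\mathrm{DT}_4$ theory of one dimensional sheaves, for instance through a degeneration argument or a cosection/localization identification of the two obstruction theories, together with a consistent global choice of orientations; no such correspondence is presently established, which is why the statement is stated as a conjecture. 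I would therefore treat the ideal-case computation above, together with the explicit checks in Section~\ref{sec:JS/GV}, as the realistic evidence, while regarding the general identity as the endpoint of a future GW/$\mathrm{DT}_4$ correspondence.
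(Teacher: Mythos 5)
Your proposal is not a proof, and it could not have been: the statement you were given is Conjecture \ref{g=0 one dim sheaf conj}, which this paper imports verbatim from \cite[Conjecture 0.2]{CMT1} and nowhere proves. Inside the paper it functions purely as a hypothesis --- Theorems \ref{prod of cy3}, \ref{thm:locFano} and \ref{n=1 local surface} are conditional statements of the form ``assume Conjecture \ref{g=0 one dim sheaf conj}; then the JS/GV formula holds for $n=1$,'' and the wall-crossing argument of Proposition \ref{lem:wcf:equiv} likewise assumes it. So there is no paper proof to compare yours against, and your closing assessment --- that the general identity is the endpoint of a GW/$\mathrm{DT}_4$ correspondence that does not currently exist --- is exactly the status the paper itself assigns to the statement.

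As for the content of your sketch, it is consistent with how the authors generate evidence elsewhere in the paper. Your reading of $\tau(\gamma)$ as an incidence condition against a $4$-cycle, and of the ideal-geometry situation as a finite enumeration of rational curves meeting that cycle, is the same mechanism used in Section \ref{heuristic argument} (there aimed at Conjecture \ref{main conj}, with the present conjecture taken as a black box: the number of rational curves meeting $Z_i$ is declared to be $n_{0,\beta_i}(\gamma)$). Your two stated obstacles are also the right ones, and the multiple-cover issue is the serious one: the only verification of this conjecture carried out inside this paper --- the computation for $X=\mathrm{Tot}_{\mathbb{P}^1\times \mathbb{P}^1}(\oO(-1,-1)^{\oplus 2})$ with $\beta=(1,d)$, where $M_1(X,\beta)\cong \mathbb{P}^{2d+1}$, the $\mathrm{DT}_4$ class reduces to an Euler class of an explicit obstruction bundle, and the integral equals $1=n_{0,(1,d)}$ --- concerns a primitive class with no multiple-cover contributions; all remaining evidence is cited from \cite{CMT1, CaoFano}. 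If you wanted to push your plan beyond what the paper does, the realistic next step would be more such closed-form checks (smooth moduli spaces where the virtual class is a half-Euler class or an Euler class of a maximal isotropic piece, computed against $\tau(\gamma)$), rather than refining the ideal-geometry argument, since the latter is structurally incapable of seeing the $d^{-2}$ multiple-cover correction you correctly flag.
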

\begin{rmk}
See also \cite{CT} for a discussion on the higher genus case. 
\end{rmk}

\subsection{PT/GV conjecture}
In \cite{CMT2}, Maulik and the authors define Pandharipande-Thomas type invariants (\ref{PT inv}) on Calab-Yau 4-folds and conjecture the following PT/GV correspondence. 
\begin{conj}\label{PT/GV conj} \emph{(\cite[Conj.~1.1,~1.2,~Sect.~1.7]{CMT2})}
Let $(X,\omega)$ be a smooth projective Calabi-Yau 4-fold, $\beta\in H_2(X,\mathbb{Z})$ and $n\in\mathbb{Z}_{\geqslant0}$.
Fix $\gamma \in H^{4}(X, \mathbb{Z})$, then for certain choice of orientation, we have 
\begin{align*}
P_{n,\beta}(\gamma)=\sum_{\begin{subarray}{c}\beta_0+\beta_1+\cdots+\beta_n=\beta, \\ \omega\cdot\beta_i>0,\,  i=1,\ldots,n \end{subarray}}
P_{0,\beta_0}\cdot\prod_{i=1}^nn_{0,\beta_i}(\gamma), \quad
\sum_{\beta \geqslant 0}
P_{0, \beta}q^{\beta}=
\prod_{\beta>0} M\big(q ^{\beta}\big)^{n_{1, \beta}}.
\end{align*}
Here $n_{0,\beta}(\gamma)$, $n_{1,\beta}$ are the Gopakumar-Vafa type invariants (\ref{g=0 GV}), (\ref{g=1 GV}) of $X$. And
$M(q)=\prod_{k\geqslant 1}(1-q^{k})^{-k}$ is the MacMahon function.
\end{conj}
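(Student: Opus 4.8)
The plan is to obtain both identities in Conjecture \ref{PT/GV conj} as the two endpoints of the wall-crossing formula (\ref{intro:WCF}) for $Z_t$-stability in the category $\aA_X$, so that the real work is to establish that wall-crossing formula (Conjecture \ref{main conj intro}). For the first identity I would simply specialize (\ref{intro:WCF}) to $t\to\infty$: by (\ref{PT inv}) we have $P^t_{n,\beta}(\gamma)|_{t\to\infty}=P_{n,\beta}(\gamma)$, and the constraint $\omega\cdot\beta_i>1/t$ on the summation relaxes to the effectivity condition $\omega\cdot\beta_i>0$ in the limit, so the right-hand side of (\ref{intro:WCF}) becomes exactly $\sum_{\beta_0+\cdots+\beta_n=\beta}P_{0,\beta_0}\prod_{i}n_{0,\beta_i}(\gamma)$. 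Thus the first formula is an immediate corollary once (\ref{intro:WCF}) is known.

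To prove (\ref{intro:WCF}) itself, I would analyze the wall-and-chamber decomposition for $Z_t$-stability as $t$ decreases from $+\infty$. By Proposition \ref{prop:chambers} and the flip diagram (\ref{diagram:wall}), the moduli spaces $P_n^t(X,\beta)$ change only across finitely many walls $t_0\in W$, where strictly $Z_{t_0}$-semistable objects appear; by (\ref{polystable}) these are extensions of a lower-dimensional $Z_{t_0}$-stable pair by $\mu$-stable one-dimensional sheaves $F_i[-1]$ with slope $\mu(F_i)=t_0$. The heart of the argument is to compute the jump $P_n^{t_+}(X,\beta)-P_n^{t_-}(X,\beta)$ across each wall and to show, via the flip $\pi^{\pm}$ through the common good moduli space $\overline{P}_n^{t_0}(X,\beta)$, that attaching such a sheaf $F_i[-1]$ produces precisely the factor $\exp(yq^{\beta_i})^{n_{0,\beta_i}(\gamma)}$ recorded in (\ref{intro:WCF2}). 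Here the insertion $\tau(\gamma)^n$ and the universal complex $\mathbb{I}$ must be tracked through the wall, and the one-dimensional contribution should be identified with $n_{0,\beta_i}(\gamma)$ via Conjecture \ref{g=0 one dim sheaf conj}.

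For the second identity I would pass to the opposite endpoint $t\to+0$. By Proposition \ref{prop:chambers}(iii) the moduli space $P_n^t(X,\beta)$ is empty once $t<n/(\omega\cdot\beta)$, so as $t\to+0$ only the $n=0$ series survives and $\lim_{t\to+0}\PT^t(X)(\exp(\gamma))$ collapses to $\sum_\beta P_{0,\beta}q^\beta$. The wall-crossing framework thereby identifies $P_{0,\beta}$ as the \emph{base} generating series at the $t=0$ endpoint, but it does not by itself produce the factorization into MacMahon functions; the identity $\sum_\beta P_{0,\beta}q^\beta=\prod_\beta M(q^\beta)^{n_{1,\beta}}$ is the genus-one, pure D0-D2 content of the theory, itself the subject of \cite[Conjecture 0.2]{CMT2}. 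Heuristically one expects it because, on an ideal CY $4$-fold, $P_{0,\beta}$ counts one-dimensional sheaves on the finitely many genus-one curves in class $\beta$ dressed by arbitrary zero-dimensional (D0-brane) thickenings, whose contributions assemble into the plane-partition generating function $M(q)$ with exponent $n_{1,\beta}$.

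The main obstacle is the $\mathrm{DT}_4$ wall-crossing step. On CY $3$-folds the jump of DT-type invariants across a wall is governed by the Joyce--Song \cite{JS} and Kontsevich--Soibelman \cite{KS} wall-crossing formulae, which rest on the motivic and Hall-algebra structure of the moduli stack; no such general formalism is yet available for the Borisov--Joyce $(-2)$-shifted symplectic virtual classes in dimension four. Concretely, one must control how the half-Euler class and the chosen orientation behave under the flip (\ref{diagram:wall}), and verify that the wall contribution is exactly $\exp(yq^{\beta})^{n_{0,\beta}(\gamma)}$ with no further correction. This is precisely why (\ref{intro:WCF}) remains conjectural: in lieu of a general theory the formula is instead justified by the heuristic argument of Section \ref{heuristic argument} and checked in the explicit compact and toric examples of this paper.
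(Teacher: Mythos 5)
You should note at the outset that the statement in question is a \emph{conjecture} in this paper (imported from \cite[Conj. 0.1, 0.2, Sect. 0.7]{CMT2}): the paper offers no proof of it, only the heuristic ideal-geometry and master-space wall-crossing discussions and the example verifications, and it presents the first formula as exactly the $t\to\infty$ specialization of the main wall-crossing conjecture (\ref{intro:WCF}). Your proposal reproduces precisely this structure --- first identity obtained by letting $t\to\infty$ in (\ref{intro:WCF}) so that $\omega\cdot\beta_i>1/t$ relaxes to $\omega\cdot\beta_i>0$, second identity correctly recognized as the independent genus-one conjecture of \cite{CMT2} which no wall-crossing argument can produce, and the absence of a $\mathrm{DT}_4$ analogue of the Joyce--Song/Kontsevich--Soibelman formalism correctly flagged as the unproven core --- so it takes essentially the same route as the paper, with the caveat that, exactly as in the paper, it is a reduction to other open conjectures rather than a proof.
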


\subsection{Main conjecture}
Pandharipande-Thomas stable pairs are $Z_t$-stable pairs when $t\to \infty$ (Definition \ref{def of Z_t stability}),
and the corresponding PT type invariants (\ref{PT inv}) are $Z_{t\to \infty}$-stable pair invariants (\ref{Z_t pair inv}).
The following main conjecture of this paper generalizes Conjecture \ref{PT/GV conj}.
\begin{conj}\label{main conj}
Let $(X,\omega)$ be a smooth projective Calabi-Yau 4-fold, $\beta\in H_2(X,\mathbb{Z})$ and $n\in\mathbb{Z}_{\geqslant0}$.
Choose a generic $t\in \mathbb{R}_{>0}$. 
Then for certain choice of orientation, we have 
\begin{align}\label{WCF:main}
P^t_{n,\beta}(\gamma)=\sum_{\begin{subarray}{c}\beta_0+\beta_1+\cdots+\beta_n=\beta  \\  \omega\cdot\beta_i>\frac{1}{t},\,  i=1,\ldots,n \end{subarray}}P_{0,\beta_0}\cdot\prod_{i=1}^nn_{0,\beta_i}(\gamma),
\end{align}
where $\gamma \in H^{4}(X, \mathbb{Z})$, $n_{0,\beta}(\gamma)$ is the genus $0$ Gopakumar-Vafa type invariant (\ref{g=0 GV}).

In particular, $P^t_{0,\beta}=P_{0,\beta}$ is independent of the choice of $t>0$.
\end{conj}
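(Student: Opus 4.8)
The plan is to derive the formula (\ref{WCF:main}) from the wall-crossing behaviour of the moduli spaces $P^t_n(X,\beta)$ as $t$ ranges over $\mathbb{R}_{>0}$, following the strategy used for CY $3$-folds in \cite{Toda5} and for the PT/GV correspondence in \cite{CMT1, CMT2}. Since a rigorous wall-crossing formula for $\mathrm{DT}_4$ invariants is not available (the Hall-algebra machinery of \cite{JS, KS} has no established $4$-fold analogue), the argument is necessarily heuristic: I would assume $X$ to be \emph{ideal}, so that all one-dimensional sheaves and the curves supporting them deform in families of the expected dimension with generic behaviour, and then compute the contributions geometrically. The first step is to record the wall-and-chamber structure already set up in the excerpt: $P^t_n(X,\beta)$ is constant on chambers and changes only across the finitely many walls through the flip diagrams (\ref{diagram:wall}), while at a wall $t_0$ the polystable objects have the explicit shape (\ref{polystable}), whose new factors are $\mu$-stable one-dimensional sheaves $F_i$ of slope $\mu(F_i)=t_0$.

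The second step is to describe, under the ideal hypothesis, the generic $Z_t$-stable pair $I=(\oO_X \stackrel{s}{\to} F)$ with $[F]=\beta$ and $\chi(F)=n$. I would argue that $F$ splits, up to the filtration controlling $Z_t$-stability, into a pure \emph{core} $F_0$ carrying the section, giving a PT stable pair $(\oO_X\to F_0)$ of class $\beta_0$ and $\chi(F_0)=0$, together with $n$ disjoint \emph{free} rational curves $\oO_{C_i}\cong \oO_{\mathbb{P}^1}$ of class $\beta_i$ and $\chi=1$, each entering $F$ as a subsheaf on which $s$ vanishes. The numerics match: $\chi=0+\sum_i 1=n$ and $\beta_0+\sum_i\beta_i=\beta$. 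The chamber constraint is then precisely the stability bound, since by Lemma~\ref{lem:charactrize}(i) the subsheaf $\oO_{C_i}\subset F$ must satisfy $\mu(\oO_{C_i})=1/(\omega\cdot\beta_i)<t$, i.e. $\omega\cdot\beta_i>1/t$, which is exactly the summation range in (\ref{WCF:main}).

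The third step is the virtual count. The core locus contributes the PT invariant $P_{0,\beta_0}$, insertion-free since it sits in the $n=0$ sector. Each free rational curve of class $\beta_i$, paired with one factor of the insertion $\tau(\gamma)$, should contribute $\int_{[M_1(X,\beta_i)]^{\mathrm{vir}}}\tau(\gamma)=n_{0,\beta_i}(\gamma)$ by Conjecture~\ref{g=0 one dim sheaf conj}; the $n$ factors of $\tau(\gamma)^n$ in (\ref{Z_t pair inv}) distribute one to each of the $n$ free curves, so summing over the (ordered) decompositions $\beta_0+\beta_1+\cdots+\beta_n=\beta$ produces $\sum P_{0,\beta_0}\prod_{i=1}^n n_{0,\beta_i}(\gamma)$, with the $1/n!$ of the generating series (\ref{intro:WCF2}) absorbing the curve orderings into the exponential. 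Letting $t\to\infty$ relaxes $\omega\cdot\beta_i>1/t$ to $\omega\cdot\beta_i>0$ and recovers Conjecture~\ref{PT/GV conj}. The final clause is immediate from (\ref{WCF:main}) at $n=0$: only the term $\beta_0=\beta$ survives (the product over $i=1,\ldots,n$ is empty when $n=0$), giving $P^t_{0,\beta}(\gamma)=P_{0,\beta}$ for every $t>0$; heuristically a $\chi=0$ pair leaves no room for free curves, which carry positive Euler characteristic, so no wall is crossed.

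The hard part will be making any of this rigorous. Concretely, one must understand how the $\mathrm{DT}_4$ virtual class $[P^t_n(X,\beta)]^{\mathrm{vir}}$, together with its orientation, transforms across each flip in (\ref{diagram:wall}), and then prove a $4$-fold wall-crossing identity expressing the jump in terms of the virtual classes of the destabilizing one-dimensional sheaf moduli $M_1(X,\beta_i)$. In the CY $3$-fold case this is exactly where \cite{JS, KS} enter, but their motivic constructions have no counterpart producing the half-Euler and Borisov--Joyce classes used here; controlling signs and orientations through the wall is the genuine obstacle, and is the reason the statement is posed as a conjecture rather than a theorem, to be corroborated instead by the explicit computations carried out in the examples.
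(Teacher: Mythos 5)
Your overall route -- pass to an ideal CY 4-fold, use the insertions $\tau(\gamma)^n$ to cut out ``incident'' pairs, decompose them into a $\chi=0$ core of class $\beta_0$ plus $n$ rational curves of classes $\beta_1,\dots,\beta_n$, and count -- is the same as the paper's heuristic in Section~\ref{heuristic argument}. But your structural description of the generic $Z_t$-stable pair is wrong in a way that matters. You place the free rational curves $\oO_{C_i}$ inside $F$ as pieces ``on which $s$ vanishes''. Since in the ideal geometry the curves are disjoint, each $\oO_{C_i}$ is a direct summand of $F$, and if $s$ factors through the complementary summand $F'$, then Lemma~\ref{lem:charactrize}(ii) forces $\mu(F/F')=\mu(\oO_{C_i})>t$, while Lemma~\ref{lem:charactrize}(i), applied to the subsheaf $\oO_{C_i}\subset F$ (which is the condition you do invoke), forces $\mu(\oO_{C_i})<t$. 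These are contradictory, so the configuration you describe is $Z_t$-unstable for every generic $t$: equivalently, $(0\to\oO_{C_i})$ is simultaneously a subobject and a quotient of the pair in $\aA_X$, and one of the two always destabilizes. What you have written down are exactly the $Z_{t_0}$-polystable objects sitting \emph{on} a wall $t_0=\mu(\oO_{C_i})$, as in (\ref{polystable}) and (\ref{point:AB}), not the stable objects in a chamber.

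The paper's argument establishes the opposite picture: writing $s=s_0\oplus\bigoplus_{i}s_i$, stability forces every $s_i\neq 0$, hence surjective (the canonical section of $\oO_{C_i}\cong\oO_{\mathbb{P}^1}$), and likewise $s_0$ surjective, so the core is $F_0\cong\oO_Z$, an iterated extension of structure sheaves of elliptic curves with $\chi=0$; the chamber constraint $\omega\cdot\beta_i>1/t$ then comes from condition (i) alone, and one must also check the converse, that these pairs with canonical sections \emph{are} stable exactly when $\omega\cdot\beta_i>1/t$ (the paper does this, and it is needed to get the summation range right rather than just an inclusion). Your final numerical answer agrees because the number of rational curves in class $\beta_i$ meeting $Z_i$ equals $n_{0,\beta_i}(\gamma)$ whether one phrases it via $\int_{[M_1(X,\beta_i)]^{\rm{vir}}}\tau(\gamma)$ and Conjecture~\ref{g=0 one dim sheaf conj}, as you do, or directly in the ideal geometry, as the paper does; but as written your count runs over objects that do not lie in $P^t_n(X,\beta)$ at all, so the justification does not go through until the role of the section is corrected.
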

Combining with the second formula in Conjecture \ref{PT/GV conj}, we may express $Z_t$-stable pair invariants in terms of all genus 
GV type invariants of $X$. 
Indeed let us consider the generating 
series for $\gamma \in H^4(X, \mathbb{Z})$:
\begin{align}\label{PTt:series}
\PT^t(X)(\exp(\gamma)) \cneq \sum_{n\in \mathbb{Z}, \beta \geqslant 0} \frac{P_{n, \beta}^t(\gamma)}{n!}
y^n q^{\beta}. 
\end{align}
Then for a very generic $t \in \mathbb{R}$ (very generic means outside a countable subset of rational numbers in $\mathbb{R}$), 
the identity in Conjecture~\ref{main conj}
together with the second formula in Conjecture~\ref{PT/GV conj}
implies that
\begin{align*}
\PT^t(X)(\exp(\gamma))=\prod_{\omega \cdot \beta>\frac{1}{t}}
\exp(yq^{\beta})^{n_{0, \beta}(\gamma)} \cdot 
\prod_{\beta>0} M(q^{\beta})^{n_{1, \beta}}. 
\end{align*}
By taking the $t \to \infty$ limit, 
we recover the conjectural formula (\ref{intro:PTexp}). 

\subsection{JS/GV conjecture}
In the Joyce-Song chamber, there are two particularly interesting special cases of Conjecture \ref{main conj}.
\begin{conj}\label{JS/GV conj}\emph{(Special case of Conjecture \ref{main conj})}
In the same setting as Conjecture \ref{main conj}, we have 
\begin{align*}(1)\,\, P^{\mathrm{JS}}_{n,\beta}(\gamma)=\sum_{\begin{subarray}{c}\beta_1+\cdots+\beta_n=\beta  \\  \omega\cdot\beta_i= \frac{\omega \cdot \beta}{n},\,  i=1,\ldots,n \end{subarray}}\prod_{i=1}^nn_{0,\beta_i}(\gamma), \,\, \mathrm{if}\,\, n\geqslant 1, \quad  (2) \,\, P^{\mathrm{JS}}_{0,\beta}=P_{0,\beta}. 
\end{align*}
In particular, $P^{\mathrm{JS}}_{1,\beta}(\gamma)=n_{0,\beta}(\gamma)$.
\end{conj}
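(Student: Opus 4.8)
The plan is to obtain this statement as a direct specialization of the main wall-crossing formula (\ref{WCF:main}) to the Joyce-Song chamber $t=\frac{n}{\omega\cdot\beta}+0$, so that no new geometric input is needed beyond Conjecture~\ref{main conj} itself. To set up, write $d=\omega\cdot\beta$ and, for a partition $\beta_0+\beta_1+\cdots+\beta_n=\beta$ contributing to the right-hand side of (\ref{WCF:main}), put $d_i=\omega\cdot\beta_i$. Since $\omega$ is ample and each $\beta_i$ is effective we have $d_i\geqslant 0$, with $d_i=0$ only when $\beta_i=0$, and $d_0+d_1+\cdots+d_n=d$.

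The key step is to determine which partitions survive the one-sided limit $t\to(\tfrac{n}{d})+$. In this limit $\frac{1}{t}\to(\tfrac{d}{n})-$, so $\frac{1}{t}$ increases to $\frac{d}{n}$ strictly from below. Consequently, for any fixed partition having some $d_j<\frac{d}{n}$ with $1\leqslant j\leqslant n$, the open condition $d_j>\frac{1}{t}$ fails once $t$ is close enough to $\frac{n}{d}$; such partitions therefore do not contribute to $P^{\mathrm{JS}}_{n,\beta}(\gamma)$. Hence only partitions with $d_i\geqslant\frac{d}{n}$ for all $i=1,\ldots,n$ remain. For these, summing gives $d-d_0=\sum_{i=1}^n d_i\geqslant n\cdot\frac{d}{n}=d$, forcing $d_0\leqslant 0$; together with $d_0\geqslant 0$ this yields $d_0=0$, hence $\beta_0=0$. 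Feeding $d_0=0$ back in gives $\sum_{i=1}^n d_i=d=n\cdot\frac{d}{n}$ with each $d_i\geqslant\frac{d}{n}$, so equality holds throughout and $\omega\cdot\beta_i=\frac{\omega\cdot\beta}{n}$ for every $i$.

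It remains to evaluate the prefactor $P_{0,\beta_0}$ on the surviving terms. Since $\beta_0=0$, this is $P_{0,0}$, the invariant attached to the class $(\beta,n)=(0,0)$; the only object is the pair $(\oO_X\to 0)$, giving a reduced point and $P_{0,0}=1$ (equivalently, $1$ is the constant term of $\sum_{\beta\geqslant0}P_{0,\beta}q^{\beta}=\prod_{\beta>0}M(q^{\beta})^{n_{1,\beta}}$ from Conjecture~\ref{PT/GV conj}). Substituting into (\ref{WCF:main}) then reproduces formula (1) exactly. Formula (2) is immediate: the case $n=0$ of Conjecture~\ref{main conj} already asserts that $P^t_{0,\beta}(\gamma)$ is independent of $t>0$, so its value in the chamber $t=+0$ equals $P_{0,\beta}$, i.e. $P^{\mathrm{JS}}_{0,\beta}=P_{0,\beta}$. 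Finally, setting $n=1$ in formula (1) leaves the single partition $\beta_1=\beta$, yielding $P^{\mathrm{JS}}_{1,\beta}(\gamma)=n_{0,\beta}(\gamma)$.

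The only genuinely substantive point is the limit analysis of the second paragraph: one must use that the limit defining $P^{\mathrm{JS}}$ is taken strictly from above the wall $t=\frac{n}{\omega\cdot\beta}$ (below which there are no semistable objects, by Proposition~\ref{prop:chambers}(iii)), so that $\frac{1}{t}$ approaches $\frac{\omega\cdot\beta}{n}$ from below and collapses the multi-index sum in (\ref{WCF:main}) onto the single hyperplane slice $\omega\cdot\beta_i=\frac{\omega\cdot\beta}{n}$ with $\beta_0=0$. Everything else is bookkeeping, since the wall-crossing identity (\ref{WCF:main}) is taken as given.
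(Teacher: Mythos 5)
Your derivation is correct and matches the paper's intended reading: the paper presents Conjecture~\ref{JS/GV conj} simply as the specialization of the wall-crossing formula (\ref{WCF:main}) to the chamber $t=\frac{n}{\omega\cdot\beta}+0$, which is exactly what you carry out, including the collapse of the sum onto $\beta_0=0$ and $\omega\cdot\beta_i=\frac{\omega\cdot\beta}{n}$ via the discreteness of the degrees $\omega\cdot\beta_i$, the normalization $P_{0,0}=1$, and the $t$-independence of the $n=0$ invariants. No gaps; this is the same argument the paper leaves implicit.
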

When $n=1$, we recover genus 0 GV type invariants $n_{0,\beta}(\gamma)$ (\ref{g=0 GV}). While in the $n=0$ case, we recover 
genus 1 GV type invariants $n_{1,\beta}$ (\ref{g=1 GV}) by assuming the conjectural relation  
between $P_{0,\beta}$ and $n_{1,\beta}$ (as in Conjecture \ref{PT/GV conj}):
\begin{align*}
\sum_{\beta \geqslant 0}
P_{0, \beta}q^{\beta}=
\prod_{\beta>0} M\big(q ^{\beta}\big)^{n_{1, \beta}}.
\end{align*}
Therefore (conjecturally), we may use $\DT_4$ counting invariants for (semi)stable one dimensional sheaves together with sections 
(more precisely JS stable pairs) to recover 
all genus GV type invariants of Calabi-Yau 4-folds.

\section{Heuristic explanations of the main conjecture}
Our main conjecture~\ref{main conj} is difficult to prove, due to the 
difficulties of $\mathrm{DT}_4$-virtual classes and 
the absence of wall-crossing formulae available in Donaldson invariants~\cite{Moc}
and Donaldson-Thomas invariants~\cite{JS, KS}. 
Here we give heuristic explanations of our main conjecture 
from the viewpoint of ideal geometry, master space argument and 
a virtual push-forward formula. 

\subsection{Heuristic argument on ideal CY 4-folds}\label{heuristic argument}

In this subsection, we give a heuristic argument to explain why we expect Conjecture \ref{main conj} to be true
in an ideal CY4 geometry. In this heuristic discussion, we ignore the issue of orientations.

Let $X$ be an `ideal' CY 4-fold
in the sense that all curves of $X$ deform in families of expected dimensions, and have expected generic properties, i.e.
\begin{enumerate}
\item
any rational curve in $X$ is a chain of smooth $\mathbb{P}^1$ with normal bundle $\mathcal{O}_{\mathbb{P}^{1}}(-1,-1,0)$, and
moves in a compact 1-dimensional smooth family of embedded rational curves, whose general member is smooth with 
normal bundle $\mathcal{O}_{\mathbb{P}^{1}}(-1,-1,0)$. 
\item
any elliptic curve $E$ in $X$ is smooth, super-rigid, i.e. 
the normal bundle is 
$L_1 \oplus L_2 \oplus L_3$
for general degree zero line bundle $L_i$ on $E$
satisfying $L_1 \otimes L_2 \otimes L_3=\oO_E$. 
Furthermore any two elliptic curves are 
disjoint and also disjoint with rational curve families.

\item
there is no curve in $X$ with genus $g\geqslant 2$.
\end{enumerate}
For the moduli space $P^t_{n}(X,\beta)$ of $Z_t$-stable pairs, we want to compute 
\begin{align*}\int_{[P^t_{n}(X,\beta)]^{\rm{vir}}}\tau(\gamma)^n, \quad \gamma\in H^4(X,\mathbb{Z}),  \end{align*}
when $X$ is an ideal CY 4-fold.
Let $\{Z_i\}_{i=1}^n$ be 4-cycles which represent the class $\gamma$. For dimension reasons,
we may assume for any $i\neq j$
the rational curves which meet with $Z_i$ are
disjoint from those with $Z_j$. 
The insertions cut out the moduli space and pick up stable pairs whose support intersects with all $\{Z_i\}_{i=1}^n$.
We denote the moduli space of such `incident' stable pairs by
\begin{align*}Q^t_{n}(X,\beta;\{Z_i\}_{i=1}^n)\subseteq P^t_{n}(X,\beta).  \end{align*}
Then we claim that 
\begin{align}\label{Q_n:identity}
Q^t_{n}(X,\beta;\{Z_i\}_{i=1}^n)=\coprod_{\begin{subarray}{c}
\beta_0+\beta_1+\cdots+\beta_n=\beta   \\
\omega\cdot\beta_i>\frac{1}{t},\,  i=1,\ldots,n
\end{subarray}}P_{0}(X,\beta_0)\times Q_{1}(X,\beta_1;Z_1)\times \cdots \times Q_{1}(X,\beta_n;Z_n), \end{align}
where $Q_{1}(X,\beta_i;Z_i)$ is the (finite) set of rational curves (in class $\beta_i$) which meet with $Z_i$. 

Indeed let us take a $Z_t$-stable pair $(\oO_X\stackrel{s}{\to}  F)$ in 
$Q^t_{n}(X,\beta;\{Z_i\}_{i=1}^n)$. 
Then $F$ decomposes into a direct sum 
\begin{align*}
F=F_0 \oplus \bigoplus_{i=0}^n F_i  \oplus F_{n+1},
\end{align*}
 where 
$F_0$ is supported on elliptic curves, each $F_i$ for $1\leqslant i\leqslant n$ is 
supported on smooth rational curves which meet with 
$Z_i$, and $F_{n+1}$ is supported on rational curves without incident condition.
Here each $F_i$ for $1\leqslant i\leqslant n$ is non-zero due to the 
incidence condition, but $F_0$ and $F_{n+1}$ are possibly zero. 

We take the Harder-Narasimhan filtration of $F_i$ for $0<i \leqslant n$
$$0\subset F_{i,1}\subset F_{i,2}\subset\cdots \subset F_{i,{n_i}}=F_i. $$
If $s'=0$ in the following diagram
\begin{align*}
\xymatrix{
\oO_{X}   \ar[r]^{s} \ar[rd]_{s'} & F_{i,n_i}  \ar@{->>}[d] \\
     &  \quad F_{i,n_i}/F_{i,{n_{i}-1}},}
\end{align*}
then $Z_t$ stability and the HN filtration property gives
$$0<t< \mu(F_{i,{n_i}}/F_{i,{n_i-1}})<\mu(F_{i,{n_i-1}}/F_{i,{n_i-2}})<\cdots < \mu(F_{i,1}), $$
hence also 
$$0<t< \mu(F_{i,{n_i}})=\mu(F_{i}).$$ 

If $s'\neq 0$, then 
the semistable sheaf $F_{i,{n_i}}/F_{i,{n_{i}-1}}$ has a section. 
By noting that $F_{i,{n_i}}/F_{i,{n_{i}-1}}$
is supported on rational curves, 
we see that its Jordan-H\"{o}lder
factors also have sections, 
so  
we conclude 
\begin{align}\label{ineq:chi}
\chi(F_{i,{n_i}}/F_{i,{n_{i}-1}})>0. 
\end{align}
So we have the inequalities
$$0<\mu(F_{i,{n_i}}/F_{i,{n_i-1}})<\mu(F_{i,{n_i-1}}/F_{i,{n_i-2}})<\cdots < \mu(F_{i,1}).$$
Hence we also have $\mu(F_i)>0$. Thus in either case, we have 
$$\chi(F_i)>0, \quad 1\leqslant i\leqslant n.$$
Similar argument also gives 
$\chi(F_0) \geqslant 0$, and $\chi(F_{n+1})> 0$
if $F_{n+1}$ is non-zero. 
Here  
$\chi(F_0)$ can be zero even if $F_0$ is non-zero, 
since the inequality (\ref{ineq:chi}) 
is replaced by $\chi(F_{0,{n_0}}/F_{0,{n_{0}-1}}) \geqslant 0$
as it is supported on an elliptic curve. 
From the identity
$$n=\chi(F)=\chi(F_0)+\sum_{i=1}^n\chi(F_i)+\chi(F_{n+1}), $$
we conclude that $\chi(F_0)=0$, $\chi(F_i)=1$ for $1\leqslant i\leqslant n$, $F_{n+1}=0$, and 
all $F_i$ ($i\geqslant 0$) are semistable.
Further argument using Jordan-H\"{o}lder filtration shows that $F_i$ ($i\geqslant 1$) are stable (otherwise $\chi(F_i)>1$).
Hence $F_i\cong \oO_{C_i}$ for some rational curve
$\mathbb{P}^1 \cong C_i \subset X$. 

Next, we discuss the role of section $s$.
We write 
\begin{align}\label{splus}
s=s_0\oplus \bigoplus_{i=1}^n s_i
\colon\oO_X\to F_0\oplus \bigoplus_{i=0}^n F_i.
\end{align}
Note that $s_i$ for $i\geqslant 1$ is either zero or surjective,
since $F_i\cong \oO_{C_i}$ as we mentioned above. If $s_i=0$, 
then the pair $(\oO_X \to F)$ decomposes as  
$$(\oO_X \to F)=(\oO_X \to *)\oplus (0\to F_i), $$
which violates the $Z_t$-stability 
of $(\oO_X \to F)$. 
Hence $s_i$ is surjective.
Similarly, $s_0$ is also surjective, so $F_0=\oO_Z$ is an iterated extension of $\oO_E$ for an elliptic curve $E$.

The argument
implies that, by setting $\beta_i=[F_i]$ for the pair (\ref{splus}), 
we have the inclusion 
\begin{align}\notag
Q^t_{n}(X,\beta;\{Z_i\}_{i=1}^n)
\subset \coprod_{\begin{subarray}{c}
\beta_0+\beta_1+\cdots+\beta_n=\beta   \\
\omega\cdot\beta_i>0,\,  i=1,\ldots,n
\end{subarray}}P_{0}(X,\beta_0)\times Q_{1}(X,\beta_1;Z_1)\times \cdots \times Q_{1}(X,\beta_n;Z_n).  \end{align}
In order to conclude (\ref{Q_n:identity}), 
it remains to show that a pair of the form (\ref{splus}), 
where each $s_i$ is surjective, 
$F_0 \cong \oO_Z$ and $F_i \cong \oO_{C_i}$
as mentioned above, is a $Z_t$-stable pair 
if and only if 
we have 
\begin{align}\label{ineq:omega}
\omega \cdot \beta_i>\frac{1}{t}, \ 
1 \leqslant i \leqslant n.
\end{align}
Since $s$ is surjective, we only need to know when any $F'\subseteq F$ satisfies $\mu(F')<t$.
Any $F'\subseteq F$ is of form 
$$F'=\bigoplus_{i\in I} F_i', $$
for some subset $I \subseteq \{0, 1, \ldots, n\}$
such that each $F_i'$ is a non-zero subsheaf of $F_i$. 
For a fixed $I$, 
the maximal $\mu(F')$ is achieved when $F_i'=F_i$. 
By taking $I=\{i\}$ for $i\geqslant 1$, 
the $Z_t$-stability of the pair (\ref{splus}) implies the 
inequalities (\ref{ineq:omega}). 
Conversely suppose that (\ref{ineq:omega}) holds. 
Then for any $I\subseteq \{0, 1, \ldots,n\}$, 
by setting $I'=I \cap \{1, \ldots, n\}$, 
we have 
$$\mu(F') 
\leqslant \frac{|I'|}{\sum_{i\in I}\omega\cdot \beta_i} \leqslant \frac{|I'|}{\sum_{i\in I'}\omega\cdot \beta_i}<t, $$
if $I' \neq \emptyset$. 
If $I'=\emptyset$, then $I=\{0\}$
so $\mu(F') \leqslant 0<t$. 
Therefore the pair (\ref{splus}) is $Z_t$-stable, 
and the identity (\ref{Q_n:identity}) is justified.  


Finally, note that each $Q_{1}(X,\beta_i;Z_i)$ consists of 
finitely many rational curves that meet with $Z_i$, 
whose number is exactly $n_{0, \beta_i}(\gamma)$. 
By counting the number of points in $P_{0}(X,\beta_0)$ and 
$Q_{1}(X,\beta_i;Z_i)$'s, we obtain  
\begin{align*}
P^t_{n, \beta}(\gamma) :=\int_{[P^t_{n}(X,\beta)]^{\rm{vir}}}\tau(\gamma)^n=\int_{[Q^t_{n}(X,\beta;\gamma)]^{\mathrm{vir}}}1 
=\sum_{\begin{subarray}{c}\beta_0+\beta_1+\cdots+\beta_n=\beta  \\ \omega\cdot\beta_i>\frac{1}{t},\,  i=1,\ldots,n \end{subarray} }P_{0,\beta_0}\cdot \prod_{i=1}^n n_{0,\beta_i}(\gamma). \end{align*}
Therefore we obtain the formula in Conjecture~\ref{main conj}
from the above heuristic argument. 

\subsection{A master space argument}\label{master spa argu}
For each $t_0 \in \mathbb{R}_{>0}$, 
the formula (\ref{WCF:main})
implies the wall-crossing formula
\begin{align}\label{PTt:WCF2}
\lim_{t \to t_0+}\PT^t(X)(\exp(\gamma))=\prod_{\omega \cdot \beta=\frac{1}{t_0}} \exp(yq^{\beta})^{n_{0, \beta}(\gamma)} \cdot \lim_{t \to t_0-}\PT^t(X)(\exp(\gamma)). 
\end{align}
In this subsection, we give a heuristic
explanation of the above formula 
for a simple wall-crossing
via master spaces,
which are used by Mochizuki~\cite{Moc} in proving
wall-crossing formulae for Donaldson type invariants on algebraic surfaces.

For a fixed $(\beta, n)$, 
suppose that $t_0 \in \mathbb{R}$ is a wall
with respect to the $Z_t$-stability. 
We say that $t_0$ is a \textit{simple wall} if 
any point $p \in \overline{P}_n^{t_0}(X, \beta)$ corresponds to a $Z_{t_0}$-stable 
object, or $Z_{t_0}$-polystable object of the form 
\begin{align}\label{point:AB}
I=A \oplus B, \ A=(\oO_X \to F'), \ B=F''[-1],
\end{align}
where $A$ is $Z_{t_0}$-stable 
and $F'$ is $\mu$-stable with $\mu(F'')=t_0$. 
In other words in the description of polystable objects (\ref{polystable}), 
we have $k\leqslant 1$, and if $k=1$ then $\dim V_1=\mathbb{C}$. 
In this case, $\overline{P}_n^{t_0}(X, \beta)$ is stratified as
\begin{align}\label{stratification}
\overline{P}_n^{t_0}(X, \beta)
=P_n^{t_0}(X, \beta)\coprod \coprod_{\begin{subarray}{c}
(\beta', n')+(\beta'', n'')=(\beta, n) \\
\frac{n''}{\omega \cdot \beta''}=t_0 \end{subarray}}
(P_{n'}^{t_0}(X, \beta') \times M_{n''}(X, \beta'')),
\end{align}
and $M_{n''}(X, \beta'')$ consists of only $\mu$-stable one dimensional sheaves. 

Let us take a point 
$p \in \overline{P}_n^{t_0}(X, \beta)$
corresponding to the polystable object $I$ 
given in (\ref{point:AB}).
Below we give a description of the diagram (\ref{diagram:wall}) 
locally around $p$, following similar
arguments of~\cite{Todstack, Toddbir}. 
Let $\kappa$ be a Kuranishi map for 
the object $I$:
\begin{align*}
\kappa \colon \Ext^1(I, I) \to \Ext^2(I, I). 
\end{align*}
The above map describes
the stack $\pP_n^{t_0}(X, \beta)$ locally 
around $p \in \overline{P}_n^{t_0}(X, \beta)$. 
Namely by~\cite[Theorem~1.1]{Todstack}, the quotient stack 
\begin{align*}
[\kappa^{-1}(0)/\Aut(I)_0] \subset [\Ext^1(I, I)/\Aut(I)_0]
\end{align*}
is isomorphic to the stack 
$\pP_n^{t_0}(X, \beta)$ for the preimage of 
an analytic open neighbourhood 
of $p \in \overline{P}_n^{t_0}(X, \beta)$
under the map 
$\pP_n^{t_0}(X, \beta) \to \overline{P}_n^{t_0}(X, \beta)$. 
Here $\Aut(I)_0 \subset \Aut(I)$ is the traceless
part, given by
\begin{align*}
\mathbb{C}^{\ast}=\Aut(I)_0 \subset \Aut(I)=\Aut(A) \times \Aut(B), \ 
u \to (\id, u). 
\end{align*}
It acts on $\Ext^1(I, I)$ by the conjugation. 
Note that 
\begin{align*}
W\cneq \Ext^1(I, I)=
\Ext^1(A, A) \oplus \Ext^1(B, B) \oplus \Ext^1(A, B) \oplus \Ext^1(B, A),
\end{align*}
and the above $\mathbb{C}^{\ast}=\Aut(I)_0$-action 
on $W$ is of weight $(0, 0, 1, -1)$. 
Let $W^{\pm} \subset W$ be the open subsets defined by
\begin{align*}
W^{+}&=W \setminus \left(
\Ext^1(A, A) \oplus \Ext^1(B, B) \oplus \{0\} \oplus \Ext^1(B, A)\right), \\
W^-&=W \setminus \left(
\Ext^1(A, A) \oplus \Ext^1(B, B) \oplus \Ext^1(A, B) \oplus \{0\} \right). 
\end{align*}
They are GIT stable loci 
with respect to different linearizations. 
 We have the toric flip type diagram
\begin{align*}
\xymatrix{
W^+/\mathbb{C}^{\ast} \ar[rd] & & \ar[ld] W^-/\mathbb{C}^{\ast} \\
& W/\!\!/\mathbb{C}^{\ast}. &
}
\end{align*}
Then locally around $p$ 
(i.e. the preimage of an analytic open neighbourhood of
$p \in \overline{P}_n^{t_0}(X, \beta)$
under the maps $\pi^{\pm}$ in (\ref{diagram:wall})), 
the moduli spaces 
$P_n^{t_{\pm}}(X, \beta)$ are isomorphic to $M^{\pm}$ defined by (see the arguments of~\cite[Thm.~7.7]{Todstack}, 
\cite[Thm.~9.11]{Toddbir}):
\begin{align*}
M^{\pm} \cneq 
(\kappa^{-1}(0) \cap W^{\pm})/\mathbb{C}^{\ast} \subset W^{\pm}/\mathbb{C}^{\ast}. 
\end{align*}
Since we have 
\begin{align*}
\Ext^2(I, I)=\Ext^2(A, A) \oplus \Ext^2(B, B) \oplus \Ext^2(A, B) \oplus \Ext^2(B, A)
\end{align*}
and $\Ext^2(A, B)$, $\Ext^2(B, A)$ are dual to each other, 
we may take 
\begin{align*}
\Ext^2(I, I)^{\frac{1}{2}}=\Ext^2_{+}(A, A)\oplus \Ext^2_{+}(B, B) \oplus \Ext^2(A, B)
\end{align*}
as a `half obstruction space (this half obstruction space is a mixture of positive real subspaces and maximal isotropic subspaces. We use it as it is $\mathbb{C}^{\ast}$-equivariant and can be descended. Its Euler class is the same as the half Euler class of $\Ext^2(I,I)$). 
As $\Ext^2(I, I)^{\frac{1}{2}}  \times W \to W$ is a $\mathbb{C}^{\ast}$-equivariant 
vector bundle, 
it descends to a vector bundle 
$\mathrm{Obs}  \to [W/\mathbb{C}^{\ast}]$, which 
restricts to vector bundles 
\begin{align*}
\mathrm{Obs}^{ \pm} \to W^{\pm}/\mathbb{C}^{\ast}=:\overline{W}^{\pm}.
\end{align*}
Thus locally around $p$, the $\mathrm{DT}_4$ virtual classes on $P_{n}^{t_{\pm}}(X, \beta)$
pushed forward to $W^{\pm}/\mathbb{C}^{\ast}$ are
Euler classes of the above half obstruction bundles
\begin{align*}
[M^{\pm}]^{\rm{vir}}=e(\mathrm{Obs}^{\pm}). 
\end{align*}
We compare the above virtual classes 
using the master space. 
Let $\widetilde{W}$ be defined by
\begin{align*}
\widetilde{W}=(W \times \mathbb{C}^{\ast}) \sqcup (W^+ \times \{0\}) \sqcup
(W^- \times \{\infty\}) \subset W \times \mathbb{P}^1. 
\end{align*}
Let $T_i=\mathbb{C}^{\ast}$ for $i=1, 2$. 
Both of $T_1$ and $T_2$ acts on 
$\widetilde{W}$: for $t_i \in T_i$,  
\begin{align}\label{actions}
t_1 \cdot (x, [s_0, s_1])=(t_1 x, [t_1 s_0, s_1]), \quad 
t_2 \cdot (x, [s_0, s_1])=(x, [s_0, t_2 s_1]).
\end{align}
The above 
$T_1$-action on $\widetilde{W}$ is free, 
and the quotient space 
$Z \cneq \widetilde{W}/T_1$
is called the \textit{master space}. 
Since the two actions (\ref{actions}) commute, 
the $T_2$-action on $\widetilde{W}$ descends 
to a $T_2$-action on $Z$. 
Its fixed locus is 
\begin{align*}
Z^{T_2}=\overline{W}^{+} \sqcup \overline{W}^{-} \sqcup W^{\mathbb{C}^{\ast}}. 
\end{align*}
Similarly as above, 
the $(T_1 \times T_2)$-equivariant vector bundle 
$\Ext^2(I, I)^{\frac{1}{2}}  \times \widetilde{W} \to \widetilde{W}$
descends to the $T_2$-equivariant vector bundle 
$\widetilde{\mathrm{Obs}} \to Z$, whose 
Euler class is denoted by $[Z]^{\rm{vir}}$. 
The $T_2$-localization formula gives
the identity in the localized $T_2$-equivariant 
homology of $Z$
\begin{align}\label{localization}
[Z]^{\rm{vir}}=\frac{[M^+]^{\rm{vir}}}{e(N_{\overline{W}^+/Z})}
+\frac{[M^-]^{\rm{vir}}}{e(N_{\overline{W}^-/Z})}
+\frac{[W^{\mathbb{C}^{\ast}}]^{\rm{vir}}}{e(N_{W^{\mathbb{C}^{\ast}}/Z}^{\rm{mov}})}. 
\end{align}
Note that we have 
\begin{align*}
W^{\mathbb{C}^{\ast}}=\Ext^1(A, A) \oplus \Ext^1(B, B) \oplus \{0\} \oplus \{0\},
\end{align*}
and $[W^{\mathbb{C}^{\ast}}]^{\rm{vir}}=[P^{t_0}_{n'}(X, \beta')]^{\rm{vir}} \times [M_{n''}(X, \beta'')]^{\rm{vir}}$, 
viewed locally around the point $(A, B)$. 
 
The above arguments are local around $p$, so
$M^{\pm}$, $Z$ are non-compact.  However suppose that we have some
globalization of the above argument (e.g. the construction of master space and its virtual class, for the global compact moduli spaces $P_n^{t_{\pm}}(X, \beta)$), and 
pretend that $M^{\pm}$, $Z$ are compact. 
Then the integration of the left hand side of (\ref{localization}), after some insertions, 
is independent of the equivariant parameter $t_2$. 

Note that the real virtual dimension of $M^{\pm}$ is $2n$, while 
that of $W^{\mathbb{C}^{\ast}}$ is $2n'+2$. 
Therefore in order to obtain non-trivial contribution to the 
wall-crossing, by taking the insertions $\tau(\gamma)^n$ in (\ref{localization}) 
and the residue at $t_2=0$, we must have 
$2n=2n'+2$, i.e. $(n', n'')=(n-1, 1)$. 
Then 
\begin{align*}
N^{\rm{mov}}_{W^{\mathbb{C}^{\ast}}/Z}=\Ext^1(A, B) +\Ext^1(B, A)-\Ext^2(A, B)
\end{align*}
has rank $-\chi(A, B)=n''=1$, with $T_2$-weight $1$, $-1$, $1$ 
respectively. Therefore the contribution of the 
 denominator of the last term of (\ref{localization}) 
 to the residue at $t_2=0$ is $-1$. 
By taking the insertion $\tau(\gamma)^n$ and 
the residue at $t_2=0$ of (\ref{localization}), we 
obtain
\begin{align*}
&\int_{[P_n^{t_{+}}(X, \beta)]^{\rm{vir}}}\tau(\gamma)^n
-\int_{[P_n^{t_{-}}(X, \beta)]^{\rm{vir}}}\tau(\gamma)^n \\
&=\sum_{\begin{subarray}{c}\beta'+\beta''=\beta \\
\frac{1}{\omega \cdot \beta''}=t_0
\end{subarray}}
\int_{[P^{t_0}_{n-1}(X, \beta')]^{\rm{vir}} \times [M_{1}(X, \beta'')]^{\rm{vir}}}
(\tau(\gamma)\boxtimes 1 +1 \boxtimes \tau(\gamma))^n. 
\end{align*}
By expanding the RHS and assuming Conjecture~\ref{g=0 one dim sheaf conj}, we obtain the wall-crossing formula
\begin{align}\label{WCF:mspace}
P^{t_{+}}_{n, \beta}(\gamma)-P^{t_-}_{n, \beta}(\gamma)
=\sum_{\begin{subarray}{c}
\beta'+\beta''=\beta  \\
\frac{1}{\omega \cdot \beta''}=t_0
\end{subarray}} n \cdot P^{t_0}_{n-1, \beta'}(\gamma) \cdot 
n_{0, \beta''}(\gamma). 
\end{align}
Indeed this wall-crossing formula is compatible with our main conjecture. 
\begin{prop}\label{lem:wcf:equiv}
Suppose that $t_0 \in \mathbb{R}_{>0}$ is a simple wall 
with respect to $(\beta, n)$. 
Then under
Conjecture~\ref{g=0 one dim sheaf conj},
the coefficient of $y^n q^{\beta}$ in the 
formula (\ref{PTt:WCF2}) is equivalent to (\ref{WCF:mspace}). 
\end{prop}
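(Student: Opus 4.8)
The plan is to expand the generating-series identity (\ref{PTt:WCF2}) into a convolution, extract the coefficient of $y^nq^\beta$, and match it with (\ref{WCF:mspace}) using the simple-wall hypothesis. Writing $t_\pm$ for the limits $t\to t_0\pm$ and abbreviating the wall factor as
\begin{align*}
\mathcal{F}\cneq\prod_{\omega\cdot\delta=\frac{1}{t_0}}\exp(yq^{\delta})^{n_{0,\delta}(\gamma)}
=\exp\Big(\sum_{\omega\cdot\delta=\frac{1}{t_0}}n_{0,\delta}(\gamma)\,yq^{\delta}\Big),
\end{align*}
whose expansion in powers of $y$ begins $\mathcal{F}=1+\sum_{\omega\cdot\delta=1/t_0}n_{0,\delta}(\gamma)\,yq^{\delta}+\cdots$ with the omitted terms of $y$-degree at least $2$, the identity (\ref{PTt:WCF2}) reads $\PT^{t_+}(X)(\exp(\gamma))=\mathcal{F}\cdot\PT^{t_-}(X)(\exp(\gamma))$. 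Comparing coefficients of $y^nq^\beta$ and using the definition (\ref{PTt:series}) of the series gives
\begin{align*}
\frac{P^{t_+}_{n,\beta}(\gamma)}{n!}=\sum_{j\geqslant 0}\ \sum_{\delta}\mathcal{F}_{j,\delta}\cdot\frac{P^{t_-}_{n-j,\,\beta-\delta}(\gamma)}{(n-j)!},
\end{align*}
where $\mathcal{F}_{j,\delta}$ is the coefficient of $y^jq^{\delta}$ in $\mathcal{F}$, so that $\mathcal{F}_{0,0}=1$ and $\mathcal{F}_{1,\delta}=n_{0,\delta}(\gamma)$ whenever $\omega\cdot\delta=1/t_0$.

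Next I would argue that only $j=0$ and $j=1$ survive. A term with $j\geqslant 2$ is indexed by classes $\delta_1,\dots,\delta_j$, all with $\omega\cdot\delta_k=1/t_0$ and $n_{0,\delta_k}(\gamma)\neq0$, together with a nonzero $P^{t_-}_{n-j,\epsilon}(\gamma)$; since each such $\delta_k$ admits a $\mu$-stable sheaf $F_k$ with $\chi(F_k)=1$ and hence $\mu(F_k)=t_0$, such data assembles into a $Z_{t_0}$-polystable object in class $(\beta,n)$ carrying at least two summands $F''[-1]$ with $\mu(F'')=t_0$, i.e. $k\geqslant 2$ in the description (\ref{polystable}). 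This is excluded by the definition of a simple wall, so all $j\geqslant 2$ terms vanish. Retaining $j=0,1$, multiplying by $n!$, and moving the $j=0$ term to the left yields
\begin{align*}
P^{t_+}_{n,\beta}(\gamma)-P^{t_-}_{n,\beta}(\gamma)
=n\sum_{\omega\cdot\beta''=\frac{1}{t_0}}n_{0,\beta''}(\gamma)\,P^{t_-}_{n-1,\,\beta-\beta''}(\gamma),
\end{align*}
the factor $n$ coming from the single linear term of $\mathcal{F}$.

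It then remains to identify $P^{t_-}_{n-1,\beta-\beta''}(\gamma)$ with the wall value $P^{t_0}_{n-1,\beta-\beta''}(\gamma)$ appearing in (\ref{WCF:mspace}). Setting $\beta'=\beta-\beta''$, if $t_0$ were a wall for $(n-1,\beta')$ there would be a strictly $Z_{t_0}$-semistable object $A'\oplus F'''[-1]$ with $\mu(F''')=t_0$; adjoining $F''[-1]$ then produces a $Z_{t_0}$-polystable object in class $(\beta,n)$ with two sheaf summands, again contradicting simplicity. Hence $t_0$ is not a wall for $(n-1,\beta')$, so $P^t_{n-1,\beta'}(\gamma)$ is constant near $t_0$ and equals $P^{t_0}_{n-1,\beta'}(\gamma)$; this turns the displayed formula into exactly (\ref{WCF:mspace}). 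Finally, Conjecture~\ref{g=0 one dim sheaf conj} is what makes the two right-hand sides agree symbolically: in the master-space derivation the coefficient $n_{0,\beta''}(\gamma)$ entered as $\int_{[M_1(X,\beta'')]^{\rm{vir}}}\tau(\gamma)$, and the conjecture identifies this with the genus $0$ Gopakumar--Vafa invariant $n_{0,\beta''}(\gamma)$ appearing in $\mathcal{F}$.

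I expect the main obstacle to be the two geometric reductions driven by the simple-wall hypothesis: the vanishing of the $j\geqslant 2$ contributions and the equality $P^{t_-}_{n-1,\beta'}=P^{t_0}_{n-1,\beta'}$. Both amount to translating the combinatorial presence of two wall-classes into the existence of a $Z_{t_0}$-polystable object with two $F''[-1]$-summands, and then invoking the definition of a simple wall to rule it out; the remaining steps — the convolution expansion, the binomial factor, and matching the ranges $\{\omega\cdot\beta''=1/t_0\}$ and $\{1/(\omega\cdot\beta'')=t_0\}$ — are purely formal.
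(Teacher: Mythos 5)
Your proof is correct and follows essentially the same route as the paper: expand (\ref{PTt:WCF2}) as a convolution, kill every term of $y$-degree at least $2$ in the wall factor by assembling its would-be contribution into a $Z_{t_0}$-polystable object with two sheaf summands (forbidden at a simple wall), and use the same polystable argument to identify $P^{t_-}_{n-1,\beta'}(\gamma)$ with $P^{t_0}_{n-1,\beta'}(\gamma)$. The one nuance worth flagging is that Conjecture~\ref{g=0 one dim sheaf conj} is needed precisely at the step where you pass from $n_{0,\delta_k}(\gamma)\neq 0$ to the existence of a $\mu$-stable sheaf in $M_1(X,\delta_k)$ (a GW-theoretic nonvanishing does not by itself produce a sheaf), rather than in the symbolic matching you describe at the end --- this is exactly where the paper invokes it.
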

\begin{proof}
By expanding the formula (\ref{PTt:WCF2}), 
the identity at the coefficient of $y^n q^{\beta}$ is 
\begin{align}\label{id:wcf}
P^{t_{+}}_{n, \beta}(\gamma)-P^{t_-}_{n, \beta}(\gamma) 
=\sum_{k=1}^n \frac{n!}{k!(n-k)!}
\sum_{\begin{subarray}{c}\beta'+\beta''=\beta \\
\beta_1''+\cdots +\beta_k''=\beta'', \ \omega \cdot \beta_i''=\frac{1}{t_0}
\end{subarray}} P_{n-k, \beta'}^{t_-}(\gamma) \cdot \prod_{i=1}^{k} n_{0, \beta_i''}(\gamma). 
\end{align}
If $t_0 \in \mathbb{R}_{>0}$ is a simple wall, 
then a term 
$P_{n-k, \beta'}^{t_-}(\gamma) \cdot \prod_{i=1}^{k} n_{0, \beta_i''}(\gamma)$
is non-zero only if $k=1$. 
Otherwise, we have 
\begin{align*}
P_{n-k}^{t_-}(X, \beta') \times \prod_{i=1}^k M_{1}(X, \beta_i'') 
\neq \emptyset. 
\end{align*}
For a point $(I', F_1'', \cdots, F_k'')$
in the above product, 
$I'$ is $Z_{t_0}$-semistable.
Therefore by 
denoting $\mathrm{gr}(I')$ the associated graded 
with respect to Jordan-H\"{o}lder filtration of the 
$Z_{t_0}$-stability, 
we have 
\begin{align*}
\mathrm{gr}(I') \oplus F_1''[-1] \oplus \cdots \oplus F_k''[-1]
\in \overline{P}_{n}^{t_0}(X, \beta).
\end{align*}
The above $Z_{t_0}$-polystable object
is of the form (\ref{point:AB}) only if $k=1$ and $I'$ is $Z_{t_0}$-stable. 
By the same reason, we have 
$P_{n-1}^{t_-}(X, \beta')=P_{n-1}^{t_0}(X, \beta')=\overline{P}_{n-1}^{t_0}(X, \beta')$, 
hence $P_{n-1, \beta'}^{t_-}(\gamma)=P_{n-1, \beta'}^{t_0}(\gamma)$. 
Therefore the identity (\ref{id:wcf}) is nothing but the formula (\ref{WCF:mspace}) if $t_0$ is a simple wall. 
\end{proof}

If $(\beta, n)$ satisfies the condition (\ref{ineq:betan}), 
there is no wall-crossing 
for $t>\frac{n}{\omega \cdot \beta}$, and 
PT and JS pairs are the same. 
Indeed in this case,
our main conjecture is compatible with
our previous PT/GV conjecture. 
\begin{prop}\label{lem:compatible}
Suppose that $(\beta, n)$ satisfies (\ref{ineq:betan}). 
Then we have 
\begin{align}\label{id:PT}
P_{n, \beta}(\gamma)=P_{n, \beta}^t(\gamma)=P_{n, \beta}^{\rm{JS}}(\gamma), \quad
t>\frac{n}{\omega \cdot \beta}, 
\end{align}
for certain choice of orientation.
And the first identity of Conjecture~\ref{PT/GV conj}, 
Conjecture~\ref{main conj} for $t>\frac{n}{\omega \cdot \beta}$, 
and Conjecture~\ref{JS/GV conj} are equivalent. 
\end{prop}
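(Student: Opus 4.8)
The plan is to split the statement into the numerical identity (\ref{id:PT}) and the equivalence of the three conjectural formulas, and to reduce the latter to a purely combinatorial comparison of the three right-hand sides, made possible once (\ref{id:PT}) identifies all the left-hand sides.

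First I would establish (\ref{id:PT}). By Proposition~\ref{prop:nowall}, the hypothesis (\ref{ineq:betan}) guarantees that the moduli space $P^t_n(X,\beta)$ does not depend on $t$ as long as $t>\frac{n}{\omega\cdot\beta}$; in particular there is no wall in this range. Since the Borisov--Joyce virtual class of Theorem~\ref{vir class of pair moduli} and the insertion $\tau(\gamma)$ of (\ref{pri insertion}) are both manufactured intrinsically out of the universal pair $\mathbb{I}=(\oO_{X\times P^t_n(X,\beta)}\to\mathbb{F})$ on this fixed space, they are unchanged as well, and hence so is the integral $P^t_{n,\beta}(\gamma)$. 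The PT invariant is the value at $t\to\infty$ and the JS invariant the value at $t=\frac{n}{\omega\cdot\beta}+0$, both lying in this range, which gives (\ref{id:PT}).

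Granting (\ref{id:PT}), the left-hand sides of the three conjectural identities coincide, so the equivalence reduces to showing that, under (\ref{ineq:betan}), the three right-hand sides are equal. I would dispose of $n=0$ separately, where all three reduce tautologically to $P_{0,\beta}$, and concentrate on $n\geqslant1$. Here the key is to show that every decomposition $\beta=\beta_0+\beta_1+\cdots+\beta_n$ contributing to the right-hand side of (\ref{WCF:main}) for some admissible $t$, or to that of Conjecture~\ref{PT/GV conj}, is forced to satisfy $\beta_0=0$ and $\omega\cdot\beta_i=\frac{\omega\cdot\beta}{n}$ for every $i$. For the first assertion, $P_{0,\beta_0}\neq0$ with $\beta_0>0$ produces a PT stable pair $(\oO_X\to F)$ with $[F]=\beta_0$, $\chi(F)=0$, whose section cuts out a one-dimensional subscheme $C$ with $[C]=\beta_0$ and $\chi(\oO_C)\leqslant\chi(F)=0$; thus $n(\beta_0)\leqslant0$, which for $0<\beta_0<\beta$ contradicts (\ref{ineq:betan}) since $\frac{n}{\omega\cdot\beta}>0$, while $\beta_0=\beta$ is excluded by the constraint $\omega\cdot\beta_i>0$. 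Hence $\beta_0=0$ and $\sum_{i=1}^n\omega\cdot\beta_i=\omega\cdot\beta$.

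The balancing $\omega\cdot\beta_i=\frac{\omega\cdot\beta}{n}$ is the heart of the matter. For each contributing class $\beta_i$ with $0<\beta_i<\beta$ the hypothesis (\ref{ineq:betan}) already gives $n(\beta_i)\geqslant1$ (an integer bounded below by $\tfrac{n}{\omega\cdot\beta}\,\omega\cdot\beta_i>0$); on the other hand the non-vanishing of the genus-$0$ invariant $n_{0,\beta_i}(\gamma)$ should force a genuine rational representative in class $\beta_i$, i.e. $n(\beta_i)\leqslant1$, whence $n(\beta_i)=1$ and (\ref{ineq:betan}) yields $\omega\cdot\beta_i\leqslant\frac{\omega\cdot\beta}{n}$. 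Summing these $n$ inequalities against $\sum_i\omega\cdot\beta_i=\omega\cdot\beta$ forces equality in each. With balancing in hand the constraints $\omega\cdot\beta_i>0$, $\omega\cdot\beta_i>\frac1t$ and $\omega\cdot\beta_i=\frac{\omega\cdot\beta}{n}$ all select exactly the same terms, so the three right-hand sides agree and the equivalence follows. I expect the main obstacle to be precisely the upper bound $n(\beta_i)\leqslant1$: extracting an effective curve of Euler characteristic at most one in the \emph{exact} class $\beta_i$ from $n_{0,\beta_i}(\gamma)\neq0$ must confront multiple-cover contributions, and one has to rule out, using (\ref{ineq:betan}) once more on the sub-multiples, that $\beta_i$ is a proper multiple of a smaller curve class.
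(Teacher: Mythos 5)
Your treatment of the identity (\ref{id:PT}) is exactly the paper's: Proposition~\ref{prop:nowall} fixes the moduli space for all $t>\frac{n}{\omega\cdot\beta}$, hence the virtual class, the insertion and the invariant. Your reduction of the equivalence to an equality of the three right-hand sides, and your argument that terms with $\beta_0\neq 0$ drop out (a nonzero $P_{0,\beta_0}$ forces a nonempty PT moduli space, hence a subscheme $C$ with $[C]=\beta_0$ and $\chi(\oO_C)\leqslant 0$, so $n(\beta_0)\leqslant 0$, contradicting (\ref{ineq:betan}) when $n\geqslant 1$) are sound and in the same spirit as the paper. The genuine gap is the step you yourself flag: the claim that $n_{0,\beta_i}(\gamma)\neq 0$ forces $n(\beta_i)\leqslant 1$. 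This is not provable with the tools of the paper. The invariants $n_{0,\beta_i}(\gamma)$ are defined from Gromov--Witten invariants through the Klemm--Pandharipande multiple-cover relation; their non-vanishing only yields non-vanishing of some $\mathrm{GW}_{0,\beta_i/d}(\gamma)$ with $d$ dividing $\beta_i$, which produces a stable map in the class $\beta_i/d$ whose image may have yet smaller class, and in any case gives no algebraic subscheme in the exact class $\beta_i$, let alone one with $\chi(\oO_C)\leqslant 1$. Converting GV non-vanishing into curve existence is precisely the (conjectural) enumerative content of statements such as Conjecture~\ref{g=0 one dim sheaf conj}, so it cannot be invoked as a proven fact; your balancing step, and with it the equality of right-hand sides as you set it up, does not go through.

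The paper's proof is organized so that this issue never arises. Instead of comparing the full right-hand sides term by term, it compares consecutive chambers: by the argument of Proposition~\ref{lem:wcf:equiv}, the discrepancy between the conjectural formula (\ref{WCF:main}) at $t_+$ and at $t_-$ across a wall $t_0>\frac{n}{\omega\cdot\beta}$ is the right-hand side of (\ref{id:wcf}), and every term there contains a stable-pair factor $P^{t_-}_{n-k,\beta'}(\gamma)$ alongside the $k\geqslant 1$ GV factors at the wall. The GV factors are never touched: one kills the pair factor geometrically. From $\omega\cdot\beta_i''=\frac{1}{t_0}<\frac{\omega\cdot\beta}{n}$ one gets $\frac{n-k}{\omega\cdot\beta'}<\frac{n}{\omega\cdot\beta}$, so $(\beta',n-k)$ again satisfies (\ref{ineq:betan}); hence $P^{t_-}_{n-k}(X,\beta')=P_{n-k}(X,\beta')$ by Proposition~\ref{prop:nowall}, and non-emptiness of this PT moduli space would give a curve with $[C]=\beta'$ and $\chi(\oO_C)\leqslant n-k$, i.e. $n(\beta')\leqslant n-k$, contradicting $\frac{n-k}{\omega\cdot\beta'}<\frac{n}{\omega\cdot\beta}\leqslant\frac{n(\beta')}{\omega\cdot\beta'}$. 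Thus every wall-crossing correction vanishes for purely sheaf-theoretic reasons. The structural difference from your approach is that your decomposition (after setting $\beta_0=0$) pushes everything into products of GV invariants, where no geometric handle exists, whereas the paper's wall-crossing decomposition keeps a pair invariant in each correction term, and pair invariants can be killed by moduli-space emptiness under the inherited inequality.
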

\begin{proof}
The identities (\ref{id:PT})
follows from Proposition~\ref{prop:nowall}. 
In order to show the compatibilities of conjectures, 
by the argument of Proposition~\ref{lem:wcf:equiv}, 
it is enough to show that for any $t_0>\frac{n}{\omega \cdot \beta}$
the right hand side of (\ref{id:wcf}) vanishes. 
Suppose that it is non-zero, and take 
$(\beta_0, \beta_1, \ldots, \beta_k)$ as in
the right hand side of (\ref{id:wcf}). 
Then
from 
the inequalities 
\begin{align*}
t_0=\frac{1}{\omega \cdot \beta_i}>\frac{n}{\omega \cdot \beta}, \quad
1\leqslant i \leqslant k,
\end{align*}
together with the condition (\ref{ineq:betan}), 
we obtain the inequality
\begin{align}\label{ineq:betak}
\frac{n-k}{\omega \cdot \beta_0}<\frac{n}{\omega \cdot \beta}
\leqslant \frac{n(\beta')}{\omega \cdot \beta'}, \quad 0<\beta' \leqslant \beta_0< \beta. 
\end{align}
Therefore the condition (\ref{ineq:betan}) 
is satisfied for $(\beta_0, n-k)$,
so $P_{n-k}^{t_-}(X, \beta_0)=P_{n-k}(X, \beta_0) \neq \emptyset$. 
Thus $n(\beta_0) \leqslant n-k$ by the definition of $n(\beta_0)$, 
which contradicts to (\ref{ineq:betak}) for $\beta'=\beta_0$. 
\end{proof}

If the condition (\ref{ineq:betan}) is not satisfied, 
we have wall-crossing phenomena as observed in Example~\ref{exam:locP2}.
In this example, there is nontrivial
wall-crossing of our invariants. 

\begin{exam}\label{exam:wall-crossing}
In the situation of Example~\ref{exam:locP2}, 
the $t=1$ is a simple wall, and 
the stratification (\ref{stratification}) is given by 
\begin{align*}
\overline{P}_1^{t=1}(X, 4[l])
=P_1^{t=1}(X, 4[l])
\coprod \big(P_0(X, 3[l]) \times M_1(X, [l])\big). 
\end{align*}
Let $[\mathrm{pt}] \in H^4(X, \mathbb{Z})=H^4(\mathbb{P}^2, \mathbb{Z})$
be the point class. From the formula (\ref{WCF:mspace}), 
we should have 
the identity
\begin{align*}
P_{1, 4[l]}([\mathrm{pt}])-P_{1, 4[l]}^{\rm{JS}}([\mathrm{pt}])=
P_{0, 3[l]} \cdot n_{0, [l]}([\mathrm{pt}]). 
\end{align*}
Indeed in this case, we can compute the invariants and conclude (ref. Proposition \ref{local P2 low deg class}):
\begin{align*}
P_{1, 4[l]}^t([\mathrm{pt}])=\left\{
\begin{array}{cc}
n_{0,4}([\mathrm{pt}]) +P_{0,3}\cdot n_{0,1}([\mathrm{pt}])=3,     &      \mathrm{if} \quad  t> 1 \\
& \\
n_{0,4}([\mathrm{pt}])=2,     &      \mathrm{if}\quad \frac{1}{4}<t<1.
\end{array} \right. 
\end{align*}
\end{exam}

\subsection{A virtual pushforward formula}
The formula for $n=1$ in Conjecture~\ref{JS/GV conj} is
\begin{align}\label{id:JS:GV}
P^{\mathrm{JS}}_{1,\beta}(\gamma)=n_{0,\beta}(\gamma),
\end{align}
which could be understood from
both wall-crossing and 
virtual pushforward formulae. 

In terms of wall-crossing, 
let us take $t_0=\frac{1}{\omega \cdot \beta}$. 
Then $t=t_0$ is a simple wall with respect to 
$(\beta, 1)$, so the formula (\ref{WCF:mspace}) gives 
\begin{align*}
P_{1, \beta}^{t_0+}(\gamma)-P_{1, \beta}^{t_0-}(\gamma)=
\sum_{\begin{subarray}{c} \beta'+\beta''=\beta, \\
\omega \cdot \beta''=\omega \cdot \beta
\end{subarray}}1 \cdot P_{0, \beta'} \cdot n_{0, \beta''}(\gamma). 
\end{align*}
Note that $\omega \cdot \beta'=0$ implies that 
$\beta'=0$ as $\beta'$ is an effective class or zero. 
Since $P_{1, \beta}^{t_0+}(\gamma)=P_{1, \beta}^{\rm{JS}}(\gamma)$, 
$P_{1, \beta}^{t_0-}(\gamma)=0$, 
and $P_{0, 0}=1$, 
we obtain the identity (\ref{id:JS:GV}). 

In terms of virtual push-forward formula, 
we consider the morphism in (\ref{map:JStoM}):
\begin{align*}
P^{\mathrm{JS}}_1(X, \beta)\to M_1(X, \beta), \quad (\oO_X\to F)\mapsto F.
\end{align*}
We expect that the following virtual pushforward formula 
\begin{align}\label{vir pushforward}f_*([P^{\mathrm{JS}}_1(X, \beta)]^{\mathrm{vir}})=[M_1(X, \beta)]^{\mathrm{vir}}\in H_2(M_1(X, \beta),\mathbb{Z}) \end{align}
holds for certain choice of orientation. Capping with insertions gives  
$$P^{\mathrm{JS}}_{1,\beta}(\gamma)=\int_{[M_1(X, \beta)]^{\mathrm{vir}}}\tau(\gamma), $$
where $\tau$ is the primary insertion (\ref{insertion for one dim sheaves}) for one dimensional stable sheaves. 
Assuming this, the equality $P^{\mathrm{JS}}_{1,\beta}(\gamma)=n_{0,\beta}(\gamma)$ is reduced to the `Katz/GV' conjecture mentioned in Conjecture \ref{g=0 one dim sheaf conj}.

Generally speaking, the virtual class in $\DT_4$ theory is difficult to work with. In the special case when the moduli space is a $(-2)$-shifted cotangent bundle of some derived smooth scheme (as reviewed in Section \ref{review DT4}), the virtual class can be described algebraically. The virtual pushforward formula (\ref{vir pushforward}) can be rigorously proved, due to the work of Manolache \cite{Mano}.
We review her formula in the following setting:
\begin{thm}\label{Manolache formula}\emph{(Manolache \cite[Thm.]{Mano})}
Given a proper morphism $f: P\to M$ between Deligne-Mumford stacks which possess perfect obstruction theories $E^{\bullet}_{P}$ and $E^{\bullet}_{M}$. If $f$ has a perfect relative obstruction theory compatible with $E^{\bullet}_{P}$ and $E^{\bullet}_{M}$ and $M$ is connected.
Assume the virtual dimension of the relative obstruction theory is zero, then 
$$f_*[P]^{\mathrm{vir}}=c\cdot [M]^{\mathrm{vir}}, $$ 
where $c\in\mathbb{Q}$ is the degree of the relative perfect obstruction theory.
\end{thm}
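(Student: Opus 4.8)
The plan is to deduce the identity from Manolache's formalism of virtual pull-backs: first promote the relative perfect obstruction theory to an operation $f^{!}$ on Chow groups, then use the compatibility datum to identify $[P]^{\mathrm{vir}}$ with $f^{!}[M]^{\mathrm{vir}}$, and finally extract the scalar $c$ by a generic degree computation. To begin, I would attach to $E^{\bullet}_{P/M}$ a virtual pull-back $f^{!}$. Following Manolache, the relative theory gives a closed embedding of the relative intrinsic normal cone $\mathfrak{C}_{P/M}$ into the vector bundle stack $\mathfrak{E}_{P/M}=h^{1}/h^{0}((E^{\bullet}_{P/M})^{\vee})$; for a class $\alpha\in A_{k}(M)$ one forms the relative normal cone over a representing cycle and intersects it with the zero-section of $\mathfrak{E}_{P/M}$ via Vistoli's refined Gysin map on Artin stacks. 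Since the relative theory has virtual dimension zero, this produces a \emph{dimension-preserving} homomorphism $f^{!}\colon A_{k}(M)\to A_{k}(P)$.

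Next I would invoke the compatibility of the three obstruction theories. The hypothesis that $E^{\bullet}_{P/M}$ is compatible with $E^{\bullet}_{P}$ and $E^{\bullet}_{M}$ supplies a morphism of distinguished triangles relating $\mathbf{L}f^{*}E^{\bullet}_{M}\to E^{\bullet}_{P}\to E^{\bullet}_{P/M}$ to the exact triangle of (co)tangent complexes. The central step is then Manolache's functoriality theorem: the intrinsic normal cones $\mathfrak{C}_{M}$, $\mathfrak{C}_{P}$, $\mathfrak{C}_{P/M}$ fit into a compatible diagram inside the respective bundle stacks, and a deformation-to-the-normal-cone argument yields
\[
[P]^{\mathrm{vir}} = f^{!}[M]^{\mathrm{vir}}.
\]
This reduces the theorem to the purely intersection-theoretic assertion that $f_{*}f^{!}[M]^{\mathrm{vir}} = c\,[M]^{\mathrm{vir}}$ for a rational constant $c$.

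Finally I would run the degree argument. Because $f^{!}$ preserves dimension and $f$ is proper, the composite $f_{*}f^{!}$ carries $A_{k}(M)$ to itself. I would define $c$ as the degree of the relative perfect obstruction theory: over a dense open substack of the support of $[M]^{\mathrm{vir}}$ where the relative virtual class of the fibres is a $0$-cycle, that cycle has a well-defined rational degree, and this is the candidate for $c$. The hard part — and the precise place where connectedness of $M$ is indispensable — is upgrading this generic equality to the identity of cycle classes $f_{*}f^{!}[M]^{\mathrm{vir}}=c\,[M]^{\mathrm{vir}}$. This is the virtual analogue of the statement that a finite flat morphism onto a connected base has locally, hence globally, constant degree: one must control the contributions concentrated on lower-dimensional or non-reduced strata and check, by a specialization argument using that $M$ is connected, that a single value of $c$ governs every irreducible component simultaneously. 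Combining this constancy with the functoriality identity $[P]^{\mathrm{vir}}=f^{!}[M]^{\mathrm{vir}}$ then gives $f_{*}[P]^{\mathrm{vir}}=c\,[M]^{\mathrm{vir}}$, as asserted.
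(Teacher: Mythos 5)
First, a point about what you are being compared against: the paper does not prove Theorem \ref{Manolache formula} at all. It is stated with attribution, and the citation \cite{Mano} is the entire ``proof''; the paper only \emph{uses} the result (e.g.\ in Theorem \ref{prod of cy3}, where the constant $c$ is computed by restricting the relative obstruction theory to a fibre $\mathbb{P}(H^0(Y,F))$ and integrating an Euler class). So your attempt can only be measured against Manolache's published argument, and in outline you have reproduced its architecture correctly: build the virtual pull-back $f^{!}$ from $E^{\bullet}_{P/M}$, use the compatibility triangle to get the functoriality identity $[P]^{\mathrm{vir}}=f^{!}[M]^{\mathrm{vir}}$ (this is a citable theorem from Manolache's earlier work on virtual pull-backs), and then analyze $f_{*}f^{!}$.

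Judged as a proof, however, there is a genuine gap exactly at the step you yourself flag as ``the hard part,'' and that step is the actual content of \cite{Mano}, not a verification one may defer. Write $[M]^{\mathrm{vir}}=\sum_i a_i[Z_i]$ with $Z_i$ irreducible of dimension equal to the virtual dimension. The support-and-dimension argument you gesture at does give $f_{*}f^{!}[Z_i]=c_i[Z_i]$ for some $c_i\in\mathbb{Q}$ (a class of dimension $\dim Z_i$ supported on a closed subset of the irreducible $Z_i$ is a multiple of $[Z_i]$), but nothing you have said forces the $c_i$ to agree, and if they do not, $f_{*}[P]^{\mathrm{vir}}$ is simply not proportional to $[M]^{\mathrm{vir}}$. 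Closing this requires two concrete inputs: (a) identify $c_i$ with the degree of the $0$-dimensional virtual class that $E^{\bullet}_{P/M}$ induces on the fibre $P_g=f^{-1}(g)$ over a general closed point $g\in Z_i$, using properness of $f$ and the base-change compatibility of virtual pull-backs with Gysin maps; and (b) prove that $g\mapsto \deg[P_g]^{\mathrm{vir}}$ is locally constant on $M$: any two closed points lie on a chain of irreducible curves, and over a curve $C$ one has $(f_C)_{*}[P_C]^{\mathrm{vir}}\in A_1(C)\otimes\mathbb{Q}=\mathbb{Q}[C]$, so Gysin restriction to any point of $C$ returns the same degree; connectedness of $M$ then yields a single constant $c$. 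Your analogy with the locally constant degree of a finite flat morphism points at (b), but the specialization argument is asserted rather than given, and your definition of $c$ via ``a dense open substack of the support of $[M]^{\mathrm{vir}}$'' is not well posed as stated (the support of a cycle class is not canonical); the fibre-degree formulation above is the one that both makes (a) precise and matches how the paper actually computes $c$ in its applications.
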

In Section \ref{sect on verification of cpt conj}, 
we will apply it to several examples and prove Conjecture \ref{JS/GV conj} (with $n=1$) in those cases.

\section{Examples of JS/GV formula}\label{sect on verification of cpt conj}\label{sec:JS/GV}
Evidence of Conjecture \ref{main conj} in the Pandharipande-Thomas chamber is given in \cite{CMT2}.
In this section, we give further verifications of Conjecture \ref{main conj} mainly concentrated in the Joyce-Song chamber as stated in the form of Conjecture \ref{JS/GV conj}.

\subsection{Irreducible curve class}
When the curve class is irreducible, there is no difference between JS and PT chamber. 
We refer to our previous work \cite[Prop.~1.4,~1.8,~Thm.~1.5,~1.7]{CMT2} for many checks of Conjecture \ref{main conj} in such setting.
\begin{prop}\label{irr class}
Let $(X,\omega)$ be a Calabi-Yau 4-fold and $\beta\in H_2(X,\mathbb{Z})$ be an irreducible curve class. 
Then we have 
\begin{align*}
P^t_{n, \beta}(\gamma)=P_{n, \beta}(\gamma), \quad  
t>\frac{n}{\omega \cdot \beta},
\end{align*}
for certain choice of orientation.
\end{prop}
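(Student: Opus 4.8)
The plan is to reduce the statement to Proposition~\ref{prop:nowall}, whose hypothesis turns out to be vacuously satisfied for an irreducible curve class. Recall that Proposition~\ref{prop:nowall} guarantees that $P^t_n(X,\beta)$ is independent of $t$ for $t>\frac{n}{\omega\cdot\beta}$ --- and in particular that it agrees with the PT moduli space obtained in the limit $t\to\infty$ --- as soon as the inequality (\ref{ineq:betan}) holds for every effective class $0<\beta'<\beta$. So the only thing one needs to verify is that there are no such intermediate classes $\beta'$ at all.

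First I would observe that an effective class $\beta'$ with $0<\beta'<\beta$ is, by the very meaning of the partial order on curve classes, one for which both $\beta'$ and $\beta-\beta'$ are non-zero and effective. Were such a $\beta'$ to exist, the decomposition $\beta=\beta'+(\beta-\beta')$ would exhibit $\beta$ as a sum of two non-zero effective classes, contradicting the definition of irreducibility recalled in the introduction. Hence no class $\beta'$ with $0<\beta'<\beta$ exists, and the hypothesis of Proposition~\ref{prop:nowall} holds for trivial reasons; there are no walls in the region $t>\frac{n}{\omega\cdot\beta}$.

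Applying Proposition~\ref{prop:nowall}, the scheme $P^t_n(X,\beta)$ is then literally the same for all $t>\frac{n}{\omega\cdot\beta}$, and coincides with $P_n(X,\beta)$. Since the universal pair (\ref{Upair}), the quadratic determinant line bundle $(\lL,Q)$, the virtual class (\ref{pair moduli vir class}), and the insertion map $\tau$ of (\ref{pri insertion}) are all built out of this one moduli space, a single orientation can be fixed once and for all across the whole range $t>\frac{n}{\omega\cdot\beta}$. With that common choice the integrals defining $P^t_{n,\beta}(\gamma)$ and $P_{n,\beta}(\gamma)$ become identical, yielding the asserted equality.

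I do not anticipate any genuine obstacle here, since the substantive content is already contained in Proposition~\ref{prop:nowall}. The only point requiring care is conceptual rather than computational: because irreducibility forces the relevant moduli spaces to be equal as schemes --- not merely isomorphic --- there is no ambiguity in transporting the orientation, the virtual class and the insertions, so the ``certain choice of orientations'' appearing in the statement is simply one fixed choice that is valid simultaneously for every $t$ in the stated range.
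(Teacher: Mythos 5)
Your proposal is correct and follows essentially the same route as the paper: the paper's proof likewise observes that irreducibility makes the hypothesis (\ref{ineq:betan}) of Proposition~\ref{prop:nowall} automatically (indeed vacuously) satisfied, and then invokes that proposition to identify $P_n^t(X,\beta)$ with $P_n(X,\beta)$ for $t>\frac{n}{\omega\cdot\beta}$. Your additional remark that the equality of moduli spaces as schemes lets one transport the orientation, virtual class and insertions without ambiguity is exactly what the paper leaves implicit.
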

\begin{proof}
Since $\beta$ is irreducible, the condition (\ref{ineq:betan})
is automatically satisfied.
Therefore we have $P_n^t(X, \beta)=P_n(X, \beta)$
for $t>\frac{n}{\omega \cdot \beta}$ 
by Proposition~\ref{prop:nowall}.
%
\end{proof}

\subsection{Degree two curve class}
Let $X\subseteq \mathbb{P}^{5}$ be a smooth sextic 4-fold with hyperplane class $\omega$.
By Lefschetz hyperplane theorem, 
$H_2(X,\mathbb{Z})\cong H_2(\mathbb{P}^{5},\mathbb{Z})=\mathbb{Z}[l]$, where 
$l$ is the class of a line. 
\begin{prop}\label{deg two class}
For $n=0,1,2$ and degree two class $\beta=2[l]\in H_2(X,\mathbb{Z})$, we have 
\begin{align*}
P^t_{n, \beta}(\gamma)=P_{n, \beta}(\gamma), \quad
t>\frac{n}{2},
\end{align*}
for certain choice of orientation. 
Furthermore, Conjecture \ref{main conj} holds for $\beta=2[l]$ and $n=0,1$.
\end{prop}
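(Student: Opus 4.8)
The plan is to derive both assertions from the wall-independence criterion of Proposition~\ref{prop:nowall} together with the compatibility statement of Proposition~\ref{lem:compatible}, so that the only substantive geometric input is the Pandharipande--Thomas computation already available for the sextic.

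First I would establish the wall-independence claim by verifying the hypothesis~(\ref{ineq:betan}) of Proposition~\ref{prop:nowall} for the pair $(2[l], n)$. Since $H_2(X, \mathbb{Z}) = \mathbb{Z}[l]$, the only effective class with $0 < \beta' < 2[l]$ is $\beta' = [l]$, and one has $\omega \cdot [l] = 1$ while $\omega \cdot 2[l] = 2$. To evaluate $n([l])$, observe that a one-dimensional closed subscheme $C \subset X$ with $[C] = [l]$ has degree one, so its one-dimensional part is a single reduced line $\mathbb{P}^1 \subset X$; then $\oO_C$ surjects onto the structure sheaf of that line with zero-dimensional kernel, whence $\chi(\oO_C) \geqslant \chi(\oO_{\mathbb{P}^1}) = 1$, with equality for the line itself, so $n([l]) = 1$. (If $X$ contained no line the condition would be vacuous and the conclusion would hold a fortiori.) Thus~(\ref{ineq:betan}) reads $\tfrac{n}{2} \leqslant 1$, which holds exactly for $n \leqslant 2$, and Proposition~\ref{prop:nowall} gives that $P_n^t(X, 2[l])$ is constant in $t$ for $t > \tfrac{n}{2}$ and coincides with the PT moduli space. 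As the moduli spaces literally agree across the empty set of walls, the orientations can be matched and the invariants coincide, giving $P^t_{n, 2[l]}(\gamma) = P_{n, 2[l]}(\gamma)$ for $n = 1, 2$.

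For the case $n = 0$ of Conjecture~\ref{main conj}, the formula~(\ref{WCF:main}) collapses to the single term $P_{0, 2[l]}$, so the conjecture is precisely the $t$-independence of $P^t_{0, 2[l]}$ for $t > 0$; since $(2[l], 0)$ satisfies~(\ref{ineq:betan}) trivially, this is again immediate from Proposition~\ref{prop:nowall}.

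For $n = 1$ I would apply Proposition~\ref{lem:compatible}: as $(2[l], 1)$ satisfies~(\ref{ineq:betan}), Conjecture~\ref{main conj} for $t > \tfrac{1}{2}$ is equivalent to the first identity of Conjecture~\ref{PT/GV conj} for $(2[l], 1)$. Expanding the latter and using $P_{0, 0} = 1$, its right-hand side is $n_{0, 2[l]}(\gamma) + P_{0, [l]} \cdot n_{0, [l]}(\gamma)$, and the extra term vanishes: a PT stable pair $(\oO_X \to F)$ with $[F] = [l]$ and $\chi(F) = 0$ would force $F \cong \oO_{\mathbb{P}^1}(-1)$ on a line, which carries no nonzero section, so $P_{0, [l]} = 0$. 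The identity therefore reduces to $P_{1, 2[l]}(\gamma) = n_{0, 2[l]}(\gamma)$, the PT/GV correspondence in degree two on the sextic verified in~\cite{CMT2}. The main obstacle hence lies entirely outside the present reduction, which is formal once~(\ref{ineq:betan}) has been checked: it is the degree-two PT-chamber computation and its identification with $n_{0, 2[l]}(\gamma)$, imported from~\cite{CMT2}. The only point requiring genuine care within this argument is the geometric evaluation $n([l]) = 1$ underlying~(\ref{ineq:betan}).
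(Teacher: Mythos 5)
Your proposal is correct and follows essentially the same route as the paper: verify the hypothesis~(\ref{ineq:betan}) for $(2[l],n)$, $n\leqslant 2$, via $n([l])=1$, apply Proposition~\ref{prop:nowall} to identify $P^t_n(X,2[l])$ with the PT moduli space (hence the invariants, for matched orientations), and then reduce Conjecture~\ref{main conj} to the degree-two PT/GV computations imported from \cite{CMT2}. You are in fact somewhat more explicit than the paper, which asserts $n([l])=1$ without justification and does not spell out the vanishing $P_{0,[l]}=0$ needed to match the constraint $\omega\cdot\beta_i>\tfrac{1}{t}$ for $\tfrac12<t<1$ against the constraint $\omega\cdot\beta_i>0$ in Conjecture~\ref{PT/GV conj}; routing this through Proposition~\ref{lem:compatible} is a clean way to package what the paper leaves implicit. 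One small omission: for $n=1$ your argument only treats $t>\tfrac12$, whereas Conjecture~\ref{main conj} quantifies over all generic $t>0$; for $0<t<\tfrac12$ both sides vanish (the moduli space is empty by Proposition~\ref{prop:chambers}~(iii), and the sum on the right is empty since $\omega\cdot\beta_i>\tfrac{1}{t}>2$ is impossible), which is the case the paper dismisses as ``obvious'' but should still be recorded.
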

\begin{proof}
The condition (\ref{ineq:betan}) is satisfied
for $(2[l], n)$ with $n\leqslant 2$, 
since 
$n([l])=1$. 
%
%
%
Therefore we have 
$P_n^t(X, 2[l])=P_n(X, 2[l])$ for $n\leqslant 2$. 
Under the isomorphism of moduli spaces, virtual classes are identified and invariants are the same by
choosing same orientations and insertions . 
When $n=0,1$ and $t>\frac{1}{2}$, Conjecture \ref{main conj} then reduces to \cite[Prop.~3.1,~3.2]{CMT2} (see also \cite{Caoconic}). The case $n=0,1$ and $t< \frac{1}{2}$ is obvious.
\end{proof}

\subsection{Elliptic fibration}
For $Y=\mathbb{P}^3$, we take general elements
\begin{align*}
u \in H^0(Y, \oO_Y(-4K_Y)), \
v \in H^0(Y, \oO_Y(-6K_Y)).
\end{align*}
We define $X$ to be the hypersurface
\begin{align*}
X =\{zy^2=x^3 +uxz^2+vz^3\}
\subset \mathbb{P}(\oO_Y(-2K_Y) \oplus \oO_Y(-3K_Y) \oplus \oO_Y). 
\end{align*}
Here $[x:y:z]$ is the homogeneous coordinate of the 
projective bundle over $Y$ in the right hand side. 
Then $X$ is a CY 4-fold, and 
the projection to $Y$ 
gives an elliptic fibration
\begin{align}\label{elliptic fib}
\pi \colon X \to Y. 
\end{align}
A general fiber of
$\pi$ is a smooth elliptic curve, and any singular
fiber is either a nodal or cuspidal plane curve.
Moreover, $\pi$ admits a section $\iota$ whose image
corresond to fiber point $[0: 1: 0]$. 

Let $h$ be a hyperplane in $Y$ and $f:=\pi^{-1}(p)$ for a general point $p\in\mathbb{P}^3$, set
$$B=\pi^*h, \quad E=\iota(Y)\in H_6(X,\mathbb{Z}). $$
We consider multiple fiber classes $r[f]$ ($r\geqslant1$) below.
\begin{prop}\label{elliptic fib}
For any $t>0$ and certain choice of orientation, we have 
\begin{align*}
P_{0, r[f]}^t=P_{0, r[f]}.
\end{align*}
Furthermore, Conjecture \ref{main conj} holds for $\beta=r[f]$ \emph{($r\geqslant 1$)} and $n=0$.
\end{prop}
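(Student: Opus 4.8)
The plan is to deduce both assertions from Proposition~\ref{prop:nowall}. Since $n=0$, the right-hand side of (\ref{WCF:main}) in Conjecture~\ref{main conj} reduces to the single term $\beta_0=\beta$ (there are no indices $i=1,\dots,n$), so Conjecture~\ref{main conj} for $(r[f],0)$ asserts exactly that $P^t_{0,r[f]}$ is independent of $t>0$ and equals the $t\to\infty$ value $P_{0,r[f]}$. Thus the two displayed statements coincide, and it suffices to prove the first one. For $(\beta,n)=(r[f],0)$ we have $\tfrac{n}{\omega\cdot\beta}=0$, so Proposition~\ref{prop:nowall} yields $t$-independence of the scheme $P^t_0(X,r[f])$ for all $t>0$, and hence of the invariant $P^t_{0,r[f]}$ after fixing a consistent orientation, provided the inequality (\ref{ineq:betan}) holds for every effective class $0<\beta'<r[f]$.

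First I would reduce (\ref{ineq:betan}) to a positivity statement. For $n=0$ the inequality reads $0\leqslant n(\beta')/(\omega\cdot\beta')$, i.e. $n(\beta')\geqslant 0$. To identify the relevant $\beta'$, I push forward along the elliptic fibration $\pi\colon X\to Y=\mathbb{P}^3$. If $0<\beta'<r[f]$, then $\beta'$ and $r[f]-\beta'$ are both effective, so $\pi_\ast\beta'\geqslant 0$ and $\pi_\ast(r[f]-\beta')=-\pi_\ast\beta'\geqslant 0$ in $H_2(\mathbb{P}^3,\mathbb{Z})=\mathbb{Z}$; hence $\pi_\ast\beta'=0$ and $\beta'$ is vertical. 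Since every fibre of $\pi$ is an irreducible plane cubic of class $[f]$, any effective vertical class is a non-negative multiple of $[f]$, so $\beta'=s[f]$ with $0<s<r$.

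It then remains to show $n(s[f])\geqslant 0$, i.e. $\chi(\oO_C)\geqslant 0$ for every one-dimensional closed subscheme $C\subset X$ with $[C]=s[f]$. Such a $C$ is vertical, hence supported on finitely many fibres of $\pi$. Removing embedded and isolated points only increases $\chi(\oO_C)$, so I may assume $C$ is pure of dimension one, i.e. Cohen--Macaulay. The key geometric inputs are that the fibres have arithmetic genus one, so $\chi(\oO_f)=0$, and that, as $X$ is Calabi--Yau with $\omega_Y\cong\oO_Y(-4)$, the relative dualizing sheaf is $\omega_{X/Y}\cong\pi^\ast\oO_Y(4)$, which restricts trivially to every fibre and therefore to $C$. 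Serre duality on the one-dimensional $C$ gives $\chi(\oO_C)=-\chi(\omega_C)$ with $\omega_C=\mathcal{E}xt^3_X(\oO_C,\oO_X)$; for $C$ a fibre, or a flat thickening $\pi^{-1}(W)$ of a Gorenstein zero-dimensional $W\subset Y$, the triviality of $\omega_{X/Y}$ over points yields $\omega_C\cong\oO_C$ and hence $\chi(\oO_C)=0$, which I would use as the model computation.

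The main obstacle is to make the bound $\chi(\oO_C)\geqslant 0$ uniform over all degenerate vertical curves---non-reduced thickenings, and curves lying in the nodal or cuspidal Weierstrass fibres---equivalently to bound the arithmetic genus of any connected vertical $C$ by one. The decisive tool is again the triviality of $\omega_{X/Y}$ along the fibres: a first-order thickening of a fibre in a normal direction $L\subset N_{f/X}$ contributes $\chi(L^{-1})=-\deg L$, and one must show that no positive-degree quotient of the conormal bundle occurs, using that $N_{f/X}$ is built from degree-zero data controlled by $\omega_{X/Y}|_f\cong\oO_f$. I expect the most delicate point to be the analysis over the singular (cuspidal) fibres, where $C$ need not be Gorenstein and the clean self-duality $\omega_C\cong\oO_C$ may fail; there one must argue directly that the length of $R^1\pi_\ast\oO_C$ does not exceed that of $\pi_\ast\oO_C$.
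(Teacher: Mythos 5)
Your overall strategy is the same as the paper's: for $n=0$ the sum in (\ref{WCF:main}) collapses to the single term $\beta_0=\beta$, so both assertions of the proposition are the identity $P^t_{0,r[f]}=P_{0,r[f]}$, and everything follows from Proposition \ref{prop:nowall} once the condition (\ref{ineq:betan}) is verified, which for $n=0$ means $n(\beta')\geqslant 0$ for every effective $0<\beta'<r[f]$. Your pushforward argument showing that any such $\beta'$ is vertical, hence of the form $s[f]$ because Weierstrass fibres are integral plane cubics, is correct and is a step the paper leaves implicit; the paper then simply asserts that $n(r'[f])=0$ for all $r'>0$ and concludes.

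The genuine gap is that you never prove this assertion. Your proposal ends by listing what ``one must show'' (that no destabilizing piece of the normal bundle occurs, and a direct length estimate over cuspidal fibres), so the essential positivity $\chi(\oO_C)\geqslant 0$ for all one-dimensional subschemes $C$ in vertical classes is left open, and the route you sketch (Serre duality, $\omega_C\cong\oO_C$, Gorenstein-ness worries) is both harder than necessary and marred by reversed inequalities: passing to the pure one-dimensional part of $C$ \emph{lowers} $\chi(\oO_C)$, which is exactly why the reduction to pure $C$ is legitimate (you state the opposite), and what must be excluded are \emph{negative}-degree quotients of the conormal bundle, equivalently positive-degree sub-line-bundles of $N_{f/X}$, not positive-degree quotients. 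The missing step has a short proof avoiding duality entirely: each fibre $f=\pi^{-1}(p)$ is a local complete intersection cut out by the pullback of $\mathfrak{m}_p$, so $I_f/I_f^2\cong\oO_f^{\oplus 3}$, including for nodal and cuspidal fibres; for $C$ pure and supported on $f$, each graded piece $I_f^k\oO_C/I_f^{k+1}\oO_C$ is a quotient of $\Sym^k(I_f/I_f^2)$, hence a globally generated sheaf on the integral curve $f$ with $\chi(\oO_f)=0$, and any globally generated sheaf $F$ on such a curve has $\chi(F)\geqslant 0$ (generic sections give $\oO_f^{\oplus r}\hookrightarrow F$ with zero-dimensional cokernel). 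Summing over the graded pieces and over the disjoint fibres supporting $C$ gives $\chi(\oO_C)\geqslant 0$, i.e. $n(s[f])=0$, which is precisely the input the paper invokes.
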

\begin{proof}
The condition (\ref{ineq:betan}) is
satisfied since $n(r'[f])=0$ for any $r'>0$. 
Therefore 
$P_0^t(X, r[f])=P_0(X, r[f])$ and 
their virtual classes are identified. 
Then Conjecture \ref{main conj} follows from 
the statement for PT stable pairs~\cite[Prop.~3.6]{CMT2}.
\end{proof}
\begin{prop} 
For certain choice of orientation, we have 
\begin{align*}
P^{\mathrm{JS}}_{1, r[f]}(\gamma)=\int_{[M_1(X, r[f])]^{\rm{vir}}}\tau(\gamma).
\end{align*}
Moreover, Conjecture \ref{JS/GV conj} holds for $\beta=r[f]$ \emph{($r\geqslant 1$)}, $n=1$ and $\gamma=B\cdot E$ or $B^2$.
\end{prop}
\begin{proof}
Let $M_1(X, r[f])$ be the moduli space of one dimensional stable sheaves $F$ with $[F]=r[f]$ and $\chi(F)=1$. An element $[F]\in M_1(X, r[f])$ is 
scheme theoretically supported on a fiber $\pi^{-1}(p)$ of $\pi$ (ref.~\cite[Lem.~2.2]{CMT1}). Write $F=i_*\mathcal{F}$ for the inclusion $i:\pi^{-1}(p)\to X$, then 
$$H^1(X,F)\cong H^1(\pi^{-1}(p),\mathcal{F})\cong \Hom(\mathcal{F},\oO_{\pi^{-1}(p)})^{\vee}=0, $$
since the slope of $\mathcal{F}$ is bigger than the slope of $\oO_{\pi^{-1}(p)}$.

The morphism 
\begin{align*}
f: P^{\mathrm{JS}}_1(X, r[f])\to M_1(X, r[f]), \quad (\oO_X\to F)\mapsto F
\end{align*}
is then an isomorphism as the fiber is $\mathbb{P}(H^0(X,F))$ and $h^0(X,F)=h^1(X,F)+1=1$. 
Conjecture \ref{JS/GV conj} in this case reduces to Conjecture \ref{g=0 one dim sheaf conj}, which has been verified in \cite[Prop.~2.3]{CMT1}. 
\end{proof}

\subsection{Product of CY 3-fold and elliptic curve}
Let $Y$ be a smooth projective Calabi-Yau 3-fold and $E$ is an elliptic curve.
Argument of Proposition \ref{elliptic fib} leads straightforward to:
\begin{prop}\label{prod elliptic fib}
Let $X=Y\times E$ be as above.
Then for any $t>0$, $r\geqslant1$, we have 
\begin{align*}
P_{0, r[E]}^t=P_{0, r[E]}, 
\end{align*}
for certain choice of orientation. 
Hence Conjecture \ref{main conj} holds for $\beta=r[E]$ \emph{($r\geqslant 1$)} and $n=0$.
\end{prop}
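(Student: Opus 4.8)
The plan is to transcribe the proof of Proposition~\ref{elliptic fib} into the product setting, reducing everything (for $n=0$) to the $t$-independence supplied by Proposition~\ref{prop:nowall} once the numerical hypothesis~(\ref{ineq:betan}) is verified. First I would check that $(r[E],0)$ satisfies~(\ref{ineq:betan}). Since $n=0$, the left-hand side $\frac{n}{\omega\cdot r[E]}$ vanishes, so the condition reduces to $n(\beta')\geqslant 0$ for every effective class $0<\beta'<r[E]$. Because $[E]$ is the fiber class of the projection $\pi\colon X=Y\times E\to Y$ and $\omega$ is ample, any such $\beta'$ must be of the form $r'[E]$ with $1\leqslant r'\leqslant r-1$: a nonzero $H_2(Y)$-component cannot be effective at the same time as its negative. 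So the real content of this step is the bound $n(r'[E])\geqslant 0$, and in fact $n(r'[E])=0$.

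The key computation, which I expect to be the main obstacle, is the vanishing $n(r'[E])=0$, paralleling $n(r'[f])=0$ in the previous proposition. The upper bound $n(r'[E])\leqslant 0$ is immediate, taking $C$ to be a disjoint union of $r'$ reduced fibers $\{y_i\}\times E$, for which $\chi(\oO_C)=r'\chi(\oO_E)=0$. For the lower bound I would argue that any one dimensional closed subscheme $C$ with $[C]=r'[E]$ has $\pi_\ast[C]=0$ in $H_2(Y)$, hence $\pi(C)$ is a finite set of points and $C$ is supported on finitely many fibers $E_{y}=\{y\}\times E$; as these fibers are disjoint in $X$, $\chi(\oO_C)=\sum_i\chi(\oO_{C_i})$ where $C_i$ is the part lying over $y_i$. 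On each fiber the normal bundle is trivial, $N_{E_y/X}\cong \oO_E^{\oplus 3}$, so the associated graded of $\oO_{C_i}$ along the ideal filtration of $E_y$ is built from quotients of $\Sym^j(N^\vee)\cong \oO_E^{\oplus\binom{j+2}{2}}$, each of which has $\chi\geqslant 0$ on the genus one curve $E$. Summing over graded pieces yields $\chi(\oO_{C_i})\geqslant 0$, hence $n(r'[E])\geqslant 0$. It is precisely this triviality of the normal bundle of a fiber in the product that makes the estimate work.

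With~(\ref{ineq:betan}) verified, Proposition~\ref{prop:nowall} applies with $n=0$ and shows that $P_0^t(X,r[E])$ is independent of $t$ for all $t>\frac{0}{\omega\cdot r[E]}=0$, and that it coincides with the Pandharipande--Thomas moduli space $P_0(X,r[E])$. Under this identification the $(-2)$-shifted symplectic structures and the determinant line bundles with their Serre-duality pairings match, so that a compatible choice of orientation identifies the $\mathrm{DT}_4$ virtual classes and therefore the invariants, giving $P^t_{0,r[E]}=P_{0,r[E]}$ for every $t>0$.

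Finally, for $n=0$ the sum in Conjecture~\ref{main conj} collapses to the single decomposition $\beta_0=\beta$ with empty product equal to $1$, so the right-hand side is simply $P_{0,\beta}$, independent of $\gamma$. Thus the $t$-independence just established is literally the assertion of Conjecture~\ref{main conj} for $\beta=r[E]$ and $n=0$, the value on the Pandharipande--Thomas side being the content of the corresponding statement in~\cite{CMT2}. I expect the genuinely delicate point to be only the lower bound $n(r'[E])\geqslant 0$; the remaining steps are a formal transcription of the fibration argument.
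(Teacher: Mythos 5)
Your proof is correct and takes essentially the same route as the paper: the paper's (one-line) proof simply transports the argument of Proposition~\ref{elliptic fib} — verify condition (\ref{ineq:betan}) via $n(r'[E])=0$, invoke Proposition~\ref{prop:nowall} to identify $P_0^t(X,r[E])$ with $P_0(X,r[E])$ along with their virtual classes and orientations, and note that Conjecture~\ref{main conj} at $n=0$ collapses to exactly this $t$-independence. The details you supply — that every effective class below $r[E]$ is a fiber multiple $r'[E]$, and the lower bound $n(r'[E])\geqslant 0$ obtained from the triviality of the normal bundle $N_{E_y/X}\cong\oO_E^{\oplus 3}$ — are precisely the facts the paper asserts without proof.
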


Next, we discuss the $n=1$ case. For any smooth projective variety $Y$, we may define the moduli space $P^{\mathrm{JS}}_n(Y,\beta)$ of Joyce-Song stable pairs on $Y$ by
Definition~\ref{defi:PTJSpair} (ii). 
We have the following
deformation-obstruction theory
(called pair deformation-obstruction theory):
\begin{align}\notag
\dR \hH om_{\pi_P}(\mathbb{I}, \mathbb{F})^{\vee}
\to \tau_{\geqslant -1}\mathbb{L}_{P^{\rm{JS}}_n(X, \beta)}.
\end{align}
Here $\mathbb{I}=(\oO_{X \times P_n^{\rm{JS}}(X, \beta)} \to \mathbb{F})$
is the universal pair, 
and $\pi_P \colon X \times P_n^{\rm{JS}}(X, \beta) \to P_n^{\rm{JS}}(X, \beta)$
is the projection. 

\begin{lem}\label{lem on pair moduli on CY3}
Let $(Y,\omega)$ be a smooth projective Calabi-Yau 3-fold and $\beta\in H_2(Y,\mathbb{Z})$.
Then the
truncated  pair deformation-obstruction theory 
\begin{align}\notag
\tau_{\geqslant -1}\left(
\dR \hH om_{\pi}(\mathbb{I}, \mathbb{F})^{\vee}\right)
\to \tau_{\geqslant -1}\mathbb{L}_{P^{\rm{JS}}_1(X, \beta)}
\end{align}
of $P^{\mathrm{JS}}_1(Y,\beta)$ is perfect in the sense of \cite{BF, LT}. Hence there exists
an algebraic virtual class
of virtual dimension zero
\begin{equation}[P^{\mathrm{JS}}_1(Y,\beta)]_{\mathrm{pair}}^{\mathrm{vir}}\in A_{0}(P^{\mathrm{JS}}_1(Y,\beta),\mathbb{Z}). \nonumber \end{equation}
\end{lem}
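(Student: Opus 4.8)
The plan is to exhibit
$$E^{\bullet}:=\tau_{\geqslant -1}\!\left(\dR\hH om_{\pi_P}(\mathbb{I},\mathbb{F})^{\vee}\right)$$
as a two-term perfect complex of amplitude $[-1,0]$ on $P:=P^{\rm JS}_1(Y,\beta)$, and to verify that the natural map $E^{\bullet}\to\tau_{\geqslant -1}\mathbb{L}_{P}$, built from the Atiyah class of the universal pair $\mathbb{I}=(\oO_{Y\times P}\to\mathbb{F})$ as in the pair theory of Pandharipande--Thomas~\cite{PT}, satisfies the Behrend--Fantechi axioms. At a closed point $I=(\oO_Y\stackrel{s}{\to}F)$ the fibre of $\dR\hH om_{\pi_P}(\mathbb{I},\mathbb{F})$ computes $\Ext^{\bullet}(I,F)$, whose degree $0$ and $1$ parts are the expected tangent and obstruction spaces; so the whole content is the vanishing of $\Ext^{\bullet}(I,F)$ outside $[0,1]$ up to a rank-one correction, plus a Riemann--Roch computation of the rank. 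First I would analyse $\Ext^{\bullet}(I,F)$ through the triangle $F[-1]\to I\to\oO_Y$: applying $\RHom(-,F)$ produces a long exact sequence relating $\Ext^{\bullet}(I,F)$ to $H^{\bullet}(Y,F)$ and $\Ext^{\bullet}(F,F)$, and since $F$ is one-dimensional one has $H^{\geqslant 2}(Y,F)=0$.

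The decisive geometric input is that for $n=1$ every such $F$ is automatically $\mu$-stable, hence simple. Indeed, if $F$ were strictly $\mu$-semistable its Jordan--H\"older factors $F_j$ would all have slope $\mu(F)=1/(\omega\cdot\beta)$, forcing $\chi(F_j)=(\omega\cdot\beta_j)/(\omega\cdot\beta)$; but each $\chi(F_j)$ is a positive integer and they sum to $\chi(F)=1$, which is impossible unless there is a single factor. Thus $\End(F)=\mathbb{C}$. Feeding $\End(F)=\mathbb{C}$ and $s\neq 0$ into the long exact sequence, together with Serre duality on the Calabi--Yau $3$-fold $Y$ (so $\Ext^i(F,F)\cong\Ext^{3-i}(F,F)^{\vee}$), I expect to obtain
\begin{align*}
\Ext^{-1}(I,F)=0,\qquad \Ext^{2}(I,F)\cong\Hom(F,F)^{\vee}\cong\mathbb{C},\qquad \Ext^{\geqslant 3}(I,F)=0,
\end{align*}
the first vanishing because $\mathrm{id}_F\mapsto s\neq 0$ makes the map $\Hom(F,F)\to H^0(Y,F)$ injective.

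Globalising, $\dR\hH om_{\pi_P}(\mathbb{I},\mathbb{F})^{\vee}$ is a perfect complex whose relative cohomology sheaves sit in degrees $0,-1,-2$, the degree $(-2)$ piece being $\hH om_{\pi_P}(\mathbb{F},\mathbb{F})\cong\oO_{P}$ because $F$ is simple and $P$ is a fine moduli space (pairs carrying no nontrivial automorphisms, again by the simplicity argument). Since $\tau_{\leqslant -2}$ of this complex is therefore the line bundle $\oO_P$ placed in degree $-2$, hence perfect, the truncation $E^{\bullet}$ is again perfect, now of amplitude $[-1,0]$; the vanishing $\Ext^{-1}(I,F)=0$ guarantees there is no cohomology in positive degrees. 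That the truncated map $E^{\bullet}\to\tau_{\geqslant -1}\mathbb{L}_{P}$ is an obstruction theory then follows from the standard pair deformation theory, since truncation at $\geqslant -1$ affects neither $h^0$ nor $h^{-1}$, giving a genuine \emph{algebraic} perfect obstruction theory in the sense of~\cite{BF, LT} (no orientation data being required here).

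Finally the virtual dimension is the rank of $E^{\bullet}$, namely $\dim\Hom(I,F)-\dim\Ext^1(I,F)$. Using the groups computed above,
\begin{align*}
\dim\Hom(I,F)-\dim\Ext^1(I,F)=\chi(I,F)-\dim\Ext^2(I,F)=\bigl(\chi(\oO_Y,F)-\chi(F,F)\bigr)-1=(1-0)-1=0,
\end{align*}
where $\chi(F,F)=0$ by Serre duality on the $3$-fold and $\chi(\oO_Y,F)=\chi(F)=1$; this produces the zero-dimensional class $[P^{\rm JS}_1(Y,\beta)]^{\rm vir}_{\rm pair}\in A_0(P^{\rm JS}_1(Y,\beta),\mathbb{Z})$. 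I expect the main obstacle to be precisely the control of the higher groups $\Ext^{\geqslant 2}(I,F)$: a priori $\Ext^2(I,F)\cong\Hom(F,F)^{\vee}$ could jump and destroy perfectness, and it is only the forced $\mu$-stability at $n=1$ — making $\Ext^2(I,F)$ a constant rank-one term that truncates off as a line bundle — that simultaneously rescues perfectness in $[-1,0]$ and the vanishing of the virtual dimension. I would therefore treat the stability-to-simplicity step as the crux and keep the Atiyah-class construction of the comparison map at the level of citation.
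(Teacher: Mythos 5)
Your proposal is correct and follows essentially the same route as the paper's proof: semistability plus $\chi(F)=1$ forces $F$ to be stable (hence simple), Serre duality gives $\Ext^3(F,F)\cong\Hom(F,F)^{\vee}\cong\mathbb{C}$, and the long exact sequence obtained by applying $\RHom(-,F)$ to $F[-1]\to I\to\oO_Y$ together with $H^{\geqslant 2}(Y,F)=0$ yields $\Ext^2(I,F)\cong\mathbb{C}$ and $\Ext^{\geqslant 3}(I,F)=0$, so truncating off the constant rank-one piece leaves a perfect complex of amplitude $[-1,0]$. Your additional explicit checks --- the vanishing $\Ext^{-1}(I,F)=0$ via injectivity of $\Hom(F,F)\to H^0(Y,F)$, the identification of the degree $-2$ cohomology as a line bundle, and the computation that the virtual dimension is zero --- are details the paper leaves implicit, but they supplement rather than alter the argument.
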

\begin{proof}
For any JS stable pair $I_Y=(\oO_Y\stackrel{s}{\to}  F)\in P^{\mathrm{JS}}_1(Y,\beta)$, 
$F$ is stable as it is semistable with $\chi(F)=1$, 
Therefore we have 
\begin{equation}\Ext^3_Y(F,F)\cong \Hom_Y(F,F)^{\vee}\cong\mathbb{C}. \nonumber \end{equation}
Applying $\RHom_Y(-,F)$ to $I_Y\to \oO_Y \to F$, we obtain a distinguished triangle 
\begin{equation}\label{dist triangle CY}\RHom_Y(F,F)\to \RHom_Y(\oO_Y,F) \to \RHom_Y(I_Y,F), \end{equation}
whose cohomology gives an exact sequence 
\begin{equation}0=H^2(Y,F) \to \Ext^2_Y(I_Y,F)\to \Ext^3_Y(F,F) \to 0 \to \Ext^3_Y(I_Y,F) \to 0.  \nonumber \end{equation}
Hence $\Ext^i_Y(I_Y,F)=0$ for $i\geqslant 3$ and $\Ext^2_Y(I_Y,F)\cong\Ext^3_Y(F,F)\cong\mathbb{C}$.
By truncating $\Ext^2_Y(I_Y,F)=\mathbb{C}$, the pair deformation-obstruction theory is perfect. 
\end{proof}

\begin{thm}\label{prod of cy3}
Let $X=Y\times E$ be as above. Assume Conjecture \ref{g=0 one dim sheaf conj} holds for $\beta\in H_2(Y)\subseteq H_2(X)$. 
Then for any $\gamma\in H^4(X)$, we have
$$ 
P^{\mathrm{JS}}_{1,\beta}(\gamma)=n_{0,\beta}(\gamma),
$$
for certain choice of orientation, i.e. Conjecture \ref{JS/GV conj} holds for $\beta\in H_2(Y)\subseteq H_2(X)$ and $n=1$.
\end{thm}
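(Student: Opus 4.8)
The plan is to deduce the identity from the virtual pushforward formula (\ref{vir pushforward}) along the morphism $f\colon P^{\mathrm{JS}}_1(X,\beta)\to M_1(X,\beta)$ of (\ref{map:JStoM}), and then to transport the whole computation from the Calabi--Yau $4$-fold $X=Y\times E$ to the Calabi--Yau $3$-fold $Y$, where the ordinary virtual pushforward of Manolache (Theorem~\ref{Manolache formula}) applies. Concretely, I will show $f_\ast[P^{\mathrm{JS}}_1(X,\beta)]^{\mathrm{vir}}=[M_1(X,\beta)]^{\mathrm{vir}}$, whence $P^{\mathrm{JS}}_{1,\beta}(\gamma)=\int_{[M_1(X,\beta)]^{\mathrm{vir}}}\tau(\gamma)$, and then invoke Conjecture~\ref{g=0 one dim sheaf conj} for $\beta\in H_2(Y)$ to obtain $n_{0,\beta}(\gamma)$.

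First I would identify the moduli spaces. Since $\beta\in H_2(Y)\subseteq H_2(X)$ has trivial component along $E$, any $\mu$-stable one dimensional sheaf $F$ on $X$ with $[F]=\beta$ has connected support, and a connected curve with no $E$-winding must project to a single point of $E$; hence $F\cong i_{e_0\ast}G$ for a stable sheaf $G$ on $Y$ with $\chi(G)=1$, where $i_{e_0}\colon Y\cong Y\times\{e_0\}\hookrightarrow X$. By adjunction $\Hom_X(\oO_X,i_{e_0\ast}G)=\Hom_Y(\oO_Y,G)$, a section $s$ of $F$ is the same datum as a section $s_Y$ of $G$, and for $G$ stable the JS condition reduces to $s_Y\neq 0$. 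This produces isomorphisms
\begin{align*}
M_1(X,\beta)\cong M_1(Y,\beta)\times E,\qquad P^{\mathrm{JS}}_1(X,\beta)\cong P^{\mathrm{JS}}_1(Y,\beta)\times E,
\end{align*}
under which $f$ becomes $g\times\id_E$, where $g\colon P^{\mathrm{JS}}_1(Y,\beta)\to M_1(Y,\beta)$ sends $(\oO_Y\to G)\mapsto G$.

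Next I would perform a dimensional reduction of the $\mathrm{DT}_4$ virtual classes. For $F=i_{e_0\ast}G$ the K\"unneth decomposition on $X=Y\times E$ gives $\RHom_X(F,F)\cong\RHom_Y(G,G)\oplus\RHom_Y(G,G)[-1]$, so that $\Ext^2_X(F,F)_0\cong\Ext^2_Y(G,G)\oplus\Ext^1_Y(G,G)$ with the two summands interchanged by Serre duality on $X$; choosing the maximal isotropic subspace $\Ext^2_Y(G,G)$ exhibits $M_1(X,\beta)$ as a $(-2)$-shifted cotangent bundle as in Section~\ref{review DT4}, whose associated virtual class is the Behrend--Fantechi class of $M_1(Y,\beta)$ in the $Y$-directions, while the unobstructed $\Ext^1_E$-direction contributes $[E]$. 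A parallel analysis of the pair deformation complex, together with the perfectness from Lemma~\ref{lem on pair moduli on CY3}, gives the analogous reduction on the pair side, so that for a compatible choice of orientations
\begin{align*}
[M_1(X,\beta)]^{\mathrm{vir}}=[M_1(Y,\beta)]^{\mathrm{vir}}\times[E],\qquad
[P^{\mathrm{JS}}_1(X,\beta)]^{\mathrm{vir}}=[P^{\mathrm{JS}}_1(Y,\beta)]^{\mathrm{vir}}_{\mathrm{pair}}\times[E].
\end{align*}
Matching this with the insertion $\tau(\gamma)$, whose integration over the $E$-factor extracts the relevant K\"unneth component of $\gamma\in H^4(X)$, the desired equality $f_\ast[P^{\mathrm{JS}}_1(X,\beta)]^{\mathrm{vir}}=[M_1(X,\beta)]^{\mathrm{vir}}$ becomes equivalent to the $3$-fold pushforward $g_\ast[P^{\mathrm{JS}}_1(Y,\beta)]^{\mathrm{vir}}_{\mathrm{pair}}=[M_1(Y,\beta)]^{\mathrm{vir}}$.

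Finally I would establish this $3$-fold pushforward via Theorem~\ref{Manolache formula}. The map $g$ is proper between schemes carrying perfect obstruction theories of virtual dimension zero, and its relative obstruction theory is perfect of virtual dimension zero by Lemma~\ref{lem on pair moduli on CY3}; since $M_1(Y,\beta)$ is connected, it remains to compute the degree $c$. The fibre of $g$ over $[G]$ is $\mathbb{P}(H^0(Y,G))\cong\mathbb{P}^{h^1}$ with $h^1=h^1(Y,G)=h^0(Y,G)-1$ (as $\chi(G)=1$ and $H^{\geqslant 2}(Y,G)=0$), and the relative obstruction bundle is $H^1(Y,G)\otimes\oO_{\mathbb{P}(H^0(Y,G))}(1)$, whose Euler class integrates to $1$ over each fibre; hence $c=1$ and $g_\ast[P^{\mathrm{JS}}_1(Y,\beta)]^{\mathrm{vir}}_{\mathrm{pair}}=[M_1(Y,\beta)]^{\mathrm{vir}}$. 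Combining the steps yields $P^{\mathrm{JS}}_{1,\beta}(\gamma)=\int_{[M_1(X,\beta)]^{\mathrm{vir}}}\tau(\gamma)$, and Conjecture~\ref{g=0 one dim sheaf conj} then gives $P^{\mathrm{JS}}_{1,\beta}(\gamma)=n_{0,\beta}(\gamma)$. I expect the main obstacle to be the dimensional reduction of the $\mathrm{DT}_4$ virtual classes in the third step---above all the pair class, where $\RHom_X(I,I)_0$ does not split as transparently as $\RHom_X(F,F)$---and in particular making the choice of maximal isotropic subspace and of orientations compatible with the product structure and with $\tau(\gamma)$; by comparison the degree computation $c=1$ and the properness and perfectness checks for Manolache's formula are routine.
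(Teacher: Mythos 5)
Your proposal is correct and follows essentially the same route as the paper: identify $P^{\mathrm{JS}}_1(X,\beta)\cong P^{\mathrm{JS}}_1(Y,\beta)\times E$ and $M_1(X,\beta)\cong M_1(Y,\beta)\times E$ with virtual classes splitting as (3-fold virtual class)$\times[E]$, apply Manolache's virtual pushforward to the forgetful map $P^{\mathrm{JS}}_1(Y,\beta)=\mathbb{P}(\pi_{M\ast}\mathbb{F})\to M_1(Y,\beta)$ with degree $c=\int_{\mathbb{P}(H^0(Y,F))}e(H^1(Y,F)\otimes\oO(1))=1$, note that $\tau(\gamma)$ is pulled back from $M_1(X,\beta)$, and invoke Conjecture \ref{g=0 one dim sheaf conj}. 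The only difference is presentational: where you sketch the K\"unneth/isotropic-subspace reduction of the $\mathrm{DT}_4$ classes directly, the paper defers exactly this step to its earlier results (\cite[Prop. 2.11]{CMT2}, \cite[Lem. 2.6]{CMT1}).
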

\begin{proof}
We take a JS stable pair 
$(\oO_X\stackrel{s}{\to}  F)\in P_{1}^{\mathrm{JS}}(X,\beta)$. 
Then 
$F$ is stable and scheme theoretically supported on 
$i_p:Y\times \{p\}\hookrightarrow X$ for some $p\in E$ (ref. \cite[Lem.~2.2]{CMT1}). Similar to \cite[Prop.~3.11]{CMT2}, there exists an isomorphism 
$$P^{\mathrm{JS}}_{1}(X,\beta)\cong P^{\mathrm{JS}}_{1}(Y,\beta)\times E,$$ 
$$(\oO_X \stackrel{s}{\to} {i_p}_*\mathcal{E})\mapsto \big((\oO_Y=i_p^*\oO_X \stackrel{s}{\to} \mathcal{E}),\, p\big),$$
under which virtual classes satisfy  
$$[P^{\mathrm{JS}}_{1}(X,\beta)]^{\mathrm{vir}}= [P^{\mathrm{JS}}_{1}(Y,\beta)]_{\mathrm{pair}}^{\mathrm{vir}}\times [E], $$ 
where $[P^{\mathrm{JS}}_{1}(Y,\beta)]_{\mathrm{pair}}^{\mathrm{vir}}$ denotes the virtual class defined in Lemma \ref{lem on pair moduli on CY3}.

There is a forgetful morphism 
$$f:P^{\mathrm{JS}}_{1}(Y,\beta)\to M_1(Y,\beta), \quad  (s\colon\oO_Y \to \mathcal{E}) \mapsto \mathcal{E}, $$
to the
 moduli space $M_1(Y,\beta)$ of one dimensional stable sheaves $F$ on $Y$ with $[F]=\beta$ and $\chi(F)=1$.  
The fiber of $f$ over $F$ is $\mathbb{P}(H^0(Y,F))$.

Let $\mathbb{F} \to M_{1}(Y,\beta) \times Y$ be the universal sheaf. 
Then the above map identifies $P^{\mathrm{JS}}_1(Y, \beta)$ with 
$\mathbb{P}(\pi_{M\ast}\mathbb{F})$
where $\pi_M \colon M_{1}(Y, \beta) \times Y \to M_{1}(Y, \beta)$
is the projection. And the universal stable pair is given by 
\begin{align*}
\mathbb{I}=
(\mathcal{O}_{Y\times P^{\mathrm{JS}}_1(Y, \beta)} 
\stackrel{s}{\rightarrow} \mathbb{F}^{\dag}), \quad
\mathbb{F}^{\dag}:= (\id_Y \times f)^{\ast}\mathbb{F}\otimes \oO(1),
\end{align*} 
where $\oO(1)$ is the tautological line bundle on 
$\mathbb{P}(\pi_{M\ast}\mathbb{F})$
and $s$ is the tautological map.  

As in \cite[Prop.~3.10]{CMT2}, we can apply Theorem \ref{Manolache formula} and obtain
$$f_*[P^{\mathrm{JS}}_1(Y,\beta)]_{\mathrm{pair}}^{\mathrm{vir}}=c\,[M_{1}(Y,\beta)]^{\mathrm{vir}},$$
where the coefficient $c$ can be fixed by restricting the relative perfect obstruction theory to a fiber of $f$ (ref.~\cite[pp.~2022~(18)]{Mano}).
The obstruction bundle over fiber $\mathbb{P}(H^0(Y,F))$ is $H^1(Y,F)\otimes \oO(1)$ whose rank is $(h^0(Y,F)-1)$ as $\chi(F)=1$. Thus
$$c=\int_{\mathbb{P}(H^0(Y,F))}e(H^1(Y,F)\otimes \oO(1))=1. $$
To sum up, we obtain 
$$(f\times \mathrm{id}_E)_*[P^{\mathrm{JS}}_1(X,\beta)]^{\mathrm{vir}}= [M_{1}(X,\beta)]^{\mathrm{vir}},$$
where we use $M_{1}(X,\beta)\cong M_{1}(Y,\beta)\times E$ and $[M_{1}(X,\beta)]^{\mathrm{vir}}=[M_{1}(Y,\beta)]^{\mathrm{vir}}\times [E]$ (ref.~\cite[Lem.~2.6]{CMT1}).

As the insertion $\tau$ (\ref{pri insertion}) depends only on $\mathbb{F}$ (not the section), we have 
$$P^{\mathrm{JS}}_{1,\beta}(\gamma)=\int_{[P^{\mathrm{JS}}_{1}(X,\beta)]^{\rm{vir}}}\tau(\gamma)=\int_{[M_{1}(X,\beta)]^{\rm{vir}}}\tau(\gamma), $$
where we use same notation $\tau$ to denote the insertion for $M_{1}(X,\beta)$. By Conjecture \ref{g=0 one dim sheaf conj},
we have 
$$\int_{[M_{1}(X,\beta)]^{\rm{vir}}}\tau(\gamma)=n_{0,\beta}(\gamma), $$
for certain choice of orientation. Combining the two equalities, we are done.
\end{proof}
Combining with previous verifications of Conjecture \ref{g=0 one dim sheaf conj} (ref.~\cite[Thm.~2.8]{CMT1}), we have:
\begin{cor}\label{prim check}
Let $Y$ be a complete intersection CY 3-fold in a product of projective spaces, and $X=Y\times E$ for an elliptic curve $E$.
Then Conjecture \ref{JS/GV conj} holds for primitive curve class $\beta\in H_2(Y)\subseteq H_2(X)$ and $n=1$. 
\end{cor}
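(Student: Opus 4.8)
The plan is to read the corollary as a direct combination of Theorem~\ref{prod of cy3} with an already-established verification of the genus~$0$ Katz/GV statement, so that no new geometric input is needed. Recall that Theorem~\ref{prod of cy3} proves the identity $P^{\mathrm{JS}}_{1,\beta}(\gamma)=n_{0,\beta}(\gamma)$ for $X=Y\times E$ and any $\beta\in H_2(Y)\subseteq H_2(X)$, conditional only on Conjecture~\ref{g=0 one dim sheaf conj} holding for such $\beta$ on $X$. Hence the whole task is to supply Conjecture~\ref{g=0 one dim sheaf conj} for the relevant classes.

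First I would confirm that Conjecture~\ref{g=0 one dim sheaf conj} is available for the classes $\beta\in H_2(Y)$ in question. Here the product structure of $X=Y\times E$ is decisive: for $\beta\in H_2(Y)$ one has $M_1(X,\beta)\cong M_1(Y,\beta)\times E$ with $[M_1(X,\beta)]^{\mathrm{vir}}=[M_1(Y,\beta)]^{\mathrm{vir}}\times[E]$ (see \cite[Lem.~2.6]{CMT1}), so the $\mathrm{DT}_4$ invariant counting one dimensional sheaves on $X$ is governed by the corresponding three-fold data on $Y$. It is precisely in this setting that \cite[Thm.~2.8]{CMT1} establishes Conjecture~\ref{g=0 one dim sheaf conj}, namely for $Y$ a complete intersection Calabi-Yau 3-fold in a product of projective spaces and $\beta$ a primitive curve class.

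Substituting this verification into Theorem~\ref{prod of cy3} then yields $P^{\mathrm{JS}}_{1,\beta}(\gamma)=n_{0,\beta}(\gamma)$, which is exactly the $n=1$ assertion of Conjecture~\ref{JS/GV conj} for $\beta\in H_2(Y)\subseteq H_2(X)$. This completes the deduction.

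The argument carries no serious mathematical obstacle, since both inputs are already proved; the only point requiring care is to check that the geometric hypotheses on the pair $(Y,\beta)$ coincide with the range in which \cite[Thm.~2.8]{CMT1} is valid. In particular, the primitivity assumption on $\beta$ in the corollary is not an artefact of Theorem~\ref{prod of cy3} (which is insensitive to it) but is inherited from the classes for which the Katz/GV verification in \cite[Thm.~2.8]{CMT1} is known, and this matching of hypotheses is the entire content of the proof.
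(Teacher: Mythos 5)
Your proposal is correct and matches the paper's own (implicit) argument exactly: the paper states this corollary as an immediate consequence of combining Theorem~\ref{prod of cy3} with the verification of Conjecture~\ref{g=0 one dim sheaf conj} in \cite[Thm.~2.8]{CMT1}, which is precisely your deduction. Your closing observation that the primitivity hypothesis is inherited from the range of validity of \cite[Thm.~2.8]{CMT1} rather than from Theorem~\ref{prod of cy3} is also the correct reading of where each hypothesis enters.
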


\subsection{Local Fano 3-folds}
Let $Y$ be a smooth Fano 3-fold and consider the total space $X=K_Y$ of canonical bundle of $Y$. Take $\omega$ to be the 
pullback ample line bundle from an ample line bundle on $Y$. The moduli space 
$P^{\mathrm{JS}}_{1}(X,\beta)$ of Joyce-Song stable pairs is proper, since for any 
pair $(\oO_X\to F)$, $F$ is stable and scheme theoretically supported on $Y$. So we can still study 
Conjecture \ref{JS/GV conj} on such non-compact Calabi-Yau 4-folds.
Similar to Theorem \ref{prod of cy3}, we have 
\begin{thm}\label{thm:locFano}
Let $X=K_Y$ be as above. Assume Conjecture \ref{g=0 one dim sheaf conj} holds for $\beta\in H_2(X)$. 
Then for any $\gamma\in H^4(X)$, we have
$$ 
P^{\mathrm{JS}}_{1,\beta}(\gamma)=n_{0,\beta}(\gamma),
$$
for certain choice of orientation, i.e. Conjecture \ref{JS/GV conj} holds for $\beta\in H_2(X)$ and $n=1$.
\end{thm}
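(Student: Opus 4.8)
The plan is to run the argument of Theorem~\ref{prod of cy3} almost verbatim, replacing the product $Y\times E$ by the total space $X=K_Y$ and the Calabi--Yau deformation theory on $Y$ by its $K_Y$-twisted analogue. First I would record that every Joyce--Song stable pair $(\oO_X\stackrel{s}{\to}F)\in P^{\mathrm{JS}}_1(X,\beta)$ has $F$ stable with $\chi(F)=1$ and scheme-theoretically supported on the zero section $i\colon Y\hookrightarrow X$, so $F\cong i_\ast\mathcal{E}$ for a one-dimensional stable sheaf $\mathcal{E}$ on $Y$ with $\chi(\mathcal{E})=1$. Since $\Hom_X(\oO_X,i_\ast\mathcal{E})\cong\Hom_Y(\oO_Y,\mathcal{E})=H^0(Y,\mathcal{E})$ by adjunction, sending $(\oO_X\to i_\ast\mathcal{E})$ to $(\oO_Y\to\mathcal{E})$ identifies $P^{\mathrm{JS}}_1(X,\beta)\cong P^{\mathrm{JS}}_1(Y,\beta)$, the (projective) moduli space of JS stable pairs on the Fano $3$-fold $Y$.

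The technical heart is the analogue of Lemma~\ref{lem on pair moduli on CY3}: I would show that the $\mathrm{DT}_4$ virtual class $[P^{\mathrm{JS}}_1(X,\beta)]^{\mathrm{vir}}$ is computed algebraically by a perfect obstruction theory on $P^{\mathrm{JS}}_1(Y,\beta)$. Writing $I=(\oO_X\to i_\ast\mathcal{E})$ and $I_Y=(\oO_Y\to\mathcal{E})$, the identity $\dR\hH om_X(I,i_\ast\mathcal{E})\cong\RHom_Y(Li^\ast I,\mathcal{E})$ together with Grothendieck duality $i^!\oO_X\cong K_Y[-1]$ exhibits the obstruction space $\Ext^2_X(I,I)_0$ as $V\oplus V^\vee$, where $V$ is the $Y$-side pair obstruction and $V^\vee$ is its $K_Y$-twisted Serre dual; because $X$ is local Calabi--Yau, the latter is a maximal isotropic subspace for the Serre-duality quadratic form. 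By the reduction reviewed in Section~\ref{review DT4}, $P^{\mathrm{JS}}_1(X,\beta)$ is then a $(-2)$-shifted cotangent bundle of the derived scheme governed by $\RHom_Y(I_Y,\mathcal{E})$, and its $\mathrm{DT}_4$ class equals the virtual class of that $Y$-side theory. The Fano condition makes this theory clean: since $K_Y\cdot C<0$ for every curve $C$, stability forces $\Hom_Y(\mathcal{E},\mathcal{E}\otimes K_Y)=0$, hence $\Ext^3_Y(\mathcal{E},\mathcal{E})=0$ and $\Ext^{\geqslant2}_Y(I_Y,\mathcal{E})=0$, so the pair deformation-obstruction theory on $Y$ is already two-term and perfect of virtual dimension one, with class $[P^{\mathrm{JS}}_1(Y,\beta)]^{\mathrm{vir}}_{\mathrm{pair}}$. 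The main obstacle is precisely this reduction, and in particular matching the Borisov--Joyce orientation on the $X$-side with the algebraic perfect obstruction theory; once it is in place the rest is formal.

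With the identification secured, the remaining steps copy Theorem~\ref{prod of cy3}. The forgetful morphism $f\colon P^{\mathrm{JS}}_1(Y,\beta)\to M_1(Y,\beta)$, $(\oO_Y\to\mathcal{E})\mapsto\mathcal{E}$, realizes $P^{\mathrm{JS}}_1(Y,\beta)\cong\mathbb{P}(\pi_{M\ast}\mathbb{F})$ with fiber $\mathbb{P}(H^0(Y,\mathcal{E}))$ over $[\mathcal{E}]$, and I would apply Manolache's virtual pushforward formula (Theorem~\ref{Manolache formula}) to $f$. The relative degree is computed on a fiber as $\int_{\mathbb{P}(H^0(Y,\mathcal{E}))}e\big(H^1(Y,\mathcal{E})\otimes\oO(1)\big)=1$, since $\chi(\mathcal{E})=1$ gives $\rk H^1(Y,\mathcal{E})=h^0(Y,\mathcal{E})-1=\dim\mathbb{P}(H^0(Y,\mathcal{E}))$. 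Hence $f_\ast[P^{\mathrm{JS}}_1(Y,\beta)]^{\mathrm{vir}}_{\mathrm{pair}}=[M_1(Y,\beta)]^{\mathrm{vir}}$, and transporting back via the local Fano identification $M_1(X,\beta)\cong M_1(Y,\beta)$ and the matching of $\mathrm{DT}_4$ and perfect-obstruction-theory classes from \cite{CMT1} gives $f_\ast[P^{\mathrm{JS}}_1(X,\beta)]^{\mathrm{vir}}=[M_1(X,\beta)]^{\mathrm{vir}}$. Finally, because the insertion $\tau(\gamma)$ in (\ref{pri insertion}) depends only on the universal sheaf $\mathbb{F}$ and not on the section, the projection formula yields $P^{\mathrm{JS}}_{1,\beta}(\gamma)=\int_{[M_1(X,\beta)]^{\mathrm{vir}}}\tau(\gamma)$, which equals $n_{0,\beta}(\gamma)$ under Conjecture~\ref{g=0 one dim sheaf conj}.
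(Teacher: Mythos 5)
Your proposal is correct and takes essentially the same approach as the paper: the paper's proof of Theorem \ref{thm:locFano} is literally "run the proof of Theorem \ref{prod of cy3} with $Y\times E$ replaced by $K_Y$," which is exactly what you do — support on the zero section giving $P^{\mathrm{JS}}_1(X,\beta)\cong P^{\mathrm{JS}}_1(Y,\beta)$, identification of the $\mathrm{DT}_4$ class with the algebraic pair-theory class on the Fano $Y$, Manolache pushforward of degree one, and the insertion argument. Your observation that the Fano condition forces $\Ext^3_Y(\mathcal{E},\mathcal{E})=0$ and hence $\Ext^{\geqslant 2}_Y(I_Y,\mathcal{E})=0$, so the pair obstruction theory is perfect of virtual dimension one without the truncation used in Lemma \ref{lem on pair moduli on CY3}, is precisely the right modification.
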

Combining with the previous verification of Conjecture 
\ref{g=0 one dim sheaf conj} (ref.~\cite[Prop.~0.3,~0.4,~Thm.~0.6]{CaoFano}), we have:
\begin{cor}\label{loc fano check}
Conjecture \ref{JS/GV conj} holds for $\beta\in H_2(K_Y)$ and $n=1$ in the following cases: 
\begin{itemize}
\item $Y\subseteq \mathbb{P}^{4}$ is a smooth Fano hypersurface and $\beta$ is irreducible. 
\item $Y=S\times \mathbb{P}^1$ and $\beta=n\,[\mathbb{P}^1]$ $(n\geqslant1)$, where $S$ is a del Pezzo surface.
\item $Y=S\times \mathbb{P}^1$, $\beta\in H_2(S)\subseteq H_2(Y)$ and $\gamma\in H^2(S)\otimes H^2(\mathbb{P}^1)\subset H^4(Y)$, where $S$ is a toric del Pezzo surface.
\end{itemize}
\end{cor}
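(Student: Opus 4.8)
The plan is to follow the strategy of Theorem~\ref{prod of cy3}, with the zero section $i\colon Y\hookrightarrow X=K_Y$ playing the role of the Calabi--Yau $3$-fold and the $\DT_4$ virtual class on $X$ replacing the product virtual class. First I would record the support statement: for a JS stable pair $(\oO_X\xrightarrow{s}F)\in P^{\mathrm{JS}}_1(X,\beta)$ the sheaf $F$ is $\mu$-semistable with $\chi(F)=1$, hence stable, and is scheme-theoretically supported on the zero section (ref.\ \cite[Lem.\ 2.2]{CMT1}, \cite{CaoFano}); thus $F=i_*F'$ for a one dimensional stable sheaf $F'$ on $Y$ with $\chi(F')=1$. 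As $\pi\colon X\to Y$ is affine, $\Hom_X(\oO_X,i_*F')=H^0(Y,F')$, so $s$ is the same datum as a section $s'\colon\oO_Y\to F'$. This identifies $M_1(X,\beta)\cong M_1(Y,\beta)$ and realises
\[
P^{\mathrm{JS}}_1(X,\beta)\cong \mathbb{P}(\pi_{M*}\mathbb{F}),\qquad \pi_M\colon M_1(X,\beta)\times X\to M_1(X,\beta),
\]
with fibre $\mathbb{P}(H^0(Y,F'))$ over $[F]$ and universal pair $\mathbb{I}=(\oO\to\mathbb{F}^\dagger)$, $\mathbb{F}^\dagger=(\id\times f)^*\mathbb{F}\otimes\oO(1)$, exactly as in Theorem~\ref{prod of cy3}.

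Next I would linearise the $\DT_4$ class through the $(-2)$-shifted cotangent bundle structure. Writing $F=i_*F'$ and using the normal bundle $N_{Y/X}=K_Y$ together with Serre duality on $Y$ gives $\Ext^k_X(F,F)=\Ext^k_Y(F',F')\oplus\Ext^{4-k}_Y(F',F')^\vee$; in particular
\[
\Ext^2_X(F,F)=\Ext^2_Y(F',F')\oplus\Ext^2_Y(F',F')^\vee
\]
is a maximal isotropic decomposition, and $\Ext^1_X(F,F)=\Ext^1_Y(F',F')$ because the Fano hypothesis ($-K_Y$ ample) forces $\Ext^3_Y(F',F')=\Hom_Y(F',F'\otimes K_Y)^\vee=0$. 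Hence $M_1(X,\beta)$ is of $(-2)$-shifted cotangent type over the natural perfect obstruction theory of $M_1(Y,\beta)$ (tangent $\Ext^1_Y$, obstruction $\Ext^2_Y$, virtual dimension $1$), and its $\DT_4$ class equals that perfect-obstruction-theory class. Running the same reduction for the pair --- as in \cite[Prop.\ 2.10]{CMT2} --- I would present $[P^{\mathrm{JS}}_1(X,\beta)]^{\mathrm{vir}}$ as the class of an honest perfect obstruction theory compatible with the projection $f\colon P^{\mathrm{JS}}_1(X,\beta)\to M_1(X,\beta)$.

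Then I would apply Manolache's pushforward (Theorem~\ref{Manolache formula}). The map $f$ is proper, and its relative obstruction theory is governed by the section: over $[F]$ the relative tangent and obstruction are $H^0(Y,F')/\mathbb{C}$ and $H^1(Y,F')$, of relative virtual dimension $(h^0-1)-h^1=\chi(F')-1=0$. Thus $f_*[P^{\mathrm{JS}}_1(X,\beta)]^{\mathrm{vir}}=c\,[M_1(X,\beta)]^{\mathrm{vir}}$, and restricting the relative obstruction theory to a fibre, the relative obstruction bundle is $H^1(Y,F')\otimes\oO(1)$ of rank $h^0-1$, so
\[
c=\int_{\mathbb{P}^{\,h^0-1}}e\big(H^1(Y,F')\otimes\oO(1)\big)=\int_{\mathbb{P}^{\,h^0-1}}c_1(\oO(1))^{h^0-1}=1.
\]
Since $\tau(\gamma)$ depends only on $\mathbb{F}$ and not on the section, this yields $P^{\mathrm{JS}}_{1,\beta}(\gamma)=\int_{[M_1(X,\beta)]^{\mathrm{vir}}}\tau(\gamma)$, which equals $n_{0,\beta}(\gamma)$ by the assumed Conjecture~\ref{g=0 one dim sheaf conj}.

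The hard part will be the second step: showing that the $\DT_4$ virtual classes of both $M_1(X,\beta)$ and $P^{\mathrm{JS}}_1(X,\beta)$ are genuinely computed by ordinary perfect obstruction theories, compatibly with $f$, so that Manolache's theorem (stated for ordinary obstruction theories) applies and the half-dimensional choice descends along the projectivization. The Fano vanishing $\Ext^3_Y(F',F')=0$ is precisely what splits the obstruction $\Ext^2_X$ as $V\oplus V^\vee$ with $V$ pulled back from the $3$-fold, reducing the $\DT_4$ class to the $3$-fold perfect-obstruction-theory class; carrying this out at the level of the pair moduli, and matching it with the relative section-obstruction theory, is the crux, and is where the argument of \cite[Prop.\ 2.10]{CMT2} (here in the local Fano rather than the CY$3\times E$ setting) must be adapted.
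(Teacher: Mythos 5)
Your argument --- adapting Theorem~\ref{prod of cy3} to the zero section of $X=K_Y$ via the support statement, the identification $P^{\mathrm{JS}}_1(X,\beta)\cong\mathbb{P}(\pi_{M\ast}\mathbb{F})$, the reduction of the $\DT_4$ class to an honest perfect obstruction theory using the Fano vanishing $\Ext^3_Y(F',F')=0$, and Manolache's pushforward with coefficient $c=1$ --- is in substance the paper's (sketched) proof of Theorem~\ref{thm:locFano}, i.e.\ of the \emph{conditional} statement: if Conjecture~\ref{g=0 one dim sheaf conj} holds for $\beta$, then Conjecture~\ref{JS/GV conj} holds for $\beta$ and $n=1$. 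But the statement you were asked to prove is the unconditional Corollary, and your proof never uses its three bullet hypotheses: your last step appeals to ``the assumed Conjecture~\ref{g=0 one dim sheaf conj},'' so your conclusion remains conditional on an open conjecture. The fact that the specific geometries listed in the Corollary play no role anywhere in your argument is the symptom of the gap.

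The missing ingredient is the verification of Conjecture~\ref{g=0 one dim sheaf conj} --- that is, the identity $\int_{[M_1(X,\beta)]^{\mathrm{vir}}}\tau(\gamma)=n_{0,\beta}(\gamma)$ --- in exactly the three listed cases. This is not proved in the present paper; it is imported from \cite{CaoFano} (Prop.~0.3, 0.4 and Thm.~0.6 there), where the $\DT_4$ invariants of one dimensional stable sheaves on $K_Y$ are computed and matched with the Klemm--Pandharipande genus zero invariants for a smooth Fano hypersurface $Y\subset\mathbb{P}^5$ with $\beta$ irreducible, for $Y=S\times\mathbb{P}^1$ with $\beta$ a multiple of the fiber class, and for $Y=S\times\mathbb{P}^1$ with $S$ toric del Pezzo and $\beta\in H_2(S)$. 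Citing (or reproving) that input, and only then combining it with your reduction, is what makes the Corollary unconditional; with that one sentence added, your argument agrees with the paper's proof, which is precisely ``Theorem~\ref{thm:locFano} plus the verifications of \cite{CaoFano}.''
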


\subsection{Local surfaces}
In this section, we consider two local surfaces:
\begin{align}\label{two local surf}X= \mathrm{Tot}_{\mathbb{P}^2}(\oO(-1)\oplus \oO(-2)), \quad   \mathrm{Tot}_{\mathbb{P}^1\times \mathbb{P}^1}(\oO(-1,-1)\oplus \oO(-1,-1)).  \end{align}
For the first one, we choose $\omega$ to be the pullback of $\oO_{\mathbb{P}^2}(1)\to \mathbb{P}^2$, and for the second one, we choose $\omega$ to be the pullback of $\oO_{\mathbb{P}^1}(l_1)\boxtimes \oO_{\mathbb{P}^1}(l_2)  \to \mathbb{P}^1\times\mathbb{P}^1$, where $l_1,l_2>0$. 

Although $X$ is non-compact, the moduli space of $Z_t$-semistable pairs on $X$ is proper, so we can study 
Conjecture \ref{main conj} on $X$.
\begin{lem}
For any $t>0$, the moduli space $\overline{P}^t_{n}(X,\beta)$ of $Z_t$-semistable pairs on $X$ is proper.
\end{lem}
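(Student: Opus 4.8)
The plan is to show that $\overline{P}^t_n(X,\beta)$, which is already a quasi-projective scheme by Theorem~\ref{existence of proj moduli space}, is in fact proper over $\mathbb{C}$. The essential point is that although $X$ is non-compact, every coherent sheaf of class $(\beta,n)$ appearing in a $Z_t$-semistable pair has support confined to a \emph{single} compact subscheme of $X$, independent of the chosen pair. Once this is established, compactness follows from the standard theory of semistable pairs on proper schemes (the Langton-type arguments underlying Le Potier's GIT construction already invoked in Theorem~\ref{existence of proj moduli space}).

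First I would reduce the support to the zero section. Write $X=\mathrm{Tot}_S(L_1\oplus L_2)$ with $S=\mathbb{P}^2$ (resp. $\mathbb{P}^1\times\mathbb{P}^1$) and $\pi\colon X\to S$ the bundle projection; in both cases $L_1,L_2$ are anti-ample line bundles on $S$. For $(\oO_X\to F)\in\overline{P}^t_n(X,\beta)$, the sheaf $F$ has proper one-dimensional support by the very meaning of the class $(\beta,n)$. Since the fibers of $\pi$ are affine spaces, which contain no proper curves, the restriction $\pi|_{\Supp F}$ is finite; hence $E\cneq \pi_\ast F$ is a coherent sheaf on $S$, and the $\oO_X$-module structure on $F$ is encoded by the two commuting fiber-coordinate actions, i.e. $\oO_S$-linear ``Higgs fields''
\begin{align*}
\phi_i\colon E\to E\otimes L_i, \quad i=1,2.
\end{align*}

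Next I would exploit the negativity of $L_1,L_2$ to force nilpotence. By Cayley--Hamilton, each $\phi_i$ satisfies a monic characteristic equation whose coefficients are the elementary symmetric functions of its spectrum, lying in $H^0(S,L_i^{\otimes j})$ for $1\leqslant j\leqslant r$, where $r$ is the generic rank of $E$ along $\Supp E$. Because $L_i$ is anti-ample and $S$ is proper, $H^0(S,L_i^{\otimes j})=0$ for all $j\geqslant 1$, so every such coefficient vanishes and $\phi_i$ is nilpotent with $\phi_i^{\circ r}=0$. Equivalently, $F$ is annihilated by the $r$-th powers of the fiber coordinates, hence is scheme-theoretically supported on the $r$-th order thickening
\begin{align*}
Y_r\cneq \{x_1^{r}=x_2^{r}=0\}\subset X
\end{align*}
of the zero section, a closed subscheme proper over $S$ and therefore compact. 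Moreover $r$ is bounded in terms of $(\beta,n)$, since the generic rank of $\pi_\ast F$ over $S$ is controlled by $\omega\cdot\beta$; thus a single compact $Y\cneq Y_r\subset X$ contains the support of every member of the family.

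Finally I would conclude compactness. Having confined all supports to the fixed proper scheme $Y$, the moduli problem of $Z_t$-semistable pairs on $X$ with class $(\beta,n)$ coincides with the corresponding moduli problem on the projective scheme $Y$, for which Le Potier's construction (Theorem~\ref{existence of proj moduli space}) applies; the valuative criterion then holds because any one-parameter degeneration of such pairs has limiting sheaf again supported on $Y$ and again compactly supported, leaving no room for the support to escape to infinity along the fiber directions. Hence $\overline{P}^t_n(X,\beta)$ is proper, i.e. compact. The main obstacle I expect is the support-confinement step: making the characteristic-polynomial (spectral) argument rigorous for the possibly non-locally-free pushforward $E=\pi_\ast F$, and verifying that the nilpotency order $r$ is bounded uniformly over the entire moduli space rather than pointwise.
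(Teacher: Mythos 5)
Your strategy (confine all supports to a compact region of $X$, then invoke Le Potier's GIT properness on a projective model) is the same in spirit as the paper's, but your choice of model creates a genuine gap. The paper enlarges $X$: it sets $\overline{X}\cneq\mathbb{P}(L_1\oplus L_2\oplus\oO_S)$, notes that $\overline{P}^t_n(\overline{X},\beta)$ is projective, and that the sheaf in any $Z_t$-semistable pair is \emph{set-theoretically} supported on the zero section by negativity of the normal bundle (citing \cite{CMT1}), whence the moduli problems on $X$ and $\overline{X}$ agree and compactness follows. The key point is that for the \emph{open} immersion $X\subset\overline{X}$, a $T$-flat family on $X\times T$ with fiberwise proper support pushes forward to a $T$-flat family on $\overline{X}\times T$ over an \emph{arbitrary} base $T$; set-theoretic support control suffices. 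You instead shrink $X$ to the \emph{closed} thickening $Y_r\subset X$, and there the analogous claim --- that ``the moduli problem of $Z_t$-semistable pairs on $X$ \ldots coincides with the corresponding moduli problem on the projective scheme $Y$'' --- is false as an identification of functors (equivalently, of scheme structures). Fiberwise scheme-theoretic support on $Y_r$ does not force a flat family over a non-reduced base to be annihilated by the ideal of $Y_r$: over $T=\Spec\mathbb{C}[\epsilon]/(\epsilon^2)$ one can let a fiber coordinate $x_i$ act on $E\otimes_{\mathbb{C}}\mathbb{C}[\epsilon]$ by $\epsilon\phi_i$ for a nonzero Higgs field $\phi_i\in\Hom_S(E,E\otimes L_i)$ (such $\phi_i$ exist, e.g.\ for $E=\oO_C\oplus(\oO_C\otimes L_i^{-1})$, because anti-ampleness kills $H^0(L_i|_C)$ but not $\Hom(\oO_C,\oO_C)$); this is a flat family whose closed fiber is supported on $S$ but which is not itself supported on $Y_r$. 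In general a flat family over an Artinian base of length $\ell$ is only supported on roughly $Y_{\ell r}$, so no fixed $r$ equalizes the two functors, and your final appeal to Le Potier on $Y$ plus a valuative criterion does not, as written, say anything about the scheme $\overline{P}^t_n(X,\beta)$.

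The conclusion can be rescued, because compactness only depends on the underlying reduced structure: your pointwise confinement statement says exactly that the natural morphism $\overline{P}^t_n(Y_r,\beta)\to\overline{P}^t_n(X,\beta)$ is surjective on closed points, and a separated finite-type scheme receiving a surjection from a proper scheme is proper (alternatively, run the valuative criterion over DVR bases only, which are reduced, where fiberwise support does propagate to the family). But this bridging step is precisely what is missing from your write-up. Two further remarks on the confinement step itself: (a) the Cayley--Hamilton argument you flag as the main obstacle is indeed awkward, since the characteristic coefficients naturally live in sections of $L_i^{\otimes j}$ over the (unknown, possibly non-reduced) support curve rather than over $S$, which is circular; the cleaner route is to observe that any compact curve in $X$ lies in the zero section (its normalization would otherwise give a nonzero section of a negative-degree bundle), so the $x_i$ act nilpotently at the generic points of $\Supp F$ with order bounded by the multiplicities in $\beta$, and then purity of $F$ --- which does follow from $Z_t$-semistability, condition (i) --- upgrades the generic vanishing of $x_i^rF$ to global vanishing; (b) the quasi-projectivity of $\overline{P}^t_n(X,\beta)$ for non-compact $X$, which you take as input from Theorem \ref{existence of proj moduli space}, is stated there only for projective $X$; the paper's compactification is also what supplies the construction in this local setting, which is another reason its route is the more economical one.
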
 
\begin{proof}
Let $\overline{X}$ be the compactification of $X$ by adding section at infinity, i.e. 
\begin{align*}\overline{X}:=\mathbb{P}(L_1\oplus L_2\oplus \oO_S). \end{align*}
Then $\overline{P}^t_n(\overline{X}, \beta)$ is proper as $\overline{X}$ is so. For a $Z_t$-semistable pair 
$(\oO_{\overline{X}}\stackrel{s}{\to} F)\in \pP^t_n(\overline{X}, \beta)$, 
$F$ is set theoretically suppoted on the zero section by the 
the negativity of normal bundle of $S\subseteq X$ (ref.~\cite[Prop.~3.1]{CMT1}). 
Therefore $\overline{P}_n^t(\overline{X}, \beta)$ is isomorphic to $\overline{P}_n^t(X, \beta)$ and $\overline{P}_n^t(X, \beta)$ is proper. 
\end{proof}
\begin{prop}\label{local surface identify virtual class}
Assume $(n,\omega\cdot\beta)=1$, then we have an isomorphism 
$$P^{\mathrm{JS}}_{n}(X,\beta)\cong P^{\mathrm{JS}}_{n}(S,\beta), $$
where $S=\mathbb{P}^2$ or $\mathbb{P}^1\times \mathbb{P}^1$ is the zero section of $X$ (\ref{two local surf}). 

Moreover, the virtual class satisfies 
$$[P^{\mathrm{JS}}_{n}(X,\beta)]^{\mathrm{vir}}=
[P^{\mathrm{JS}}_{n}(S,\beta)]^{\mathrm{vir}}\cdot e\Big(-\dR\hH om_{\pi_{P}}(\mathbb{F}, \mathbb{F} \boxtimes L_1)\Big),$$
for certain choice of orientation. Here $\mathbb{I}_S=(\oO_{S \times P^{\mathrm{JS}}_n(S, \beta)} \to \mathbb{F})\in D^b\big(S \times P^{\mathrm{JS}}_n(S, \beta)\big)$
is the universal stable pair and 
$\pi_{P} \colon S \times P^{\mathrm{JS}}_n(S, \beta) \to P^{\mathrm{JS}}_n(S, \beta)$ is the projection.
The virtual class $[P^{\mathrm{JS}}_{n}(S,\beta)]^{\mathrm{vir}}$ is constructed with respect to the perfect obstruction theory 
$\dR \hH om_{\pi_P}(\mathbb{I}_S,\mathbb{F})$.  
\end{prop}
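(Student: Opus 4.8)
The plan is to push everything onto the zero section $S$ and then compute the $\mathrm{DT}_4$ obstruction bundle through the self-intersection formula along $i\colon S\hookrightarrow X$. First I would establish the isomorphism $P^{\mathrm{JS}}_1(X,\beta)\cong P^{\mathrm{JS}}_1(S,\beta)$. For any JS stable pair $(\oO_X\stackrel{s}{\to}F)$ the sheaf $F$ is stable one-dimensional with $\chi(F)=1$, and by the negativity of the normal bundle $N=L_1\oplus L_2$ of $S\subset X$ (the same mechanism as in the compactness lemma, ref.~\cite[Proposition 3.1]{CMT1}) such an $F$ is scheme-theoretically supported on $S$; thus $F=i_*\bar F$ for a stable sheaf $\bar F$ on $S$, and $s$ corresponds by adjunction to $\bar s\colon\oO_S\to\bar F$. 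I would then check that $i_*$ carries flat families to flat families and induces an isomorphism of the two moduli functors (every family on $X$ being supported on $S$), giving the stated isomorphism of schemes.

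For smoothness of $P^{\mathrm{JS}}_1(S,\beta)$ I use that $S$ is del Pezzo: $K_S$ is anti-ample, so $\bar F\otimes K_S$ has strictly smaller slope than $\bar F$ and hence $\Ext^2_S(\bar F,\bar F)\cong\Hom_S(\bar F,\bar F\otimes K_S)^{\vee}=0$; together with $H^{\geqslant 2}(S,\bar F)=0$ this makes $M_1(S,\beta)$ smooth. Feeding these vanishings into the long exact sequence obtained by applying $\RHom_S(-,\bar F)$ to the triangle $I_S\to\oO_S\to\bar F$ shows $\Ext^2_S(I_S,\bar F)=0$, and reduces the obstruction space of the pair deformation theory to $\mathrm{coker}\big(\Ext^1_S(\bar F,\bar F)\xrightarrow{\circ\, s}H^1(S,\bar F)\big)$; the key point to verify here is that this section-induced map is surjective, which is precisely the smoothness of $P^{\mathrm{JS}}_1(S,\beta)$.

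Now the virtual class. Since $M:=P^{\mathrm{JS}}_1(X,\beta)\cong P^{\mathrm{JS}}_1(S,\beta)$ is a smooth scheme, I am in case (2) of Section~\ref{review DT4}, so $[M]^{\mathrm{vir}}=\mathrm{PD}\big(e(Ob,Q)\big)$ with $Ob=\mathcal{E}xt^2_{\pi_P}(\mathbb{I},\mathbb{I})_0$ the obstruction bundle carrying the Serre-duality form $Q$. I would compute $\Ext^2_X(I,I)_0$ pointwise using $i^*i_*\bar F\cong\bar F\otimes\Lambda^{\bullet}N^{\vee}$, which gives $\Ext^k_X(i_*\bar F,i_*\bar F)=\bigoplus_q\Ext^{k-q}_S(\bar F,\bar F\otimes\Lambda^q N)$, where $\Lambda^0N=\oO_S$, $\Lambda^1N=L_1\oplus L_2$ and crucially $\Lambda^2N=L_1\otimes L_2=K_S$. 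The $q=0$ and $q=2$ summands in degree $2$ are $\Ext^2_S(\bar F,\bar F)$ and $\Hom_S(\bar F,\bar F\otimes K_S)$, both zero by the del Pezzo vanishing, and the corners $\Ext^2_X(\oO_X,F)=H^2(S,\bar F)$ and $\Ext^2_X(F,\oO_X)\cong H^2(S,\bar F)^{\vee}$ (CY$_4$ Serre duality) vanish too, so the section contributes only in degrees $1$ and $3$ and
\[
\Ext^2_X(I,I)_0\cong\Ext^1_S(\bar F,\bar F\otimes L_1)\oplus\Ext^1_S(\bar F,\bar F\otimes L_2).
\]
Serre duality on $S$ with $L_1\otimes L_2=K_S$ identifies the two summands as mutually dual, which is exactly the isotropic splitting for $Q$; hence $\mathcal{E}xt^1_{\pi_P}(\mathbb{F},\mathbb{F}\boxtimes L_1)$ is a Lagrangian subbundle and $e(Ob,Q)=e\big(\mathcal{E}xt^1_{\pi_P}(\mathbb{F},\mathbb{F}\boxtimes L_1)\big)$. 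Finally, since $\Hom_S(\bar F,\bar F\otimes L_1)=0$ and $\Ext^2_S(\bar F,\bar F\otimes L_1)\cong\Hom_S(\bar F,\bar F\otimes L_2)^{\vee}=0$ (again by the slope inequalities, $L_1,L_2$ being negative), the complex $\dR\hH om_{\pi_P}(\mathbb{F},\mathbb{F}\boxtimes L_1)$ is concentrated in degree $1$, so $-\dR\hH om_{\pi_P}(\mathbb{F},\mathbb{F}\boxtimes L_1)=\mathcal{E}xt^1_{\pi_P}(\mathbb{F},\mathbb{F}\boxtimes L_1)$ in $K$-theory and $e(Ob,Q)=e\big(-\dR\hH om_{\pi_P}(\mathbb{F},\mathbb{F}\boxtimes L_1)\big)$, which is the asserted formula.

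The hard part will be the third step: honestly computing the traceless group $\Ext^2_X(I,I)_0$, tracking the connecting maps coming from the section $s$ through the long exact sequences that relate $\Ext^{\bullet}_X(I,I)$ to the four corners $\Ext^{\bullet}_X(\oO_X,\oO_X)$, $\Ext^{\bullet}_X(\oO_X,F)$, $\Ext^{\bullet}_X(F,\oO_X)$, $\Ext^{\bullet}_X(F,F)$, and checking that after removing the trace the degree-$2$ part is unaffected by the section and collapses to the pure sheaf contribution above. Equally delicate is matching the Serre-duality form on $\Ext^2_X(I,I)_0$ with the $S$-level duality so that the chosen isotropic half is genuinely the $L_1$-summand; the residual orientation-dependent sign is what is absorbed into the clause ``for certain choice of orientations''.
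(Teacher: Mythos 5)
Your route is the same as the paper's own: the published proof is essentially a citation (scheme-theoretic support on $S$ via \cite[Prop.~3.1]{CMT1}, then ``similar to \cite[Prop.~3.7]{CMT2}'' for the isomorphism of smooth moduli spaces and the virtual class identity), and your three steps are exactly what that reference carries out --- support on the zero section, the decomposition $\Ext^k_X(i_*\bar F,i_*\bar F)\cong\bigoplus_q\Ext^{k-q}_S(\bar F,\bar F\otimes\Lambda^qN)$ with $\Lambda^2N\cong K_S$, the stability vanishings, the Lagrangian splitting, and the $K$-theoretic reduction to $e\big(-\dR\hH om_{\pi_P}(\mathbb{F},\mathbb{F}\boxtimes L_1)\big)$. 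The genuine gap is the statement you flag but never prove: surjectivity of the composition-with-$s$ map $\circ s\colon\Ext^1_S(\bar F,\bar F)\to H^1(S,\bar F)$. Both halves of the proposition hinge on exactly this. For smoothness you reduce the obstruction space to $\mathrm{coker}(\circ s)$ and stop. More seriously, your step-three inference --- ``the degree-$2$ corners vanish, hence the section contributes only in degrees $1$ and $3$'' --- is not valid: the degree-$1$ and degree-$3$ corners, namely $H^1(X,F)=H^1(S,\bar F)$ and $\Ext^3_X(F,\oO_X)\cong H^1(S,\bar F)^{\vee}$, feed into $\Ext^2_X(I,I)_0$ through connecting maps. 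Using the triangle $\RHom(E,F)\to\RHom(E,E)_0[1]\to\RHom(F,\oO_X)[2]$ from the proof of Theorem~\ref{prop:stack:AB}, together with $\Ext^2_X(F,\oO_X)\cong H^2(S,\bar F)^{\vee}=0$ and $\Ext^2_X(I,F)\cong\Ext^3_X(F,F)\cong\Ext^1_S(\bar F,\bar F)^{\vee}$, one gets
\begin{equation*}
0\to\Ext^1_X(I,F)\to\Ext^2_X(I,I)_0\to\ker\Big(H^1(S,\bar F)^{\vee}\to\Ext^1_S(\bar F,\bar F)^{\vee}\Big)\to 0,
\end{equation*}
where $\Ext^1_X(I,F)$ is itself an extension of $\Ext^2_X(F,F)$ by $\mathrm{coker}(\circ s)$. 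Hence
\begin{equation*}
\dim\Ext^2_X(I,I)_0\;\geqslant\;\dim\Big(\Ext^1_S(\bar F,\bar F\otimes L_1)\oplus\Ext^1_S(\bar F,\bar F\otimes L_2)\Big)+\dim\mathrm{coker}(\circ s),
\end{equation*}
so the collapse onto the pure sheaf contribution holds if and only if $\circ s$ is surjective; the same hypothesis is needed for $\Ext^2_X(I,I)_0$ to have constant rank, i.e. even to speak of the obstruction bundle $(Ob,Q)$ in the smooth-case construction of Section~\ref{review DT4}.

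This is not a vacuous concern, because $H^1(S,\bar F)\neq 0$ genuinely occurs for stable sheaves with $\chi=1$ in classes the paper treats. On $S=\mathbb{P}^2$ with $\beta=4[l]$, take a smooth quartic $C$, a point $p\in C$, and $\bar F=\omega_C(-p)$: this is stable with $\chi(\bar F)=1$ and $h^1(S,\bar F)=h^0(\oO_C(p))=1$, and any non-zero section gives a point of $P^{\mathrm{JS}}_1(S,4[l])$ at which neither your smoothness claim nor your obstruction-space computation follows from the vanishings you invoke. What is missing is a proof of the surjectivity of $\circ s$ --- by Serre duality on $S$ it is equivalent to injectivity of $\Ext^1_S(\bar F,K_S)\to\Ext^1_S(\bar F,\bar F\otimes K_S)$, post-composition with $s\otimes\id_{K_S}$, which one can attack via $\Ext^1_S(\bar F,K_S)\cong H^0\big(S,\mathcal{E}xt^1_S(\bar F,K_S)\big)$ and the corresponding sheaf-level map, but that argument must actually be supplied. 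You do acknowledge this as ``the hard part''; the point is that it is not a technical remainder but the entire mathematical content of the proposition beyond what the cited references give, and until it is closed both conclusions of the statement are unproven in your write-up.
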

\begin{proof}
Given $(\oO_X\stackrel{s}{\to}  F)\in P^{\mathrm{JS}}_{n}(X,\beta)$ such that $(n,\omega\cdot\beta)=1$, $F$ is stable and hence scheme theoretically supported on $S$ (ref.~\cite[Prop.~3.1]{CMT1}). 
Similar to \cite[Prop.~4.7]{CMT2}, \cite[Prop.~4.2]{CKM2} we have an isomorphism
$$P^{\mathrm{JS}}_{n}(X,\beta)\cong P^{\mathrm{JS}}_{n}(S,\beta) $$
of moduli spaces, under which virtual classes have the desired property.
\end{proof}

As $H^4(X, \mathbb{Z})\cong \mathbb{Z}$ and the primary insertion (\ref{pri insertion}) is linear with respect to $\gamma\in H^4(X, \mathbb{Z})$, so we may 
simply take $\gamma=[\mathrm{pt}]$ to be the generator $H^4(X, \mathbb{Z})\cong \mathbb{Z}$.
By the same argument as in Theorem \ref{prod of cy3}, we have
\begin{thm}\label{n=1 local surface}
Let $X$ be one of the above two local surfaces (\ref{two local surf}). Assume Conjecture \ref{g=0 one dim sheaf conj} holds for $\beta\in H_2(X)$. 
Then we have
$$ 
P^{\mathrm{JS}}_{1,\beta}([\mathrm{pt}])=n_{0,\beta}([\mathrm{pt}]),
$$
for certain choice of orientation, i.e. Conjecture \ref{JS/GV conj} holds for $\beta\in H_2(X)$ and $n=1$.
\end{thm}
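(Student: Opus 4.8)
The plan is to transcribe the proof of Theorem~\ref{prod of cy3}, using the zero section $S\subset X$ in place of the Calabi-Yau $3$-fold factor $Y$ and invoking Proposition~\ref{local surface identify virtual class} for the reduction of the $\DT_4$ virtual class. Assuming Conjecture~\ref{g=0 one dim sheaf conj}, the desired right-hand side is $n_{0,\beta}([\mathrm{pt}])=\int_{[M_1(X,\beta)]^{\rm{vir}}}\tau([\mathrm{pt}])$, so it suffices to establish
\begin{align*}
\int_{[P^{\mathrm{JS}}_1(X,\beta)]^{\rm{vir}}}\tau([\mathrm{pt}])=\int_{[M_1(X,\beta)]^{\rm{vir}}}\tau([\mathrm{pt}]).
\end{align*}
First I would reduce both integrals to the surface $S$: the support argument of~\cite[Prop.~3.1]{CMT1} gives $M_1(X,\beta)\cong M_1(S,\beta)$, which is smooth for $S=\mathbb{P}^2,\,\mathbb{P}^1\times\mathbb{P}^1$ and carries a virtual class of the shape $[M_1(S,\beta)]\cdot e(\mathrm{Obs}_M)$ for an obstruction bundle $\mathrm{Obs}_M$ (cf.~\cite{CMT1}), while Proposition~\ref{local surface identify virtual class} gives $P^{\mathrm{JS}}_1(X,\beta)\cong P^{\mathrm{JS}}_1(S,\beta)$ together with an explicit expression for $[P^{\mathrm{JS}}_1(X,\beta)]^{\rm{vir}}$ as the Euler class of the obstruction bundle $-\dR\hH om_{\pi_P}(-,-\boxtimes L_1)$ capped with $[P^{\mathrm{JS}}_1(S,\beta)]$. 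In both cases the insertion $\tau([\mathrm{pt}])$ is computed from the universal sheaf on $S\times(-)$.

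Next I would use the forgetful morphism $f\colon P^{\mathrm{JS}}_1(S,\beta)\to M_1(S,\beta)$, $(\oO_S\to F)\mapsto F$, which exhibits $P^{\mathrm{JS}}_1(S,\beta)$ as the projective bundle $\mathbb{P}(\pi_{M\ast}\mathbb{F})$ with fiber $\mathbb{P}(H^0(S,F))$ over $[F]$, where $\mathbb{F}$ is the universal sheaf on $M_1(S,\beta)\times S$ and $\pi_M$ the projection to $M_1(S,\beta)$. Since both moduli spaces are smooth, their $\DT_4$ classes are honest Euler classes, and the virtual pushforward along $f$ reduces to a projective-bundle computation: the relative obstruction of $f$ over a fiber is $H^1(S,F)\otimes\oO(1)$, of rank $h^1(S,F)=h^0(S,F)-1$ equal to the fiber dimension (as $\chi(F)=1$), so that
\begin{align*}
\int_{\mathbb{P}(H^0(S,F))}e\big(H^1(S,F)\otimes\oO(1)\big)=1.
\end{align*}
This is exactly the constant $c=1$ arising in the application of Manolache's formula (Theorem~\ref{Manolache formula}) inside the proof of Theorem~\ref{prod of cy3}, and it would yield the virtual pushforward identity $f_\ast[P^{\mathrm{JS}}_1(X,\beta)]^{\rm{vir}}=[M_1(X,\beta)]^{\rm{vir}}$.

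Finally, since $\tau([\mathrm{pt}])$ is defined purely from the universal sheaf and not from the section, its class on $P^{\mathrm{JS}}_1(S,\beta)$ is compatible with the pullback along $f$; combining this with the projection formula and the virtual pushforward identity gives $\int_{[P^{\mathrm{JS}}_1(X,\beta)]^{\rm{vir}}}\tau([\mathrm{pt}])=\int_{[M_1(X,\beta)]^{\rm{vir}}}\tau([\mathrm{pt}])$, whereupon Conjecture~\ref{g=0 one dim sheaf conj} finishes the proof. The step I expect to be most delicate is precisely this virtual pushforward compatibility: because $X$ is non-compact, the $\DT_4$ class is only available through the reduction of Proposition~\ref{local surface identify virtual class}, so one must check that the relative obstruction theory of $f$ is genuinely compatible with these reduced classes. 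Concretely, one has to track how the $\oO(1)$-twist in the universal pair sheaf $(\id\times f)^{\ast}\mathbb{F}\otimes\oO(1)$ enters both the obstruction Euler class and the $\ch_3$-insertion, and verify that it leaves the fiber integral equal to $1$ while the surviving insertion matches $\tau([\mathrm{pt}])$ on $M_1(S,\beta)$. Everything else is a direct transcription of the argument for Theorem~\ref{prod of cy3}.
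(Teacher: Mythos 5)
Your proposal is correct and follows essentially the same route as the paper: the paper's proof of this theorem is precisely a transcription of the argument of Theorem~\ref{prod of cy3} to the zero section $S$, using Proposition~\ref{local surface identify virtual class} for the reduction of the $\mathrm{DT}_4$ virtual class, the forgetful map $f\colon P^{\mathrm{JS}}_1(S,\beta)\to M_1(S,\beta)$ with Manolache's formula (Theorem~\ref{Manolache formula}) giving $c=\int_{\mathbb{P}(H^0(S,F))}e\big(H^1(S,F)\otimes\oO(1)\big)=1$, the resulting virtual pushforward identity, and the fact that $\tau([\mathrm{pt}])$ depends only on the universal sheaf, before invoking Conjecture~\ref{g=0 one dim sheaf conj}. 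The delicate points you flag (cancellation of the $\oO(1)$-twist in $\dR\hH om_{\pi_P}(\mathbb{F},\mathbb{F}\boxtimes L_1)$ and in the $\ch_3$-insertion against the point class) are exactly the checks left implicit in the paper's one-line reference to that argument, and your treatment of them is sound.
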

Note also the following case where our conjecture holds by a trivial reason.
\begin{prop}\label{coprime vanishing for local surface}
If $n>1$ and $\omega\cdot \beta$ are coprime, then  
$$
P^{\mathrm{JS}}_{n,\beta}([\mathrm{pt}])=0.
$$
Moreover, Conjecture \ref{JS/GV conj} holds in this setting.
\end{prop}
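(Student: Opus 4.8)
The plan is to show that both sides of the identity in Conjecture~\ref{JS/GV conj} vanish in this regime. For the right-hand side this is immediate: under the assumption $\gcd(n,\omega\cdot\beta)=1$ with $n>1$, the number $\tfrac{\omega\cdot\beta}{n}$ is not an integer, whereas $\omega\cdot\beta_i\in\mathbb{Z}$ for every class $\beta_i\in H_2(X,\mathbb{Z})\cong H_2(S,\mathbb{Z})$. Hence there is no decomposition $\beta=\beta_1+\cdots+\beta_n$ with $\omega\cdot\beta_i=\tfrac{\omega\cdot\beta}{n}$, the index set of the sum is empty, and the right-hand side is $0$. It remains to prove the vanishing $P^{\mathrm{JS}}_{n,\beta}([\mathrm{pt}])=0$ of the left-hand side.

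First I would record the consequences of coprimality. Since $\gcd(n,\omega\cdot\beta)=1$, any $\mu$-semistable one dimensional sheaf $F$ with $([F],\chi(F))=(\beta,n)$ is automatically $\mu$-stable: a strictly destabilizing subsheaf would share the slope $\tfrac{n}{\omega\cdot\beta}$ while having strictly smaller $\omega$-degree, which is incompatible with integrality of $\chi$. Thus condition (ii) of Definition~\ref{defi:PTJSpair} is vacuous, and a JS stable pair is just $(\oO_X\stackrel{s}{\to}F)$ with $F$ $\mu$-stable and $s\neq 0$. As in Proposition~\ref{local surface identify virtual class}, such $F$ is scheme-theoretically supported on the zero section $S$, so $M_n(X,\beta)\cong M_n(S,\beta)$ and the forgetful map
\begin{align*}
f\colon P^{\mathrm{JS}}_n(X,\beta)\longrightarrow M_n(X,\beta),\qquad (\oO_X\to F)\mapsto F,
\end{align*}
exhibits $P^{\mathrm{JS}}_n(X,\beta)$ as $\mathbb{P}(\pi_{M\ast}\mathbb{F})$, with fibre $\mathbb{P}(H^0(S,F))$ over $[F]$. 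Since $F$ is purely one dimensional, $h^0(S,F)\geqslant\chi(F)=n\geqslant 2$, so for $n>1$ the fibres of $f$ are positive dimensional and $f$ is not generically finite.

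Next I would check that the insertion is pulled back from the base. Writing $\mathbb{F}^{\dag}=(\id\times f)^{\ast}\mathbb{F}_M\otimes\pi_P^{\ast}\oO(1)$ for the universal sheaf, the identity $\ch(\mathbb{F}^{\dag})=(\id\times f)^{\ast}\ch(\mathbb{F}_M)\cdot e^{\pi_P^{\ast}c_1(\oO(1))}$, together with the vanishing of $\ch_{\leqslant 2}$ of a one dimensional sheaf, shows that every term involving $c_1(\oO(1))$ drops out of $(\pi_P)_{\ast}\bigl(\pi_X^{\ast}[\mathrm{pt}]\cup\ch_3(\mathbb{F}^{\dag})\bigr)$ in the relevant $X$-degree. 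Therefore $\tau([\mathrm{pt}])=f^{\ast}\sigma$ for the corresponding insertion $\sigma\in H^2(M_n(X,\beta))$, and so $\tau([\mathrm{pt}])^n=f^{\ast}(\sigma^n)$.

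Finally I would compute the virtual class and push forward. Generalizing Proposition~\ref{local surface identify virtual class} to $n>1$, I expect the $(-2)$-shifted structure to yield
\begin{align*}
[P^{\mathrm{JS}}_n(X,\beta)]^{\mathrm{vir}}=e(Ob)\cap[P^{\mathrm{JS}}_n(S,\beta)],\qquad Ob=-\dR\hH om_{\pi_P}(\mathbb{F}^{\dag},\mathbb{F}^{\dag}\boxtimes L_1),
\end{align*}
and since the tautological twist $\oO(1)$ cancels inside the $\dR\hH om$, one gets $Ob=f^{\ast}Ob_M$ with $Ob_M=-\dR\hH om_{\pi_M}(\mathbb{F}_M,\mathbb{F}_M\boxtimes L_1)$ pulled back from $M_n(S,\beta)$. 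The projection formula then gives
\begin{align*}
P^{\mathrm{JS}}_{n,\beta}([\mathrm{pt}])=\int_{[P^{\mathrm{JS}}_n(S,\beta)]}f^{\ast}\bigl(\sigma^n\cup e(Ob_M)\bigr)=\int_{f_{\ast}[P^{\mathrm{JS}}_n(S,\beta)]}\sigma^n\cup e(Ob_M)=0,
\end{align*}
because $f_{\ast}[P^{\mathrm{JS}}_n(S,\beta)]=0$: the fundamental class of $P^{\mathrm{JS}}_n(S,\beta)$ sits in a homological degree exceeding $2\dim M_n(S,\beta)$, so its pushforward lands in a vanishing group. The main obstacle is the last step, namely establishing the smoothness of $P^{\mathrm{JS}}_n(S,\beta)$ and the pulled-back form of the half obstruction bundle $Ob$ for $n>1$ (the precise analogue of Proposition~\ref{local surface identify virtual class}); once $[P^{\mathrm{JS}}_n(X,\beta)]^{\mathrm{vir}}$ is expressed through data pulled back from $M_n(S,\beta)$, the vanishing is forced by the strictly positive relative dimension of $f$, and the conjecture holds for the trivial reason that both sides are zero.
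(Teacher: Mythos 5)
Your proposal is correct and follows essentially the same route as the paper's own proof: coprimality forces semistable $=$ stable, the JS moduli space fibers over $M_n(S,\beta)$ with positive-dimensional projective-space fibers, both the insertion $\tau([\mathrm{pt}])$ and the obstruction class are pulled back along the forgetful map $f$, and the projection formula together with the dimension count $f_*[P^{\mathrm{JS}}_n(S,\beta)]=0$ gives the vanishing, while the right-hand side of Conjecture \ref{JS/GV conj} is an empty sum since $\omega\cdot\beta/n\notin\mathbb{Z}$. The step you flag as the main obstacle --- extending Proposition \ref{local surface identify virtual class} from $n=1$ to $n>1$ --- is invoked in the paper with the same implicit appeal (coprimality makes every semistable sheaf stable, so the $n=1$ argument carries over verbatim), so your write-up matches the paper's level of rigor.
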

\begin{proof}
As $(n,\,\omega\cdot \beta)=1$, the coarse moduli space $M_{n}(X,\beta)$ of one dimensional semistable sheaves $F$
on $X$ with $[F]=\beta$ and $\chi(F)=n$ consists of stable sheaves only. By \cite[Prop.~3.1]{CMT1}, $F$ is scheme 
theoretically supported on the zero section $S$ of $X$, and we have an isomorphism 
$$M_{n}(X,\beta)\cong M_{n}(S,\beta), $$
to the (smooth) coarse moduli space $M_{n}(S,\beta)$ of one dimensional stable sheaves on $S$.

We have a surjective (as $n>1$) forgetful map 
$$f\colon P^{\mathrm{JS}}_{n}(S,\beta)\to  M_{n}(S,\beta), \quad (\oO_X\to F)\mapsto F. $$
By Riemann-Roch formula, we know 
$$\mathrm{vir.dim}_{\mathbb{C}}(P_{n}(S,\beta))=n+\beta^2 ,\quad \dim_{\mathbb{C}}(M_{n}(S,\beta))=1+\beta^2. $$ 
By Proposition \ref{local surface identify virtual class}, we have 
\begin{align*}
P^{\mathrm{JS}}_{n,\beta}([\mathrm{pt}])&=\int_{[P^{\mathrm{JS}}_{n}(S,\beta)]^{\mathrm{vir}}} \tau([\mathrm{pt}])\cdot 
e\Big(-\dR\hH om_{\pi_{P}}(\mathbb{F}, \mathbb{F} \boxtimes L_1)\Big) \\
&=\int_{[P^{\mathrm{JS}}_{n}(S,\beta)]^{\mathrm{vir}}} f^*\Big(\tau([\mathrm{pt}])\cdot e\big(-\dR\hH om_{\pi_{M}}(\mathbb{F}, \mathbb{F} \boxtimes L_1)\big)\Big) \\
&=\int_{f_*[P^{\mathrm{JS}}_{n}(S,\beta)]^{\mathrm{vir}}} \Big(\tau([\mathrm{pt}])\cdot e\big(-\dR\hH om_{\pi_{M}}(\mathbb{F}, \mathbb{F} \boxtimes L_1)\big)\Big) \\
&=0 \,.
\end{align*}
Here the second equality is because the insertion comes from the pull-back from $M_{n}(S,\beta)$ via $f$, and in the last equality we use 
$f_*[P^{\mathrm{JS}}_{n}(S,\beta)]^{\mathrm{vir}}=0$ by a dimension counting.

In the coprime case $(n,\, \omega\cdot \beta)=1$, the conjectural formula in Conjecture \ref{JS/GV conj} obviously gives vanishing
$P^{\mathrm{JS}}_{n,\beta}([\mathrm{pt}])=0$, which coincides with the above computations.
\end{proof}

To sum up, we verify Conjecture \ref{JS/GV conj} for low degree curve classes.
In the following, we denote by $[l] \in H_2(X, \mathbb{Z})=H_2(\mathbb{P}^2, \mathbb{Z})$
to be the line class. 
\begin{prop}\label{local P2 low deg class}
Conjecture \ref{JS/GV conj} holds for $X=\mathrm{Tot}_{\mathbb{P}^2}(\oO(-1)\oplus \oO(-2))$ in the following cases
\begin{itemize}
\item $\beta=2[l]$ and $n=0,1,2,2k+1$ \emph{($k\geqslant 1$)}.
\item $\beta=3[l]$ and $n=0,1,3k\pm 1$ \emph{($k\geqslant 1$)}.
\item $\beta=4[l]$ and $n=0,\,2k+1$ \emph{($k\geqslant 1$)}. 
\end{itemize}
\end{prop}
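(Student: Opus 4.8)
The plan is to partition the pairs $(\beta,n)$ in the list into two families, each dispatched by a result established earlier: the \emph{coprime} pairs, where Proposition~\ref{coprime vanishing for local surface} forces both sides to vanish, and the remaining \emph{small} pairs ($n=0,1,2$ for $\beta=2[l]$; $n=0,1$ for $\beta=3[l]$; $n=0$ for $\beta=4[l]$), where the absence of walls collapses the problem into the Pandharipande--Thomas chamber and reduces it to Conjecture~\ref{PT/GV conj}.

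First I would record the numerical invariants that govern the wall structure. Since every one dimensional subscheme of $X$ is set-theoretically supported on the zero section $\mathbb{P}^2$ by the negativity of the normal bundle (as in \cite[Prop.~3.1]{CMT1}), and any thickening transverse to $\mathbb{P}^2$ only raises $\chi$, the infimum defining $n(d[l])$ is attained by planar curves; this gives $n([l])=n(2[l])=1$ and $n(3[l])=0$, the last value being the one already used in Example~\ref{exam:locP2}. With $\omega\cdot d[l]=d$, I would then check the inequality (\ref{ineq:betan}) for each non-coprime pair: for $(2[l],0),(2[l],1),(2[l],2)$ one needs $\tfrac{n}{2}\leqslant n([l])=1$; for $(3[l],0),(3[l],1)$ one needs $\tfrac{n}{3}\leqslant\min\{n([l]),\tfrac{n(2[l])}{2}\}=\tfrac12$; and for $(4[l],0)$ one needs $0\leqslant\min\{1,\tfrac12,\tfrac{n(3[l])}{3}\}=0$. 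The pairs $(2[l],2)$ and $(4[l],0)$ are the boundary cases where (\ref{ineq:betan}) holds only with equality, and I would note that the Joyce--Song point $t=\tfrac{n}{\omega\cdot\beta}+0$ still lies strictly inside the no-wall region $t>\tfrac{n}{\omega\cdot\beta}$ of Proposition~\ref{prop:nowall}, so equality is harmless.

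Granting these inequalities, Proposition~\ref{lem:compatible} applies to each such pair: it yields $P^{\mathrm{JS}}_{n,\beta}(\gamma)=P_{n,\beta}(\gamma)$ and reduces the assertion of Conjecture~\ref{JS/GV conj} to the corresponding statement of Conjecture~\ref{PT/GV conj}. For $n=0$ this is exactly statement~(2) of Conjecture~\ref{JS/GV conj}, i.e. the $t$-independence of $P^t_{0,\beta}$, which is immediate from no-wall and needs no Gromov--Witten input, so all three $n=0$ cases are finished at once. The three genus-input cases left are $(2[l],1)$, $(3[l],1)$, and $(2[l],2)$; here I would invoke the verification of the PT/GV correspondence for these low-degree classes from \cite{CMT2} (note that the extra PT/GV terms carrying $P_{0,[l]}$ vanish, since $P_{0,[l]}=0$, so the PT/GV and JS/GV right-hand sides literally agree). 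For the two $n=1$ cases one may alternatively use Theorem~\ref{n=1 local surface} together with the verification of the Katz/GV Conjecture~\ref{g=0 one dim sheaf conj} for $2[l]$ and $3[l]$ in \cite{CMT1}, giving $P^{\mathrm{JS}}_{1,\beta}([\mathrm{pt}])=n_{0,\beta}([\mathrm{pt}])$; since $H^4(X,\mathbb{Z})\cong\mathbb{Z}\langle[\mathrm{pt}]\rangle$ and both sides are homogeneous of degree $n$ in $\gamma$, the case $\gamma=[\mathrm{pt}]$ suffices.

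Finally, every remaining pair---$n=2k+1$ ($k\geqslant1$) with $\beta\in\{2[l],4[l]\}$, and $n=3k\pm1$ ($k\geqslant1$) with $\beta=3[l]$---satisfies $n>1$ and $\gcd(n,\omega\cdot\beta)=1$, these being precisely the odd $n>1$ when $d\in\{2,4\}$ and the $n>1$ with $3\nmid n$ when $d=3$. Proposition~\ref{coprime vanishing for local surface} then gives $P^{\mathrm{JS}}_{n,\beta}([\mathrm{pt}])=0$ and simultaneously shows the conjectural sum is empty, so Conjecture~\ref{JS/GV conj} holds unconditionally in these cases. I expect the main obstacle to be concentrated in the three small non-coprime cases, especially $(2[l],2)$: unlike the coprime and $n=0$ cases these genuinely rely on the explicit $\mathrm{DT}_4$ virtual class computations underlying the PT/GV (and Katz/GV) verifications, and one must confirm that these classes are actually covered by the computations in \cite{CMT1, CMT2} rather than merely reduced to them.
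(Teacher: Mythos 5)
Your proposal is correct and takes essentially the same approach as the paper: coprime pairs with $n>1$ are dispatched by Proposition \ref{coprime vanishing for local surface}, and the remaining pairs are shown to satisfy (\ref{ineq:betan}) (via $n([l])=n(2[l])=1$, $n(3[l])=0$), so the JS and PT chambers coincide and the claim reduces to the prior verifications in \cite{CMT1, CMT2}. The only cosmetic difference is that the paper routes the $n=1$ cases through Theorem \ref{n=1 local surface} and the Katz/GV verification of \cite[Sect. 3.2]{CMT1}, which you list as your alternative rather than your primary argument.
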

\begin{proof}
When $(n,\omega\cdot\beta)=1$, Conjecture \ref{JS/GV conj} is reduced to Theorem \ref{n=1 local surface}, 
Proposition \ref{coprime vanishing for local surface} and our previous verification of Conjecture \ref{g=0 one dim sheaf conj} (ref.~\cite[Sect.~3.2]{CMT1}).
The $\beta=2[l]$, $n=2$ case follows from a similar argument as Proposition \ref{deg two class}.

When $n=0$, we discuss $\beta=4[l]$ case (other cases follow from an easier argument).
In this case, the condition (\ref{ineq:betan}) is
satisfied since $n([l])=n(2[l])=1$ and $n(3[l])=0$. 
Therefore 
$P_0(X, 4[l])=P_0^{\rm{JS}}(X, 4[l])$, and 
we have the identity $P_{0, 4[l]}=P_{0, 4[l]}^{\rm{JS}}$
for certain choice of orientation and we then use \cite[Cor.~1.6]{CKM2}.
\end{proof}
Similar to Proposition \ref{local P2 low deg class}, we also have:
\begin{prop}
Conjecture \ref{JS/GV conj} holds for $X=\mathrm{Tot}_{\mathbb{P}^1\times \mathbb{P}^1}(\oO(-1,-1)\oplus \oO(-1,-1))$ in the following cases
\begin{itemize}
\item $\beta=(0,d)$ \emph{($d\geqslant 1$)} and $n=0,1,2$.
\item $\beta=(1,d)$ \emph{($d\geqslant 1$)} and $n=0,1$.
\item $\beta=(2,2)$ and $n=0,1$.
\end{itemize}
\end{prop}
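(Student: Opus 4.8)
The plan is to follow the template of the proof of Proposition~\ref{local P2 low deg class}, organizing the verification by the value of $n$ and isolating the one genuinely nontrivial case. The single geometric input I would record first is that every \emph{proper} effective subclass $\beta'$ of each listed class is of the form $(a',b')$ with nonpositive arithmetic genus, so that $n(\beta')\geqslant 1$: indeed a pure one–dimensional representative of a class $\beta'$ on $S$ has $\chi(\oO_C)=1-p_a(\beta')$, and thickening a ruling into the negative normal directions of $X$ only increases $\chi$. This positivity of $n(\beta')$ is exactly what will make the inequality~(\ref{ineq:betan}) hold below.

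For $n=0$, for each listed $\beta$ the left-hand side of~(\ref{ineq:betan}) is $0$ while $n(\beta')/(\omega\cdot\beta')>0$ for every $0<\beta'<\beta$ by the remark above; hence~(\ref{ineq:betan}) holds and Proposition~\ref{prop:nowall} gives $P^{\mathrm{JS}}_{0,\beta}=P_{0,\beta}$, which is precisely assertion~(2) of Conjecture~\ref{JS/GV conj}. For $n=1$, I would simply invoke Theorem~\ref{n=1 local surface}, which yields $P^{\mathrm{JS}}_{1,\beta}([\mathrm{pt}])=n_{0,\beta}([\mathrm{pt}])$ once Conjecture~\ref{g=0 one dim sheaf conj} is known for $\beta$, the latter being verified for these low-degree classes on $X$ in~\cite{CMT1}. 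Since $\beta=(1,d)$ and $\beta=(2,2)$ appear only with $n\in\{0,1\}$, these two families are then completely settled.

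The only remaining case is $\beta=(0,d)$ with $n=2$, which I would split by the parity of $\omega\cdot\beta$. Writing $\ell:=\omega\cdot(0,1)\in\{l_1,l_2\}$, so $\omega\cdot(0,d)=\ell d$: if $\ell d$ is odd then $\gcd(2,\omega\cdot\beta)=1$ and Proposition~\ref{coprime vanishing for local surface} gives $P^{\mathrm{JS}}_{2,(0,d)}([\mathrm{pt}])=0$, matching the conjectural right-hand side, which is empty because no effective splitting $\beta_1+\beta_2=(0,d)$ can satisfy $\omega\cdot\beta_i=\ell d/2\notin\mathbb{Z}$. If $\ell d$ is even I would instead check~(\ref{ineq:betan}) directly: a proper subclass $(0,d')$ has $n((0,d'))/(\omega\cdot(0,d'))=1/\ell$ (realized by $d'$ disjoint rulings), against $2/(\ell d)$ on the left, so the inequality reduces to $d\geqslant 2$, while for $d=1$ there is no proper subclass at all. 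Thus~(\ref{ineq:betan}) holds, Proposition~\ref{prop:nowall} gives $P^{\mathrm{JS}}_{2,(0,d)}=P_{2,(0,d)}$, and Proposition~\ref{lem:compatible} reduces the desired identity to the first (PT/GV) formula of Conjecture~\ref{PT/GV conj} for the fiber classes $(0,d)$, which is supplied by~\cite{CMT2}.

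The hardest part is this last, even subcase, since it is the only place where a wall could a priori occur; making it rigorous rests on two external inputs, namely the computation of $n((0,d'))$ (equivalently, the $\oO(-1,-1,0)$ geometry of the rulings of $X$) used to validate~(\ref{ineq:betan}), and the PT/GV verification for all $(0,d)$ in~\cite{CMT2}. I would also flag that the $n=1$ statements are only as unconditional as the verification of Conjecture~\ref{g=0 one dim sheaf conj} for $(0,d),(1,d),(2,2)$ in~\cite{CMT1}; were any of those classes absent there, that gap would have to be closed by a direct evaluation of $\int_{[M_1(X,\beta)]^{\mathrm{vir}}}\tau([\mathrm{pt}])$.
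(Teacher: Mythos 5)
Your treatment of the $n=0$ and $n=2$ cases is essentially the paper's own route: verify condition (\ref{ineq:betan}) (using $n((0,d'))=d'$, $n((1,d'))=1$, etc.), invoke Proposition~\ref{prop:nowall} to identify $P_n^{\mathrm{JS}}(X,\beta)$ with $P_n(X,\beta)$, and then reduce to the PT-chamber results via Proposition~\ref{lem:compatible}; your extra parity split using Proposition~\ref{coprime vanishing for local surface} for $\omega\cdot\beta$ odd is harmless and consistent with this. The genuine gap is in the $n=1$ case. You discharge it by citing Theorem~\ref{n=1 local surface} together with the claim that Conjecture~\ref{g=0 one dim sheaf conj} "is verified for these low-degree classes on $X$ in \cite{CMT1}". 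That claim is false for $\beta=(1,d)$: the paper states explicitly that this case "is not discussed in the previous literature," and closing it is the bulk of the paper's proof. Concretely, the paper shows that every Cohen--Macaulay curve in class $(1,d)$ has $\chi(\oO_C)=1$, so that $M_1(X,(1,d))\cong \mathbb{P}\big(H^0(\oO_{\mathbb{P}^1\times\mathbb{P}^1}(1,d))\big)\cong\mathbb{P}^{2d+1}$; it then identifies the $\mathrm{DT}_4$ virtual class, for a suitable orientation, with
\begin{align*}
[M_1(X,\beta)]^{\mathrm{vir}}=[\mathcal{M}]\cap e\big(\mathcal{E}xt^1_{\pi_M}(\oO_{\mathcal{Z}},\oO_{\mathcal{Z}}\otimes\oO(-1,-1))\big),
\end{align*}
with $\mathcal{Z}$ the universal $(1,1,d)$ divisor, and computes $\int_{[M_1(X,\beta)]^{\mathrm{vir}}}\tau([\mathrm{pt}])=1$, matching $n_{0,(1,d)}=1$ from \cite{KP}. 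You flagged exactly this contingency at the end of your write-up, but flagging it does not close it: without this computation the case $\beta=(1,d)$, $n=1$ is unproved.

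A smaller but related point: for $\beta=(0,d)$ with $d\geqslant 2$ and $n=1$, Conjecture~\ref{g=0 one dim sheaf conj} is also not simply quoted from \cite{CMT1}; the paper uses the support lemma of \cite{CMT1} to show any one-dimensional stable sheaf in class $(0,d)$ is scheme-theoretically supported on a single ruling, which forces $d=1$, so $M_1(X,(0,d))=\emptyset$ and both sides of the conjectured identity vanish. This is a short argument, but it belongs to the proof and is missing from your proposal as well.
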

\begin{proof}
For $n=1$ case, by Theorem \ref{n=1 local surface}, we are reduced to prove Conjecture \ref{g=0 one dim sheaf conj} in those cases.
When $\beta=(2,2)$, this was done in \cite[Sect.~3.2]{CMT1}.
When $\beta=(0,d)$, any one dimensional stable sheaf $F$ in this class is scheme theoretically supported on one $\mathbb{P}^1$ factor (ref.~\cite[Lem.~2.2]{CMT1}). This is possible only when $d=1$, Conjecture \ref{g=0 one dim sheaf conj} then follows easily.
When $\beta=(1,d)$, this is not discussed in the previous literature. To be self-contained, we include the argument here.  Any Cohen-Macaulay curve $C$ in class $(1,d)$ has $\chi(\oO_C)=1$. 
Hence $M_1(X,\beta)$ is isomorphism to the moduli space 
$$\mathcal{M}=\mathbb{P}\Big(H^0\big(\mathbb{P}^1\times \mathbb{P}^1,\oO(1,d)\big)\Big)\cong\mathbb{P}^{2d+1}$$
of curves in $\mathbb{P}^1\times \mathbb{P}^1$ with curve class $\beta=(1,d)$.
The universal curve  
$$\mathcal{Z}\subset \mathcal{M}\times \mathbb{P}^1\times \mathbb{P}^1$$
is the $(1,1,d)$ divisor. By \cite[Prop.~3.1]{CMT1}, any $F=\oO_C\in M_1(X,\beta)$ is scheme theoretically supported on the zero section 
$\iota:S:=\mathbb{P}^{1}\times \mathbb{P}^{1}\hookrightarrow X$, then 
\begin{align*}\Ext^2_X(F,F)&\cong \Ext^2_S(\oO_C,\oO_C)\oplus \Ext^1_S(\oO_C,\oO_C\otimes \oO(-1,-1)^{\oplus2})\oplus \Ext^0_S(\oO_C,\oO_C\otimes \oO(-2,-2)) \\
&\cong \Ext^1_S(\oO_C,\oO_C\otimes \oO(-1,-1))^{\oplus2}.
\end{align*}
By Section \ref{review DT4}, for certain choice of orientation, we have 
$$[M_1(X,\beta)]^{\mathrm{vir}}=[\mathcal{M}]\cap  e\big(\mathcal{E}xt^1_{\pi_{M}}(\oO_{\mathcal{Z}},\oO_{\mathcal{Z}}\otimes \oO(-1,-1))\big),$$
where $\pi_{M}: \mathcal{M}\times \mathbb{P}^{1}\times \mathbb{P}^{1}\to \mathcal{M}$ is the projection.
Therefore 
\begin{align*}&\quad \,\,\int_{[M_{1}(X,\beta)]^{\rm{vir}}}\tau([\mathrm{pt}]) \\
&=\int_{\mathbb{P}^{2d+1}}e\big(\mathcal{E}xt^1_{\pi_{M}}(\oO_{\mathcal{Z}},\oO_{\mathcal{Z}}\otimes \oO(-1,-1))\big)\cdot \tau([\mathrm{pt}])\\
&=\int_{\mathbb{P}^{2d+1}}e\big(-\dR \hH om_{\pi_{M}}(\oO_{\mathcal{Z}},\oO_{\mathcal{Z}}\otimes \oO(-1,-1))\big)\cdot c_1\big(\oO_{\mathbb{P}^{2d+1}}(1)\big) \\
&=\int_{\mathbb{P}^{2d+1}}e\big(-\dR \hH om_{\pi_{M}}(\oO-\oO(-1,-1,-d), (\oO-\oO(-1,-1,-d))\otimes \oO(-1,-1))\big)\cdot c_1\big(\oO_{\mathbb{P}^{2d+1}}(1)\big) \\
&=\int_{\mathbb{P}^{2d+1}}e\big(\dR \hH om_{\pi_{M}}(\oO,\oO(1,0,d-1))+\dR \hH om_{\pi_{M}}(\oO,\oO(-1,-2,-d-1)) \big)\cdot c_1\big(\oO_{\mathbb{P}^{2d+1}}(1)\big) \\
&=\int_{\mathbb{P}^{2d+1}}\big(1+c_1(\oO_{\mathbb{P}^{2d+1}}(1))\big)^{2d}\cdot c_1\big(\oO_{\mathbb{P}^{2d+1}}(1)\big)=1,
\end{align*}
where in the third equality, 
we use $\mathcal{Z}$ is a $(1,1,d)$ divisor in $\mathbb{P}^{2d+1}\times\mathbb{P}^1\times \mathbb{P}^1$.
This computation matches with $n_{0,(1,d)}=1$ (ref.~\cite[pp.~24]{KP}), i.e. Conjecture \ref{g=0 one dim sheaf conj} also holds in this case.

In all other cases, we can identify $P^{\mathrm{JS}}_n(X,\beta)\cong P_n(X,\beta)$ as in Proposition \ref{local P2 low deg class}, so invariants are the same for certain choice of orientation. Conjecture \ref{JS/GV conj} then follows.
\end{proof}

\section{Equivariant computations on local curves}
Let $C$ be a smooth projective curve and
\begin{align}\label{X local curve}
p \colon
X=\mathrm{Tot}_C(L_1 \oplus L_2 \oplus L_3) \to C
\end{align}
be the total space of split rank three bundle on $C$. Denote the zero section by $\iota:C\to X$.
Assuming that
\begin{align}\label{L123}
L_1 \otimes L_2 \otimes L_3 \cong \omega_C,
\end{align}
then the variety (\ref{X local curve}) is a non-compact Calabi-Yau 4-fold and we set $l_i \cneq \deg L_i$. 

In this section, we consider the case 
that
\begin{align*}
C=\mathbb{P}^1, \quad 
l_1+l_2+l_3=-2,
\end{align*}
where the latter is equivalent to (\ref{L123}). 
Let $T\subset (\mathbb{C}^{\ast})^{4}$ be the three dimensional subtorus (when the genus $g(C)>0$, we can use fiberwise two dimensional CY torus action for (\ref{X local curve}) to define equivariant invariants) which preserves the Calabi-Yau 4-form of $X$.
Let $\bullet=\Spec \mathbb{C}$ with trivial $T$-action, 
$\mathbb{C} \otimes t_i$ be the one dimensional $(\mathbb{C}^{\ast})^{4}$-representation with weight $t_i$ ($i=0,1,2,3$),
and $\lambda_i \in H_{(\mathbb{C}^{\ast})^{4}}^{\ast}(\bullet)$ be its first Chern class.
They are generators of equivariant cohomology rings:
\begin{align*}
H_{(\mathbb{C}^{\ast})^{4}}^{\ast}(\bullet)=\mathbb{Z}[\lambda_0, \lambda_1, \lambda_2,\lambda_3], \quad \
H_{T}^{\ast}(\bullet)=\frac{\mathbb{Z}[\lambda_0,\lambda_1, \lambda_2, \lambda_3]}{(\lambda_0+\lambda_1+\lambda_2+\lambda_3)}.\end{align*}
The Calabi-Yau torus $T$ lifts to an action on the moduli space of $Z_t$-stable pairs on $X$ which preserves Serre duality pairing.
Since the moduli space is non-compact, we define (equivariant) stable pair invariants by a localization formula (as in \cite{CL1, CMT2, CK2}):
\begin{align}\label{localization for local curve}
P^t_{n,d}=[P^t_{n}(X,d\,[\mathbb{P}^{1}])^{T}]^{\rm{vir}}\cdot e( \dR \hH om_{\pi_P}(\mathbb{I}, \mathbb{I})_0^{\rm{mov}})^{1/2}, \end{align}  
The PT and JS stable pair invariants are then the 
special limits
\begin{align*}
P_{n,d}:=P^t_{n,d}|_{t\to \infty}\,, \quad
P^{\mathrm{JS}}_{n,d}:=P^t_{n,d}|_{t=\frac{n}{d}+0}\,.
\end{align*}  
Here $\mathbb{I}=(\oO_{X\times P^t_n(X, d\,[\mathbb{P}^{1}])}\to \mathbb{F})$
 is the universal stable pair and $\pi_P:X\times P^t_n(X, d\,[\mathbb{P}^{1}])\to P^t_n(X, d\,[\mathbb{P}^{1}])$ is the projection. 
Of course, the equality (\ref{localization for local curve})
is not a definition as the virtual class of the fixed locus as well as the square root needs justification. 
We will make it precise in cases studied below. 
The PT moduli space $P_n(X,d\,[\mathbb{P}^{1}])$, i.e. $t\to \infty$ case is studied in \cite[Sect.~5.3]{CMT2}, \cite[Sect.~2.2,~2.3]{CK2}. 
Here we concentrate on the moduli spaces of Joyce-Song stable pairs 
(ref. Definition \ref{defi:PTJSpair}):
\begin{align*}P^{\mathrm{JS}}_n(X,d\,[\mathbb{P}^{1}])=\bigg{\{}&\big(\oO_X\stackrel{s}{\to}  F\big)\,\Big{|}\, F\,\, \mathrm{is}\,\, \mathrm{one}\,\, \mathrm{dim}\,\,\mathrm{compactly}\,\,\mathrm{supported}\,\, \mathrm{semistable},\,\,\mathrm{Im}(s)\neq 0, \\ 
&\mathrm{with}\,\,[p_*F]=d\,[\mathbb{P}^{1}],\,\, \chi(p_*F)=n\,\, \mathrm{and}\,\,\frac{\chi(p_*F')}{\deg(p_*F')}<\frac{n}{d}\,\,\,\mathrm{if}\,\,\mathrm{Im}(s)\subseteq F'\subsetneq F \bigg{\}}. 
\end{align*}

\subsection{When $(l_1,l_2,l_3)$ general and $d=1$}\label{local P1 d=1}

For the $d=1$ case, as in Proposition \ref{irr class}, we have $$P^{\mathrm{JS}}_n(X,[\mathbb{P}^1])=P_n(X,[\mathbb{P}^1]), $$
whose torus fixed loci are described by:
\begin{lem}
Let $\iota:\mathbb{P}^1\to X$ be the zero section. Then
$$P^{\mathrm{JS}}_n(X,[\mathbb{P}^1])^T=\Big\{I=\big(\oO_X\stackrel{s}{\to} \iota_*\oO_{\mathbb{P}^1}(aZ_0+bZ_\infty)\big)\,\Big| \,a,b\geqslant 0\,\, \emph{with}\,\, a+b=n-1 \Big\}, $$
where $s$ is given by the canonical section and $Z_0, Z_{\infty}\in \mathbb{P}^1$ are the torus fixed points.
\end{lem}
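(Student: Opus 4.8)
The identification $P^{\mathrm{JS}}_n(X,[\mathbb{P}^1])=P_n(X,[\mathbb{P}^1])$ having already been recorded via Proposition~\ref{irr class}, the plan is to describe the $T$-fixed locus by analyzing a $T$-fixed PT stable pair $I=(\oO_X\stackrel{s}{\to} F)$ with $[F]=[\mathbb{P}^1]$ and $\chi(F)=n$ directly. First I would pin down the underlying sheaf. Since the class $[\mathbb{P}^1]$ has multiplicity one over the base and $F$ is pure of dimension one, the Fitting support of $F$ is a reduced irreducible curve $C'$ mapping isomorphically to $C=\mathbb{P}^1$ under $p$; thus $C'$ is a section of $p$, it is smooth rational, and $F\cong \iota_{C'\ast}L$ for a line bundle $L$ on $C'$.

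The next step is to force $C'$ to be the zero section. The fiber-scaling subtorus of $T$ (which for generic $(l_1,l_2,l_3)$ has all three fiber weights nonzero) fixes a point of $X$ only when its fiber coordinate vanishes, so its fixed locus is exactly $\iota(\mathbb{P}^1)$. As $F$ is $T$-fixed, its support $C'$ is invariant under this subtorus, whence $C'=\iota(\mathbb{P}^1)$ and $F\cong \iota_\ast L$. Riemann--Roch on $\mathbb{P}^1$ then gives $\deg L=\chi(L)-1=n-1$, i.e. $L\cong\oO_{\mathbb{P}^1}(n-1)$.

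It remains to analyze the section. By adjunction $\Hom_X(\oO_X,\iota_\ast L)\cong H^0(\mathbb{P}^1,L)$, so $s$ corresponds to a section $\sigma\in H^0(\mathbb{P}^1,\oO_{\mathbb{P}^1}(n-1))$, and the PT stability of Definition~\ref{defi:PTJSpair}(i) is equivalent to $\sigma\neq 0$, its cokernel being supported on the finite zero locus of $\sigma$. I would then use $T$-fixedness of the point $[(F,s)]$: the fiber scalings act trivially on the zero section and hence on $H^0(\mathbb{P}^1,L)$, while the base $\mathbb{C}^\ast\subset T$ acts on $\mathbb{P}^1$ with the two fixed points $Z_0,Z_\infty$ and diagonalizes $H^0(\mathbb{P}^1,\oO_{\mathbb{P}^1}(n-1))$ with eigenbasis the monomial sections. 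Invariance of the pair forces $\sigma$, up to scale, to be one of these eigenvectors, i.e. the canonical section of $\oO_{\mathbb{P}^1}(aZ_0+bZ_\infty)$ with $a,b\geqslant 0$ and $a+b=n-1$. Conversely each such pair is visibly $T$-fixed and PT stable, yielding the asserted finite list.

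The step demanding the most care is the equivariant bookkeeping in the last paragraph: one must upgrade the statement ``the isomorphism class of $(F,s)$ is $T$-invariant'' to the concrete normal form. Concretely, $F\cong\iota_\ast\oO_{\mathbb{P}^1}(n-1)$ is simple, so it carries a $T$-equivariant structure unique up to an overall character; fixing such a structure makes $s$ a genuine $T$-eigenvector in $H^0$, and only then does the diagonalization of $H^0(\mathbb{P}^1,\oO_{\mathbb{P}^1}(n-1))$ pin $\sigma$ down to a single monomial. By contrast the support identification is geometrically clean, so this linearization of the section is the main obstacle.
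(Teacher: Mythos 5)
Your proposal is correct and takes essentially the same route as the paper: the paper reduces to $P^{\mathrm{JS}}_n = P_n$ via Proposition~\ref{irr class} exactly as you do, and then disposes of the classification in one line (``as $s$ is nonzero, it is surjective, then the result follows''), leaving implicit precisely the steps you spell out — that a $T$-fixed stable sheaf in class $[\mathbb{P}^1]$ is a line bundle $\iota_*\oO_{\mathbb{P}^1}(n-1)$ on the zero section, and that fixedness of the pair forces the section to be a monomial, i.e.\ the canonical section of $\oO_{\mathbb{P}^1}(aZ_0+bZ_\infty)$. Your expanded treatment, including the use of simplicity of $F$ to rigidify the equivariant structure, is a faithful filling-in of that argument rather than a different proof.
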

\begin{proof}
As $s$ is nonzero,  it is surjective, then the result follows.
\end{proof}
For an equivariant line bundle $F$ on $\mathbb{P}^1$ and $I=\big(\oO_X\stackrel{s}{\to} \iota_*F)$, we have 
$$\chi_X(I,I)_0=\chi_X(\iota_*F,\iota_*F)-\chi_X(\oO_X,\iota_*F)-\chi_X(\iota_*F,\oO_X)\in K_0^T(\bullet). $$
By the adjunction formula (ref.~\cite[Lem.~4.1]{CMT1}), we have 
\begin{align}\label{adjunct}\chi_X( \iota_*F, \iota_*F)=\chi_{\mathbb{P}^1}(F,F)-\chi_{\mathbb{P}^1}(F,F\otimes N_{\mathbb{P}^1/X})
+\chi_{\mathbb{P}^1}(F,F\otimes \wedge^2 N_{\mathbb{P}^1/X})-\chi_{\mathbb{P}^1}(F,F\otimes \wedge^3 N_{\mathbb{P}^1/X}), \end{align}
where $$N_{\mathbb{P}^1/X}=\oO_{\mathbb{P}^1}(l_1Z_{\infty})\otimes t_1\oplus \oO_{\mathbb{P}^1}(l_2Z_{\infty})\otimes t_2\oplus \oO_{\mathbb{P}^1}(l_3Z_{\infty})\otimes t_3. $$
We want to choose a \textit{square root} of $\chi_X(I,I)_0$, i.e. finding $\chi_X(I,I)^{\frac{1}{2}}_0\in K_0^T(\bullet)$ such that
$$\chi_X(I,I)_0=\chi_X(I,I)^{\frac{1}{2}}_0+\overline{\chi_X(I,I)^{\frac{1}{2}}_0}\in K_0^T(\bullet), $$
where $\overline{(\cdot)}$ denotes the involution on $K_0^T(\bullet)$ induced by $\mathbb{Z}$-linearly extending the map 
$$t_0^{w_0}t_1^{w_1}t_2^{w_2}t_3^{w_3} \mapsto t_0^{-w_0}t_1^{-w_1}t_2^{-w_2}t_3^{-w_3}.$$
By Serre duality and (\ref{adjunct}), we can define  
\begin{align}
\begin{split} \label{d=1 square root} 
\chi_X(I,I)^{\frac{1}{2}}_0&:=\chi_X(\iota_*F,\iota_*F)^{\frac{1}{2}}-\chi_X(\oO_X,\iota_*F) \\
&:= \chi_{\mathbb{P}^1}(\oO_{\mathbb{P}^1})-\chi_{\mathbb{P}^1}(N_{\mathbb{P}^1/ X})-\chi_{\mathbb{P}^1}(\oO_{\mathbb{P}^1},F).
\end{split}
\end{align}
The ($d=1$) $T$-equivariant JS stable pair invariant is defined in the following: 
\begin{defi}\label{def:equiv:JS}
Let $\chi_X(I, I)_0^{\frac{1}{2}}$ be chosen as in (\ref{d=1 square root}).
Then we define 
\begin{align*}
P_{n, 1}^{\rm{JS}}:= \sum_{I \in P_n^{\rm{JS}}(X, [\mathbb{P}^1])^{T}}
e_T(\chi_X(I, I)_0^{\frac{1}{2}}) \in 
\frac{\mathbb{Q}(\lambda_0, \lambda_1, \lambda_2, \lambda_3)}{(\lambda_0+\lambda_1+\lambda_2+\lambda_3)}. 
\end{align*} 
\end{defi}
By Proposition~\ref{prop:nowall}, $P_{n, 1}^{\rm{JS}}=P_{n, 1}$ which has been studied in \cite{CK2, CKM1}.

\subsection{When $(l_1,l_2,l_3)$ general and $d=2$}\label{local P1 d=2}
In this case, we have 
\begin{align*}
P_n^{\rm{JS}}(X, 2[\mathbb{P}^1])^T=P_n(X, 2[\mathbb{P}^1])^T,
\end{align*}
for $n\leqslant 2$ by Proposition~\ref{prop:nowall}, 
since $n([\mathbb{P}^1])=1$. 
Based on (\ref{localization for local curve}), one can then define $T$-equivariant JS stable pair invariants $P^{\mathrm{JS}}_{n,2}$ such that
$$P^{\mathrm{JS}}_{n,2}=P_{n,2}\in \frac{\mathbb{Q}(\lambda_0, \lambda_1,\lambda_2,\lambda_3)}{(\lambda_0+\lambda_1+\lambda_2+\lambda_3)}, $$
where $P_{n,2}$ has been rigorously defined in \cite[Def.~2.9]{CK2}.
\begin{rmk}${}$ \\
(1) When $n<0$, $P^{\mathrm{JS}}_{n,2}$ and $P_{n,2}$ are not necessarily zero (e.g. the case $n=-1$, $l_1=3$, $l_2=-2$, $l_3=-3$).  \\
(2) When $n\geqslant 3$, we can still define $P^{\mathrm{JS}}_{n,2}$, $P_{n,2}$.
But they are not necessarily the same (e.g. the case $n=3$, $l_1=2$, $l_2=l_3=-2$). 
It is an interesting question to find a formula relating them.
\end{rmk}
 
\subsection{When $(l_1,l_2,l_3)=(-1,-1,0)$ and $d$ is arbitrary}\label{local(-1,-1,0)}

First of all, to have a nonempty moduli space 
$P^{\mathrm{JS}}_{n}(X,d\,[\mathbb{P}^1])$, $d$ must divide $n$ by the Jordan-H\"{o}lder filtration. 
We first classify $T$-fixed JS stable pairs. 
\begin{lem}\label{T-fixed JS pairs}
Let $k\geqslant 0$, $n=d(k+1)$ and $\{Z_0,Z_{\infty}\}=(\mathbb{P}^1)^T$ be the torus fixed points.
Then 
a $T$-fixed JS stable pair 
$I=(\oO_X\stackrel{s}{\to}  F)\in P^{\mathrm{JS}}_{n}(X,d\,[\mathbb{P}^1])^T$
is precisely of the form
\begin{align}\label{general d fixed pt}
F=
\bigoplus_{i=0}^k
\oO_{\mathbb{P}^1}\big((k-i)Z_{\infty} +iZ_{0}\big)
\Big(\sum_{j=0}^{d_i-1} t_3^{-j} \Big),
\end{align}
for some $d_0,\ldots, d_k\geqslant 0$ with $\sum_{i=0}^k d_i=d$, and $s$ is given by canonical sections.
\end{lem}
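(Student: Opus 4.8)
The plan is to repackage the pair $(\oO_X\stackrel{s}{\to}F)$ as equivariant linear algebra on the base $\mathbb{P}^1$, use the torus action to split everything into weight spaces, and then read off the constraints coming from semistability and from Joyce--Song stability.

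\emph{Step 1: descent to the base.} Since $I$ is $T$-fixed and $[p_*F]=d\,[\mathbb{P}^1]$, the support of $F$ is a $T$-invariant one dimensional closed subset, hence set-theoretically the zero section $\iota(\mathbb{P}^1)$; as $F$ is compactly supported and the $L_3\cong\oO$ direction is non-proper, multiplication by the fibre coordinate $x_3$ is nilpotent on $F$. Semistability makes $F$ pure, so $G\cneq p_*F$ is locally free on $\mathbb{P}^1$ of rank $d$ and, by Riemann--Roch, degree $\chi(F)-d=dk$. The $\oO_X$-module structure of $F$ is encoded by three commuting $T$-equivariant maps $\phi_1\colon G\to G\otimes L_1$, $\phi_2\colon G\to G\otimes L_2$, $\phi_3\colon G\to G$ (fibre multiplication), with $\phi_3$ nilpotent, and the $\mu$-semistability of $F$ becomes the requirement that every $(\phi_1,\phi_2,\phi_3)$-invariant subbundle $G'\subseteq G$ satisfies $\chi(G')/\rk(G')\leqslant k+1$, i.e. has average summand degree $\leqslant k$.

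\emph{Step 2: the negative directions vanish.} By the $T$-equivariant Birkhoff--Grothendieck theorem write $G=\bigoplus_\alpha \oO_{\mathbb{P}^1}(m_\alpha)\otimes\chi_\alpha$ for characters $\chi_\alpha$ of $T$. Because $L_1\cong L_2\cong\oO(-1)$, the fields $\phi_1,\phi_2$ strictly raise the summand degree while $\phi_3$ does not lower it, so $G_{\geqslant m}\cneq\bigoplus_{m_\alpha\geqslant m}\oO(m_\alpha)\chi_\alpha$ is $(\phi_1,\phi_2,\phi_3)$-invariant for every $m$. Applying the slope bound to $G_{\geqslant k+1}$ forces all $m_\alpha\leqslant k$, and comparing with $\deg G=dk$ and $\rk G=d$ then forces every $m_\alpha=k$. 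Hence $\phi_1,\phi_2$ would be nonzero maps $\oO(k)\to\oO(k-1)$ and so vanish: $\phi_1=\phi_2=0$, and $F$ is scheme-theoretically supported on $\mathrm{Tot}_{\mathbb{P}^1}(L_3)=\mathbb{P}^1\times\mathbb{A}^1_{x_3}$, determined by the nilpotent $\phi_3$ on $G=\bigoplus_\alpha\oO(k)\chi_\alpha$.

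\emph{Step 3: equivariant line bundles and $\phi_3$-chains.} Each $\oO(k)$ summand carries a $\lambda_0$-equivariant structure, so it is $\oO(a_\alpha Z_0+b_\alpha Z_\infty)$ with $a_\alpha+b_\alpha=k$, twisted by $t_1^{p_\alpha}t_2^{q_\alpha}t_3^{r_\alpha}$. As $\phi_3$ has $t_3$-weight $-1$ and any nonzero map between two copies of $\oO(k)$ is an isomorphism, $\phi_3$ connects only summands with identical $(a,b,p,q)$ and consecutive $t_3$-weight; thus $G$ breaks into $\phi_3$-chains of identical equivariant line bundles.

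\emph{Step 4: Joyce--Song stability fixes the configuration.} The section $s$ is a weight $0$ element of $H^0(\mathbb{P}^1,G)$. Since every Jordan--H\"older factor has slope $k+1=\mu(F)$, JS stability forbids $s$ from missing any slope-$(k+1)$ quotient; this forces $s$ to have nonzero component at the bottom ($t_3$-weight $0$) of each chain, whence $p_\alpha=q_\alpha=r_\alpha=0$ and the bottom line bundle must admit a $T$-invariant (canonical) section. That fixes its equivariant structure to $\oO((k-i)Z_\infty+iZ_0)$ for some $i\in\{0,\dots,k\}$, and the same non-missing condition rules out two chains with the same $i$ (a change of basis would leave a slope-$(k+1)$ subsheaf through which $s$ factors). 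Therefore there is at most one chain for each $i$, of length $d_i\geqslant0$ with $\sum_i d_i=d$, which is exactly the asserted form, with $s$ the sum of the canonical sections. Conversely these pairs are JS stable: the $\oO_X$-submodule generated by $\mathrm{Im}(s)$ differs from $F$ by a zero dimensional sheaf of length $dk$, so any proper $F'\supseteq\mathrm{Im}(s)$ has $\mu(F')\leqslant(k+1)-\tfrac{1}{d}<\mu(F)$. The main obstacle is this last step: extracting from JS stability both that each $i$-block is a single $\phi_3$-chain and that only the canonical equivariant line bundles survive, together with the converse stability estimate; the vanishing $\phi_1=\phi_2=0$ in Step 2 is the other essential input, but it is routine once the equivariant splitting is available.
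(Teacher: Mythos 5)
Your forward implication (Steps 1--3 and the classification part of Step 4) is correct, and it is in places more careful than the paper's own proof: where the paper simply asserts, ``by the Jordan--H\"older filtration'', that $F$ decomposes as a direct sum of sheaves $\oO_{\mathbb{P}^1}(k)\boxtimes \oO_{Z}$ supported on thickenings of the zero section in the $L_3$-direction, you derive the scheme-theoretic support and the chain decomposition from first principles, via pushforward to $\mathbb{P}^1$, the commuting fields $(\phi_1,\phi_2,\phi_3)$, the equivariant splitting of $G$, and the slope bound applied to $G_{\geqslant k+1}$ (forcing all summands to be $\oO(k)$ and $\phi_1=\phi_2=0$). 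From there your Step 4 --- nonvanishing of $s$ on the generating quotient of each chain, invariant sections pinning down the linearizations to $\oO(iZ_0+(k-i)Z_\infty)$, and the change-of-basis argument excluding two chains with the same $i$ --- is the same as the paper's argument.

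The genuine gap is in your converse. The claim that ``the $\oO_X$-submodule generated by $\mathrm{Im}(s)$ differs from $F$ by a zero dimensional sheaf of length $dk$'' is false whenever at least two of the $d_i$ are nonzero: the submodule generated by $\mathrm{Im}(s)$ is $\mathrm{Im}(s)=\oO_X/\ker(s)$ itself, the image of the \emph{diagonal} map, not $\bigoplus_i\mathrm{Im}(s_i)$, and only the latter has zero-dimensional quotient of length $dk$. Concretely, for $k=1$, $d=2$, $(d_0,d_1)=(1,1)$ one has $F=\oO_{\mathbb{P}^1}(Z_\infty)\oplus\oO_{\mathbb{P}^1}(Z_0)$ on the reduced zero section, and $\ker(s)=\mathrm{Ann}(s_0)\cap\mathrm{Ann}(s_1)$ is the full ideal sheaf of the reduced zero section, so $\mathrm{Im}(s)\cong\oO_{\mathbb{P}^1}$ and $F/\mathrm{Im}(s)$ is a \emph{one-dimensional} sheaf of class $[\mathbb{P}^1]$ with $\chi=3$, not a finite-length sheaf. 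Since condition (ii) of JS stability only constrains subsheaves $F'$ containing $\mathrm{Im}(s)$ --- not subsheaves containing $\bigoplus_i\mathrm{Im}(s_i)$ --- your slope estimate does not follow from your argument, so the stability of the pairs in (\ref{general d fixed pt}) is left unproved.

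The converse is true, but it needs a different argument (presumably the ``straightforward check'' the paper omits). Suppose some proper $F'\subsetneq F$ of slope $k+1$ contains $\mathrm{Im}(s)$; then $s$ dies in the equal-slope semistable quotient $F/F'$, hence in some \emph{stable} quotient $Q$ of slope $k+1$. Each component $C_i\to Q$ of $F\to Q$ is zero or surjective; for a surjective one, stability of $Q$ together with nilpotency of $x_3$ forces $x_3Q=0$ (if $x_3Q\neq 0$ it would be a proper nonzero submodule of slope $<k+1$ and simultaneously a quotient of $Q$ of slope $\geqslant k+1$), so $Q$ is a pure one-dimensional quotient of $C_i/x_3C_i\cong\oO_{\mathbb{P}^1}(k)$, i.e. $Q\cong\oO_{\mathbb{P}^1}(k)$. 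Hence $F\to Q$ factors through $\bigoplus_i \bar{C}_i\to\oO_{\mathbb{P}^1}(k)$, $(y_i)\mapsto\sum_i c_ig_i(y_i)$, with $\bar{C}_i=C_i/x_3C_i$ and $g_i$ isomorphisms, and the image of $s$ is a nontrivial linear combination of the sections $g_i(\bar{s}_i)$, which vanish to order $i$ at $Z_0$ and $k-i$ at $Z_\infty$ and are therefore linearly independent in $H^0(\oO_{\mathbb{P}^1}(k))$; this contradicts $s$ dying in $Q$. (One should also record, though it is immediate, that $F$ itself is $\mu$-semistable, being a direct sum of semistable sheaves of slope $k+1$.)
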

\begin{proof}
Let 
$\iota$ be the inclusion
\begin{align*}
\iota=i\times \mathrm{id}_{\mathbb{C}}:\mathbb{P}^1\times \mathbb{C}\hookrightarrow \oO_{\mathbb{P}^1}(-1,-1)\times \mathbb{C}=X,
\end{align*}
 where $i$ is the zero section 
 of the projection 
 $\oO_{\mathbb{P}^1}(-1,-1) \to \mathbb{P}^1$. 
Let us take a $T$-fixed JS pair $I=(\oO_X \to F)$. 
By the Jordan-H\"{o}lder filtration, 
$F$ is written as 
\begin{align*}
F=\bigoplus_{i=1}^l
\iota_{\ast}F_i, \ 
F_i=\oO_{\mathbb{P}^1}(k)\boxtimes \oO_{T_i}, 
\end{align*}
where $T_i$ is a zero dimensional subscheme of $\mathbb{C}$
supported at $0 \in \mathbb{C}$. 
We write the section $s$ as 
\begin{align*}
s=(s_1,\ldots,s_l), \quad 0\neq s_i \colon \oO_X\to \iota_*(\oO_{\mathbb{P}^1}(k)\boxtimes \oO_{T_i}). 
\end{align*}
Here each $s_i$ is non-zero by the JS stability. 
By pushforward to $\mathbb{P}^1$, we know $s_i$ is described by commutative diagrams
\begin{align*}
\xymatrix{
\oO_{\mathbb{P}^1}   \ar[r]^{s_i^j\quad\quad} \ar[d]^{=} & \oO_{\mathbb{P}^1}(k)\otimes t_3^{-j}  \ar[d]^{\,\, \cdot\,t_3^{-1}}\\
\oO_{\mathbb{P}^1}  \ar[r]^{s_i^{j+1}\quad\quad} &  \oO_{\mathbb{P}^1}(k)\otimes t_3^{-j-1}.}
\end{align*}
Then $s_i$ is determined by $s_i^{0}$
which gives $\oO_{\mathbb{P}^1}(k)$
an equivariant structure of the form
\begin{align*}
\oO_{\mathbb{P}^1}(a_iZ_0+(k-a_i)Z_{\infty}), \quad 0\leqslant a_i\leqslant k, 
\end{align*}
and $s_i^0$ is the canonical section.
So each $F_i$ is of the form
\begin{align*}
F_i=
\oO_{\mathbb{P}^1}(a_iZ_0+(k-a_i)Z_{\infty})\Big(\sum_{i=0}^{d_i-1}t_3^{-i}\Big),
\end{align*}
for some $d_i \geqslant 0$. 
Furthermore 
we need $a_i\neq a_j$ for $i\neq j$ in order that $(F,s)$ is JS stable.
Indeed suppose that $a_i=a_j$, 
and set $\overline{F}_i=F_i/t_3^{-1} F_i$. 
Then 
there is an isomorphism 
$h \colon \overline{F}_j \stackrel{\cong}{\to} \overline{F}_i$ such that the composition
\begin{align*}
\oO_X 
\stackrel{s}{\to}F \twoheadrightarrow
 \overline{F}_i \oplus \overline{F}_j \twoheadrightarrow 
\overline{F}_i
\end{align*}
is zero, where the last arrow is $(x, y) \mapsto x-h(y)$. 
The above morphism destabilizes 
$(\oO_X \stackrel{s}{\to} F)$ in JS stability, so a contradiction. 
Therefore $F$ is of the form (\ref{general d fixed pt}). 

Conversely it is straightforward 
to check that any pair $(\oO_X \stackrel{s}{\to}F)$
where $(F, s)$ is as in (\ref{general d fixed pt})
is a $T$-fixed JS stable pair. 
\end{proof}
To choose a square root for $\chi_X(I,I)_0$, we recall the following:
\begin{lem}\label{lem on equiv RR}
As elements in $K_0^T(\bullet)$, we have 
\begin{align}
\begin{split}\label{chi O_Z}
\chi(\oO_{\mathbb{P}^1}(aZ_0+bZ_{\infty}))=\left\{
\begin{array}{rcl}
t_0^{-b}+\cdots+t_0^{-1}+1+t_0+\cdots+t_0^{a}\,,     &      &\mathrm{if} \,\, a,\, b\geqslant 0, \\
& & \\
t_0^a\,,  \quad\quad\quad\quad\quad\quad\quad   &    &\mathrm{if} \,\, a=-b>0. 
\end{array} \right. 
\end{split}
\end{align}
\end{lem}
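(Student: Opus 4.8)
The plan is to compute the equivariant holomorphic Euler characteristic directly from global sections, exploiting the fact that for both ranges of $(a,b)$ in the statement the first cohomology vanishes, so that $\chi$ is represented by an honest $T$-representation rather than a virtual difference. First I would fix the equivariant conventions: write $Z_0,Z_\infty$ for the two $T$-fixed points on $\mathbb{P}^1$, choose an affine coordinate $z$ vanishing to first order at $Z_0$ and with a simple pole at $Z_\infty$, and normalize the lift of the $\mathbb{C}^\ast$-factor carrying weight $t_0$ so that $T_{Z_0}\mathbb{P}^1$ has weight $t_0$; equivalently $z$ is a weight vector of weight $t_0^{-1}$, hence $z^m$ has weight $t_0^{-m}$. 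On $\oO_{\mathbb{P}^1}(aZ_0+bZ_\infty)$ I would take the natural $T$-linearization, under which $H^0$ is identified with the span of the Laurent monomials $z^m$ having at most a pole of order $a$ at $Z_0$ and order $b$ at $Z_\infty$, i.e. those with $-a\le m\le b$.

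For the case $a,b\ge 0$ the degree is $a+b\ge 0$, so $H^1(\mathbb{P}^1,\oO(aZ_0+bZ_\infty))=0$ and therefore $\chi=[H^0]$ in $K_0^T(\bullet)$. As $m$ runs over $-a\le m\le b$, the weights $t_0^{-m}$ run over $t_0^{a},\dots,t_0^{-b}$, which is exactly the stated sum $t_0^{-b}+\cdots+1+\cdots+t_0^{a}$. For the case $a=-b>0$ the degree is $a+b=0$, so again $H^1=0$ and $H^0$ is one-dimensional; the only exponent with $-a\le m\le b=-a$ is $m=-a$, giving the single monomial $z^{-a}$ of weight $t_0^{a}$, whence $\chi=t_0^{a}$.

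As a cross-check I would recompute both cases by Atiyah--Bott localization, summing over $Z_0$ and $Z_\infty$ the ratios of the equivariant fibres of $\oO(aZ_0+bZ_\infty)$ to the K-theoretic Euler classes $1-t_0^{\mp 1}$ of the tangent weights, and expanding each ratio as a geometric series; the two one-sided infinite tails cancel and leave precisely the finite Laurent polynomials found above. This localization check is reassuring but I expect the direct global-sections argument to be the cleaner write-up.

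The main thing to get right is not any deep step but the bookkeeping of the linearization and the signs of the weights: one must pin down the weight of $z$ (equivalently, which fixed point is $Z_0$ together with the sign of the tangent weight) so that the answer carries exactly the signs displayed in the statement, and one must observe that the two fixed points contribute opposite tangent weights. The only genuinely conceptual point is the degree-zero case $a=-b$, where $\oO(aZ_0-aZ_\infty)$ is abstractly isomorphic to $\oO_{\mathbb{P}^1}$ yet carries the nontrivial character $t_0^{a}$ on its unique section; this is precisely why the two branches of the formula look different even though both reduce to ``$\chi=H^0$ with $H^1=0$.''
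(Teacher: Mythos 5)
Your proof is correct, but its primary route differs from the paper's. The paper's entire argument is a two-line application of equivariant Riemann--Roch localization: it writes
$\ch\big(\chi(\oO_{\mathbb{P}^1}(aZ_0+bZ_\infty))\big)=\frac{e^{a\lambda_0}}{1-e^{-\lambda_0}}+\frac{e^{-b\lambda_0}}{1-e^{\lambda_0}}=\frac{e^{(a+1)\lambda_0}-e^{-b\lambda_0}}{e^{\lambda_0}-1}$
and reads off the finite geometric sum; in other words, what you relegate to a ``reassuring cross-check'' is precisely the paper's proof. Your main argument is more elementary: you note that in both cases $\deg = a+b\geqslant 0 > -2$, so $H^1=0$ and $\chi$ is the honest $T$-representation $H^0$, whose weights you enumerate directly via the Laurent monomials $z^m$ with $-a\leqslant m\leqslant b$ (weight $t_0^{-m}$ under the natural linearization of $\oO(D)$ for the invariant divisor $D$). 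The two computations are consistent: your identification gives fiber weight $t_0^{a}$ at $Z_0$ and $t_0^{-b}$ at $Z_\infty$, which are exactly the numerators in the paper's fixed-point formula. What your route buys is transparency in the sign and linearization bookkeeping --- in particular the conceptual point that the degree-zero bundle $\oO(aZ_0-aZ_\infty)$, though abstractly trivial, carries the nontrivial character $t_0^{a}$ on its one-dimensional space of sections --- at the cost of a (mild) case split and an explicit choice of coordinate; the paper's localization formula is shorter and treats both cases uniformly, but leaves those conventions implicit. There is no gap in either version.
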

\begin{proof}
The $T$-equivariant Riemann-Roch formula gives
\begin{align*}
\ch\Big(\chi\big(\oO_{\mathbb{P}^1}(aZ_0+bZ_{\infty})\big)\Big)=\frac{e^{a\lambda_0}}{1-e^{-\lambda_0}}+\frac{e^{-b\lambda_0}}{1-e^{\lambda_0}}\
=\frac{e^{(a+1)\lambda_0}-e^{-b\lambda_0}}{e^{\lambda_0}-1},
\end{align*}
from which we can conclude the result.
\end{proof}
\begin{lem}\label{choice of squ root}
Let $k\geqslant 0$, $n=d(k+1)$ and $I=(\oO_X\stackrel{s}{\to}  F)\in P^{\mathrm{JS}}_{n}(X,d\,[\mathbb{P}^1])^T$, where $F$ is given by (\ref{general d fixed pt}). Then we can choose a square root of $\chi_X(I,I)_0$ to be
\begin{align*}
\chi_X(I,I)^{\frac{1}{2}}_0=-\sum_{i=0}^k\Big(\sum_{j=-(k-i)}^{i}t_0^j \Big)\Big(\sum_{j=0}^{d_i-1}t_3^{-j}\Big) +\sum_{i<j}t_0^{j-i}(1-t_3^{d_i}-t_3^{-d_j}+t_3^{d_i-d_j})+\sum_{i=0}^k(1-t_3^{d_i}).
\end{align*}
\end{lem}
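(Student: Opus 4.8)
The plan is to compute $\chi_X(I,I)_0$ explicitly as an element of $K_0^T(\bullet)$ and then to verify that the claimed element, call it $\Sigma$, is a genuine square root, i.e. that $\Sigma + \overline{\Sigma} = \chi_X(I,I)_0$. I would write $\chi_X(I,I)_0 = \chi_X(\iota_*F,\iota_*F) - \chi_X(\oO_X,\iota_*F) - \chi_X(\iota_*F,\oO_X)$. Since $X$ is a Calabi--Yau $4$-fold, Serre duality gives $\Ext^i_X(A,B)\cong \Ext^{4-i}_X(B,A)^{\vee}$, hence $\chi_X(A,B) = \overline{\chi_X(B,A)}$ and in particular $\chi_X(\iota_*F,\oO_X) = \overline{\chi_X(\oO_X,\iota_*F)}$. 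Thus $\chi_X(I,I)_0$ is manifestly self-conjugate, and it suffices to compute the two pieces $\chi_X(\oO_X,\iota_*F)$ and $\chi_X(\iota_*F,\iota_*F)$. Throughout I write $F=\bigoplus_i G_i\,R_i$ with $G_i=\oO_{\mathbb{P}^1}\big((k-i)Z_\infty+iZ_0\big)$ and $R_i=\sum_{j=0}^{d_i-1}t_3^{-j}$.

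First I would treat the mixed term. By pushforward $\chi_X(\oO_X,\iota_*F)=\chi_{\mathbb{P}^1}(F)$, and the equivariant Riemann--Roch of Lemma~\ref{lem on equiv RR} gives $\chi(G_i)=\sum_{m=-(k-i)}^{i}t_0^{m}$, so that $\chi_X(\oO_X,\iota_*F)=\sum_i\big(\sum_{m=-(k-i)}^{i}t_0^{m}\big)R_i$. This is exactly minus the first sum in the statement, and it will contribute the $-(P+\overline{P})$ part below.

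Next I would compute the self-$\Ext$ term via the adjunction formula (\ref{adjunct}). The key simplifications are: (a) for the $\wedge^1 N_{\mathbb{P}^1/X}$ summands carrying the weights $t_1,t_2$, each $G_i^{\vee}\otimes G_{i'}$ is twisted by $\oO_{\mathbb{P}^1}(-Z_\infty)$, giving a degree $-1$ line bundle whose equivariant $\chi$ vanishes; (b) along the $t_3$ direction $L_3=\oO$ has degree $0$, so only this term survives and $\chi_{\mathbb{P}^1}(G_i,G_{i'})=t_0^{i'-i}$ by Lemma~\ref{lem on equiv RR} applied to the degree-$0$ bundle $G_i^{\vee}\otimes G_{i'}$; (c) Serre duality on $\mathbb{P}^1$ together with the Calabi--Yau relation $\wedge^3 N_{\mathbb{P}^1/X}\cong\omega_{\mathbb{P}^1}$ pairs the $\wedge^0$ term with $\wedge^3$ and the $\wedge^1$ term with $\wedge^2$. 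Collecting these I expect
\[
\chi_X(\iota_*F,\iota_*F)=\sum_{i,i'}t_0^{i'-i}\,\overline{R_i}\,R_{i'}\,\big(2-t_3-t_3^{-1}\big).
\]
The telescoping identities $\overline{R_i}\,(1-t_3)=1-t_3^{d_i}$ and $R_{i'}\,(1-t_3^{-1})=1-t_3^{-d_{i'}}$, together with $2-t_3-t_3^{-1}=(1-t_3)(1-t_3^{-1})$, then collapse this to
\[
\chi_X(\iota_*F,\iota_*F)=\sum_{i,i'}t_0^{i'-i}\,(1-t_3^{d_i})(1-t_3^{-d_{i'}}).
\]

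Finally I would verify $\Sigma+\overline{\Sigma}=\chi_X(I,I)_0$. Writing $\Sigma=-P+Q+D$ with $P=\chi_X(\oO_X,\iota_*F)$, $Q=\sum_{i<j}t_0^{j-i}(1-t_3^{d_i})(1-t_3^{-d_j})$ and $D=\sum_i(1-t_3^{d_i})$, the piece $-(P+\overline{P})$ accounts for the mixed contribution, so the content reduces to $(Q+\overline{Q})+(D+\overline{D})=\chi_X(\iota_*F,\iota_*F)$. After relabelling indices, $Q+\overline{Q}$ supplies precisely the off-diagonal ($i\neq i'$) part of the last displayed formula, while the diagonal part is produced by the identity $(1-t_3^{d_i})(1-t_3^{-d_i})=(1-t_3^{d_i})+(1-t_3^{-d_i})$, i.e. $D+\overline{D}=\sum_i(1-t_3^{d_i})(1-t_3^{-d_i})$. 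I expect the main obstacle to be the bookkeeping of equivariant weights in the adjunction step: getting the signs in the Serre-duality pairing exactly right so that the self-conjugate factor $2-t_3-t_3^{-1}$ (rather than some non-self-conjugate expression) emerges, and checking carefully that the degree $-1$ twists really force the $t_1,t_2$ contributions to vanish. One should also note that the $\Sigma$ in the statement is not literally the naive ``adjunction half'' $\chi_{\mathbb{P}^1}(F,F)-\chi_{\mathbb{P}^1}(F,F\otimes N_{\mathbb{P}^1/X})-\chi_X(\oO_X,\iota_*F)$ but differs from it by a conjugate-antisymmetric term, which is harmless precisely because any element of the form $c-\overline{c}$ may be added to a square root; this is why the direct verification $\Sigma+\overline{\Sigma}=\chi_X(I,I)_0$ is the cleanest route.
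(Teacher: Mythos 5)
Your proof is correct and follows essentially the same route as the paper's: the same decomposition $\chi_X(I,I)_0=\chi_X(F,F)-\chi(F)-\overline{\chi(F)}$, the same equivariant Riemann--Roch evaluation of $\chi(F)$, the same adjunction computation giving $\chi_X\big(\iota_*G_i,\iota_*G_j\big)=t_0^{j-i}\big(2-t_3-t_3^{-1}\big)$, and the same Serre-duality splitting of $\chi_X(F,F)$ into the $i<j$ terms, their conjugates, and the diagonal. The only difference is presentational: the paper constructs the square root piece by piece (choosing $\chi_X(\oO_{\mathbb{P}^1},\oO_{\mathbb{P}^1})^{1/2}=1-t_3$ and then a square root of the diagonal correction term), whereas you verify the stated formula directly via the telescoping identities $\overline{R_i}(1-t_3)=1-t_3^{d_i}$ and $(1-t_3^{d_i})(1-t_3^{-d_i})=(1-t_3^{d_i})+(1-t_3^{-d_i})$, which handles the diagonal term more cleanly but amounts to the same computation.
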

\begin{proof}
As in (\ref{d=1 square root}) we are left to choose a square root for $\chi_X(F,F)$ and then 
\begin{align}\label{general d square root 1}
\chi_X(I,I)^{\frac{1}{2}}_0:=-\chi(F)+\chi_X(F,F)^{\frac{1}{2}}, 
\end{align}
where by (\ref{chi O_Z}) and (\ref{general d fixed pt}), we have 
\begin{align}
\label{general d square root 2}
\chi(F)=\sum_{i=0}^k\Big(\sum_{j=-(k-i)}^{i}t_0^j \Big)\Big(\sum_{j=0}^{d_i-1}t_3^{-j}\Big). \end{align}
By Serre duality, we may define 
\begin{align*}
\chi_X(F,F)^{\frac{1}{2}}:=&\sum_{\begin{subarray}{c}i<j  \\  0\leqslant i,j\leqslant k \end{subarray}}
\chi_X\big(\oO_{\mathbb{P}^1}(iZ_0+(k-i)Z_{\infty},\oO_{\mathbb{P}^1}(jZ_0+(k-j)Z_{\infty})\big)\big)\cdot \sum_{s=0}^{d_i-1}t_3^s\sum_{r=0}^{d_j-1}t_3^{-r}\\ 
&+d\cdot \chi_X\big(\oO_{\mathbb{P}^1},\oO_{\mathbb{P}^1})^{\frac{1}{2}} +\chi_X\big(\oO_{\mathbb{P}^1},\oO_{\mathbb{P}^1})\Big(\sum_{i=0}^k\big(-d_i+\sum_{s=0}^{d_i-1}t_3^s\sum_{r=0}^{d_i-1}t_3^{-r} \big)\Big)^{\frac{1}{2}},
\end{align*}
where we use the fact that $\chi_X\big(\oO_{\mathbb{P}^1},\oO_{\mathbb{P}^1})=2-t_3-t_3^{-1}$ which is invariant under involution $\overline{(\cdot)}$. 

We choose 
$$\chi_X\big(\oO_{\mathbb{P}^1},\oO_{\mathbb{P}^1})^{\frac{1}{2}}:=1-t_3, $$
\begin{align*}
\Big(\sum_{i=0}^k\big(-d_i+\sum_{s=0}^{d_i-1}t_3^s\sum_{r=0}^{d_i-1}t_3^{-r} \big)\Big)^{\frac{1}{2}}&:=
\sum_{i=0}^k \bigg(\frac{(1-t_3^{d_i})(1-t_3^{-d_i})}{2-t_3-t_3^{-1}}-d_i\bigg)^{\frac{1}{2}} \\
&=\sum_{i=0}^k \bigg(\frac{2-t_3^{d_i}-t_3^{-d_i}-d_i(2-t_3-t_3^{-1})}{2-t_3-t_3^{-1}}\bigg)^{\frac{1}{2}} \\
&:=\sum_{i=0}^k \bigg(\frac{1-t_3^{d_i}-d_i(1-t_3)}{2-t_3-t_3^{-1}}\bigg). 
\end{align*}
By (\ref{chi O_Z}) and the adjunction formula, we have 
$$\chi_X\big(\oO_{\mathbb{P}^1}(iZ_0+(k-i)Z_{\infty}),\oO_{\mathbb{P}^1}(jZ_0+(k-j)Z_{\infty})\big)=2\,t_0^{j-i}-t_0^{j-i}\,t_3-t_0^{j-i}\,t_3^{-1}. $$ 
Then it is easy to see
\begin{align}
\label{general d square root 3}
\chi_X(F,F)^{\frac{1}{2}}=\sum_{i<j}t_0^{j-i}(1-t_3^{d_i}-t_3^{-d_j}+t_3^{d_i-d_j})+\sum_{i=0}^k(1-t_3^{d_i}). 
\end{align} 
Combining with (\ref{general d square root 1}), (\ref{general d square root 2}), (\ref{general d square root 3}), we are done.
\end{proof}
\begin{defi}\label{def of JS inv for -1-10}
Let $X=\mathcal{O}_{\mathbb{P}^{1}}(-1,-1,0)$ and $T$ be the Calabi-Yau torus. The $T$-equivariant JS stable pair invariants are defined by
$$P^{\mathrm{JS}}_{n,d}:=\sum_{I\in P^{\mathrm{JS}}_n(X,d\,[\mathbb{P}^1])^T}(-1)^{d+1}e_T(\chi_X(I,I)^{\frac{1}{2}}_0)\in \frac{\mathbb{Q}(\lambda_0, \lambda_1,\lambda_2,\lambda_3)}{(\lambda_0+\lambda_1+\lambda_2+\lambda_3)}, $$
where $\chi_X(I,I)^{\frac{1}{2}}_0$ is chosen as in Lemma \ref{choice of squ root} and the sign denotes a choice of orientation.
\end{defi}
We can explicitly compute all $T$-equivariant JS stable pair invariants for $\mathcal{O}_{\mathbb{P}^{1}}(-1,-1,0)$.
\begin{thm}\label{main thm local curve}
In the setting of Definition \ref{def of JS inv for -1-10}, for $k\geqslant 0$ and $n=d(k+1)$, we have  
\begin{align*}
P^{\mathrm{JS}}_{n,d} =&\frac{(-1)^{k(d+1)}}{1!\,2!\,\cdots k!}\cdot\frac{1}{\lambda_0^{k(k+1)/2}\lambda_3^{d}}\cdot 
\sum_{\begin{subarray}{c}d_0+\cdots+d_k=d  \\  d_0,\ldots, d_k\geqslant 0 \end{subarray}}\frac{1}{d_0!\cdots d_k!}\cdot \prod_{\begin{subarray}{c}i<j  \\  0\leqslant i,j \leqslant k \end{subarray}}\Big((j-i)\lambda_0+(d_i-d_j)\lambda_3\Big) \\
&\times \prod_{i=0}^k\Bigg(\prod_{\begin{subarray}{c}1\leqslant a\leqslant d_i  \\  1\leqslant b\leqslant k-i  \end{subarray}}\frac{1}{a\lambda_3+b\lambda_0}\cdot
\prod_{\begin{subarray}{c}1\leqslant a\leqslant d_i  \\  1\leqslant b\leqslant i  \end{subarray}}\frac{1}{a\lambda_3-b\lambda_0}\Bigg).
\end{align*}
\end{thm}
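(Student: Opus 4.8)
The plan is to evaluate the equivariant Euler class of Definition~\ref{def of JS inv for -1-10} termwise on the explicit square root provided by Lemma~\ref{choice of squ root}, and then sum over the torus-fixed locus. By Lemma~\ref{T-fixed JS pairs} the fixed points of $P^{\mathrm{JS}}_n(X, d\,[\mathbb{P}^1])$ are isolated and reduced, indexed by tuples $(d_0, \ldots, d_k)$ with $d_i \geqslant 0$ and $\sum_{i=0}^k d_i = d$; hence $P^{\mathrm{JS}}_{n,d} = \sum_{(d_i)} e_T(\chi_X(I,I)_0^{1/2})$, and it suffices to show that the summand attached to each $(d_i)$ equals the corresponding term in the claimed formula. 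The key input is that $e_T$ is the multiplicative map sending a virtual character $\sum_w n_w t^w$ to $\prod_w (w\cdot\lambda)^{n_w}$, with positive-weight characters contributing to the numerator and negative ones to the denominator, exactly as in the $d=1$ computation following \eqref{d=1 square root}. As a first sanity check I would record that the weight-zero part of $\chi_X(I,I)_0^{1/2}$ cancels: the $t_0^0 t_3^0$ terms of the first summand contribute $-\#\{i : d_i \geqslant 1\}$, those of the last summand contribute $+\#\{i : d_i \geqslant 1\}$, and the middle summand is weight-zero free since every monomial carries $t_0^{\,j-i}$ with $j>i$. This reflects that the fixed points are isolated and makes $e_T$ well defined.

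Next I would substitute Lemma~\ref{choice of squ root} and organize the three summands. Writing the middle summand as $\sum_{i<j} t_0^{j-i}(1 - t_3^{d_i})(1 - t_3^{-d_j})$, one sees it vanishes unless $d_i, d_j > 0$, and otherwise expands into four monomials contributing $((j-i)\lambda_0)\,((j-i)\lambda_0 + (d_i - d_j)\lambda_3)$ to the numerator and $((j-i)\lambda_0 + d_i\lambda_3)\,((j-i)\lambda_0 - d_j\lambda_3)$ to the denominator. The first summand and the last summand $\sum_i(1 - t_3^{d_i})$ then produce the pure-$\lambda_3$ and pure-$\lambda_0$ contributions, together with the mixed factors $\prod (a\lambda_3 \pm b\lambda_0)^{-1}$, after reindexing the geometric series $\sum_{j=-(k-i)}^{i} t_0^{\,j}$ and $\sum_{j'=0}^{d_i-1} t_3^{-j'}$.

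The heart of the argument is a set of combinatorial identities assembling these pieces into the stated product. The pure-$\lambda_0$ exponents and factorials combine, uniformly in $(d_i)$, to $\frac{1}{(1!\,2!\cdots k!)\,\lambda_0^{k(k+1)/2}}$, using $\prod_{i<j}(j-i) = \prod_{m=1}^k m!$ together with $\prod_{i=0}^k i!\,(k-i)! = (\prod_{m=1}^k m!)^2$; I would verify independence of $(d_i)$ by checking that setting any $d_{i_0} = 0$ deletes matching factors from the first and middle summands, so the net $\lambda_0$-power stays $-k(k+1)/2$ and the factorial stays $1!\cdots k!$. Likewise the pure-$\lambda_3$ factors from the first and last summands collapse to $\frac{1}{d_0!\cdots d_k!\,\lambda_3^d}$. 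The remaining mixed factors are the delicate point: the single-index products from the first summand only realize the ranges $1 \leqslant a \leqslant d_i - 1$, and the missing rows $a = d_i$ are supplied by the pair-indexed denominators $((j-i)\lambda_0 + d_i\lambda_3)^{-1}$ and $((j-i)\lambda_0 - d_j\lambda_3)^{-1}$ of the middle summand under the substitution $b = j - i$; matching these across all pairs, including those in which one of $d_i, d_j$ vanishes, is what produces the full products $\prod_{a=1}^{d_i}\prod_{b=1}^{k-i}(a\lambda_3 + b\lambda_0)^{-1}$ and $\prod_{a=1}^{d_i}\prod_{b=1}^{i}(a\lambda_3 - b\lambda_0)^{-1}$, as well as the numerator $\prod_{i<j}((j-i)\lambda_0 + (d_i - d_j)\lambda_3)$.

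Finally I would collect every sign — from the overall minus signs on the first and last summands, from the negative monomials in the middle summand, and from the rewritings $(b\lambda_0 - a\lambda_3) = -(a\lambda_3 - b\lambda_0)$ — into the single factor $(-1)^{(k+1)(d+1)}$, and read off the summand, then sum over $(d_i)$. The main obstacle is precisely this bookkeeping: reconciling the pair-indexed contributions of the middle summand with the single-index products of the first summand to fill out the rectangular ranges $1 \leqslant a \leqslant d_i$, while keeping the $\lambda_0$-power, the factorials, and the global sign independent of the particular composition $(d_i)$ and correct even when some $d_i = 0$. A clean execution should avoid case distinctions by proving the combined exponent, factorial, and sign identities as formal identities in the $d_i$, valid uniformly across all fixed points.
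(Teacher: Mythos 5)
Your strategy is the one the paper itself uses: by Lemma \ref{T-fixed JS pairs} the $T$-fixed points are isolated and indexed by compositions $(d_0,\ldots,d_k)$ of $d$, and one evaluates $e_T$ on the square root of Lemma \ref{choice of squ root} at each fixed point and sums. Your structural bookkeeping is also essentially correct: the weight-zero cancellation, the factorization of the middle summand as $\sum_{i<j}t_0^{j-i}(1-t_3^{d_i})(1-t_3^{-d_j})$ and its vanishing when $d_i=0$ or $d_j=0$, the identities $\prod_{i<j}(j-i)=\prod_{m=1}^{k}m!$ and $\prod_{i=0}^{k}i!\,(k-i)!=\bigl(\prod_{m=1}^{k}m!\bigr)^2$, and the mechanism by which the pair-indexed denominators $((j-i)\lambda_0+d_i\lambda_3)$ and $((j-i)\lambda_0-d_j\lambda_3)$ supply the missing rows $a=d_i$ of the rectangular products.

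However, the step you defer to the very end --- that all signs assemble into the single factor $(-1)^{(k+1)(d+1)}$ \emph{uniformly in the composition} --- is false, and this is exactly where the proposed proof cannot close. Carrying out your own recipe at a fixed point with some $d_{i_0}=0$ produces an extra sign $(-1)^{\#\{i\,:\,d_i=0\}}$ relative to the stated summand. Concretely, take $k=1$, $d=1$ (so $n=2$) and the fixed point $(d_0,d_1)=(1,0)$: the first summand of Lemma \ref{choice of squ root} is $-(t_0^{-1}+1)$, the middle summand is $t_0(1-t_3-1+t_3)=0$, and the last is $1-t_3$, so $\chi_X(I,I)_0^{1/2}=-t_0^{-1}-t_3$ and $e_T=\bigl((-\lambda_0)\lambda_3\bigr)^{-1}=-\tfrac{1}{\lambda_0\lambda_3}$, whereas the summand of the Theorem for $(1,0)$ equals $\tfrac{1}{\lambda_0\lambda_3}\cdot\tfrac{\lambda_0+\lambda_3}{\lambda_0+\lambda_3}=+\tfrac{1}{\lambda_0\lambda_3}$. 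Nor does this discrepancy cancel after summation in general: for $k=1$, $d=2$ the three fixed points $(2,0),(1,1),(0,2)$ give, from the square root of Lemma \ref{choice of squ root}, the values $-\tfrac{1}{2\lambda_0\lambda_3^2(\lambda_0+\lambda_3)}$, $-\tfrac{1}{\lambda_3^2(\lambda_0^2-\lambda_3^2)}$, $-\tfrac{1}{2\lambda_0\lambda_3^2(\lambda_0-\lambda_3)}$, whose sum is $-\tfrac{2}{\lambda_3^2(\lambda_0^2-\lambda_3^2)}$, while the right-hand side of the Theorem sums to $0$ (as it must, for consistency with Theorem \ref{thm:id:rational} in the case $n=2d$). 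So the ``formal identity in the $d_i$'' you hope to prove does not exist: a proof along these lines must either modify the choice of square root at the fixed points having some $d_i=0$ (which the framework permits, since a square root is a choice playing the role of an orientation, but which is \emph{not} what Lemma \ref{choice of squ root} literally prescribes), or else establish the formula with the per-term sign $(-1)^{(k+1)(d+1)+\#\{i\,:\,d_i=0\}}$. In fairness, the paper's own proof consists of the unproved assertion that ``a direct calculation gives'' precisely the per-term formula, so you have run into a sign inconsistency internal to the paper rather than overlooked a trick; but as written, your plan asserts a sign identity that fails, and the computation would not terminate in the stated theorem.
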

\begin{proof}
Let $F$ be given by (\ref{general d fixed pt}) and choose the square root $\chi_X(I,I)^{\frac{1}{2}}_0$ as in Lemma \ref{choice of squ root}.
A direct calculation gives the following:
\begin{align*}
e_T(\chi_X(I,I)^{\frac{1}{2}}_0)=&\frac{(-1)^{k(d+1)}}{1!\,2!\,\cdots k!}\cdot\frac{1}{\lambda_0^{k(k+1)/2}\lambda_3^{d}}\cdot 
\frac{1}{d_0!\cdots d_k!}\cdot \prod_{\begin{subarray}{c}i<j  \\  0\leqslant i,j \leqslant k \end{subarray}}\Big((j-i)\lambda_0+(d_i-d_j)\lambda_3\Big) \\
&\times \prod_{i=0}^k\Bigg(\prod_{\begin{subarray}{c}1\leqslant a\leqslant d_i  \\  1\leqslant b\leqslant k-i  \end{subarray}}\frac{1}{a\lambda_3+b\lambda_0}\cdot
\prod_{\begin{subarray}{c}1\leqslant a\leqslant d_i  \\  1\leqslant b\leqslant i  \end{subarray}}\frac{1}{a\lambda_3-b\lambda_0}\Bigg).
\end{align*}
Taking a sum over all torus fixed points (as described by Lemma \ref{T-fixed JS pairs}) gives the result
\end{proof}
For $\mathcal{O}_{\mathbb{P}^{1}}(-1,-1,0)$, PT stable pair invariants satisfy $P_{0,d}=P_{1,d+1}=0$ if $d>0$ (ref.~\cite[Prop.~5.2]{CMT2}). In view of Conjecture \ref{PT/GV conj}, the only non-zero ``GV type invariant" exists in the $g=0$, $d=1$ case (and $n_{0,1}=\lambda_3^{-1}$).
The following direct analogue of Conjecture \ref{main conj} is expected to be true.
\begin{conj}\label{conj for local 0,-1,-1}
Let $X=\mathcal{O}_{\mathbb{P}^{1}}(-1,-1,0)$ and $T$ be the Calabi-Yau torus. Then the $T$-equivariant JS stable pair invariants (Definition \ref{def of JS inv for -1-10}) satisfy
$$ P^{\mathrm{JS}}_{n,d}=\left\{
\begin{array}{rcl}
\frac{1}{d\,! (\lambda_3)^{d}}\,,      &      &\mathrm{if} \,\, n=d\geqslant 0, \\
& & \\
0\,,   \quad \quad   &      &\mathrm{otherwise}.
\end{array} \right. 
$$
\end{conj}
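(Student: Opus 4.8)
The plan is to deduce Conjecture~\ref{conj for local 0,-1,-1} directly from the closed evaluation of Theorem~\ref{main thm local curve}, turning the geometric claim into a purely algebraic identity between rational functions in $\lambda_0,\lambda_3$. Two of the three cases are immediate. When $d\nmid n$ the moduli space $P^{\mathrm{JS}}_n(X,d\,[\mathbb{P}^1])$ is empty, as recalled before Lemma~\ref{T-fixed JS pairs}, so $P^{\mathrm{JS}}_{n,d}=0$ by definition. When $n=d$ we have $k=0$, the only composition is $(d_0)=(d)$, every product over $i<j$ and over $b$ is empty, and the formula of Theorem~\ref{main thm local curve} collapses to $(-1)^{d+1}/(d!\,\lambda_3^{d})$; matching this against the sign in the conjecture is a bookkeeping matter for the square-root and orientation choice fixed in Lemma~\ref{choice of squ root}. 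The entire content therefore lies in the vanishing $P^{\mathrm{JS}}_{d(k+1),d}=0$ for $k\geqslant 1$.

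For the vanishing I would first normalize. Writing $w=\lambda_0/\lambda_3$ and extracting powers of $\lambda_3$, each denominator $\prod_{a=1}^{d_i}(a\lambda_3+b\lambda_0)$ becomes a Pochhammer symbol, so the summand factorizes as a product of $(k+1)$ hypergeometric weights $1/\big(d_i!\prod_{b}(1+bw)_{d_i}\big)$ (with $b$ ranging over the integers in $[-i,k-i]\setminus\{0\}$) times the Vandermonde factor $\prod_{i<j}\big((j-i)\lambda_0+(d_i-d_j)\lambda_3\big)=\prod_{i<j}(a_i-a_j)$ in the variables $a_i=d_i\lambda_3-i\lambda_0$. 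The sum over compositions $d_0+\cdots+d_k=d$ is then the degree-$d$ coefficient of a product of ${}_0F_k$-type series $A_i(x)=\sum_{m\geqslant 0}x^m/\big(m!\prod_b(1+bw)_m\big)$, and the Vandermonde weight is produced by a Wronskian-type determinant of Euler operators $x\,d/dx$ applied to $\prod_i A_i(x_i)$ and restricted to the diagonal. Thus the claim becomes: a generalized Wronskian of the $A_i$ has no coefficient in positive degree.

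The mechanism is transparent for $k=1$, which I would present as the model case. There the identity reduces to showing that
\[
w\,A(x)B(x)+x\big(A'(x)B(x)-A(x)B'(x)\big)
\]
is constant, where $A={}_0F_1(;1+w;x)$ and $B={}_0F_1(;1-w;x)$ satisfy $xA''+(1+w)A'-A=0$ and $xB''+(1-w)B'-B=0$. Differentiating the displayed expression and substituting these two second-order equations makes every term cancel, so it equals its value $w$ at $x=0$; hence all positive-degree coefficients vanish, which is exactly $P^{\mathrm{JS}}_{2d,d}=0$ (the case $n=2d$ of Theorem~\ref{thm:id:rational}). For general $k$ the same strategy asks for a $(k+1)\times(k+1)$ Wronskian determinant of the ${}_0F_k$ solutions to be annihilated in positive degree by the $(k+1)$-st order operators they satisfy.

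The hard part will be precisely this general determinantal identity for $k\geqslant 2$: the cancellation is no longer a two-term ODE manipulation but a simultaneous consequence of $k+1$ coupled higher-order equations, and I do not expect a short uniform closed form. This matches the status in the paper, where Theorem~\ref{thm:id:rational} proves the identity exactly in the ranges where the argument degenerates to low order ($n=d,2d$, i.e. $k=0,1$) or small degree ($d=1,2$, where the sums truncate), the remaining instances being confirmed only by the Mathematica computation of Proposition~\ref{prop:mathematica}. A complete proof would most plausibly come either from a Jeffrey--Kirwan/iterated-residue reformulation in which the total residue is forced to vanish by a degree estimate at infinity, or from identifying $P^{\mathrm{JS}}_{n}(X,d\,[\mathbb{P}^1])$ with a global (e.g.\ quiver or Quot) model on which the vanishing becomes a dimension count rather than a hypergeometric identity.
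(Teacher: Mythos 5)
What you prove, you prove correctly, but note first that the statement is a conjecture: the paper itself only verifies it in the cases of Theorem~\ref{thm:id:rational} plus the Mathematica checks of Proposition~\ref{prop:mathematica}, and your proposal honestly matches that partial status rather than closing it. Within that scope, your treatment of the key case $n=2d$ (i.e.\ $k=1$) is a genuinely different argument from the paper's. The paper sets $\lambda_3=1$, observes that the sum $\Phi$ is homogeneous with only simple poles at $\lambda_0=m$, $|m|\leqslant d$, and kills each residue via the substitution $j=m+d-2i$, under which the summand is odd; absence of poles plus negative homogeneous degree then forces $\Phi=0$. You instead realize the sum over $d_0+d_1=d$ as the $x^d$-coefficient of $wAB+x(A'B-AB')$ with $A={}_0F_1(;1+w;x)$, $B={}_0F_1(;1-w;x)$, and the two equations $xA''+(1+w)A'-A=0$, $xB''+(1-w)B'-B=0$ do make the derivative of that expression vanish identically (the cancellation closes: the $A'B$-coefficient is $w+1-(1+w)$ and the $AB'$-coefficient is $w-1+(1-w)$), so it equals its value $w$ at $x=0$ and every positive-degree coefficient vanishes. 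Your route buys a structural explanation (a Wronskian identity for ${}_0F_1$'s) and a plausible template, via Wronskians of ${}_0F_k$ series, for general $k$; the paper's residue technique, though more pedestrian, also disposes of $d=2$ with arbitrary $n$ by the same method, and of $d=1$ by the direct binomial computation of Section~\ref{local P1 d=1} --- two cases you gesture at (``the sums truncate'') but never actually prove, so your verified coverage is strictly smaller than that of Theorem~\ref{thm:id:rational}. Finally, your sign remark at $n=d$ is warranted and, if anything, more careful than the paper: with the square root fixed in Lemma~\ref{choice of squ root}, the $k=0$ specialization of Theorem~\ref{main thm local curve} really is $(-1)^{d+1}/(d!\,\lambda_3^{d})$, so for even $d$ the conjectured formula holds only up to an orientation (sign) flip, a point that the paper's ``the $n=d$ case is easy'' silently elides.
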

By Theorem \ref{main thm local curve}, verification of Conjecture \ref{conj for local 0,-1,-1} reduces to an explicit combinatoric problem.
\begin{thm}\label{thm:id:rational}
Conjecture \ref{conj for local 0,-1,-1} is true in the following cases 
\begin{itemize}
\item $d\nmid n$, 
\item $d=1,2$ with any $n$,
\item  $n=d,2d$ with any $d$.
\end{itemize}
\end{thm}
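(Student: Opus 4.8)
The plan is to reduce the whole statement to the closed formula of Theorem~\ref{main thm local curve} and then to one combinatorial identity, which I would prove by a generating-function (residue) argument. Two of the ranges are essentially free. For $d \nmid n$, the Jordan--H\"older filtration argument recorded at the start of Section~\ref{local(-1,-1,0)} shows $P^{\mathrm{JS}}_n(X, d[\mathbb{P}^1])$ is empty, so the fixed locus is empty and $P^{\mathrm{JS}}_{n,d}=0$. For $n=d$ we are in the case $k=0$ of Theorem~\ref{main thm local curve}: the sum over $(d_0,\dots,d_k)$ collapses to the single composition $d_0=d$, all coupling and hypergeometric products are empty, and one reads off $P^{\mathrm{JS}}_{d,d}=\pm\frac{1}{d!\,\lambda_3^{d}}$ by inspection. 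Thus the genuine content is the \emph{vanishing} of $P^{\mathrm{JS}}_{n,d}$ for $k\geqslant 1$.

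The key step is to rewrite the sum in Theorem~\ref{main thm local curve} so that the constraint $\sum_i d_i=d$ becomes a coefficient extraction. By homogeneity set $\lambda_3=1$ and $x\cneq\lambda_0$, and put $u_i\cneq d_i-ix$. Then the coupling factor is a Vandermonde, $\prod_{i<j}\big((j-i)\lambda_0+(d_i-d_j)\lambda_3\big)=\prod_{i<j}(u_i-u_j)=\pm\det\big(u_i^{\ell}\big)_{0\leqslant i,\ell\leqslant k}$. Expanding the determinant over permutations and imposing $\sum_i d_i=d$ through a formal variable $z$ converts the composition sum into
\[
P^{\mathrm{JS}}_{n,d}=C_{k,d}\cdot[z^d]\,\det\big(G_{i,\ell}(z)\big)_{0\leqslant i,\ell\leqslant k},\qquad G_{i,\ell}(z)=\big(z\partial_z-ix\big)^{\ell}H_i(z),
\]
where $C_{k,d}$ is the explicit prefactor and $H_i(z)=\sum_{m\geqslant 0}\frac{z^m}{m!}\prod_{a=1}^m\big(\prod_{b=1}^{k-i}(a+bx)\prod_{b=1}^{i}(a-bx)\big)^{-1}$ is a generalized hypergeometric series of type ${}_0F_k$ (its $k$ lower parameters being $\{1+bx\}_{b=1}^{k-i}\cup\{1-bx\}_{b=1}^{i}$). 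So the theorem reduces to showing that a Wronskian-type determinant of hypergeometric functions has vanishing coefficient of $z^d$ in the relevant ranges.

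The engine for the vanishing is the hypergeometric ODE. For $n=2d$, i.e.\ $k=1$, the determinant is $2\times 2$ in $f={}_0F_1(;1+x;z)$ and $g={}_0F_1(;1-x;z)$, and the series to be controlled is $\phi=x\,fg+z(f'g-fg')$. Using $z f''=f-(1+x)f'$ and $z g''=g-(1-x)g'$ one computes $\phi'=0$ identically, so $\phi\equiv x$; hence $[z^d]\phi=0$ for all $d\geqslant 1$ and $P^{\mathrm{JS}}_{2d,d}=0$. For $d=1$ the sum has only the $k+1$ terms $d_i=1$, and the formula collapses to the enumeration already carried out in Section~\ref{local P1 d=1}, giving $P^{\mathrm{JS}}_{n,1}=0$ for $n\neq 1$. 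For $d=2$ the extraction is only to order $z^2$, so the identity reduces to finitely many relations among the low-order Taylor coefficients of the $H_i$, provable in closed form from the same ODEs; the remaining sporadic $(n,d)$ are the finite checks in Proposition~\ref{prop:mathematica}.

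The main obstacle is the general-$k$ vanishing. The clean cancellation $\phi'\equiv 0$ is special to ${}_0F_1$ (a second-order ODE, hence a transparent Wronskian identity); for $k\geqslant 2$ the functions $H_i$ are ${}_0F_k$ with $(k+1)$-st order ODEs, and the associated determinant $\det(G_{i,\ell})$ has no evident closed form, so uniform control of $[z^d]$ is unavailable. The cases in the theorem are exactly those in which one parameter stays bounded---the determinant size is fixed ($n=2d$) or the coefficient order is fixed ($d=1,2$)---and there either the ODE method truncates to a manageable computation or a finite computer-algebra verification closes the gap.
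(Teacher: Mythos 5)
Your handling of the easy cases coincides with the paper's: $d\nmid n$ via the Jordan--H\"older emptiness observation, $n=d$ by direct inspection of Theorem~\ref{main thm local curve} (your ``$\pm$'' correctly flags that the formula actually yields $(-1)^{d+1}/(d!\,\lambda_3^d)$, a sign the paper also glosses over), and $d=1$ by collapsing to the binomial identity $\sum_{i=0}^k(-1)^i\binom{k}{i}=0$, exactly as in Section~\ref{local P1 d=1}. For the substantive case $n=2d$ your route is genuinely different from the paper's and it is correct. The paper sets $\lambda_3=1$, notes that the $k=1$ sum is a proper rational function of $\lambda_0$ with only simple poles, and kills every residue by the substitution $j=m+d-2i$ together with a $j\mapsto-j$ antisymmetry. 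You instead encode the sum as $[z^d]\phi$ with $\phi=xfg+z(f'g-fg')$, $f={}_0F_1(;1+x;z)$, $g={}_0F_1(;1-x;z)$, and the ODEs $zf''=f-(1+x)f'$, $zg''=g-(1-x)g'$ indeed give $\phi'\equiv0$, hence $\phi\equiv x$ and $[z^d]\phi=0$ for all $d\geqslant1$; I checked this computation and it is right. Your method proves the stronger statement that the $2\times2$ Wronskian-type determinant is constant, and your Vandermonde rewriting $P^{\mathrm{JS}}_{n,d}=C_{k,d}[z^d]\det(G_{i,\ell})$ is a genuinely useful reformulation: it exhibits Conjecture~\ref{conj for local 0,-1,-1} as the assertion that this determinant is constant in $z$ for every $k$. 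What the paper's residue method buys in exchange is uniformity in $k$ once $d$ is fixed, and that is precisely where your proposal breaks down.

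The genuine gap is the $d=2$ bullet, which the theorem asserts for \emph{all} $n$. For fixed $d=2$ one has $n=2(k+1)$ with $k$ unbounded, so the determinant whose $[z^2]$-coefficient must vanish has unbounded size $k+1$: there is one identity for each $k$, infinitely many in total. Your claim that this ``reduces to finitely many relations among the low-order Taylor coefficients of the $H_i$'' and that ``a finite computer-algebra verification closes the gap'' is therefore not an argument --- no finite computation covers infinitely many $k$, and the clean $k=1$ cancellation $\phi'\equiv0$ is special to second-order ODEs, as you yourself concede. Contrast $d=1$, where the sum collapses to a single binomial identity uniformly in $k$; nothing analogous is exhibited for $d=2$ in your write-up. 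The paper's (admittedly terse) claim for $d=2$ rests on the residue/antisymmetry scheme, which is $k$-uniform because the pole data is governed by $a\leqslant d=2$; even there some extra care is needed, since for $d=2$ a single term can carry a double pole (the factors $(1+b\lambda_0)$ and $(2+2b\lambda_0)$ can occur together when $d_{i'}=2$ and $k-i'\geqslant 2b$). To complete your proposal you would need a $k$-uniform proof that $[z^2]\det(G_{i,\ell})=0$; as written, that case is unproven.
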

\begin{proof}
By the Jordan-H\"{o}lder filtration, $d$ must divide $n$, otherwise the moduli space is empty. 
The $d=1$ case is discussed in Section \ref{local P1 d=2}. The $n=d$ case is easy. Here we show a proof when $n=2d$ (the $d=2$ case can be proved using a similar method). 

In this case, the formula simplifies to 
\begin{align*}
&P^{\mathrm{JS}}_{2d,d}=\\
&\frac{1}{\lambda_3^{d}}\cdot 
\sum_{\begin{subarray}{c}d_0+d_1=d  \\  d_1, d_1\geqslant 0 \end{subarray}}\frac{(-1)^{d_1}}{d_0!\cdot d_1!} 
\cdot\frac{\lambda_0+(d_0-d_1)\lambda_3}{(\lambda_0+d_0\lambda_3)\cdots \cdot(\lambda_0+2\lambda_3)\cdot(\lambda_0+\lambda_3)\cdot\lambda_0\cdot (\lambda_0-\lambda_3)\cdot(\lambda_0-2\lambda_3)\cdots (\lambda_0-d_1\lambda_3)},
\end{align*}
and we want to show it is zero. The rational function   
$$\Phi=\sum_{\begin{subarray}{c}d_0+d_1=d  \\  d_1, d_1\geqslant 0 \end{subarray}}\frac{(-1)^{d_1}}{d_0!\cdot d_1!}\cdot\frac{\lambda_0+(d_0-d_1)\lambda_3}{(\lambda_0+d_0\lambda_3)\cdots \cdot(\lambda_0+2\lambda_3)\cdot(\lambda_0+\lambda_3)\cdot\lambda_0\cdot (\lambda_0-\lambda_3)\cdot(\lambda_0-2\lambda_3)\cdots (\lambda_0-d_1\lambda_3)}$$
is homogenous in variable $\lambda_0$ and $\lambda_3$ and all possible poles are of order one. To prove cancellation of poles,
we may set $\lambda_3=1$ and it is enough to prove the residue
is zero at any pole. 

Poles happen at $\lambda_0=m\in \{0,\pm1,\ldots, \pm d\}$.
Say $m\geqslant0$ ($m\leqslant 0$ case is similar), terms involving $(\lambda_0-m)$ exists only when $d_1=m,m+1,\ldots,d$. We consider the residue at $\lambda_0=m$:
\begin{align*}
\mathrm{Res}_{\lambda_0=m}(\Phi|_{\lambda_3=1})&=\quad \sum_{i=m}^d\frac{(-1)^{i}}{(d-i)!\cdot i!}\cdot\frac{m+d-2i}{(m+d-i)\cdots (m+1)\cdot m\cdot(m-1)\cdots \widehat{(m-m)} \cdots(m-i)} \\
&=\sum_{i=m}^d\frac{(-1)^{m}}{(d-i)!\cdot i!}\cdot\frac{m+d-2i}{(i-m)!\cdot(m+d-i)!}\\
&=(-1)^{m}\cdot \sum_{j=d-m}^{m-d}\frac{j}{\big(\frac{m+d-j}{2}\big)!\cdot\big(\frac{d-m+j}{2}\big)!\cdot\big(\frac{d-m-j}{2}\big)!\cdot\big(\frac{m+d+j}{2}\big)!} \\
&=0\, ,
\end{align*}
where we make a change of index $j=m+d-2i$ in the third equality and the last equality is because
the denominator is invariant under $j\to -j$ while the numerator gets a sign change.
\end{proof}
In general, we may have higher order poles in the rational functions involved in $P^{\mathrm{JS}}_{n,d}$ which we do not know how to deal with 
at the moment. Nevertheless, we verify our conjecture in a huge number of examples by direct calculations with the help of a `Mathematica' program.
\begin{prop}\label{prop:mathematica}
We have $P^{\mathrm{JS}}_{n,d}=0$ in the following cases:
\begin{itemize}
\item  $d=3$ and $n/d\leqslant 30$, 
\item  $d=4$ and $n/d\leqslant 20$, 
\item  $d=5$ and $n/d\leqslant 14$, 
\item  $d=6$ and $n/d\leqslant 11$, 
\item  $n=3 d$ and $d\leqslant 60$,
\item  $n=4d$ and $d\leqslant 20$,
\item  $n=5d,6d,7d$ and $d\leqslant 10$,
\item  $n=8d$ and $d\leqslant 9$,
\item  $n=9d$ and $d\leqslant 8$,
\item  $n=10 d$ and $d\leqslant 7$,
\end{itemize}
i.e. Conjecture \ref{conj for local 0,-1,-1} is true in all these cases.
\end{prop}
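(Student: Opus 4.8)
The plan is to reduce the vanishing assertion to a finite, exact symbolic computation via the closed formula of Theorem~\ref{main thm local curve}, and then to carry out that computation pair by pair. For each $(n,d)$ in the list I would write $k = n/d - 1$ and recall that, by Theorem~\ref{main thm local curve}, $P^{\mathrm{JS}}_{n,d}$ is an explicit finite sum over compositions $d_0 + \cdots + d_k = d$ with $d_i \geqslant 0$, each summand being a product of linear forms in $\lambda_0,\lambda_3$ and of reciprocals of such forms. A degree count shows the whole expression is homogeneous (of degree $-n$) in $(\lambda_0,\lambda_3)$, so exactly as in the proof of Theorem~\ref{thm:id:rational} I would pull out the overall monomial prefactor, set $\lambda_3 = 1$, and reduce the claim to showing that a single rational function $\Phi(\lambda_0) \in \mathbb{Q}(\lambda_0)$ vanishes identically. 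The diagonal entries with $n = d$ are not part of this vanishing statement (there the value is the nonzero $1/(d!\lambda_3^d)$) and are already settled in Theorem~\ref{thm:id:rational}; the content of the proposition is the vanishing in the remaining range.

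For each fixed $(n,d)$, the concrete step is to implement $\Phi$ in Mathematica: enumerate all $\binom{d+k}{k}$ compositions, form the corresponding summand prescribed by Theorem~\ref{main thm local curve}, combine them over a common denominator with \texttt{Together}, and confirm with \texttt{Simplify} or \texttt{FullSimplify} that the result is $0$. Because after the homogeneity reduction $\Phi$ is a rational function in the single variable $\lambda_0$, the computation is performed in exact arithmetic over $\mathbb{Q}(\lambda_0)$; the output is either the identically-zero function or not, so the check is rigorous and free of any floating-point ambiguity. Running this over the enumerated bounds then establishes the stated cases.

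The main obstacle is the absence of a uniform argument that survives for all $k$. In the $k = 1$ case (i.e.\ $n = 2d$) treated in Theorem~\ref{thm:id:rational}, every pole of $\Phi$ at $\lambda_0 = m$ is simple, and vanishing follows from a one-line residue computation together with the antisymmetry $j \mapsto -j$. For $k \geqslant 2$ the reciprocal factors $\prod_{a,b}(a\lambda_3 \pm b\lambda_0)^{-1}$ contributed by different compositions can share common linear factors, producing higher-order poles in the sum; the first-order residue cancellation then no longer controls the principal part, and I do not expect a short closed-form argument to be available (this is precisely the difficulty flagged in the text preceding the proposition). Consequently the statement is genuinely a finite verification rather than a single proof, and its scope is limited by the growth of the number of compositions $\binom{d+k}{k}$ together with the cost of exact rational-function simplification, which is exactly what dictates the particular bounds on $n/d$ and on $d$ recorded in the list.
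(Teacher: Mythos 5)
Your proposal is correct and follows essentially the same route as the paper: the proposition is precisely a finite, case-by-case exact symbolic verification of the vanishing of the rational function given by Theorem~\ref{main thm local curve}, carried out by a Mathematica program, with the homogeneity reduction to a single variable as in the proof of Theorem~\ref{thm:id:rational}. Your observations that the $n=d$ diagonal is excluded (being nonzero and already covered by Theorem~\ref{thm:id:rational}) and that higher-order poles block a uniform residue argument for $k\geqslant 2$ match the paper's own framing of why this remains a computational check.
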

\begin{rmk}
 Recently Conjecture \ref{conj for local 0,-1,-1} has been proved in general by studying tautological invariants and their compact analogies \cite{CT2, CT3}.
 \end{rmk}

\providecommand{\bysame}{\leavevmode\hbox to3em{\hrulefill}\thinspace}
\providecommand{\MR}{\relax\ifhmode\unskip\space\fi MR }
\providecommand{\MRhref}[2]{%
  \href{http://www.ams.org/mathscinet-getitem?mr=#1}{#2}
}
\providecommand{\href}[2]{#2}


\end{document}